\newcommand{\cE}{\mathscr{E}}
\newcommand{\cA}{\mathscr{A}}
\newcommand\Tate[1]{\bm{(}#1\bm{)}}
\newcommand\sTate[1]{\left|\!\left[#1\right]\!\right|}
    \definecolor{lgray}{rgb}{0.8, 0.8, 0.8}
    \definecolor{blue1}{rgb}{0,0.3,1}
    \definecolor{peach1}{rgb}{1,0.592,0.557}
\newcommand{\C}{\mathbb{C}}
\newcommand{\Gm}{\mathbb{G}_\mathbf{m}}
\newcommand{\Ct}{\mathbb{C}^\times}
\newcommand{\Q}{\mathbb{Q}}
\newcommand{\Qlb}{\overline{\Q}_\ell}
\newcommand{\A}{\mathbb{A}}
\newcommand{\F}{\mathbb{F}}
\newcommand{\bbf}{\mathbb{f}}
\newcommand{\bO}{\mathbb{O}}
\newcommand{\bD}{\mathbb{D}}
\newcommand{\Z}{\mathbb{Z}}
\newcommand{\X}{\mathbb{X}}
\newcommand{\R}{\mathbb{R}}
\newcommand{\bA}{\mathsf{A}}
\newcommand{\bff}{\mathsf{f}}
\newcommand{\bG}{\mathsf{G}}
\newcommand{\bH}{\mathsf{H}}
\newcommand{\sP}{\mathsf{P}}
\newcommand{\bC}{\mathsf{C}}
\newcommand{\bY}{\mathsf{Y}}
\newcommand{\bL}{\widehat{\mathsf{L}}}
\newcommand{\bM}{\mathsf{M}}
\newcommand{\bU}{\mathsf{U}}
\newcommand{\Gr}{\mathsf{Gr}}
\newcommand{\bB}{\mathsf{B}}
\newcommand{\bX}{\mathsf{X}}
\newcommand{\bK}{\mathsf{K}}
\newcommand{\bN}{\mathsf{N}}
\newcommand{\bP}{\mathbb{P}}
\newcommand{\cK}{\mathscr{K}}
\newcommand{\cT}{\mathscr{T}}
\newcommand{\cL}{\mathscr{L}}
\newcommand{\cP}{\mathscr{P}}
\newcommand{\cU}{\mathscr{U}}
\newcommand{\cQ}{\mathscr{Q}}
\newcommand{\cF}{\mathscr{F}}
\newcommand{\cM}{\mathscr{M}}
\newcommand{\cN}{\mathscr{N}}
\newcommand{\cV}{\mathscr{V}}
\newcommand{\cW}{\mathscr{W}}
\newcommand{\bpt}{\mathbf{p}}
\newcommand{\bfL}{\mathbf{L}}
\newcommand{\bmu}{\boldsymbol{\mu}}
\newcommand{\bph}{{\boldsymbol{\varphi}}}
\newcommand{\be}{\mathsf{e}}
\newcommand{\bh}{\mathsf{h}}
\newcommand{\cD}{\mathscr{D}}
\newcommand{\bS}{\mathsf{S}}
\newcommand{\bbS}{\mathbf{S}}
\newcommand{\eqdef}{\overset{\textup{\tiny def}}=}
\newcommand{\Aroof}{\widehat{A}}
\newcommand{\omhat}{\widehat{\omega}}
\newcommand{\cOh}{\widehat{\cO}} 
\newcommand{\fst}{\Bbbk(\!(t)\!)} 
\newcommand{\fTst}{\Bbbk[\![t]\!]} 
\newcommand{\pst}{(\!(t)\!)} 
\newcommand{\pTst}{[\![t]\!]}
\newcommand{\tdash}{\textup{---}}
\newcommand{\cO}{\mathscr{O}}
\newcommand{\Hd}{{H}^{\raisebox{0.5mm}{$\scriptscriptstyle \bullet$}}}
\newcommand{\bhd}{{\mathsf{h}}^{\raisebox{0.5mm}{$\scriptscriptstyle \bullet$}}}
\newcommand{\Sd}{{\bS}^{\raisebox{0.5mm}{$\scriptscriptstyle \bullet$}}}
\newcommand{\vir}{\textup{vir}}
\newcommand{\hor}{{\textup{hor}}}
\newcommand{\ver}{{\textup{ver}}}
\newcommand{\fsl}{\mathfrak{sl}}
\newcommand{\Heis}{\mathfrak{Heis}}
\newcommand{\Attr}{\mathsf{Attr}}
\newcommand{\mov}{\textup{mov}}
\newcommand{\Lbar}{\widehat{L}}
\newcommand{\buu}{^{\sqbullet}}
\newcommand{\Fix}{\mathsf{Fix}}
\DeclareMathOperator{\Coh}{Coh}
\DeclareMathOperator{\ch}{char}
\DeclareMathOperator{\Hom}{Hom}
\DeclareMathOperator{\Ext}{Ext}
\DeclareMathOperator{\Ker}{Ker}
\DeclareMathOperator{\Aut}{\mathsf{Aut}}
\DeclareMathOperator{\Lie}{Lie}
\DeclareMathOperator{\Res}{Res}
\DeclareMathOperator{\const}{const}
\DeclareMathOperator{\pt}{pt}
\DeclareMathOperator{\rk}{rk}
\DeclareMathOperator{\Pic}{Pic}
\DeclareMathOperator{\tr}{tr}
\DeclareMathOperator{\Spec}{Spec}
\DeclareMathOperator{\pFix}{Fix}
\DeclareMathOperator{\spec}{Spec}
\DeclareMathOperator{\supp}{supp}
\DeclareMathOperator{\ev}{ev}
\DeclareMathOperator{\QM}{\mathsf{QM}}
\DeclareMathOperator{\Nil}{\mathsf{Nil}}
\DeclareMathOperator{\QMN}{\mathsf{QMN}}
\DeclareMathOperator{\Maps}{\mathsf{Maps}}
\DeclareMathOperator{\Sect}{\mathsf{Sect}}
\DeclareMathOperator{\End}{End}
\DeclareMathOperator{\Image}{Image}
\DeclareMathOperator{\Thom}{Thom}
\DeclareMathOperator{\diag}{diag}
\DeclareMathOperator{\codim}{codim}
\DeclareMathOperator{\virdim}{vir\ \!dim}
\DeclareMathOperator{\Cone}{Cone}
\DeclareMathOperator{\Bun}{Bun}
\DeclareMathOperator{\Ad}{Ad}
\DeclareMathOperator{\Bl}{Bl}
\DeclareMathOperator{\CT}{CT}
\DeclareMathOperator{\Fr}{\mathsf{Fr}}
\newcommand{\Eis}{\textup{Eis}}
\newcommand{\Ld}{{\Lambda}^{\!\raisebox{0.5mm}{$\scriptscriptstyle
      \bullet$}}\!}
\newcommand{\of}[1]{\,\langle #1 \rangle}
\newtheorem{Theorem}{Theorem}
\newtheorem{Lemma}{Lemma}[section]
\newtheorem{Proposition}[Lemma]{Proposition}
\newtheorem{Corollary}[Lemma]{Corollary}
\theoremstyle{definition}
\newcommand{\cNrm}{\textup{\usefont{T1}{wela}{eb}{n}Mov}}
\newcommand{\Flag}{\textup{\usefont{T1}{wela}{eb}{n}\large Flag}}
\DeclareMathOperator{\cBun}{\textup{\usefont{T1}{wela}{eb}{sl}\large Bun\hspace{0.3mm}}}
\DeclareMathOperator{\Chars}{\textup{\usefont{T1}{wela}{eb}{sl}Chars\hspace{0.3mm}}}
\DeclareMathOperator{\Cochars}{\textup{\usefont{T1}{wela}{eb}{sl}Cochars\hspace{0.3mm}}}
\DeclareMathOperator{\AK}{\textup{\usefont{T1}{wela}{eb}{sl}AK\hspace{0.3mm}}}
\DeclareMathOperator{\SH}{\textup{\usefont{T1}{wela}{eb}{sl}\large Hecke\hspace{0.5mm}}}
\DeclareMathOperator{\Cliff}{\textup{\usefont{T1}{wela}{eb}{sl}\large Cliff\hspace{0.5mm}}}
\DeclareMathOperator{\Spin}{\textup{\usefont{T1}{wela}{eb}{sl}\large Spin\hspace{0.5mm}}}
\DeclareMathOperator{\NN}{\textup{\usefont{T1}{wela}{eb}{sl}N}}
\begin{document}

\title{L-function genera and applications} 
\author{David Kazhdan and Andrei Okounkov} 
\maketitle

\hfill \textsl{To Enrico Arbarello}

\setcounter{tocdepth}{2}
\tableofcontents

\section{Overview}

\subsection*{1.1}

Our goal in these notes is to produce an accesible, leisurely and, hopefully,
engaging introduction to the ideas, objects, and techniques that are used in our papers
\cites{KO1,KO2}. The problem studied in these papers has to do with
spectral analysis of automorphic forms, and thus fundamentally
a problem in analysis. Using well-established techniques in spectral
theory,
see Appendix \ref{s_app} for an elementary discussion, 
the sought spectral decomposition 
may be reduced to the study of certain contour integrals involving
$\zeta$-functions and more general L-functions. These special
functions of number theory appear in what may be described as the 
Eisenstein series analog of the reflection coefficients from the theory
of $1$-dimensional many-body systems. Only very basic properties of L-functions play a role in what
happens, thus reducing the problem further to an essentially combinatorial
problem of dealing with Weyl groups, roots, and many residues in
contour integrals.

This sounds like a great progress until one
realizes how much complexity the intermediate stages of
this analysis involve. Langlands devotes pages 181--195 of his
monumental treatise \cite{L} to the analysis of the basic contour
integral for the group $G_2$. In this case, the integral is
2-dimensional, corresponding to the rank of the group, the integrand
is a sum of 12
terms indexed by the elements of the Weyl group, and singularities of
the integrand are 
on 6 hyperplanes where the positive roots take value 1. The reader
will be no doubt dazzled by Langlands ability organize the computation
using carefully chosen notation, tables, etc., and to identify one
unexpected point in the spectrum. In light of subsequent insights by
Langlands, Arthur, and many other prominent mathematicians, this
unexpected point is explained by the existence of an
interesting nilpotent conjugacy class for type $G_2$, which is its own
Langlands dual $G_2^\vee = G_2$. 

At this point,
it may be worth mentioning, that the group $E_8$ has Weyl group of
order 696729600 and 120 polar hyperplanes, which intersect in
basically all possible ways inside an 8-dimensional space. By
contrast, it has only 70 nilpotent conjugacy classes
in total. It may be therefore highly desirable to have a more direct
link between spectral decomposition and nilpotent elements in the
Langlands dual Lie algebra. The first objective of this paper is
to explain in elementary terms such direct link found by us in
\cite{KO1}. 

\subsection*{1.2}
One can interpret the contour integral in question as the pairing of a
certain distribution $\Psi_{\Eis}$ with  a pair of holomorphic test
function. In this way, the spectral decomposition may be recast into
a problem of finding a special additive decomposition of the
distribution 
$\Psi_\Eis$. We solve this problem with geometric tools.

First, we interpret the distribution $\Psi_{\Eis}$ as a certain
nonproper pushforward in equivariant K-theory, or cohomology,
depending on the characteristic of the global field $\F$.  
Recall that for a group $\bH$ acting on a space $\bX$,
the equivariant K-theory $K_\bH(\bX) = K(\bX/\bH)$ is a ring over
\begin{equation}
K(\pt/\bH)=\textup{conjugation-invariant functions on
  $\bH$}\,,\label{eq:7}
\end{equation}
and proper pushforwards $\bX/\bH \to \pt/\bH$ produce elements of
\eqref{eq:7}. It should be considered well-known, see e.g.\
\cite{Atiyah}, but is nonetheless explained at length in Section
\ref{s1}, that some nonproper
pushforwards are well-defined as linear functionals on
\eqref{eq:7}, that is, as conjugation-invariant \emph{distributions} on $\bH$.
In particular, $\Psi_{\Eis}$ was identified in \cite{KO1} as a certain
equivariant \emph{genus} of the quotient stack $\cT$ from
\eqref{eq:252} below.

Since it is a genus, any decomposition of $\cT$ in a suitable 
equivariant cobordism group results in an additive decomposition
of $\Psi_\Eis$. By construction, the stack $\cT$ maps to the stack of nilpotent
elements in $\Lie (\bG^\vee)$, where the group $\bG^\vee$ is Langlands
dual to the reductive group $\bG$ for which the original
spectral problem was posed. We show in \cite{KO1} that the
additive decomposition of $\cT$ and $\Psi_\Eis$ induced by the decomposition of
$\Lie (\bG^\vee)$ into conjugacy classes
is precisely the right one for the spectral decomposition.

\subsection*{1.3}
The analysis in \cite{KO1} was about Eisenstein series induced from the trivial
automorphic form on the minimal parabolic $\bB \subset \bG$. 
The genus that appears in $\Psi_{\Eis}$ in this special case encodes the 
$\zeta$-function of the field $\F$. In the more general situation \cite{KO2},
$\zeta$-functions are replaced by L-functions.

The second objective of this note is to introduce and explain the notion of
L-genera associated to algebraic actions of Galois groups $\Gamma$ on smooth
algebraic varieties, or, more generally, virtually smooth algebraic stacks.
This occupies the bulk of Section \ref{s2}.
While L-genera may be given a number of definitions applicable in
slightly different contexts, the following one may have the
largest geometric appeal.

Let a group $\Gamma$ act on a smooth manifold $M$. By passing to
cotangent bundles if necessary, it is convenient to assume that
$\Gamma$ preserves a sympletic form on $M$.
The $\Gamma$-equivariant cobordism class of
$M$ stores a lot of data. For every closed subgroup $\Gamma'\subset
\Gamma$, it remembers the cobordism class of the fixed
locus $M^{\Gamma'}$, together with the restriction of the tangent
bundle $TM\big|_{M^{\Gamma'}}$ and the action of the normalizer of $\Gamma'$ on
both. The groups $\Gamma$ considered in this paper have a
distinguished normal subgroup $\Gamma' \triangleleft \Gamma$, called
the inertia subgroup. We denote the $\Gamma'$-fixed locus by $\Fix \subset
M$. On this fixed locus, we may consider the vector bundle formed by
\[
  T^1_\vir \Fix \eqdef H^1(\Gamma', TM\big|_\Fix) \,. 
\]
As the notation indicates, this should be viewed as a part of the
virtual tangent bundle to the fixed locus. In fact, by duality \eqref{eq:cup}
for $H^i(\Gamma',\textup{---})$, this is the only interesting part of the
virtual tangent bundle.

Multiplication in cohomology and the
symplectic form combine to give $T^1_\vir \Fix$ a nondegenerate
symmetric pairing, which we can use to define a bundle $\Cliff$ of
Clifford algebras on $\Fix$. Assuming the existence of the
corresponding self-dual spinor bundle $\Spin$, we define
\[
\bL_{\frac12}(M) = \textup{index of $\Spin$} = \chi(\Fix, \Spin_0 -
\Spin_1)\,, 
\]
where we have explicitly separated the $\Z/2$-graded bundle $\Spin$
into its even and odd parts. The index above is equivariant with
respect to the action of $\Gamma/\Gamma'$ and whatever commutes
with the action of $\Gamma$, hence defines a distribution on the
corresponding groups.

It is curious to note that even for the trivial action of $\Gamma$ on
$M$ this procedure outputs a
nontrivial genus of $M$. For instance, for a trivial action of 
$\Gamma=\pi_{1,\textup{alg}}(C)$, where $C$ is curve over a finite
field, this gives the genus corresponding to the completed
$\zeta$-function of the curve
$C$.

We believe L-genera is a useful notion beyond their use in spectral
analysis of Eisenstein series. Some potential applications are briefly
discussed in Section \ref{s3}.  

\subsection*{1.4}

Theorem \ref{t1} in Section \ref{s3} identifies the distribution
$\Psi_\Eis$ with a certain L-genus for general split unramified Eisenstein
series over a function field. It has a formal analog of number fields
which may be proven and applied without referring to the action of $\Gamma$.
While, fundamentally, it is little more
than a repackaging of classical formulas, see \cites{L, Labesse, MW}, it certainly provides a very useful angle of approach
to spectral analysis.

It is natural to explore the deeper meaning of this
identification, including its possible categorification, by which we
mean identifying the role played by the bundle $\Spin$ itself, without
passing to its K-theory class. 

We explore a basic example of
such categorification in Section \ref{s_curves}, where we deal with
Eisenstein series for the group $\bG=PSL(2)$ over the field
$\F=\Bbbk(C)$ of functions on an algebraic curve $C$. This theory
is ultimately about special divisors on $C$. While it gets
wrapped in many layers of reformulations in the process, we hope that both
its geometric origins and the geometric output contained in
Theorem \ref{t2} will appeal to Enrico's taste in mathematics,
and his passion for everything concerning
the geometry of algebraic
curves. 

Theorem \ref{t2} categorifies Theorem \ref{t1} for $PSL(2,\Bbbk(C))$
and may be also viewed as the first baby step in broadening the
fruitful interactions of between classical enumerative geometry and modern
high-energy physics in the direction of arithmetic enumeration. 
A very brief and superficial discussion of
these ideas may be found in Section \ref{s_mirror}.

In principle, the ideas presented in Section \ref{s_curves} apply to 
the Eisenstein series itself, not just its inner product with another
Eisenstein series (which is what is captured by the distribution
$\Psi_\Eis$). We find, however, that the moduli spaces that enter the
categorification of $\Psi_\Eis$, constant terms, and related
quantities are better-behaved than their fibers responsible for the values
of the
Eisenstein series at points. This may be the geometric explanation of
why it is easier to access the Eisenstein series via harmonic analysis
than directly from definitions.

\subsection*{1.5}
To our surprize and delight, following the logic of Theorem \ref{t2},
we came into a very close proximity of several classical geometric
representations theory results related to Eisenstein series. In particular, the conjecture of \cite{Semiinf2}, proven in \cite{Gaits}, expresses
the stalks of Eisenstein series for the curve $C=\bP^1$ in terms of
semi-infinite cohomology of tilting modules for quantum groups at
roots of unity. The latter can also be described as local cohomology of an irreducible perverse coherent sheaf on the nilpotent cone by \cites{Bez,BezPos}. It seems clear that the
powerful techniques developed in these and related papers
may turn out to be very handy for the eventual
generalization of our Theorem \ref{t2} to groups $\bG$ other
than $PGL(2)$.

While a technical investigation of these connections is perhaps best
left for future research, it may be worthwhile to highlight some aspects in
which our line of approach may be shifting the center of mass of the argument
from representation theory to geometry. Since we deal with the curve
$C$ of arbitrary genus, the group $H^1(C)$, with its Frobenius action,
has to play the central role for us. The descendants of
$H^1(C)$ and $H^2(C)$ generate a Clifford algebra 
correspondences, which we denote by $\cA^{\ge 1}(C)$ in this paper.
The algebra $\cA^{\ge 1}(C)$, on the
one hand, uniquely determines the modules in questions, while on
the other hand, is as close to commutative as one can get in
representation theory. In other words, one can work with $\cA^{\ge
  1}(C)$ and its modules as if they were usual objects of algebraic
geometry. 

\subsection*{1.6}
In this text, we try to intertwine several related threads on
very different levels of abstraction. While some sections of this narrative
may be rather heavy on technicalities, we hope to have provided a
sufficient number of easy reading interludes and examples to keep the
reader engaged. We have even written a completely elementary
introductory discussion of some aspects of automorphic forms for
readers lacking such background. See Appendix \ref{s_app} for that. 

\subsection*{1.7}
We would like to dedicate these modest notes to Enrico Arbarello as a
token of our great appreciation for the influence he had on algebraic
geometry, as well as on the people who work in this field and its 
applications. We imagine Enrico would agree with the point of view
that a good way to solve
a mathematical problem, be it purely theoretical or applied, is to interpret and
solve it geometrically. This is precisely what we tried to do in the context
of spectral analysis of Eisenstein series.

\subsection*{1.8}

We would like to thank many people for very interesting and enlightening
conversations about the topics discussed here. Among them, Roman
Bezrukavnikov, Alexander Braverman, Misha Finkelberg, Johan de Jong,
Mikhail Kapranov, Davesh Maulik,
Hiraku Nakajima, Nikita Nekrasov, 
Martin Olsson, Alexander Polishchuk, Will Sawin, and Leon Takhtajan.
A.O.\ thanks
the Simons Foundation for being supported as Simons Investigator.

We 
think about Igor Krichever, who's untimely passing left an enormous void
in mathematics and life for us. We cherish the memory of many related
and unrelated discussions with Igor in both recent and distant past.

\section{Distributions from geometry}\label{s1}

\subsection{Characters as distributions}

\subsubsection{}

Let $V$ be a finite-dimensional complex representation of a group $\bG$. Its
character
\[
\chi_V(g) = \tr_V g
\]
is a conjugation-invariant function on $\bG$, which satisfies
\begin{equation}
  \label{eq:128}
\chi_V
=\chi_{V_1}+\chi_{V_2}\,, 
\end{equation}
for any exact sequence
\[
0 \to V_1 \to V \to V_2 \to 0
\]
of $\bG$-modules.

\subsubsection{}

If $\bG$ is a complex reductive group, equivalently, the complexification of
a compact Lie group $\bK$, then $\chi_V$ determines $V$
completely. Indeed, the multiplicity of an irreducible representation
$V^\mu$ in $V$ is given by
\begin{equation}
  \label{eq:129}
  [V: V^\mu]
  =
  \int_{\bK} \chi_V(k) \, \overline{\chi_{\mu}(k)} \, d_\textup{Haar} k\,, 
\end{equation}
where $\chi_{\mu} = \chi_{V^\mu}$ and the Haar measure on $\bK$ is normalized to have total mass $1$.

\subsubsection{}

Fundamental to study of infinite-dimensional representations is the
idea that the character of a infinite-dimensional representation $V$,
while not defined as a function, may be well-defined as a
distribution so that, in particular, the equality \eqref{eq:129} is
preserved as a pairing
\begin{equation}
  \label{eq:129b}
  \left\langle \chi_V ,\chi_{\mu} \right\rangle = \left[V: \left(V^{\mu}\right)^*\right]\,, 
\end{equation}
of a test function $\chi_{\mu}$
with the distribution $\chi_V$.

To avoid a possible confusion, we should stress that 
in this paper we are not interested in traces of unitary and
other truly infinite-dimensional representations of reductive
groups. Representations that we consider are direct sums of
finite-dimensional irreducible representations with finite
multiplicities. In particular, test functions $\chi_{\mu}$ are
analytic functions on $\bG$.

\subsubsection{}

For example, if $V$ is the regular representation $\C[\bG]$ of a complex
reductive group $\bG$ then the decomposition 
\[
\C[\bG] = \bigoplus_{\textup{irreducible $V^\mu$}} \End(V^\mu) 
\]
gives $[\C[\bG]: V^\mu]=\dim V^\mu = \chi_{\mu}(1)$. Thus
\begin{equation}
\chi_{\C[\bG]}=
\delta_1\label{eq:131}\,, 
\end{equation}
where $\delta_1$ is the $\delta$-function at the identity $1\in \bG$.

\subsubsection{}
More generally, the multiplicity
\[
[V: V^\mu ] = \dim \Hom_\bG(V^\mu, V)\,, 
\]
which we assume to be finite, 
may be upgraded to a representation of the centralizer of $\bG$ in
$GL(V)$. If $\bG \times \bH$ acts on $V$ and
$(g,h) \in \bG \times \bH$ then 
\begin{equation}
  \label{eq:129bb}
  \left\langle \chi_V(gh) ,\chi_{\mu^*} \right\rangle =
  \tr_{\Hom_\bG(V^\mu, V)} h \,, 
\end{equation}
defines a linear functional on $\C[\bG]^{\Ad}$ with values in $
\C[\bH]^{\Ad}$.

For example, the group $\bG \times \bG$ acts in $\C[\bG]$ by left
and right shifts and
\[
\left\langle \tr_{\C[\bG]} g \times h, \chi_{\mu}(g) \right
\rangle = \chi_{\mu}(h) \,. 
\]
Since we pair only with conjugation-invariant functions of $g$, we get
\begin{equation}
\tr_{\C[\bG]} g \times h = \delta_{g \sim h}\,, \label{eq:132}
\end{equation}
where $g \sim h$ means that $g$ and $h$ have the same
eigenvalues, that is, the same image in $\Spec \C[\bG]^{\Ad}$. In
other words, $\delta_{g \sim h}$ is the delta-function at the point
$h$ in $\Spec \C[\bG]^{\Ad}$. 
This generalizes \eqref{eq:131}. 

\subsubsection{}

In the example \eqref{eq:132},
one can observe the following general phenomenon. If $h$ lies in a
fixed neighborhood $\cU_\bH$ of a maximal compact subgroup of $\bH$ then
the traces \eqref{eq:129bb} grow with a fixed exponential rate as
function of the highest weight $\mu$. Therefore, $\chi_V(gh)$ may be
paired with test functions $f(g)$ analytic in a sufficiently large
neighborhood $\cU_\bG$ of $\bK \subset \bG$. The sizes of $\cU_\bH$
and $\cU_\bG$ 
are correlated in general and simply equal in the particular
example \eqref{eq:132}.

In number-theoretic
applications, $\cU_\bH$ is bounded in terms of the data like the
cardinality $q$ of a finite field etc. We will thus use the word 
\emph{distributions} to describe linear functionals on functions
analytic in a certain fixed neighborhood $\cU_\bG$ of $\bK \subset
\bG$.

A typical example of such linear functional for $\bG = (\Ct)^r \owns z$ is
\begin{equation}
f(z) \mapsto \int_{\gamma} f(z) \, w(z,h) \, \prod \frac{dz_j}{2\pi i
  z_j} \label{eq:133}
\end{equation}
where $w$ is meromorphic in $z$ and
$\gamma\in H_r(\bG\setminus \{\textup{poles of $w$}\})$. In
particular, $\delta$-functions and their derivatives can be written in
this form by the Cauchy integral formula. 
Clearly, in this example, $\cU_\bG$ has to be large enough to contain a
representative of $\gamma$.

\subsubsection{}

It is possible and important to also consider the generalizations in
which the trace \eqref{eq:129bb}, while being infinite,
converges for a given $h\in \bH$. This means that the multiplicities
of irreducibles for $\bG \times \bH$ are finite, and furthermore
satisfy certain bounds in the $\bH$-direction.

\subsection{Euler characteristics as distributions}

\subsubsection{}

The regular representation of $\bG$ in
\[
\C[\bG] = H^0(\cO_\bG) = \Hd(\cO_\bG) \,, 
\]
where $\cO_\bG$ is the sheaf of functions on the algebraic variety $\bG$,
belongs to the very broad class of representations arising in the
cohomology groups 
$H^i(\bX,\cF)$, where $\bX$ is a scheme or an algebraic stack with the
action of $\bG$ and $\cF$ is a $\bG$-equivariant coherent sheaf on
$\bX$.

Informally, one can say that we replace a finite-dimensional 
representation of $\bG$ by a family $\cF_x$ of vector spaces (in generally, of
varying dimension) parametrized by the points of $\bX$, on which the
group $\bG$ acts by linear isomorphisms $\cF_{x} \xrightarrow{\,
  \sim\, } \cF_{gx}$. Taking $\bX$ to be a point recovers the previous
setup.

\subsubsection{}
In order to maintain the basic additivity \eqref{eq:128} over short
exact sequences
\begin{equation}
0 \to \cF_1 \to \cF \to \cF_2 \to 0\,, \label{eq:138}
\end{equation}
one has to combine the individual cohomology groups into the Euler characteristic
\begin{equation}
\chi(\cF) = \sum_{i} (-1)^i H^i(\cF)\,,\label{eq:134}
\end{equation}
which is a virtual representation of $\bG$. We will identify this
representation with its characters, viewed as a distribution on
$\bG$. Here we assume that the multiplicities of irreducibles in $\chi(\cF)$
are finite.

\subsubsection{}

Despite the fact that we are only interested in
the Euler characteristic $\chi(\cF)$, for the technical development of
the subject it is convenient to assume that all groups $H^i(\bX,\cF)$
have finite multiplicities of irreducibles for all $\bG$-equivariant
coherent sheaves $\cF$. Since we can always replace $\cF$  by
a tensor product $\cF \otimes V^\mu$, this is equivalent to
requiring that 
\[
\dim H^i(\bX,\cF)^{\bG} <
\infty 
\]
for all $i$ and all $\cF \in \Coh_\bG(\bX)$. This, in turn, can be
phrased as saying the the pushforward map
\begin{equation}
\bX/\bG \to \pt \,, \label{eq:135}
\end{equation}
from the quotient stack $\bX/\bG$ to the point takes complexes of
coherent sheaves to complexes with coherent cohomology. By definition,
this means that the map \eqref{eq:135}, equivalently the stack
$\bX/\bG$ itself, is \emph{cohomologically
  proper} \cite{DHLloc}.

Cohomological properness is a very convenient technical assumption satisfied by
many practically important examples. It may be thought as an algebraic
extension of the notion of \emph{transversally elliptic operators}
introduced and studied by Atiyah \cite{Atiyah} and many authors since.
As already observed by Atiyah, the index of a transversally elliptic
operator is naturally a distribution, of which
$\chi(\cF)$ is a direct algebraic analog.

\subsubsection{}
Evidently, the map $\cF \to \chi(\cF)$ factors through
the K-theory $K_\bG(\bX)$ of $\bG$-equivariant coherent sheaves on
$\bX$, which is a module over
\[
K_\bG(\pt) = \Z[\bG]^{\Ad} \,.
\]
For concrete spaces $\bX$ that we have in mind, there is no
difference between algebraic and topological K-theory. In any
case, one can always pass to the \emph{numerical} K-theory, in which
one quotients out by classes $\cF$ such that $\chi(\cF\otimes \cV)=0$
for any locally free sheaf $\cV$.

\subsubsection{}

The pairing of $\chi(\cF)$ with the test functions
$\chi_\mu \in K_\bG(\pt)$ may be phrased in terms of the diagram
\begin{equation}
  \label{eq:136}
  \xymatrix{
    \bX/\bG \ar[dr]^{p_2} \ar[d]_{p_1}\\
    \pt / \bG \ar[r] & \pt
}
\end{equation}
as follows
\begin{equation}
  \label{eq:137}
  \left \langle \chi(\cF), \chi_\mu \right \rangle = p_{2*} (\cF
  \otimes p_1^*(\chi_\mu))\,. 
\end{equation}
In this formulation, $\bX/\bG$ may be replaced by any other stack
mapping to $\pt / \bG$ so that the composed map $p_2$ is
cohomologically proper. In the presense of a commuting action of
a group $\bH$, the pairing \eqref{eq:137} takes values in
$K_\bH(\pt)$. 

\subsubsection{}
Note that the pairing \eqref{eq:137} is defined over $\Z$ regardless
of the field of definition of $\bX$. To apply continuity and other
analytic arguments to distributions, it is convenient to extend
the scalars to
\[
K_\bH(\pt) \otimes_\Z \C = \C[\bH]^{\Ad} \,.
\]
As to the field of definition of $\bX$, it is convenient to assume
it has characteristic zero,
but it may very well be the algebraic closure $\overline{\Q}_\ell$ of
the field 
$\Q_\ell$ of $\ell$-adic rationals, the typical ground field in the study of Galois
representations.

We also assume that
$\bG$ is a split reductive group over the same ground field, although the
reader should bear in mind that $\bG$ in the ongoing discussion is
different from reductive groups considered in the automorphic and
Galois representation contexts. For our concrete applications, the we will have
$\bG=\bA \times \bA$, where $\bA$ is a maximal torus is the
centralizer of a Galois representation, $\bX$ will be the fixed locus
of the inertia subgroup, and 
$\bH$ will be the reductive group
generated topologically by the Frobenius automorphism. 

\subsection{Scissor relations}\label{sec_sciss}

\subsubsection{}

In addition to being additive over short exact sequences
\eqref{eq:138},
Euler characteristics $\chi(\bX,\cF)$ are also additive over $\bX$ in
the following sense.
Given a $\bG$-stable closed $\bY \subset \bX$, there is
a long exact sequence
\begin{equation}
  \label{eq:139}
  \dots \to H^i_\bY(\bX,\cF) \to H^i(\bX,\cF) \to H^i(\bU,\cF) \to H^{i+1}_\bY(\bX,\cF) \to \dots \,, 
\end{equation}
in which $\bU = \bX \setminus
  \bY $ and the groups $H^i_\bY(\bX,\cF)$ are \emph{local cohomology}
groups. These are algebraic analog of the relative cohomology and
one should view the long exact sequence \eqref{eq:139} as coming from
a distinguished triangle of spaces
\begin{equation}
  \label{eq:130}
  \xymatrix{ && \bX  \ar[dr] \\
    \bU \ar[urr]^{\iota_\bU} & & & 
   \bX/\bU=\Cone(\iota_\bU) \ar[lll]_{[1]}\,.} 
\end{equation}
Local cohomology is easiest to compute when $\bX$ and $\bY$ are
smooth, in which case $\Cone(\iota_\bU)$ may be identified with the Thom space
$\Thom(\bY \to \bX)$ of the embedding of $\bY$ into its normal
bundle.

\subsubsection{} 

It is convenient to write, by definition, 
\[
\chi(\Thom(\bY \to \bX), \cF) = \sum (-1)^i H^i_\bY(\bX,\cF) \,, 
\]
and one can, in principle, declare the Cech complex that computes
the local cohomology of $\cO_\bX$ to be the structure sheaf of $\Thom(\bY \to
\bX)$. 
It follows from \eqref{eq:139} that if any two among $\bX/\bG$,
$\bU/\bG$, and $\Thom(\bY \to \bX)/\bG$ are cohomologically proper
then so it the third.

Tautologically, we have
\[
\chi(\bX,\cF) = \chi(\bU,\cF) + \chi(\Thom(\bY \to \bX), \cF)\,,
\]
which means that from the point of view of Euler characteristics,
$\bX$ is the sum of $\bX \setminus \bY$ and $\Thom(\bY \to \bX)$. It
is
therefore convenient
to introduce equivalence classes $[\bX]$ of schemes or stacks that satisfy
\[
\left[\bX\right] = \left[\bX \setminus \bY\right] + \left[\Thom(\bY \to \bX)\right]
\,.
\]

\subsubsection{Example}

Many key features of the general story are already clearly visible in
the following simplest example. Let $z\in \Ct=\bG$ act on
$\bX = \C^n$ by multiplication by $z$. Then $z$ acts by multiplication
by $z^{-d}$ on homogeneous polynomials of degree $d$, and therefore
\begin{equation}
  \label{eq:140}
  \chi(\cO_\bX) = 1 + n z^{-1} + \binom{n+1}{2} z^{-2}+ \dots =
  (1-z^{-1})^{-n} \,, \quad |z|> 1\,. 
\end{equation}
Equivalently,
\begin{equation}
  \label{eq:141}
  \left\langle\chi(\cO_\bX),
  \phi(z) \right\rangle = \oint_{|z|=c > 1}
  \frac{\phi(z)}{(1-z^{-1})^n} \, \frac{dz}{2\pi i z} \,. 
\end{equation}
The fact that \eqref{eq:140} converges for $|z|>1$ and that,
correspondingly, the integral in \eqref{eq:141} goes outside of the
pole at $z=1$ can be traced to the fact that $\Ct$ contracts $\bX$ to
a proper variety (here, the point at the origin) as $z\to 0$.

\subsubsection{Example, continued}\label{s_examp2} 

Now take $\bY= \{0\} \subset \bX$. The Cech complex computing the
local cohomology is the product of the elementary complexes
\begin{equation}
\C[x_i] \to \C[x_i^{\pm 1}]\label{eq:142}
\end{equation}
over all coordinates $i=1,\dots,n$. The map in \eqref{eq:142} is the
obvious embedding. We conclude
\[
H^i_\bY(\cO_\bX) =
\begin{cases}
  \left(\prod x_i^{-1} \right) \C[x_1^{-1},\dots,x_n^{-1}] \,, \quad &i =n \,,\\
  0 \,, \quad & \textup{otherwise} \,. 
\end{cases}
\]
Therefore
\begin{alignat}{2}
\chi(\Thom(0\to \C^n),\cO) &= (-1)^n\cdot\left(
                             z^n+ n z^{n+1} + \binom{n+1}{2} z^{n+2}+ \dots\right)&&
  \notag \\
&=
(1-z^{-1})^{-n}\,, && \quad |z|<1 \,.\label{eq:143}
\end{alignat}
The equality of \eqref{eq:140} and \eqref{eq:143} as
rational functions follows from localization theorem in equivariant
K-theory. The fact that now the trace converges for $|z|<1$ can
again be traced to the fact that the normal bundle $N_{\bX/\bY}$ is
has positive $z$-weights.

\subsubsection{Example, continued}

The action of $\bG=\Ct$ on $\bU = \C^n \setminus \{0\}$ is free and
\[
\bU/\bG = \bP^{n-1} \,.
\]
From definitions, we have
\begin{equation}
  \label{eq:230}
  \langle \chi(\C^n \setminus \{0\},\cO), z^{m} \rangle = \chi(\bP^{n-1}, \cO(m))\,,
\end{equation}
which we can rephrase as
\begin{equation}
  \label{eq:230bis}
  \langle \chi(\C^n \setminus \{0\},\cO), \phi(z) \rangle = \chi(\bP^{n-1},
  \phi(\cO(1)))\,. 
\end{equation}
In \eqref{eq:230bis}, $\phi(\cO(1))$ is computed from a
Taylor series of $\phi$ at $z=1$. This is well-defined
due to the relation $(\cO(1) -1)^{n}=0$ in $K(\bP^{n-1})$ and
makes \eqref{eq:230bis} a distribution supported at $z=1$.

\subsubsection{Example, summarized}\label{ex_summ}

We see that the decomposition 
\begin{equation}
  \label{eq:144}
  [\C^n/\Ct] = [ \bP^{n-1}] + [(\Thom(0\to \C^n)/\Ct] \,. 
\end{equation}
amounts to the
equality
\begin{equation}
  \label{eq:248}
  \oint_{|z|=c > 1}
  \frac{\phi(z)}{(1-z^{-1})^n} \, \frac{dz}{2\pi i z}  =
  \chi(\bP^{n-1},
  \phi(\cO(1))) + \oint_{|z|=c' < 1} (\textup{same as in LHS})   \,. 
\end{equation}
%
%
This means that the scissor relation \eqref{eq:144} replaces
picking up residue while moving the contour of
integration.

\subsubsection{}

Of course, the operation of picking up residues while moving the contour of
integration appears all over mathematics and natural sciences. In
particular, a very popular approach to spectral decompositions,
including spectral decomposition of Eisenstein series, uses this step
in an essential way. See, for instance,  Appendix \ref{s_app} for an elementary
example.

 The advantages of lifting an additive relation between distributions
 to an additive relation between spaces becomes
 particularly apparent for groups of rank more than one.
Distributions of the form \eqref{eq:133} coming from
 $\chi(\bX,\cF)$ have the schematic form 
 \begin{equation}
 w(z,h) \, dz = \frac{p(z,h)}{\prod_i (1-h^{\alpha_i} z^{\beta_i})}
 \prod \frac{dz_j}{2\pi i z_j} \,, 
\label{eq:146}
\end{equation}
where $p(z,h)$ is a polynomial and $z^{\beta_i}$
and
$h^{\alpha_i}$ are weights of $\bG$ and $\bH$, that is,
characters of their maximal tori. Each 
exponent in \eqref{eq:146} is a integer vector of dimension
equal to the rank of the corresponding group.

The monomials $z^{\beta_i} h^{\alpha_i}$
are the weights of the
maximal torus of $\bG \times \bH$ acting in the tangent
space to the fixed locus $\bX^{\bG \times \bH}$. This fixed locus is
often 0-dimensional, which means that is has
$\chi_{\textup{top}}(\bX)$
many components and each contributes $\dim \bX$
terms to the denominator. The weight $w(z,h)$ has
usually a much simpler conceptual form when written as a sum over
fixed loci, but this sum can very large indeed (think order of the
Weyl
group many terms any time we see a flag variety) and have many more
poles than the collected expression.
All this leads to a combinatorial explosion in the
group $H_{\textup{middle}}(\bG \setminus
\{\textup{poles of $w$}\})$, which in turn 
results in unwieldy outputs from the
direct application of the residue formula. The eventual dramatic
simplification of these outputs appears miraculous in such
approach.

Scissor relations, in general, cross many poles at once and
automatically account for a multitude of geometric
relations satisfied by distributions of the form $\chi(\bX,\cF)$.
In addition to the additivity in both $\cF$ and $\bX$,
these relations include the basic principle that
\begin{equation}
  \label{eq:147}
  \textup{$\supp \cF$ is proper} \quad \Rightarrow
  \quad \textup{$\chi(\cF)$ has no singularities}\,, 
\end{equation}
which in practical terms amounts to cancellation of all poles in
rather complex rational functions that give an equivariant
localization computation of $\chi(\cF)$.

\subsection{Remarks}

\subsubsection{}

A given $\bX$ can be cut in pieces in many different ways and
it is natural to ask if there is some guiding principle to finding 
good decompositions.

If $\bG$ preserves an ample line bundle $\cL$ on $\bX$ then one
can use geometric invariant theory to partition $\bX$ into
the unstable and the semistable locus (which is what we did
in Example \eqref{s_examp2}), and then further
partition the unstable locus into various strata according
to their degree of instability. Such stratification has been
introduced and used by many authors in slightly different
settings, see Chapter 5 in \cite{VinPop}  for a textbook
exposition.

Different choices of $\cL$ produce interesting linear
relations between distributions. For instance, in the example
above, the unstable locus can be the origin or all of $\C^n$,
depending on the weight of $\Ct$-action on $\cL=\cO_{\C^n}$.
However, what we have found especially illuminating, is
to compare the GIT stratifications for $\bG$ and $\bH$.

\subsubsection{}

One can study the diagram \eqref{eq:136} in multiplicative cohomology
theories $\bhd(\tdash)$ other than equivariant K-theory.
The cohomology $\bhd(\pt / \bG)$ becomes the
ring of test functions for the distributions $p_{1,*} \cF$, 
$\cF \in \bhd(\bX / \bG)$.

The key ingredient of $\bhd(\tdash)$ is $\spec \bhd(\pt/\Ct)$, which
is a commutative algebraic group since $\pt/\Ct$ is a group up to
homotopy. What we have found in \cite{KO1} is that the natural choice for
this group 
in the automorphic context is
\begin{equation}
\spec \bhd(\pt/\Ct) = \Hom (\cQ_\F, \Ct) \,,\label{eq:258}
\end{equation}
where $\cQ_\F$ is the image of the norm map $\| \, \cdot \,\|$ on the
adeles of $\F$. Concretely,  
\begin{equation}
  \label{eq:257} 
  \cQ_\F=\textup{image of $\| \, \cdot \,\|$ in $\R_{>0}$}
  =
  \begin{cases}
     q^{\Z} \,, & \ch F >0\,, \\ 
    \R_{>0}\,, & \ch \F = 0 \,. 
  \end{cases}
\end{equation}
Here $q$ is the cardinality of the constant subfield $\Bbbk \subset
\F=\Bbbk(C)$ and the number field case may be viewed as the 
$q\downarrow 1$ limit. Formula \eqref{eq:258} means that 
$\bhd(\tdash)$ is the equivariant K-theory for
a global function field and the equivariant cohomology
for a global number field. 

\subsubsection{}

As linear $q$-difference equations are growing in importance in
various areas of mathematics and theoretical physics, so does
the interest in integral solutions of these equations.

Formally, an integral is a pushforward of some other $q$-difference
module, and in this way one aims to describe an interesting $q$-difference
module as an image of a simpler one. The simplest possible
linear $q$-difference equation is a rank $1$ equation
\begin{equation}
f(qz) = r(z) f(z) \,,\label{eq:250}
\end{equation}
where $r(z)$ is a rational function of the variable $z\in \C$. The
equation \eqref{eq:250} may be solved explicitly in terms of
the $q$-analog
\[
\Gamma_q(z) = \frac1{(1-z)(1-q z) (1-q^2 z) (1-q^3 z) \dots} \,,
\quad |q|< 1\,,
\]
of the $\Gamma$-function. The product converges for all $z \ne
1, q^{-1}, q^{-2}, \dots$ and solves
\[
\Gamma_q(qz) = (1-z) \Gamma_q(z) \,.
\]
One notes that
\begin{equation}
\Gamma_q(z) = \chi(\cO_{\bX})\,,
\quad \bX=\Maps(\C\otimes q \to \C \otimes z^{-1})\,,\label{eq:23}
\end{equation}
where we consider polynomial maps (that is, just polynomials in this
case), and the group $\left(\Ct\right)^{2} \owns (q,z)$ act in the
source and target of these maps with 
indicated weights. Here and in what follows we denote
by $\otimes q$, $\otimes z^{-1}$, etc.,  the twist of an action on a linear space by
a character of the group. 

It is easy to guess from \eqref{eq:23} that one can solve 
interesting $q$-difference equations via K-theoretic
computations on 
\[
\bX=\Maps(\C\otimes q \to \textup{some target})\,, 
\]
which is indeed the case, see for instance \cite{O2}. There are
natural stratifications of the spaces of maps coming from
a stratification of their target or from some notion of stability
for maps, and all these play a certain role in the development
of the subject.

\section{$\zeta$-function and $L$-function genera} \label{s2}

\subsection{Genera}

\subsubsection{}

The dependence of $\chi(\bX,\cF)$ on two separate arguments is 
artificial as, with minimal assumptions about $\bX$, every such
quantity may be naturally expressed as a linear
combination of $\chi(\cO_{\bY})$, where $\bY\subset \bX$ is an
invariant closed subvariety or a substack. These satisfy a version of scissor
relations from Section \ref{sec_sciss}, as well as relations
imposed by equivariant deformations and flat limits.

\subsubsection{}

If $\bX$ is smooth, one can put $\chi(\cO_{\bX})$ into the richer
framework of \emph{genera} as follows. Let $\psi(x)$ be a
function of one variable $x$, which we will take to be
a Laurent polynomial with coefficients in a ring $R$ to start. Denoting by $x_1,\dots,x_n$, where
$n=\dim \bX$, the Chern roots of the tangent bunde $T\bX$, we
can define
\[
\psi(T\bX) = \prod_{i=1}^n \psi(x_i) \in K_\bG(\bX) \otimes_\Z R \,. 
\]
Recall that Chern roots of a bundle $\cV$
or rank $r$ are the coordinates in the target of the map
\[
\Spec K_\bG(\bX) \to S^r \Ct = \Spec K(\pt/GL(r))\,,
\]
induced on K-theories by the map $\bX \to \pt/GL(r)$ that
classifies $\cV$.


\subsubsection{}
For cohomologically proper $\bX$, we define 
\[
\psi(\bX) = \chi(\psi(T\bX)) \in \Hom(K_\bG(\pt),R) \,.
\]
In addition to the obvious deformation invariance,
these are preserved by an arbitrary $\bG$-equivariant bordism.
Further, we have
\begin{equation}
  \label{eq:148}
  \psi(\bX_1 \times \bX_2) = \psi(\bX_1) \times \psi(\bX_2)
\end{equation}
provided one of the factors is proper, which means that in the right-hand side
of \eqref{eq:148} we have a product of a function and a distribution.
We also have 
\begin{equation}
  \label{eq:149}
  \psi(\bX) = \psi(\bX \setminus \bY) + \psi(\Thom(\bY \to \bX)) \,, 
\end{equation}
when $\bY$ is smooth and either (equivalently, both) spaces on
the right are cohomologically proper. 

\subsubsection{}
Many spaces in algebraic geometry, while not smooth, have a
virtual\footnote{While the literature on virtual
  tangent bundles and virtual structure sheaves is really vast and
  advanced, old references like \cite{Fan} or Section 5.3 in
  \cite{OP} may still be useful as
  an introductory discussion of the subject.} 
 tangent bundle $T_\vir \bX \in K_\bG(\bX)$. For example,
$\bX$ could be cut out by a section of a vector bundle $\cV$ in
some ambient smooth space $\bX'$, in which case
\[
T_\vir \bX = \left(T \bX' - \cV\right) \Big|_{\bX} \,.
\]
Since
\begin{equation}
\psi(\cV_1 - \cV_2) = \psi(\cV_1) \big/ \psi(\cV_2)\,, \label{eq:151}
\end{equation}
the quantity $\psi(T_\vir\bX)$ is a rational function on
the spectrum of $K_\bG(\bX)$. To push it forward to a distribution, 
we need to make sure it is regular on a appropriate open set. It is
convenient to discuss this issue in the context of analytic K-theory
classes. 

\subsubsection{}\label{s_AK}

In general, let 
\[
\cU \subset \Spec K_\bG(\pt) \otimes_\Z \C= \textup{maximal torus} \big/
\textup{Weyl group} 
\]
be an open neighborhood of the image of the compact torus,
and let
\[
\AK_\bG(\pt) \supset K_\bG(\pt)\otimes_\Z \C 
\]
be some algebra of analytic
functions on $\cU$. The notation is short for analytic K-theory
classes. The projection $K_\bG(\pt) \to \Z$
onto $\bG$-invariants extends to the map $\AK_\bG(\pt) \to \C$ given by
the Weyl integration formula. This gives a natural embedding 
\[
\AK_\bG(\pt)  \hookrightarrow \Hom(\AK_\bG(\pt) , \C)
\]
of functions in distributions.

For any $\bG$-space $\bX$, we can define
\begin{equation}
  \label{eq:150}
 \AK_\bG(\bX) = K_\bG(\bX) \otimes_{K_\bG(\pt)} \AK_\bG(\pt) \,. 
\end{equation}
Since push-forward is a $K_\bG(\pt)$-linear operation, these push
forward just like ordinary K-theory classes. That is, if $\cF \in
\AK_\bG(\bX)$ then $\chi(\cF) \in \AK_\bG(\pt)$ if $\bX$ is proper and
\[
\chi(\cF) \in \Hom(\AK_\bG(\pt) , \C)
\]
if $\bX$ is cohomologically proper. In particular,
\[
  \psi(\bX) = \chi(\psi(T_\vir \bX))
\]
is a well-defined linear functional on $\AK_\bG(\pt)$ if $\psi(T_\vir
\bX)$ is regular on the preimage of $\cU$.



\subsection{$\zeta$-function genera}

\subsubsection{}
This paper is about certain topological
computations which have applications to arithmetic
questions. Interestingly, we use very little arithmetic information 
as an input, making our results applicable to genera associated with
arbitrary functions 
that share a few basic properties with $\zeta$-functions and
L-functions. 
Since, at the moment, we don't see any application beyond the arithmetic
situation, we will refer to a function like \eqref{eq:269} below as a
$\zeta$-function of some curve $C$, even though the actual proof
in \cite{KO1} only uses the fact that $\alpha_i$ come in complex
conjugate pairs and that $1< |\alpha_i| < q$.

\subsubsection{}

In this paper, will
be concerned with algebraic actions of groups $\Gamma$ of the
form 
 \begin{equation}
   \label{eq:256}
   1 \to \Gamma' \to \Gamma \to 
   \cQ_\F \to 1  
 \end{equation}
 on algebraic varieties over $\bbf$. Here $\bbf=\C$ in the number
 field case, while in the function field case $\bbf$ can be any algebraically
closed field of characteristic zero.  The group $\cQ_\F\subset \R_{>0}$
was defined in \eqref{eq:257}. It is  formed
by the norms of the adeles of $\F$.

 The exact sequence \eqref{eq:256} makes
$\Gamma$ look like a Weil group of a global field. The only
further arithmetic input that we will need to define L-genera is the duality pairing 
\begin{equation}
  \label{eq:cup}
  H^i(\Gamma',\pi)\otimes H^{2-i}(\Gamma', \pi^\vee) \xrightarrow{\quad
    \cup \quad } H^{2}(\Gamma', 1_\bbf) = 1_\bbf \otimes \hbar 
\end{equation}
of $\cQ_\F$-modules, where $\pi$ is a representation of $\Gamma$ over
$\bbf$, $1_\bbf$ denotes the trivial representation, and
\begin{equation}
  \label{eq:75}
   \hbar: \Gamma/\Gamma' \to GL(1,\bbf)
\end{equation}
is the tautological character. Concretely
\begin{equation}
  \label{eq:75_}
  \hbar(\Fr) =q = 
  \begin{cases}
    \textup{cardinality of the constant field}\,, & \ch F > 0 \,, \\
    1 \,, & \ch F = 0 \,, 
  \end{cases}
\end{equation}
for any element $\Fr \in \Gamma$ that projects to the generator of
$\cQ_F$ in the function field case and to the canonical infinitesimal
generator $1\in \Lie \cQ_\F$  in the number field. To save on
notation, after these introductory pages, we will identify the
character $\hbar$ with its value $q$ on the generator of $\cQ_F$.
This is harmless since
all equivariant computations are in the end evaluated at an element
$\Fr\in \Gamma$ as above.

By definition, an action of $\Gamma$ on an algebraic variety $\bX$
over $\bbf$ factors through a homomorphism from $\Gamma$ to
a linear algebraic group over $\bbf$ acting on $\Aut(\bX)$, see the
diagram \eqref{eq:263} below. Without loss of generality, we may
assume that the character $\hbar$ extends to a character of the
algebraic group in question and denote it by the same letter.



\subsubsection{}
After these preliminaries, we recall that
a $\zeta$-function of the field $\F = \Bbbk(C)$ is obtained by
substituting $t=q^{-s}$ into
\begin{equation}
\zeta_C(t) = \prod_{x\in C}  (1-t^{\deg x})^{-1} =
\frac{\prod_{i=1}^g (1-t \alpha_i)(1- t q/\alpha_i)}{(1-t)(1-tq)} \in \Z[[t]]
\,, \label{eq:269}
\end{equation}
where $\{\alpha_i,q/\alpha_i\}$ are pairs of eigenvalues of the Frobenius
action on $H^1(C)$. A slight modification
\begin{equation}
\xi_C(t) = (q^{1/2}t)^{1-g} \zeta_C(t)  \label{eq:269b}
\end{equation}
satisfies the functional equation
\begin{equation}
  \label{eq:270}
  \xi_C(q^{-1}t^{-1}) = \xi_C(t) \,. 
\end{equation}
The two poles in \eqref{eq:269} and the functional equation
\eqref{eq:270} suggest that $\xi_C(t)$ can be interpreted as an equivariant
genus of a 2-dimensional vector space on which
a group acts with weights $\{t, q^{-1} t^{-1}\}$. 
Concretely, let 
\[
 \bY=\A^1\otimes t\,,
\]
be a one-dimensional representation of $\bG$ with
character $t$ and trivial action of $\bH=GL(1,\bbf)$.  Let
$\hbar: \Gamma \to \bH$ be the tautological character as before.
We consider 
\[
\bX = T^*\bY
\]
and let the group $\bH$ act on it by scaling the cotangent fibers with
weight $-1$. The genus 
\[
\psi(\bX) \Big|_{\{\Fr\} \times \bG}= \frac{\psi(t) \psi(q^{-1}t^{-1})}{(1-t^{-1})(1-qt)}\,,
\]
has the same poles and the same symmetry as $\xi_C(t)$. 
We can thus find a Laurent polynomial $\psi_\F(t)$ such that 
\begin{equation}
  \label{eq:249}
  \psi_\F(\bX) \Big|_{\{\Fr\} \times \bG} = \xi_C(t) \,. 
\end{equation}
Note that while the polynomial $\psi_\F(t)$ involves splitting
Frobenius eigenvalues and taking 
square roots, these ambiguities cancel out of 
\eqref{eq:249} and its generalization \eqref{eq:271} below. 

A parallel definition works for number fields in cohomology once
one introduces analytic equivariant cohomology  classes as
in Section \ref{s_AK}.

\subsubsection{}

More generally, suppose that $\bY$ is smooth but otherwise
arbitrary and consider $\bX = T^*\bY$. Let $\bG$ and $\bH$ act on
$\bY$ and let us twist the induced action on $\bX$ by a character 
\[
\hbar^{-1}: \bH \to GL(1,\bbf) 
\]
that scales all cotangent directions. This scales the
canonical symplectic form on $\bX$ with weight $\hbar$.

The Chern roots of $T\bX$ form pairs of the form
$\{y_i, \hbar^{-1} y_i^{-1}\}$, where $y_i$ are the Chern
roots of $T \bY$. Clearly, $\psi(\bX)$ depends only on
the combination $\psi(y)\psi(\hbar^{-1} y^{-1})$ which is
a function symmetric with respect to $y \mapsto \hbar^{-1} y^{-1}$.
Identifying $\Fr$ with its image in $\bH$, let 
$\Fr\in \bH$ be an element such that $\hbar(\Fr)=q$. 
We define
\begin{equation}
  \label{eq:271}
  \xi_\F (\bX ) = \psi_\F(\bX) \Big|_{\{\Fr\} \times \bG}  \,. 
\end{equation}
This defines a  distribution on $\bG$ provided
the traces of $\Fr$ in the $\bG$-multiplicity spaces converge as
in \eqref{eq:129bb}.

\subsubsection{}

The essense of our paper \cites{KO1} may be described as 
the following link
between genera and spectral problems. Let $\bG$ be a
split reductive group and let $\bG^\vee$ be the Langlands
dual reductive group. We have the Springer resolution
\begin{equation}
  \label{eq:251}
  T^*(\bG^\vee/\bB^\vee)  \to \textup{Nilpotent elements $\Nil^\vee$}
 \subset \Lie(\bG^\vee) \,, 
\end{equation}
where $\bB^\vee$ is the Borel subgroup. One can take
the symplectic reduction of \eqref{eq:251} by $\bB^\vee$
and arrive at the quotient stack $\cT$ with two
canonical maps 
\begin{equation}
  \label{eq:252}
  \xymatrix{
    & \cT = T^*(\bB^\vee \backslash \bG^\vee / \bB^\vee) \ar[dr]^{p_2}
    \ar[dl]_{p_1}\\
    \pt/(\bA^\vee \times \bA^\vee) && \Nil^\vee/ \Ad \bG^\vee \,. }
\end{equation}
Here $\bA^\vee$ is the maximal torus of $\bG^\vee$,
and the canonical map $\bB^\vee \to \bA^\vee$ is the quotient by
the unipotent radical. Since 
\[
K(\pt/(\bA^\vee \times \bA^\vee))= \Z[\bA^\vee \times \bA^\vee] \,, 
\]
the pushforward 
\[
\Psi_{\xi_\F} = p_{1,*} \xi_\F(\cT)
\]
defines a distribution on two copies of $\bA^\vee$.

\subsubsection{}\label{s_Eis_spec1}

The torus $\bA^\vee$ is the space of parameters for the spherical
Eisenstein 
series $\Eis(a)$  induced from the trivial representation of $\bB(\A) \subset
\bG(\A)$, where $\A$ denotes the adeles of the field $\F$.
By 
construction, spherical Eisenstein series are certain functions on 
\[
\Bun_\bG = \bG(\F) \backslash \bG(\A) /\textup{maximal compact} \,, 
\]
which are always eigenfunctions of the Hecke operators (and of the
Laplace operators in the number field context). However, they
are not square-integrable and the problem of the describing
the spectrum of Hecke operators in the intersection of
$L^2(\Bun_\bG)$ with the closure of the span of Eisenstein series
is highly nontrivial.

In his groundbreaking work \cite{L}, Langlands introduced integral
formulas and shift of the integration contours as tools of spectral
analysis. 
the spectrum. These techniques received further important contributions in
the work of many prominent mathematicians, including the work of 
C.~M\oe glin and J.-L.~Waldspurger \cites{Moe1,Moe2,MW}, who analyzed
the case of classical groups. Volker Heiermann, Marcelo de
Martino and Eric Opdam, starting in
\cite{DHO}   and completing their project in the recent
paper \cite{DHO2}, extended
the analysis to general split groups.
Their work is focused on the number field case and the minimal
parabolic subgroup. It is computer 
assisted in the exceptional cases.  

\subsubsection{}

Taking linear combinations of Eisenstein series, one can form the
function
\begin{equation}
\Eis_\omega = \int_{|a| = \textup{fixed} \gg 1} \Eis(a) \, \omega(a)
\,
d_\textup{Haar}(a) \,, \quad \omega \in \C[\bA^\vee]\,,\label{eq:253}
\end{equation}
known as the pseudo-Eisenstein series, or wave packets in other
contexts, see Appendix \ref{s_app} for an  elementary example.
These are always in
$L^2(\Bun_\bG)$ but never eigenfunctions of the Hecke operators.
Indeed, Hecke operators acts on \eqref{eq:253} as multiplication by
a $W$-invariant function of $a$ under the integral sign. In other
words, we have a commutative diagram
\begin{equation}
  \label{eq:24}
  \xymatrix{
    \C[\bA^\vee] \ar[rr]^{\Eis_\omega} \ar[d]_{\textup{multiplication by $\C[\bA^\vee]^W$}} && L^2(\Bun_\bG) \ar[d]^{\textup{Hecke
      operators}} \\
    \C[\bA^\vee] \ar[rr]^{\Eis_\omega}  && L^2(\Bun_\bG)\,,
    }
  \end{equation}
 in which a Hecke operator corresponds to its image in
 $\C[\bA^\vee]^W$ under the Satake isomorphism.

 \subsubsection{}
 The abstract spectral theorem asserts that a self-adjoint operator
 is unitarily equivalent to multiplication by a real-valued function
 $\lambda$ in $L^2(\bbS, \bmu)$, where $(\bbS,\bmu)$
 is a measure space. The spectrum of this operator is then the support
 of pushforward $\lambda_*(\bmu)$. The joint spectrum of the
 Hecke operators in the span of the Eisenstein series is easily seen
 to be simple, hence we can always take $\bbS = 
 \bA^\vee/W$, with coordinate functions $\lambda$. In this case,
 the joint spectrum is the support of $\bmu$.

 While \eqref{eq:24} conjugates Hecke operators to multiplication by
 coordinate functions, it gives no immediate information
 about the measure $\bmu$ and, hence, about the spectrum. By
 construction, this information is captured by the pullback
 of the norm $\| \cdot \|_{L^2(\Bun_\bG)}$ under $\Eis_\omega$,
 and concretely by distribution $\Psi_\Eis$
 defined by
\begin{equation}
  \label{eq:254}
  \left\langle \Psi_\Eis,  \omega_1 \otimes \omega^*_2 \right \rangle \eqdef
  \left(\Eis_{\omega_1}, \Eis_{\omega_2}
  \right)_{L^2(\Bun_\bG) },
\end{equation}
where $\omega^*(a) = \bar \omega(a^{-1})$.

Our geometric
language really captures the full spectral decomposition, including
the spectral measure and spectral projectors. This is more information
than the spectrum, which
only remembers the measure equivalence of $\mu$.
Even on the most basic level, that of the normalization of measures, 
formulas like \eqref{eq:255} below contain computations equivalent
to Langlands' computation of the Tamagawa numbers \cite{Lvol}.

\subsubsection{}

The main results of \cite{KO1} may be summarized as follows. First
\begin{equation}
  \label{eq:255}
   \Psi_\Eis = \Psi_{\xi_\F} \,. 
 \end{equation}
 Second, the spectral
 decomposition of $\Psi_\Eis$ is induced, via $p_2^*$ and the
 scissor relations, from the decomposition into conjugacy
 classes of nilpotent elements in \eqref{eq:252}. This works equally well
 for function fields and number fields.

 Note that by \eqref{eq:24} the distribution $\Psi_\Eis$ descends to
an element
\begin{equation}
  \label{eq:73}
  \Psi_\Eis \in \textup{linear dual} \Big(\underbrace{\C[\bA^\vee]
  \otimes_{\C[\bA^\vee]^W} \C[\bA^\vee]}_{K(\cT)} \Big)\,. 
\end{equation}
One may view this as a heuristic reason why it is natural to
expect a formula of the form \eqref{eq:255}. 

 An elementary example of how \eqref{eq:255} works may be found
 in Appendix \ref{s_app}. Of course, it may be much more interesting
 to explore the deeper meaning of \eqref{eq:255} and its
 generalizations.

 \subsection{L-function genera}

 \subsubsection{}

 Let $\bG$ be a reductive group over a global field $\F$, which for
 simplicity we will assume to be split. 
 The very general and powerful philosophy going back to Langlands
 aims to describe complex or $\ell$-adic automorphic forms for $\bG(\A)$ in term of
 representations
 \begin{equation}
   \label{eq:260}
   \phi: \Gamma \to \bG^\vee(\bbf)\,, 
 \end{equation}
 where $\bbf$ denotes the complex numbers $\C$ or the $\ell$-adic
 algebraic numbers $\overline{\Q}_\ell$. 
 Here $\Gamma$ is a
 certain enhanced version of the Galois group of $\F$ which fits into
 an exact sequence of the form \eqref{eq:256}. 
 
 In case of
 a function field $\F=\Bbbk(C)$, $\Gamma'$ is the geometric
 fundamental group, meaning that
 \begin{equation}
   \label{eq:259}
   \Gamma' = \pi_1(C \otimes \overline{\Bbbk})
 \end{equation}
 in the unramified situation. The extension \eqref{eq:256} includes
 the natural action of the Frobenius automorphism on
 \eqref{eq:259}. Galois-theoretic parametrization of 
 cuspidal automorphic representations in the function field
 case is a monumental achievement of V.~Lafforgue \cite{Laff}.
 The nature of $\Gamma'$ in the
 number field case is much more mysterious, see for instance \cite{Lmarch}.

 \subsubsection{}

 To associate an L-function to a homomorphism \eqref{eq:260}, one
 normally composes $\phi$ with a linear representation of
 $\bG^\vee$. Instead, and this is an important point for us,
 one can consider an action of $\bG^\vee(\bbf)$ on an algebraic variety
 $\bY$ over $\bbf$.

Let  $\bbf$ be 
an algebraically
 closed field of characteristic zero. 
We call an action of an abstract group $\Gamma$ on an
 algebraic variety $\bY$ over $\bbf$ algebraic if there is a triangle 
 \begin{equation}
   \label{eq:263}
   \xymatrix{
     \Gamma \ar[rr] \ar[dr]_\phi && \Aut(\bY) \,, \\
     & \bG^\vee(\bbf) \ar[ur]
     }
 \end{equation}
 in which $\bG^\vee$ is a linear algebraic group acting
 on $\bY$. We denote the ambient group by $\bG^\vee$ because it will
 be the Langlands dual group in the context of Eisenstein
 series. However, the notion of an algebraic action is abstract and it
 is always beneficial to choose $\bG^\vee$ as large as possible in
 \eqref{eq:263}. The larger the group, the more variables will the
 L-genus potentially involve. For simplicity, we will assume that $\bG^\vee$ and
 the Zariski closure $\overline{\phi(\Gamma)} \subset \bG^\vee$ are 
 reductive.

 For algebraic actions of groups of the form
 \eqref{eq:256} on a smooth algebraic variety $\bY$ we will define a nonlinear
 generalization $\bL(\bY)$  of an L-function as a certain equivariant
 genus of the fixed locus $\bY^{\Gamma'}$. Note that this fixed locus depends
 only on the Zariski closure
 \begin{equation}
 \overline{\phi(\Gamma')} \subset \bG^\vee\,. 
\label{eq:264}
\end{equation}
The closure \eqref{eq:264} is reductive by our assumptions, being a normal subgroup of
$\overline{\phi(\Gamma)}$. It follows that $\bY^{\Gamma'}$ is smooth.
In the case of the trivial
 action, the construction of $\bL(\bY)$ will recover the $\zeta$-genus
 \begin{equation}
   \label{eq:265}
   \phi=1 \quad \Rightarrow \quad \bL(\bY) = \xi(T^*\bY) \,. 
 \end{equation}

 \subsubsection{}\label{def_act} 

 The genus $\bL(\bY)$ will be defined as a distribution on a certain space
 of \emph{deformations} of the action \eqref{eq:263}. 
Homomorphisms $\phi$ in \eqref{eq:263} come in families in which the restriction
of $\phi$ to $\Gamma'$ stays the same, while the values of $\phi$ on
the quotient $\cQ_\F$ are deformed by a map 
\[
a: \cQ_\F \to \bG\buu\,, \quad \bG\buu
=(\bG^\vee)^{\phi(\Gamma')}_{\textup{connected}} \,. 
\]
The continuous variables $a$ will correspond, in particular, to the
continuous parameters of Eisenstein series. This  is why we denote them
by the same
letter $a$.

The group $\bG\buu$ has an action of a commutative reductive 
algebraic group
\begin{equation}
  \label{eq:262}
  \bH\buu = \overline{\phi(\Gamma)}/\overline{\phi(\Gamma')}\,, 
\end{equation}
%
The group $\bH\buu$ has a distinguished semisimple topological generator $\Fr$ in
the function field case and a distinguished infinitesimal
generator in the number field case, which we will also denote also by
$\Fr$. It is the $\bG\buu$-conjugacy class of
$\phi(\Fr) a(\Fr)$ that matters in the deformation, where $\Fr$ is an
arbitrary lift of the Frobenius to $\Gamma$. The element 
$\phi(\Fr) a(\Fr)\in \bG^\vee$ is understood infinitesimally in the number field
case.

The
$\bG\buu$-conjugacy classes of semisimple elements in
$\Aut(\bG\buu)$ have been studied classically, see Section 3.3.8 in
\cite{VinOnish}. In particular, up to $\bG\buu$-conjugacy, the map $a$ may
be chosen in the form
\begin{align}
a: \cQ_\F \to \bA\buu &=
                        \textup{maximal torus of
                        $(\bG\buu)^{\phi(\Fr)}$} \notag \\
                        &=
\textup{maximal torus of $(\bG^\vee)^{\phi(\Gamma)}$} \,. \label{eq:266}
\end{align}
While there can be a finite group action further identifying
equivalent deformations, we will focus for now on \eqref{eq:266} as the
basic space of deformations. We see from \eqref{eq:258}  that it is naturally identified
with $\spec \bhd(\pt/\bA\buu)$, or more precisely with the
fiber over $\Fr$ in the following diagram
\begin{equation}
  \label{eq:267}
  \xymatrix{
    \textup{deformations \eqref{eq:266}} \ar[rr] \ar[d]&&
    \spec \bhd(\pt/\bH\buu \times \bA\buu) \ar[d] \\
    \Fr \ar@{|->}[rr]&&  \spec \bhd(\pt/\bH\buu) \,. 
    }
\end{equation}
Concerning this diagram, we recall that $\spec \bhd(\pt/\bH\buu)$ is
the
group $\bH\buu$ itself in the function field case and its Lie algebra
$\Lie \bH\buu$ in the number field case. 

For example, if $\bY$ is an irreducible linear representation of
$\bG^\vee$ then $\bA\buu$ is reduced to the scalars and
$\phi(\Fr) a(\Fr)$ in the function field case is a scalar multiple of the Frobenius element
that enters familiar formulas for the L-functions.

\subsubsection{}
Consider the fixed locus
\[
\Fix=\bY^{\Gamma'} = \bY^{\overline{\phi(\Gamma')}}\,. 
\]
Since $\bY$ was assumed
smooth and $\overline{\phi(\Gamma')}$ is reductive, it follows that
$\Fix$ is smooth with the tangent bundle
\begin{equation}
  \label{eq:268}
  T \Fix = \left( T \bY \big|_{\Fix}
  \right)^{\Gamma'} \,. 
\end{equation}
We define
\begin{equation}
  \label{eq:268-2}
  T_\vir  \Fix = \Hd\left(\Gamma', T \bY \big|_{\Fix}
  \right) \in K_{\bH\buu \times \bA\buu}(\Fix)\,. 
\end{equation}
Instead of taking the K-theory class in \eqref{eq:268-2}, one can
consider the corresponding complex of $\bH\buu \times
\bA\buu$-equivariant coherent sheaves on $\Fix$. It is also 
easy to imagine a generalization in which $T \bY \big|_{\Fix}$ is by itself a
complex of $\Gamma'$-modules.

\subsubsection{}

In the function field case, Galois actions on $\bY$ correspond to local
$\bY$-systems over $C$ for which $\Fix \subset \bY$ are the global flat 
sections. Geometrically, global sections are obstructed by higher
cohomology groups, and the virtual tangent space in \eqref{eq:268-2}
is the natural virtual tangent space to the space of global 
sections.

In the number field case, the authors of this paper do not have
new insights to offer on the nature of the group $\Gamma'$.
It seems to us that it is more pragmatic to define the functor
$H^1(\Gamma',\textup{---})$ as a infinite-dimensional linear space
with the action of $\Fr$ given by the zeroes of the corresponding
L-function.

It may be worth
stressing that the final formulas used in applications show
remarkable uniformity between function fields and number fields
and can be proven and used without those geometric constructions that may or may not
be available in the number field case.

\subsubsection{}

In the function field case, we define 
\begin{equation}
  \label{eq:272}
  \bL(\bY) = \chi(\Fix, \cOh_{\Fix,\vir})\Big|_{\Fr}  \,. 
\end{equation}
Here 
$\cOh_{\Fix,\vir}$ is the symmetrized virtual structure sheaf corresponding
to the virtual tangent space \eqref{eq:268-2}, about which we
will say more below. The definition is arranged
so that
\begin{align}
 \bL( \textup{space of a representation $\pi$}) &=\Lbar(\pi,a) \notag\\
                                                &\overset{\textup{\tiny
                                                  def}}=
                                                  {\det}_{\Hd(\Gamma',\pi)}
                                                  \left(a^{1/2} \Fr^{1/2} - a^{-1/2} \Fr^{-1/2}\right)^{-1}                       \,,    \label{eq:273}
 \end{align}
 where $a\in \bA\buu$ is a scalar operator in $\pi$. 
 We take formula \eqref{eq:273} as the definition of the completed
 L-function $\Lbar(\pi,a)$ in the function field case. Observe that
 it can be interpreted as an equivariant $\Aroof$-genus
 \begin{align}
   \label{eq:276}
   \Lbar(\pi,a) = \Aroof\left(-\Hd(\Gamma',\pi)\right)
 \end{align}
 evaluated on the automorphism $a \Fr$. The $\Aroof$-genus here 
 corresponds to the function
 \begin{equation}
   \label{eq:277}
   \psi_{\Aroof}(x) = {x^{1/2} - x^{-1/2}} \,,
 \end{equation}
 and is closely related to indices of Dirac operators, see Section
 \ref{s_Spin} below. Since
 $\Hd(\Gamma',\pi)$ is even-dimensional, we conclude by Poincare
 duality that 
 \begin{equation}
   \label{eq:278}
    \Lbar(\pi,a)  =  \Lbar(\pi^*,q^{-1} a^{-1})  \,, 
 \end{equation}
 where $\pi$ denotes the dual representation.

 \subsubsection{}\label{s_epsilon}

 As it is always the case with $\Aroof$-genera and Dirac operators,
 the definition requires fixing some square root, namely the square
 root of the determinant line $\Lambda^{\textup{top}}
 H^1(\Gamma',\pi)$. This determinant line is, essentially, the
 $\varepsilon$-factor in the functional equation for the L-functions.
 One should treat this line as a line bundle on the space of all
 relevant parameters, and fix a square root of this line bundle. 
 
 While there is no canonical square root, finding and
 fixing a square root is aided by the fact that $\dim
 H^1(\Gamma',\pi)$ is even for any $\pi$. Indeed, taking a Jordan-Holder
 filtration of $\pi$, we may assume that  $\pi = \pi'
 \otimes V$, where $\pi'$ is irreducible and the multiplicity space $V$ is a representation of
 the centralizer of $\Gamma'$. This gives
 \[
 \det H^1(\Gamma',\pi) = \left(\det H^1(\Gamma',\pi')\right)^{\dim V}
   \otimes \left(\det V\right)^{\dim H^1(\Gamma',\pi')} \,, 
   \]
 and hence it suffices to fix the square roots of $\det
 \Fr|_{H^1(\pi)}$ for irreducible linear representations $\pi$ that 
appear in the tangent bundle to $\bY$. From our current
 viewpoint, these determinants are simply elements of the algebraically
 closed field $\bbf$, and their square roots may be chosen
 arbitrarily. Special considerations, for instance considerations of
 self-duality, may be required
 to make compatible choices when different actions of $\Gamma'$ interact.

 If a compatible choice of square roots cannot be made,
 one can replace the symmetrized sheaf $\cOh_{\Fix,\vir}$ by the
 usual virtual structure sheaf $\cO_{\Fix,\vir}$, which means
 replacing the $\Aroof$-genus by the Todd genus. While less
 symmetric, this genus still carries an equivalent amount of information.

 \subsubsection{}

In the number field case, the requirement that
\begin{equation}
  \label{eq:274}
  \bL( \textup{space of a representation $\pi$}) =
  \textup{completed L-function of $\pi$} 
\end{equation}
uniquely fixes an analytic characteristic class on $\Fix$, which we
push forward to $\pt/\bH\buu \times \bA\buu$ to
obtain the distribution $\bL(\bY)$. Here completed means that it
includes the Archimedean L-factors and a square root of the
$\varepsilon$-factor, which simplify the functional equation to
\[
  \Lbar(\pi,s)  =  \Lbar(\pi^*,1-s) \,.
\]
Here the traditional variable $s$ corresponds to minus the logarithm of
the coordinate on $\cQ_\F$.

\subsubsection{}

The symmetrized virtual structure sheaf used in \eqref{eq:272}
is defined by
\begin{equation}
  \label{eq:261}
  \cOh_{\vir} = (\det T_\vir)^{-1/2} \otimes \cO_{\vir} \,,
\end{equation}
provided a square root of the line bundle $\det T_\vir$ exists. Here
$\cO_{\vir}$ is the usual virtual structure sheaf. 
  
While, in general, there are obstacles to constructing $\cO_{\vir}$
when higher obstructions are present in
the virtual tangent space, in the case at hand case the construction of $\cO_\vir$
readily reduces to the special case of the $\zeta$-genus. Indeed, we have
the decomposition into $\overline{\phi(\Gamma')}$-modules
\begin{equation}
  \label{eq:275}
 T \bY =  T  \Fix \oplus T_{\mov} \bY 
\end{equation}
where the second term contains only nontrivial $\Gamma'$-modules.
Since nontrivial $\Gamma'$-modules have neither $H^0$ nor $H^2$, we
conclude that the contribution of $T_{\mov} \bY$ to $T_{\vir} \Fix$ 
\begin{equation}
\cNrm = - \Hd(\Gamma', T_{\mov} \bY ) = H^1(\Gamma', T_{\mov} \bY
)\label{eq:281}
\end{equation}
is a vector bundle of rank $(2g-2)\codim \Fix$. The
trivial representation $T  \Fix$, by construction, gives the pushforward of the
$\xi_\F$-genus under the projection
\begin{equation}
p: T^*\Fix \to \Fix \,.\label{eq:280}
\end{equation}
We thus conclude 
\begin{equation}
  \label{eq:279}
  \bL(\bY) = \chi\left(T^*\Fix,\xi_\F \otimes p^*\Aroof(\cNrm)
             \right)  \,. 
\end{equation}
Separating the contribution of $H^1$ to the $\xi_\F$-genus, we can
rewrite \eqref{eq:279} as follows

\begin{Proposition}
  We have
  \begin{equation}
    \label{eq:28}
    \bL(\bY) = (- q^{\frac12})^{\dim \Fix} \, \chi\left(T^*\Fix, p^*\Aroof\left(H^1\left(\Gamma',T\bY\big|_\Fix\right)\right)
             \right)  \,. 
  \end{equation}
\end{Proposition}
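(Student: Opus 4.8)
The plan is to start from equation \eqref{eq:279}, which already expresses $\bL(\bY)$ as $\chi(T^*\Fix,\xi_\F \otimes p^*\Aroof(\cNrm))$, and to unpack the definition of the $\xi_\F$-genus on $T^*\Fix$ so as to pull the $H^1$-contribution of $T\Fix$ out of the $\xi_\F$-factor and combine it with $\cNrm$. First I would recall that, by the construction of $\xi_\F$ in \eqref{eq:271} and the earlier discussion of $\psi_\F$, the $\xi_\F$-genus of the cotangent bundle $T^*\Fix$ is built from the symplectic pairing on the Chern roots $\{y_i, q^{-1}y_i^{-1}\}$ of $T(T^*\Fix)$, and that when the action of $\Gamma'$ is trivial on $\Fix$ the only ``interesting'' part of the virtual tangent bundle (in the sense of the paragraph around \eqref{eq:cup}) is $H^1(\Gamma',\textup{---})$. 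Concretely, for the trivial $\Gamma'$-module $T\Fix$ we have $\Hd(\Gamma',T\Fix) = T\Fix \oplus (T\Fix \otimes \hbar^{-1}) [-2]$ by the duality pairing \eqref{eq:cup}, so that the virtual piece $H^1$ appearing in $T_\vir\Fix$ and the one hidden inside $\xi_\F(T^*\Fix)$ are the same object.

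Next I would make the accounting explicit. Writing $\xi_\F(T^*\Fix)$ via $\psi_\F$ applied to the Chern roots and separating the $H^1$-contribution amounts to peeling off a factor of $\Aroof\big(H^1(\Gamma', T^*\Fix\big|_\Fix)\big)$ — or equivalently, by the self-duality $T^*\Fix \cong T\Fix\otimes\hbar^{-1}$ under the symplectic form, a factor built from $H^1(\Gamma',T\Fix)$ — up to the explicit normalization constant $\psi_\F(\text{trivial Chern root})^{\dim\Fix}$. Computing that scalar is the bookkeeping heart of the argument: evaluating $\psi_\F$ on the weight-zero (trivial) Chern root, which by \eqref{eq:273}–\eqref{eq:277} contributes $\big(a^{1/2}\Fr^{1/2} - a^{-1/2}\Fr^{-1/2}\big)^{\pm 1}$ per trivial summand of $\Hd(\Gamma',T\Fix)$, and tracking the $q$-powers coming from the $\hbar$-twist in the $H^2$-part of the duality \eqref{eq:cup}, produces exactly the factor $(-q^{1/2})^{\dim\Fix}$. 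Then $\Aroof(\cNrm) = \Aroof(H^1(\Gamma',T_\mov\bY))$ from \eqref{eq:281} combines with the peeled-off $\Aroof(H^1(\Gamma',T\Fix))$, and additivity of $H^1(\Gamma',\textup{---})$ over the decomposition \eqref{eq:275} gives $\Aroof\big(H^1(\Gamma',T\bY\big|_\Fix)\big)$, which is the right-hand side of \eqref{eq:28}.

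The main obstacle I anticipate is getting the constant and the square-root ambiguities exactly right. Separating the trivial Chern root from $\xi_\F(T^*\Fix)$ involves a choice — one must decide whether the weight-zero factor $(1-t^{-1})^{-1}(1-qt)^{-1}$ type denominator is absorbed into the $\Aroof$-piece or contributes a bare $q$-power — and since $\psi_\F$ itself was only defined up to splitting Frobenius eigenvalues and taking square roots (as flagged after \eqref{eq:249}), one has to verify that the putative identity \eqref{eq:28} is genuinely independent of these choices, exactly as $\xi_\F$ and \eqref{eq:271} were. Concretely the sign $(-1)^{\dim\Fix}$ and the half-integer power $q^{\dim\Fix/2}$ must be shown to match the $\Aroof$-normalization \eqref{eq:277} applied to the $2\dim\Fix$-dimensional space $H^1(\Gamma',T\Fix)$; this is where the evenness of $\dim H^1(\Gamma',\pi)$ and the functional-equation symmetry \eqref{eq:278} do the work of killing the ambiguity. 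Everything else — additivity of cohomology, the splitting \eqref{eq:275}, the multiplicativity \eqref{eq:151} of genera — is routine.
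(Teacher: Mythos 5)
Your approach coincides with the paper's: both proceed by isolating, for each Chern root $y$ of $T\Fix$, the $H^0$- and $H^2$-contributions of $\Hd(\Gamma',T\Fix)$ to the $\xi_\F$-genus and recognizing that they multiply out to $\xi_{\bP^1}(y)=\frac{q^{1/2}y}{(1-y)(1-qy)}=\frac{-q^{1/2}}{(1-y^{-1})(1-qy)}$, whose denominator is exactly $\chi(\cO)$ of the corresponding factor of $T^*\Fix$, leaving the scalar $(-q^{1/2})^{\dim\Fix}$; the remaining $H^1$-part of $T\Fix$ then joins $\cNrm=H^1(\Gamma',T_\mov\bY)$ via \eqref{eq:275} to give $\Aroof(H^1(\Gamma',T\bY|_\Fix))$. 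One small imprecision worth flagging: the phrase ``evaluating $\psi_\F$ on the weight-zero (trivial) Chern root'' misstates where the constant comes from — the Chern roots $y_i$ of $T\Fix$ are not weight-zero; rather, for \emph{each} $y_i$ the $H^0\oplus H^2$-factor of $\xi_\F$ is $\xi_{\bP^1}(y_i)$, and the cleanest route (the one the paper takes) is simply the one-line algebraic identity above.
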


\begin{proof}
  From the equality
  $$
  \xi_{\bP^1}(t) = \frac{q^{1/2} t}{(1-t)(1-qt)} = \frac{(-q^{1/2}
    )}{(1-1/t)(1-qt)} \,, 
  $$
  we see that the contributions of $H^0$ and $H^2$ to the
 $\xi_\F$-genus combine into $(- q^{\frac12})^{\dim \Fix}
 \cO_{T^*\Fix}$. 
 \end{proof}

\subsubsection{}

What really helps in our situation is that Frobenius acts on the
fibers of \eqref{eq:280} even if it does not act on the base $\Fix$.
The additional weight $q^{-1}$ of the Frobenius action can make
infinite sums appearing the nonproper pushforward \eqref{eq:280}
converge.

Similarly, Frobenius action on \eqref{eq:281} helps in the situation
when the tangent spaces $T \bY$ and $T_{\mov} \bY$ are already
virtual tangent spaces. As their cohomology acquire an additional
Frobenius weight, it may be possible for $p^*\Aroof(\cNrm)$, which
in this case will be a ratio of two K-theory classes, to 
be a well-defined analytic K-theory class in the sense of
Section \ref{s_AK}.

\subsection{Categorification and other
  generalizations}\label{s_cat_gen} 

\subsubsection{Manifolds with a polarization}\label{s_L_polar}

Since the symplectic manifold $M=T^*\bY$ appears very prominently in
\eqref{eq:28}, it is natural to extend the applicability of
$\Lbar$-genera 
to other manifolds which may share some key features of $T^*\bY$
without being cotangent bundles to manifolds with $\Gamma$-action. 

In one possible direction, we may define the $\Lbar$-genus for any
manifold
$M$
that admits a $\Gamma\times \bA\buu$-equivariant invariant
polarization. As above, $\bA\buu$ is a torus that commutes with $\Gamma$
and forms the arguments of the $\Lbar$-genus. By definition, a
polarization $T^{1/2} M$ a class in 
$K_{\Gamma\times \bA\buu}(M)$ such that
\begin{equation}
T^{1/2} M + q^{-1} \left(T^{1/2} M  \right)^{*} = TM \,.\label{eq:27}
\end{equation}
For example, $p^* T \bY$ is a polarization of $M=T^*\bY$. We define 
  \begin{equation}
 \bL_{\frac12}(M) = (-q^{1/2})^{\frac12\dim M^{\Gamma'}} \,
 \chi\left(M^{\Gamma'}, \Aroof\left(H^1\left(\Gamma',T^{1/2} M \right)\right)
 \right) \,.\label{eq:25}
 \end{equation}
By the functional
equation \eqref{eq:278}, this is independent of the choice
of the polarization.

\subsubsection{Spinor bundles}\label{s_Spin}
Note that \eqref{eq:27} implies that 
\[
  TM = q^{-1} \, T^*M 
\]
in $K_\textup{eq}(M)$. In examples of interest to us, there exists 
a $\Gamma$-invariant 
symplectic form $\omega_M$ on $M$ of weight $q$ 
%
\begin{equation}
\omega_M: \Lambda^2 TM \to q^{-1} \otimes \cO_M \,. \label{eq:32}
\end{equation}
%
It induces 
a natural \emph{symmetric}
pairing
\begin{equation}
  \label{eq:29}
S^2 H^1\left(\Gamma',T M
\right)
\xrightarrow{\quad \cup\otimes  \omega_M 
  \quad} q^{-1} \otimes H^2(\Gamma',
\cO_{\Fix}) = \cO_{\Fix} \,, 
\end{equation}
where the cup product is as in \eqref{eq:cup}. 
In \eqref{eq:29} and above $\cO$ stands for the trivial line bundle on
the corresponding spaces. The pairing \eqref{eq:29} induces a Clifford algebra
structure on $\Ld \, H^1\left(\Gamma',T M
\right)$. We denote this sheaf of algebras by $\Cliff_{M,\Gamma}$ and denote by 
\begin{equation}
  \Spin_{M,\Gamma} = \Spin_0 \oplus \Spin_{1}\label{eq:30} 
\end{equation}
the corresponding $\Z/2$-graded self-dual spinor bundle, assuming it
exists. This is a weaker
assumption than the existence of a polarization and of the
corresponding square root in \eqref{eq:25}.
The self-duality of $\Spin$ implies that
\[
  \Cliff
  \cong \left(\Spin\right)^{\otimes 2}
\]
as $\Z/2$-graded
vector bundles. 
With these preparations, we may define the $\Lbar$-genus as the
index
\begin{equation}
  \label{eq:31}
  \bL_{\frac12}(M) = (-q^{1/2})^{\frac12\dim M^{\Gamma'}} \,
 \chi\left(M^{\Gamma'},  \Spin_0 - \Spin_1 
 \right) \,.
\end{equation}
Clearly, this agrees with \eqref{eq:25} on the common domain of
applicability.

\subsubsection{Categorification}

Without plunging into categorical depths, we will only discuss
the special situations of interest to us in Section \ref{s_curves}.
We assume that the structure sheaf $\cO_M$ has an even
cohomological degree grading such that the form \eqref{eq:32}
\begin{equation}
\omega_M: \Lambda^2 TM \to \cO_M\sTate{2} \,,  \label{eq:65}
\end{equation}
shifts the cohomological degree by $2$ with a Tate twist. 
This is
the case, for instance, for Coulomb branches as constructed in
\cites{ BFN1, BFN2, BFN3, NakCoul}, since they are defined as spectra of certain cohomology
rings, see Section \ref{s_curves} for an elementary example.  In
\eqref{eq:65} and it
what follows, we combine the degree shift and Tate twist if the
overall shift is pure and set, by definition
\begin{equation}
  \label{eq:64}
  \sTate{n} = [n] \Tate{\tfrac n2} \,. 
\end{equation}
It follows from \eqref{eq:65} that the pairing \eqref{eq:29} lifts to 
\begin{equation}
  \label{eq:29cat}
S^2 H^1\left(\Gamma',T M
\right)
\xrightarrow{\quad \cup\otimes  \omega_M 
  \quad} H^2(\Gamma',
\cO_{\Fix}) \sTate{2} = \cO_{\Fix} \,. 
\end{equation}
As a result, the Clifford algebra $\Cliff_{M,\Gamma}$ 
is still well-defined and we can ask for a self-dual
spinor bundle $\Spin$ on $M^{\Gamma'}$ which is $\Z$-graded cohomologically.
If such bundle exists, we define
\begin{equation}
  \label{eq:33}
  \cOh_{\Fix,\vir} = \Spin  \sTate{-\delta}\,, \quad \delta=\tfrac 12 \dim M^{\Gamma'} \,. 
\end{equation}
The cohomological shift
\[
  \sTate{-\delta} = \cK^{1/2}_\Fix \,, 
\]
which appears in \eqref{eq:33}, is the correct categorical
lift of the $(-q^{1/2})^\delta$-factor in \eqref{eq:31}.

\subsubsection{}\label{PoissonM} 
More generally, $M$ can be a Poisson manifold such that 
\begin{equation}
  \label{eq:40}
  \{ \, \cdot \, , \, \cdot \, \}: \Omega^2 M \to \cO_M\sTate{-2}
\end{equation}
where $\Omega^2 M = \Lambda^2 \Omega^1 M$. In this case, $\Omega^1 M
\sTate{2}$ can be used as a replacement on the tangent bundle in the
definition of $\Cliff_{M,\Gamma}$. We define 
\begin{equation}
  \label{eq:6}
  \Cliff_{M,\Gamma} = \textup{Clifford}(H\Omega_M), \quad
  H\Omega_M= H^1(\Gamma', \Omega^1 M) \sTate{2} \,. 
\end{equation}
Note that a $\Gamma$-equivariant
Poisson map
\[
  f: M_1 \to M_2
\]
induces a homomorphism $\Cliff_{M_2} \xrightarrow{f^*} \Cliff_{M_1}$
and, therefore, a pushforward $f_*$ for sheaves of Clifford modules.
If $f_*$ is a symplectic resolution of singularities then $f_*
\cOh_{\Fix_1,\vir}$ may be used as a replacement for the virtual
structure sheaf of $\Fix_2$, see an example in Section \ref{s_curves}.

\subsubsection{Connected components}\label{s_components} 

 Note that L-genera, as defined above, depend only on the first order
 neighborhood of $M^{\Gamma'}$ inside the ambient
 space $M$  with a $\Gamma$-action. In particular, there is no
 interaction\footnote{While one may imagine
 generalizations that may involve, for instance, $\Gamma'$-invariant 
 curves joining different components of $M^{\Gamma'}$, this is not something
 we consider in this paper.} between different connected components of
$M^{\Gamma'}$, beyond the fact that they may be permuted by the action
of $\Fr$. Thus, given a $\Fr$-invariant subset of component $M^{\Gamma'}_1 \subset M^{\Gamma'}$ we
 may define L-genera of $M_1$ by restricting any of the
 definitions above to $M^{\Gamma'}_1$. Here $M_1$ may be takes as a
 germ of $M$ along $M^{\Gamma'}_1$.

 By construction, L-genera are computed on the fiber over $\Fr$ in
 \eqref{eq:267}. Therefore, any component of $M^{\Gamma'}$ which is 
 nontrivially permuted by $\Fr$ makes no contribution to the
 L-genus. In other words, we can always restrict the computation to the subset of
 components that are $\Fr$-stable.

 The flexibility to specify a set of components is convenient for the
 following pragmatic definition of genera for quotient stacks.

\subsubsection{Quotient stacks}\label{s_fix_quo}

For applications to Eisenstein series and other needs, it is
convenient to have a definition of L-genera for stacks, in
particular, for quotient stacks. For quotient stacks, we propose
the following recipe.

Suppose an algebraic action of $\Gamma$ on $\bX$ considered above
normalizes an action of
an algebraic group $\bH$ on $\bX$. In particular, $\Gamma$ acts on
$\bH$ by automorphisms, the fixed points of which form the
subgroup $\bH^{\Gamma'}  \subset \bH$. For the quotient stack $\bX/\bH$, we
define
\begin{equation}
\Fix(\bX/\bH) = \bX^{\Gamma'} \big/ \bH^{\Gamma'} \,.\label{eq:292}
\end{equation}
From 
\[
T (\bX/\bH) = T \bX - \Lie \bH
\]
we see that $T_\vir \Fix$ gets an extra piece $H^1(\Gamma', (\Lie
\bH)_\mov)$.

While there may be other definitions of the fixed locus for quotient
stacks, for all of them \eqref{eq:292} will form a subset of the set
of 
components. This  subset plays a distinguished role in the
application we have in mind.

\subsubsection{Example}\label{ExamplePGP}

Let 
\begin{equation}
  \label{eq:88}
  \phi: \Gamma \to \sP^\vee 
\end{equation}
be a homomorphism and consider the induces action of $\Gamma$ on
the flag manifold $\Flag = \bG^\vee / \sP^\vee$, which we will view
as a quotient stack. In this example, we will investigate the
difference between the fixed locus of the quotient and the
quotient of fixed loci. 

Since a parabolic subgroup is its own normalizer, we have
\[
(\Flag)^{\Gamma'} = \{ \textup{conjugates $\sP'=g\, \sP^\vee\!g^{-1}$ that contain
  $\phi(\Gamma')$} \}\,. 
\]
By reductivity of $\overline{\phi(\Gamma')}$, 
\begin{equation}
  \label{eq:78}
  T_{\sP'} (\Flag)^{\Gamma'} = \Lie \left(\bG^\vee\right)^{\Gamma'}
  \big/ \Lie \left(
    \sP^\vee\right)^{\Gamma'}  \,. 
\end{equation}
It follows that the connected component 
$\bG\buu$ of $(\bG^\vee)^{\Gamma'}$ acts transitively on each connected component of
$\Flag^{\Gamma'}$. For each connected component, we have a homomorphism
\begin{equation}
  \label{eq:74&3}
  \phi_g = g^{-1} \, \phi \, g: \Gamma' \to \sP^\vee\,, 
\end{equation}
well-defined up to the conjugation by $\sP^\vee$. Clearly,
\begin{equation}
  \label{eq:75&4}
  \Fix(\Flag) = \left(\bG^\vee\right)^{\Gamma'}
  \big/ \left(
    \sP^\vee\right)^{\Gamma'} =
  \left\{
    \begin{matrix}
      \textup{$\Gamma'$-fixed flags such that}\\
      \textup{$\phi_g$ is equivalent to $\phi\big|_{\Gamma'}$}
    \end{matrix}
    \right\} \subset \Flag^{\Gamma'} \,. 
\end{equation}
Each component of this locus is isomorphic to $\bG\buu/\sP\buu$, where
$\sP\buu = \bG\buu \cap \left(
    \sP^\vee\right)^{\Gamma'}$ is a parabolic subgroup. Other
components can have different stabilizers, including stabilizers of
different dimensions. 

Since only $\Fr$-stable components contribute to the genus, we may
restrict our attention to the intermediate subgroup 
\begin{equation}
  \label{eq:74**}
  \bG\buu \subset \bG^\diamond \subset \left(\bG^\vee\right)^{\Gamma'}
  \,, 
\end{equation}
defined by 
\begin{align}
  \label{eq:76}
  \bG^\diamond &= \left\{ \left. g \in  \left(\bG^\vee\right)^{\Gamma'} \right| \,
                 g \, \phi(\Fr) \, g^{-1} \, \phi(\Fr)^{-1} \in \bG\buu \right\} \\
  & = \{ 
\textup{$g$ such that $g\phi g^{-1}$ is a deformation of $\phi$} \}
    \notag \\
  & = \textup{$\Fr$-stable components of
    $\left(\bG^\vee\right)^{\Gamma'} $} \,. \notag 
\end{align}

\section{Applications}\label{s3}

\subsection{Additive relations among L-genera}

By construction, L-functions behave like a genus, that is
\[
L(\pi_1 \oplus \pi_2) = L(\pi_1) L(\pi_2) \,,
\]
and similarly for the symmetrized L-functions \eqref{eq:273}. 
The genera $\bL(\bY)$ satisfy scissor relations, which are
\emph{additive} in the space $\bY$. In particular, the space $V$ of an
irreducible linear representation $\pi: \Gamma \to GL(V)$ is not irreducible in the
sense of $\Gamma$-actions on algebraic varieties. For example
it always has the trivial orbit $0\in V$, and will typically have
many other $\Gamma$-invariant subvarieties.

Note that these additive relations are understood in the sense of
distributions, which concretely means that they relate different
integrals, of different dimensions, involving L-functions. An
example of such additive relation is the spectral decomposition of the
distribution $\Psi_{\Eis}$.



\subsection{Eisenstein series}

\subsubsection{} 

In general, Eisenstein and pseudo-Eisenstein series on $\Bun_\bG$
are defined  by the
pull-push $\pi_{\bG,*} \pi_\bM^*$ of functions, which we will assume
$\C$-valued, in the following
diagram
of spaces
\begin{equation}
  \label{eq:187_1}
  \xymatrix{
    & \Bun_{\sP} \ar[ld]_{\pi_\bM} \ar[rd]^{\pi_\bG}\\
    \Bun_\bM && \Bun_\bG \,.}
\end{equation}
Here $\sP \subset \bG$ is a parabolic subgroup and
$\sP \to \bM$ is the quotient by the unipotent radical.

For Eisenstein series, one starts with an automorphic form $\phi$ on
$\Bun_\bM$, which we will assume a cuspidal Hecke eigenfunction. Since $\bM$ has a
nontrivial center and, hence, nontrivial characters, such forms come in
families of the form $\phi(m) \|\chi(m)\|^s$, where $\chi$ is
a character of $\bM$ and $s\in\C$ is a complex number. More
abstractly, one should view the function $a(m) = \|\chi(m)\|^s$
as an element of
\begin{align}
  \notag
  a \in \spec \bhd(\pt/\bA\buu) &= \Chars(\bM) \otimes \Hom(\cQ_\F,\Ct)  \\
  &= \Cochars(\bA\buu) \otimes \Hom(\cQ_\F,\Ct)  \,,\label{eq:283}
\end{align}
where $\bA\buu$ is the center of the Langlands dual group $\bM^\vee$.
We denote
\begin{equation}
  \Eis(\phi,a) = \pi_{\bG,*} \pi_\bM^* \, a \cdot \phi\label{eq:284}
  \,,\quad 
  a \in \spec \bhd(\pt/\bA\buu) \,. 
\end{equation}
In principle, there is a finite group $\pFix(\phi)$ that idenfies equivalent choices
of $a$ in \eqref{eq:284}. The quotient $\bhd(\pt/\bA\buu)/\pFix(\phi)$ 
is the true space of parameters for the Eisenstein series associated
with $\phi$.

The spectral analysis problem mentioned in Section \eqref{s_Eis_spec1}
corresponds to the trivial function $\phi=1$ on the minimal parabolic
subgroup $\sP=\bB$. The general spectral analysis problem can
be posed in the exactly the same way and rephrased, in the
exactly the same way, as finding a certain additive decomposition of
the distribution $\Psi_\Eis$ defined by \eqref{eq:254}  on two copies of the
parameter space $\bhd(\pt/\bA\buu)$.

\subsubsection{}
For function fields $\F=\Bbbk(C)$, the work of Lafforgue \cite{Laff} relates
cuspidal Hecke eigenfunctions $\phi$ to Galois representations. In the
unramified situation, these are irreducible
representations
\begin{equation}
\phi: \Gamma \to \bM^\vee(\Qlb)\label{eq:286}
\end{equation}
of the unramified Weil group
 \begin{equation}
   \label{eq:256bb}
   1 \to \Gamma'=\pi_1(C \otimes \overline{\Bbbk})
   \to \Gamma \xrightarrow{\,\, \| \, \cdot \, \|\,\, }
   \cQ_\F \to 1 \,. 
 \end{equation}
The irreducibility of \eqref{eq:286} means that the image of $\phi$
does not lie in any smaller Levi subgroup or, equivalently, that the
center $\bA\buu$ of $\bM^\vee$ satisfies 
\begin{equation}
  \label{eq:287}
  \bA\buu = \textup{maximal torus of } (\bM^\vee)^{\phi(\Gamma)}  
\end{equation}
as in \eqref{eq:266}, with the role of $\bG^\vee$ played by
$\bM^\vee$. Irreducibility of $\phi$ also implies that the group
\[
\bB\buu = \bG\buu \cap \sP^{\vee}
\]
is a Borel subgroup of the group $\bG\buu$. We recall that
$\bG\buu \subset \left(\bG^\vee\right)^{\Gamma'}$ is defined
as the connected component of the identity.

\subsubsection{} \label{s_t1}
As in \eqref{eq:254},
the $L^2$-norm of pseudo-Eisenstein series defines a distribution
on two copies of $\bA\buu$, which we want to understand with
geometric tools. Consider the quotient stack
\begin{equation}
  \label{eq:89}
\cT=   T^*(\sP^\vee \backslash \bG^\vee / \bB^\vee)
\end{equation}
with the induced action of $\Gamma$. As explained in Section
\ref{ExamplePGP} all components of the fixed locus $\Fix\subset \cT$ are copies
of $T^*(\bB\buu \backslash \bG\buu / \bB\buu)$. Therefore,
the L-genus in the following theorem defines a distribution on
$\bA\buu \times \bA\buu$. 

\begin{Theorem}{\cite{KO2}} \label{t1} We have 
 \begin{equation}
   \label{eq:288}
   \Psi_\Eis = 
   \bL_{\frac12}(T^*(\sP^\vee \backslash \bG^\vee / \sP^\vee))\,.
 \end{equation}
 \end{Theorem}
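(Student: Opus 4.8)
The plan is to establish the identity \eqref{eq:288} by combining two inputs: the scalar identity \eqref{eq:255} of \cite{KO1}, which handles the case $\phi=1$ on the minimal parabolic $\bB$, and a reduction of the general Eisenstein series to the minimal one. Concretely, the first step is to recall from Section \ref{s_Eis_spec1} and \eqref{eq:254} that $\Psi_\Eis$ is, up to normalization of Haar measures, the pullback of $\|\cdot\|^2_{L^2(\Bun_\bG)}$ under the pseudo-Eisenstein map $\Eis_\omega$ of \eqref{eq:284}, and to invoke the classical Maass--Selberg / Langlands inner product formula (see \cites{L, Labesse, MW}), which expresses $\left(\Eis_{\omega_1},\Eis_{\omega_2}\right)_{L^2}$ as a sum over the Weyl group $W(\bM,\bG)$ of integrals whose integrand is $\omega_1 \otimes \omega_2^*$ times a product of ratios of completed L-functions $\Lbar(\pi_\rho, a)$, one factor for each root $\rho$ of $\bG^\vee$ not in $\bM^\vee$, where $\pi_\rho$ is the corresponding $\Gamma'$-representation obtained by composing $\phi$ with the root space. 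This is the step that is, in the words of Section 1.4, ``little more than a repackaging of classical formulas.''

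The second step is geometric: I would identify the right-hand side of \eqref{eq:288} by a localization computation on $\Fix \subset \cT$. By Section \ref{ExamplePGP}, applied with $\sP^\vee$ in both slots, every $\Fr$-stable component of $\Fix(T^*(\sP^\vee\backslash\bG^\vee/\sP^\vee))$ is a copy of $T^*(\bB\buu\backslash\bG\buu/\bB\buu)$, where $\bB\buu$ is the Borel of $\bG\buu$ as in Section \ref{s_t1}. Applying Proposition 1 (equation \eqref{eq:28}) to $\bY$ the relevant partial flag variety, the L-genus $\bL_{\frac12}$ becomes a $\bG\buu$-nonproper pushforward of an $\Aroof$-class built from $H^1(\Gamma', T_{\mov})$; equivariant localization on $T^*(\bB\buu\backslash\bG\buu/\bB\buu)$ with respect to $\bA\buu\times\bA\buu\times\langle\Fr\rangle$ then produces exactly a sum over the fixed points — which are indexed by $W(\bG\buu)$ acting on itself, i.e.\ by the relative Weyl group $W(\bM,\bG)$ — of rational functions in $a_1, a_2, q$ whose denominators are the $(1-q^{\pm}\text{root})$ factors and whose numerators are the $\Aroof$-genus values $\Lbar(\pi_\rho,\cdot)$ by the very definition \eqref{eq:273}. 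The third step is to match the two sides term by term: the fixed-point sum from localization must be identified with the Maass--Selberg sum, which amounts to checking that (i) the index set $W(\bM,\bG)$ agrees, (ii) the weights of $\bA\buu\times\bA\buu$ on the tangent space at each fixed point reproduce the arguments $a_1, w(a_2)$ appearing in the classical formula, and (iii) the contour of the nonproper pushforward — dictated by the convergence constraint ``$\supp\cF$ proper $\Rightarrow$ no singularities'' of \eqref{eq:147} and by the Frobenius twist $q^{-1}$ on the cotangent fibers \eqref{eq:280} — matches the contour $|a|=\text{fixed}\gg 1$ in \eqref{eq:253}.

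A cleaner way to organize steps two and three, which I would prefer if it can be made rigorous, is to bypass the explicit localization and instead use the multiplicativity and scissor relations directly. Namely: $T^*(\sP^\vee\backslash\bG^\vee/\sP^\vee)$ is glued from Bruhat cells, and on each cell $\bL_{\frac12}$ is computed as an L-genus of a product of root subgroups, hence factors through \eqref{eq:148} into a product of the rank-one genera $\xi_\F$ attached to individual roots, i.e.\ into the elementary building block $\Lbar(\pi_\rho,\cdot)$. The compatibility of this decomposition with the scissor relations \eqref{eq:149} is precisely parallel to the compatibility, on the automorphic side, of the constant-term computation of $\Eis$ with the Bruhat decomposition of $\bB\backslash\bG/\bB$; this is the content of \eqref{eq:255} in the minimal case, and for general $\sP$ and general cuspidal $\phi$ the only new ingredient is that the rank-one factors are $\Lbar(\pi_\rho,a)$ with $\pi_\rho$ possibly of dimension $>1$ rather than $\zeta$-factors — but \eqref{eq:273} was set up so that this substitution is literally definitional, and the parity statement \eqref{eq:278} supplies the functional equation that makes the answer independent of the choice of polarization, hence well-defined.

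The main obstacle I anticipate is not any single analytic estimate but the bookkeeping of the $\varepsilon$-factors and square roots discussed in Section \ref{s_epsilon}. On the automorphic side the inner product formula involves completed L-functions whose functional equation carries a specific $\varepsilon$-factor; on the geometric side, $\bL_{\frac12}$ depends on a choice of square root of $\det H^1(\Gamma', T^{1/2}M)$, equivalently of $\cK^{1/2}_\Fix$ via \eqref{eq:33}. One must check that the self-duality constraint forced by the symplectic form of weight $q$ (equations \eqref{eq:32}--\eqref{eq:29}) selects \emph{the same} square root, root by root and Weyl-element by Weyl-element, as the one implicit in the classical completed L-functions — otherwise the two sides differ by a sign or a monomial in $q^{1/2}$. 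A secondary subtlety is convergence: one must verify that the hypotheses on $\bY$ (reductivity of $\overline{\phi(\Gamma')}$, so that $\Fix$ is smooth by \eqref{eq:264}, together with the Frobenius weight on the cotangent fibers) indeed make the nonproper pushforward of $\Aroof(\cNrm)$ a well-defined analytic K-theory class in the sense of Section \ref{s_AK}, with the contour on the correct side of all the polar hyperplanes $q^{\beta}a^{\alpha}=1$; the ``$1<|\alpha_i|<q$'' condition on Frobenius eigenvalues noted after \eqref{eq:269} is exactly what guarantees this, but tracking it through the general $L$-function case (rather than the $\zeta$-function case of \cite{KO1}) requires care.
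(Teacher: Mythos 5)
The paper does not prove Theorem~\ref{t1}; it is cited from \cite{KO2}, which is in preparation, so there is no argument in this text to compare yours against line by line. What can be checked is whether your plan is consistent with the hints the paper drops: that the result is ``fundamentally\dots\ little more than a repackaging of classical formulas, see \cites{L, Labesse, MW}''; that the $\Fr$-stable components of $\Fix$ are copies of $T^*(\bB\buu\backslash\bG\buu/\bB\buu)$ indexed by $\pi_0(\bG^\diamond)$ (Section~\ref{ExamplePGP} and the discussion after Theorem~\ref{t1}); and that the completed L-function is, by the definition \eqref{eq:273}, the $\Aroof$-genus of $-\Hd(\Gamma',\pi)$. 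Your plan lines up well with all of this. Step~1 (Maass--Selberg / Langlands inner product formula) is exactly the ``classical formula'' the paper points at; Step~2 (localization on $\cT$, or better, the scissor decomposition along Bruhat cells) is the geometric avatar of the Gindikin--Karpelevich computation of the constant term; and the two obstacles you isolate --- matching the $\varepsilon$-factor/square-root choices against those implicit in the completed L-function, and controlling the contour of the nonproper pushforward --- are exactly the genuine ones.

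Two things you gloss over, neither fatal but both needing attention in a real proof. First, $\sP^\vee\backslash\bG^\vee/\sP^\vee$ is a quotient \emph{stack}, not a smooth variety, so you cannot apply \eqref{eq:28} verbatim to ``$\bY$ the relevant partial flag variety''; you need the quotient-stack recipe of Section~\ref{s_fix_quo}, in particular the extra summand $H^1(\Gamma',(\Lie\bH)_\mov)$ in $T_\vir\Fix$, before any localization or cell decomposition is performed. Second, the fixed-point index set is not simply $W(\bM,\bG)$: the paper is explicit that the contributing components are indexed by $\pi_0(\bG^\diamond)$, with $\bG^\diamond$ as in \eqref{eq:76}, and within each component the torus-fixed points are indexed by $W(\bG\buu)$. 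When $\phi$ fails to be tempered, $\pi_0(\bG^\diamond)$ can be nontrivial, and on the automorphic side the corresponding index set in the inner product formula can involve an R-group rather than a single relative Weyl group. Lining these two subdivisions up is part of the content of the matching, not a formality --- the paper itself flags the tempered case as the one that is ``entirely parallel to \cite{KO1}.''
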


 We recall from Section
 \ref{ExamplePGP} that the components of $\Fix$ that are $\Fr$-stable, 
 and hence contribute to
 L-genus, are indexed by $\pi_0(\bG^\diamond)$,
where the group $\bG^\diamond$ is defined in \eqref{eq:76}.

 As before, the fixed locus $\Fix$ maps to nilpotent
 elements in $\Lie \bG\buu$ and one can decompose its genus
 into the corresponding pieces. When the
 the representation $\phi$ is \emph{tempered}, this is
 entirely parallel to what we did in \cite{KO1}, see \cite{KO2}.

 \subsubsection{}
 Note that complex Eisenstein series mesh seamlessly with $\ell$-adic
 representation in Theorem \ref{eq:288} because K-theoretic
 computations that are involved in the definition of the RHS in
 \eqref{eq:288} are really defined over integers, and not over the field
 of the definition of the varieties in questions. If one wants a more
 direct interaction between $\ell$-adic and complex manifolds, one
 may note that the L-genera are certain tensor functors of the
 $\Gamma'$-isotypic components of the normal bundle to the fixed locus
 $\Fix$. Both the fixed locus and the isotypic component
 depend only on the Zariski closure of
 $\phi(\Gamma')$. Since we assume this closure to be reductive, it can
 be uniquely transferred from the $\ell$-adic world to the complex
 world.

 \subsection{L-genera in enumerative problems}\label{s_mirror} 

 \subsubsection{} 

 A large portion of modern enumerative geometry studies various
 moduli spaces of maps $\Maps(C \xrightarrow{\, f\,} \bX)$  from a curve $C$, which may be fixed or
 move in moduli, to some target algebraic variety $\bX$. For a number
 of reasons, ranging from the needs of applications in theoretical
 physics to the desire to work with moduli spaces that enjoy certain 
 properness, one is lead to allow certain singularities for
 the maps $f$, about which we will be more specific below. For our
 goals, the ability to change the degree of $f$ by creating or
 removing singularities will play a very important role.

 One
 sets up things so that, whatever counts one wants to do, these
 counts have a well-defined answer for maps of fixed degree
 \[
   \deg f = [f(C)] \in H_2(\bX,\Z) \,,
 \]
 after which one forms a generating function over all
 degrees. Equivalently, one studies the Fourier transform of
 \eqref{eq:1} as function on the
 dual torus
 \begin{equation}
   \label{eq:74*}
   \bA\buu = H^2(\bX,\Z) \otimes \Gm \,. 
 \end{equation}
 The reason we denote this torus by $\bA\buu$ is the following.
 In the examples we have in mind, $\bX$ is obtained as a quotient
 by a parabolic subgroup $\sP$ and the resulting map 
 \[
   \Cochars(\bA\buu) = \Chars(\sP) \to H^2(\bX,\Z)
   \,,
 \]
 turns out to be an isomorphism. 

 \subsubsection{}

 In the traditional approach, see Part 4 in \cite{Mirror} for an excellent
 introduction, the counts are defined by some variation
 on the formula
   \begin{equation}
   \left \langle \lambda_1 \lambda_2 \dots  \right
   \rangle_{d} = \int_{\deg f = d} [\Maps]_\vir \cap \bigcup
   \lambda_i \label{eq:1}\,, 
   \end{equation}
   where the integral denotes
   pushforward to a point, 
$[\Maps]_\vir$ denotes a distinguished homology
   class called the virtual fundamental cycle, and 
$\lambda_1, \lambda_2, \dots$ are some cohomology classes on
   the moduli of maps, including those pulled back from $\bX$  via
   the evaluation maps 
   \begin{equation}
     \label{eq:3}
     \ev_x:  f \mapsto f(x) \in \bX\,, \quad x \in C \,. 
   \end{equation}
   Parallel constructions exist in other
   cohomology theories, notably equivariant K-theory.

   Because of the
   direct link between K-theory and indices of the Dirac operator and other 
   elliptic operators, computations in equivariant K-theory are particularly close
   to the indices which our colleagues in modern high energy physics are computing
  in supersymmetric quantum mechanics on
  the space of maps\footnote{If $\X$ represents a
     component of the moduli
     spaces of vacua in some supersymmetric quantum field theory on $C
     \times \{\textup{time}\}$, the dynamics of  the corresponding modulated vacua may
     be described by supersymmetric quantum mechanics on $\Maps(C \to
     \X)$. This is  a
     natural low-energy
     approximation to the full dynamics of the theory, which captures
     deformation-invariant quantities like indices.}. 

   \subsubsection{}\label{s_Hcrit}

   One may compare and contrast counts \eqref{eq:1} with
   counts defined as \emph{traces}, or more precisely supertraces, of certain operators
   acting in the critical cohomology $\Hd_{\textup{crit}}(\Maps)$ 
  or its
  extraordinary analogs.  These are defined when $\Maps$ can be
  described, globally or locally, as a critical
   locus of a certain function
   $\cW$, see for example \cite{Ionuts} and the many references therein. The counts \eqref{eq:1} are typically richer and, in
   particular, allow richer equivariance\footnote{For a function $\cW$,
     the construction of the
equivariant critical cohomology requires 
     $\cW$ to be invariant (as opposed to scaled by
     group action), which represents a very noticeable loss of equivariance in enumerative applications. A
     promising approach to restoring full equivariance is provided by
     the interpretation of the fully equivariant counts as
     Nekrasov-style
     $qq$-characters, for which one hopes to find a purely
     representation-theoretic definition.}. Traces over $\Hd_{\textup{crit}}(\Maps)$ have the advantage
   of being easily categorified.

   In some situations, $\Hd_{\textup{crit}}(\Maps)$
may be equated with ordinary cohomology of related moduli spaces
   by the mechanism of dimensional
   reduction, see e.g.\ \cite{DeformRed} and references therein for the current state of
   technology in this area.  In what follows,  we will assume, for simplicity
   of notation, that dimensional reduction has been already applied to
   pass from critical cohomology to ordinary cohomology. In the
   concrete situation considered in Section \ref{s_curves}, this
   means replacing critical cohomology of  quasimaps to $\X =T^*
   \bP^1$ with the ordinary cohomology of quasimaps to
   $\bX=\bP^1$. 

   \subsubsection{}

   For the computation of traces, it is beneficial to study
   $\Hd(\Maps)$ as a module over some algebra $\cA$
   which includes operators of cup product like $\cup \ev_x^*(\alpha)$,
   $\alpha\in \Hd(\bX)$, but 
   may also include raising and lowering
   correspondences which change the degree of $f$ by 
   creating or destroying singularities at points
   $x\in C$. 
   Various correspondences associated with $x\in C$
   can be assembled into a single correspondence
   \begin{equation}
     \label{eq:4}
     \cE \in \Hd(I \times C \times \Maps) \to  \Hd(\Maps) \,, 
   \end{equation}
   where $I$ is a discrete indexing set for the generators of $\cA$.
   Composing $\cE$ along the $\Maps$-factors, one
   gets an algebra of correspondences between $(I \times C)^n \times
   \Maps$ and $\Maps$. It can be studied locally, since
   modifications at different points of $C$ supercommute.

   By definition, the algebra $\cA_\cE(C)$ has generators
   $\be_i\of{\gamma}$  labelled by an index $i \in
   I$ and a cohomology class $\gamma \in \Hd(C)$. These act
   by the
   corresponding components
\[
   \be_i\of{\gamma} \mapsto \cE(\{i\} \otimes \gamma \otimes
   \, \textup{---}\,) \in \End \Hd (\Maps) 
 \]
of the K\"unneth decomposition of $\cE$ along the
   $I\times C$-factor. The generators $\be_i\of{\gamma}$ satisfy two
   sets of relations, discussed in \eqref{eq:81} and
   \eqref{eq:82} below. Since the
   correspondence $\cE$ will be fixed in what follows, we will drop it
   from the notation $\cA(C)$.

 It is convenient
   to use the term \emph{descendants} of $\gamma$ to refer to all
   generators labelled by $\gamma$. The change
   in degree provides a natural grading of
   $\cA(C)$ by the lattice $H_2(\bX,\Z)$. There is also a $\Z$-grading
   by the cohomological degree.

   \subsubsection{}

   At the expense of making the indexing set $I$ infinite, we may
   assume that the linear span of the correspondences $\cE$ is
   closed under two following operations.
First, consider the commutator of $\cE$ with itself 
   along the $\Maps$-factor. This is a
   correspondence between $(I \times C)^2\times \Maps$ and $\Maps$,
   supported on the diagonal in $C^2$. To say that this diagonal
   correspondence is in the linear
   span of the correspondences $\cE$ is to say that
   \begin{equation}
     \label{eq:79}
     \left[ \cE_{iC_1}, \cE_{j C_2}\right] = \sum_k c^k_{ij} \, \cE_{k,C} \circ
     \Delta^*\,, 
   \end{equation}
   for some $C$-independent coefficients $c^k_{ij}$ in the ground ring, where $i,j,k
   \in I$. In \eqref{eq:79}, all correspondences act in the
   $\Maps$-factor, $C_1$ and $C_2$ refer to the two
   factors in $C^2$, and 
  \[ 
   \Delta: C \to C_1 \times C_2
 \]
 is the diagonal map. Second, we can restrict the composition of the correspondences from
 $(I \times C)^2$ to the diagonal in $C$, resulting in a relation of
 the form
 \begin{equation}
 \cE_{iC_1} \cE_{j C_2} \circ \Delta_{*} = \sum_k m^k_{ij} \, 
 \cE_{k,C}\label{eq:80} \,, 
\end{equation}
with $C$-independent coefficients $m^k_{ij}$ in the ground ring.
In terms of the generators, this means the Lie superalgebra relations 
\begin{equation}
     \left[ \be_i\of{\gamma_1} , \be_j\of{\gamma_2} \right] =
     \sum c^k_{ij} \, \be_k\of{\gamma_1 \cup \gamma_2} \,,\label{eq:81}
   \end{equation}
   and the relations
   \begin{equation}
     \label{eq:82}
      \be_i \be_j \of{\Delta_* \gamma} = \sum_k m^k_{ij} \, 
 \be_{k}\of{\gamma} \,. 
\end{equation}
In \eqref{eq:82}, we use the convention that $\be_i \be_j \of{\gamma_1
  \otimes \gamma_2} = \be_i \of{\gamma_1} \, \be_j \of{\gamma_2}$.
We have 
\begin{align*}
  \Delta_* \bpt &= \bpt \otimes \bpt \\
  \Delta_* \alpha &= \alpha \otimes \bpt + \bpt \otimes \alpha \\
  \Delta_* 1 &= 1 \otimes \bpt + \textstyle{\sum} \, \alpha_i \otimes \alpha_i^\vee +
  \bpt \otimes 1 \,, 
\end{align*}
where $\alpha \in H^1(C)$ and $\alpha_i$ and $\alpha^\vee_i$ form a
dual set of bases of $H^1(C)$ with respect to the intersection
form. From this we conclude that: 
\begin{itemize}
\item[(1)] assuming we work
  nonequivariantly, the descendants of $\bpt\in H^2(C)$ generate a commutative
  subalgebra $\cA^{2} \subset \cA(C)$ which does not depend on
  $C$; 
\item[(2)] the subalgebra  $\cA^{\ge 1}(C) \subset \cA(C)$
generated by the descendants of $H^{\ge 1}(C)$ is the Clifford algebra
of $\Omega^1 \cA^{2}\otimes H^1(C)$, where $\Omega^1$
denotes the module of K\"ahler differentials. The symmetric pairing on
the generators of the Clifford algebra comes from the skew-symmetric
Poisson
structure $c^k_{ij}$ and the skew-symmetric multiplication in $H^1(C)$. 
\end{itemize}
   
One can relate the descendents of $1\in H^0(C)$, which will not receive a
   significant attention in this paper, to 
   Hamiltonian vector fields on the spectrum of $\cA^{2}$, with their
   natural action on functions and $1$-forms.
   For us, it is the descendant of classes $\alpha\in H^1(C)$ that are of
   maximal interest. They form the part of $\Hd(C)$ that
   is sensitive to the geometry of the curve $C$, via the Galois action 
   that they carry.

   \subsubsection{}\label{s_cAcE}

   While in the discussion above we explicitly avoided equivariant
   cohomology, the key to understanding $\cA^2$ lies in the study of the
   algebra $\cA_{\cE}(\A^1/\Gm)$
   generated by $\Gm$-equivariant correspondences for
   $C=\A^1$. Since $\bpt^2 = \tau \bpt$, where $\tau \in H^2(\pt/\Gm)$
   is the generator, this is a noncommutative algebra over
   $\Z[\tau]$. It becomes 
  $\cA^{2}$ modulo $\tau$. Computations modulo
  $\tau^2$ give $\cA^{2}$ its Poisson structure, see an example in Section
  \ref{s_cAcE2}.

   \subsubsection{}

   Enumerative geometry has experienced an
   enormous influx of ideas from modern high energy
   physics and, in particular, in recent years there has been a lot of
   progress in making so-called Coulomb branches of
   supersymmetric gauge theories act by correspondences
   as above
  \footnote{At the risk of a gross oversimplification, this phenomenon
    may be explained as follows. A supersymmetric gauge theory has 
    gauge fields, matter fields, and their supersymmetric
    partners. In a Higgs vacuum, the gauge fields vanish, the matter
    fields take a constant value at the bottom of their potential $\cW_\textup{mat}$, while
    the remaining
    constant gauge transformation act on them with a finite stabilizer.
   The moduli space $\X_\textup{Higgs}$ of such vacua is the quotient
   of the critical locus $\partial \cW_\textup{mat} =0$ by the gauge
   group. The dynamics of the modulated Higgs
   vacua is described by the supersymmetric quantum mechanics of the
   moduli spaces of maps, or more precisely quasimaps, from $C$ to
   $\X_\textup{Higgs}$.

   There is an opposite kind of phase, called Coulomb phase, in which
   the matter fields vanish, or take some constant value fixed by the whole
   gauge group, while the superpartners of the gauge fields are
   allowed constant commuting values. Nonabelian and
   nonperturbative effects in the Coulomb phase are due to the
   appearance of monopoles, which fill the space-time like a gas of
   particles of different species. The moduli space
   $\X_\textup{Coulomb}$  of Coulomb vacua, which should be a Poisson algebraic
   variety because of the supersymmetry constraints, may be interpreted as describing
   the equations
   of state for this monopole gas.  Inserted in
   the expectation, each monopole species gives a global
   function on $\X_\textup{Coulomb}$.

   The correspondence \eqref{eq:4} may be interpreted as
   the insertion
   of a monopole of species $i \in I$ at the spatial location $x\in C$ in
   the spacetime $C\times \R$.
   Thus, descendants of the point class $\bpt\in H^2(C)$ give global
   function on $\X_\textup{Coulomb}$. This logic can be inverted to
   analyze $\X_\textup{Coulomb}$ as in \cites{NakCoul}, thus 
   circumventing the formidable difficulties
   present in the direct analysis of the monopole gas. }. 
Among the
   really vast literature on the subject, we should mention
   \cites{BFN1, BFN2, BFN3 ,Vermas, NakCoul} as the references that particularly influenced our thinking
   about the subject.

   From such considerations, one expects a Poisson map
   \[
     \sigma: \X^\vee \to \Spec \cA^{2}\,, 
   \]
   where $\X^\vee$ is a certain nicer Poisson algebraic variety, such that
   \[
   \sigma^*: \cA^{\ge 1}(C) \to \Cliff\, (\Omega^1 \X^\vee
   \otimes H^1(C))\,,
 \]
 and similarly for the whole algebra $\cA(C)$. Here $\Cliff$ denotes 
 the corresponding sheaf of Clifford algebras. It is reasonable to
 expect that this sheaf of Clifford algebras is easier to study than $\cA^{\ge 1}(C)$ and
 that one can construct important $\cA(C)$-modules as pushforwards
 under $\sigma$.

  \subsubsection{}\label{s_C_gen}

 As an important additional degree of freedom, one may have a
 constructible sheaf 
   $\bph_{\Maps}$ on
   $\Maps$ and  a matching local system $\bph$ on $C$ with
   a correspondence 
   \begin{equation}
   \cE : \Hd(C \times \Maps, \bph \boxtimes
   \bph_{\Maps} ) \to \Hd(\Maps, \bph_{\Maps})\label{eq:5}\,. 
   \end{equation}
   The local system  $\bph$ has rank $|I|$, which 
   absorbs the $I$-factor from earlier formulas. As before, it is
   convenient to assume that this rank is countable. The coefficients
   $c^k_{ij}$ and $m^k_{ij}$ from \eqref{eq:79} and \eqref{eq:80}
   amount to bilinear operations
   \[
     \{\, \cdot\, , \, \cdot \,\}, m : \bph \otimes \bph \to \bph \,, 
   \]
  which make $\bph$ a local system of Poisson algebras. Fixing a
   base point $x \in C$, we may interpret $\bph$ as an algebraic action of the group
   $\Gamma=\pi_{1,\textup{alg}}(C)$ on a certain $\X^\vee$, which
   depends on $\bph$. To match the
   generality of our discussion to the simplifying assumptions made in 
   our treatment of  L-genera,
   we will assume that the image of $\Gamma$ is Zariski dense
   in a reductive subgroup $\overline{\Gamma} \subset \Aut \X^\vee$. 

   It is important to stress that while $\X^\vee$ depends on
   $\bph$, this dependence is only through the reductive group $\overline{\Gamma}$
   that is required to act on $\X^\vee$. We will write
   $\X^\vee_{\overline{\Gamma}}$ to indicate such
   dependence. 
   It is natural to require this assignment to be functorial in the sense of the diagram
   \begin{equation}
     \label{eq:84}
     \xymatrix{
       \overline{\Gamma}_1\ar@{~>}[rr] \ar[d] &&
       \X^\vee_{\overline{\Gamma}_1} \ar[d] \\
       \overline{\Gamma}_2 \ar@{~>}[rr]&&
       \X^\vee_{\overline{\Gamma}_2} 
     }\,.
   \end{equation}
   The vertical arrows in this diagram are maps in the category of
   algebraic groups and the category of schemes with group action,
   respectively.
   
   The grading by the degree of the map and by the cohomological degree
leads to the action of $\bA\buu \times \Gm$ on $\X^\vee$ which
commutes with the action of $\overline{\Gamma}$.


   \subsubsection{}

   We denote by $\cA_{\bph}$ the algebra generated by the K\"unneth
   components of \eqref{eq:5}, which we denote by 
   \[
   \be\of{\gamma} \in \cA_{\bph}\,, \quad \gamma \in
   \Hd(C,\bph)=\Hd(\Gamma',\bph) \,. 
   \]
   Here $\Gamma'=\pi_{1,\textup{geom}}(C)$. Since the groups $H^0$ and
   $H^2$ pick out the trivial component in $\bph$, we have
   \[
     \cA_{\bph}^{2} = \textup{$\overline{\Gamma}$-invariants} \big/ (
     \textup{noninvariants}) \xrightarrow{\quad \sigma^*\quad} \cO_\Fix\,, 
     \]
     where we mod out the invariants by its intersection with the
     ideal generated by noninvariant functions, and $\Fix \subset \X^\vee$
     denotes the scheme-theoretic fixed locus. Similarly, 
  \begin{equation}
   \sigma^*: \cA_{\bph}^{\ge 1} \to
    \Cliff_{\X^\vee, \Gamma}\,. \label{eq:35*2}
  \end{equation}
  where Clifford algebra in the right-hand side is generated by
  the sheaf
   \[
     H\Omega_{\X^\vee} = H^1(\Gamma', \Omega^1 \X^\vee \big|_{\Fix}
     )\sTate{2} \,. 
   \]
   Here we pay attention to cohomological degree and the Galois
   action, as in Section \ref{PoissonM}. See Section \ref{s_Galois_SL2} below for 
   the details on how this works in a concrete situation.

   \subsubsection{}

It is logical to
construct $\cA_{\bph}^{\ge 1}$-modules or $\cA_{\bph}$-modules 
as pushforwards of the Clifford modules, or Clifford modules with a
certain extra $\cD$-module structure, under $\sigma$. If $\Fix$ is smooth and has a spinor bundle
$\Spin$ then $\Spin$ is clearly the main building block for all
constructions of this kind. We recall from Section \ref{s_cat_gen}
that $\Spin$ is a natural categorification of the L-genus 
$\bL_{\frac12}(\X^\vee)$.

In the pushforward $\sigma_*$, one can in principle specify an
arbitrary support. We will see that modules of the form $\Hd(\Maps,
\bph_{\Maps})$
will indeed be constructed as pushforwards with support. What we do
in Section \ref{s_curves} in an example, and what we think is the right recipe in
general, is to take the pushforward with support in the total attracting manifold
for the $\bA\buu$-action. This automatically produces modules whose
grading by degree agrees with the effective cone in $H_2(\bX,\Z)$. 

\subsubsection{}

In Theorem \ref{t1}, L-genera appear as the answer to a 
counting problem which is controlled by the Frobenius action on
$\Hd(\bph_{\Maps})$ for a certain sheaf $\bph_{\Maps}$ on a
suitable moduli spaces of maps $C \to \bG/\sP$. More precisely, these
are sections of $\bG/\sP$-bundles over $C$ rather than maps, see 
Section \ref{s_ct} below. The cohomology
groups $\Hd(\bph_{\Maps})$  provide 
a natural categorification of the left-hand side in
\eqref{eq:288}. Thus, a
natural guess for the categorification of the right-hand side is that
\begin{equation}
  \label{eq:83}
  \X^\vee =   T^* (\sP^\vee \backslash \bG^\vee / \sP^\vee), 
\end{equation}
with the bundle $\Spin$ on its $\Gamma'$-fixed locus. 
%

In Section \ref{s_curves} we verify a version of this prediction in the simplest
example $\bG=PSL(2)$, with one strange exception which only
happens for curves of genus $2$, see Theorem \ref{t2}.

There are
various aspect of this verification that merit comments. The most
immediate one is that the spaces \eqref{eq:83} are, in general, \emph{not}
the same as the
conventional Coulomb branches\footnote{With current technology, those
 are defined by Nakajima and his collaborators for $\bG$ of classical type.} for
geometries like $T^*\bG/\sP$. Whence our expectation that more general spaces
$\X^\vee_{\overline{\Gamma}}$ from Section \ref{s_C_gen} can be
defined and studied\footnote{One may speculate that
  $\X^\vee_{\overline{\Gamma}}$  should
  become 
  relevant when the superpartners of the gauge fields take
  nongeneric values and leave the remaining gauge fields to vary in
  $\Bun_\bM$ for some Levi subgroup $\bM$.}.

\subsubsection{}\label{s_ct}

Let $\Bun_{\bX}$ denote the moduli
stack of $\bX$-bundles $\widetilde{\bX} \to  C$. For any $\widetilde{\bX}
\in \Bun_{\bX}$, we may consider the moduli
space $\Sect(C\to \widetilde{\bX} )$ of sections of
$\widetilde{\bX}$. These moduli spaces are very similar to $\Maps(C\to \bX)$ and
   the corresponding counts are studied in enumerative geometry in parallel
   with untwisted counts. Their arithmetic analogs provide very
   interesting functions on the stack $\Bun_{\bX}$. In the example
   $\bX=\bP^1$, $\bG=PSL(2)$ considered in Section \ref{s_curves}, we have
   $\Bun_{\bX} = \Bun_\bG$.

   In the classical enumerative context, all
   counts are invariant under equivariant deformation.
   Up to such deformation, 
   we may assume $\widetilde{\bX}$ is in the image of
   $\Bun_\bA \to \Bun_\bG$, where $\bA\subset \bG$ is a maximal
   torus. In the arithmetic context, deformation invariance is no longer valid, but we
   can still reduce the counts to $\Bun_\bA$ if we sum or integrate
   over all $\bB$-bundles that project to a given $\bA$-bundle. Such
   counts give rise to contant terms, Whittaker coefficients, and
   similar functions of the Eisenstein series. 

   Geometrically, this means we allow the bundle $\widetilde{\bX}$
   to vary in the unipotent directions. Concretely for
   $\bP^1$-bundles, this means we consider all bundles of the form
   $\bP(\cV)$, where rank 2 vector bundle $\cV$ fits into a sequence
   of the form
   \begin{equation}
     0 \to \cM \to \cV \to \cO_C \to 0 \,, \label{eq:86}
   \end{equation}
   with fixed $\cM \in \Pic(C)$. 
   Such moduli spaces turn out to be nicer and, in particular, more
   amenable to the analysis by $\bA$-equivariant localization.
   In general, equivariant localization plays an important role in the
   geometry of $\X^\vee$ as, for instance, different components of the
   fixed locus give different affine charts on it, see Section
   \ref{two_local}.

   \subsubsection{}\label{s_diagram3}
   Another technical point about Theorem \ref{t2} is that we work with
   quasimaps, or rather quasisections, in place of the regular
   maps. Quasimaps to $\bP^1$ provide a simple compactification of the
   space of maps, 
   for which the strata look like moduli spaces of maps of lower
   degree, with extra marked points. This allows one to, first, construct an
   algebra of correspondences from the inclusion of boundary strata 
   and, second, isolate the compactly supported cohomology of the
   moduli spaces of maps via the long exact sequence corresponding
   to the inclusion of the boundary. 

   This structure is reflected in the first correspondence in the
   following chain of correspondences 
   \begin{equation}
     \label{eq:85}
     \xymatrix{
       \X^\vee_{\QMN} \ar@{<->}[r]\ar@{=}[d]& \X^\vee_{\mathsf{MapsN}}
       \ar@{<->}[r]\ar@{=}[d]&
       \X^\vee_{\Psi_\Eis} \ar@{=}[d]\\
   T^* \bP^1 \ar@{<->}[r]& 
   T^* (\bN^\vee \backslash \bP^1) \ar@{<->}[r] &
     T^* (\bB^\vee \backslash \bP^1)\,.}  
   \end{equation}
   Here the correspondences are the conormals to the maps between the
   bases of the cotangent bundles, 
   $\QMN$ denotes the moduli spaces of quasisections of
   $\bP(\cV)$, where $\cV$ is allowed to vary as in \eqref{eq:86},
   while $\mathsf{MapsN} \subset \QMN$ corresponds to the open set of regular
  sections. We expect such structure to be present generally. 

   \subsubsection{}

   The second correspondence in the chain \eqref{eq:85} appears because we integrate the
   constant term, that is,  integrate over
   $\cM\in \Pic(C)$, to obtain the distribution $\Psi_\Eis$. The
   dependence on $\cM$ is Theorem \ref{t2} is introduced via
   a homomorphism 
   \begin{equation}
   \Pic(C)(\Bbbk) \xrightarrow{\quad \cM \mapsto \cO(\cM)\quad }
   \Pic_{\overline{\Gamma}\times \bA\buu}(\X^\vee)\label{eq:25*2}
   \end{equation}
 such that $\cO(\cM)\cong \cO(\deg \cM)$ as
   a nonequivariant line bundle.

   \subsubsection{}

   For $\bG=PSL(2)$, there are only Whittaker coefficients beyond the
   constant term, and those can be computed in the exact same fashion
   in the function field situation. Geometrically equivalent to
   Whittaker coefficient are counts of sections of $\bP(\cV)$ according to the order
   of their tangency with a distinguished section provided by $\cM
   \subset \cV$. Similar counts are well-studied in enumerative geometry
   and we are tempted to speculate that the duality interface \cite{AO}, which
   is a certain Borel-equivariant elliptic class on $\X \times
   \X^\vee$, may provide a general dictionary between insertions not
   just for K-theoretic counts but also
   in the arithmetic function field situation.

   In the number field situations, there is a lack of many basic
   building blocks and hence a really vast
   space for imagination. It is curious to note, however, that the
   Archimedean contributions to Whittaker coefficients of Eisenstein
   series for $\bG/\bB$ fully match the
   quantum connection for $\bG^\vee/\bB^\vee$, see \cites{Ger1,Ger2}.


\section{$\bP^1$-bundles over curves}\label{s_curves} 

\subsection{Overview }

\subsubsection{}

Our goal in this section is to illustrate the mechanism by which
the L-genera, and their natural categorification, appear
in the simplest automorphic and enumerative context, namely in the analysis of the
Eisenstein series for the group $\bG=PGL(2, \Bbbk(C))$.
Leaving the detailed discussion of definitions and results to
later sections, our goal in this overview is to explain them in
very broad strokes.

Let $\cP \to C$ be a $\bP^1$-bundle defined over $\Bbbk$.
By definition, the Eisenstein series
$\Eis(\cP,\chi)$ is the weighted count of 
$\Bbbk$-rational sections $s$ of $\cP$. The variable $\chi$ is a
character of the group $\Pic(C)(\Bbbk)$ which  determines
the weight of $s$. It involves
the factor $a^{-\deg(s)}$, where $a$ is a variable, and also
a character $\chi_0$ of the finite group $\Pic_0(C)(\Bbbk)$. For
$|a|\gg 1$, the series converges to a rational function of $a$.

\subsubsection{}\label{s_delicate}

For a bundle $\cP$ of the form $\bP(\cM \oplus \cO_C)$, where
$\cM\in \Pic(C)$, the count of sections contains the
count of sections of $\cM$. The latter is a famously delicate function
of
$C$ and $\cM$. This means that
values of $\Eis(\, \cdot \, , \chi)$ at points may be very difficult to
compute directly. 
Classically, it has been observed that other linear functionals
of $\Eis(\, \cdot \, , \chi)$ are easier to describe and open the
path to understanding Eisenstein series via Fourier analysis. 

One such functional, known as the constant term\footnote{See
  Appendix \ref{s_app} for an elementary discussion of the role
  played by constant terms in the spectral analysis of Eisenstein
  series.}, sums the Eisenstein
series over all $\cP = \bP(\cV)$ such that
\[
  0 \to \cM \to \cV \to \cO_C \to 0 \,, 
\]
where the line bundle $\cM$ is fixed. We denote this sum $\CT(\cM,\chi)$. 
One can further refine it by recording the
intersection of $s$ with the distinguished section of $\cP$ 
defined by $\cM$. These refined counts can also be done
following the logic below.

\subsubsection{}
Constant terms are very closely related to the distribution $\Psi_\Eis$, simply because
taking the constant term is the adjoint operator to the operator of
forming the Eisenstein series from $\chi$. 
Theorem \ref{t1} may be rephrased to give a formula for 
the constant term $\CT(\cM,\chi)$ in terms of $\bL_{\frac12}(\bX^\vee)$, where 
\[
  \bX^\vee = T^*\bP^1 = T^*\left(\bG^\vee / \bB^\vee\right) \,, 
\]
with the action of the Galois group $\Gamma$ that comes from a
homomorphism
\[
  \Gamma \xrightarrow{\quad (\chi,q^{-1}) \quad} \bA^\vee \times \bH
  \subset \bG^\vee \times \bH\subset \Aut(\bX^\vee) \,.
\]
Here $\bG^\vee=SL(2)$, $\bA^\vee \cong \Gm$ is the maximal torus of
$SL(2)$, and $\bH \cong \Gm$ scales the cotangent fibers. The group $\bB^\vee$ is a minimal
parabolic and hence $\bA\buu=\bA^\vee$. We recall that, by
construction, 
$ \bA\buu$ is the group that records the grading of Eisenstein
series by degree via its character $a^{-\deg(s)}$, $a\in \bA\buu$.
The action of $\bA\buu$ on $\bX^\vee$ commutes with the action of
$\Gamma$.

\subsubsection{}

From its construction, the genus $\bL_{\frac12}(\bX^\vee)$ can be interpreted as
the K-theory class of a certain spinor bundle $\Spin$ on
\begin{equation}
  \label{eq:221}
  \Fix = \left( \bX^\vee \right)^{\Gamma'} =
  \begin{cases}
    \bX^\vee\,, \quad & \chi_0^2=1\,, \\
    \{0,\infty\} \,,  \quad & \chi_0^2 \ne 1\,, 
  \end{cases}
\end{equation}
where $\Gamma' = \pi_{1,\textup{geom}}(C)\subset
\Gamma$ is the inertia subgroup.
Our goal in this section is to relate $\Spin$ to the constant
term directly and geometrically.

%
%

\subsubsection{}

The first steps towards bringing the Eisenstein series and the spinor
bundle $\Spin$ closer together are rather
standard. Eisenstein series are counts of $\Bbbk$-rational
points in suitable moduli spaces. Hence, they can be computed and
also categorified using compactly supported cohomology of
these moduli spaces, with their action of Frobenius.
In general, this cohomology is taken with coefficients in a
nontrivial local system determined by $\chi$. 

Further, counts of sections are easily
related to counts of quasisections, which is a useful technical
concept recalled in Section \ref{s_quasis}. In the Eisenstein series
context, counts of quasisections are known as the geometric
Eisenstein series, see \cite{BravGaits,Laumon}. We denote by $\QMN$ the
moduli spaces that appear in the computation of the constant
term of geometric Eisenstein series. In fact, the formulation in
terms of $\bL_{\frac12}(\bX^\vee)$, as opposed to
$\bL_{\frac12}(T^*(\bN^\vee\backslash \bG^\vee /\bB^\vee))$ already assumes passing form
the constant terms of Eisenstein series to the constant terms of
geometric Eisenstein series, see the discussion in Section
\ref{s_diagram3}. 

\subsubsection{}

Quasisections have
two important advantages. First, quasisection of $\cP$ of
a given degree $d$ form a proper moduli
space $\QM_d(\cP)$. Second, there are natural correspondences between quasisections of
different degrees.

These correspondence were introduced in \cite{FinkKuz} and, specialized to
our situation, 
generate an algebra $\cA_\chi(C)$ that may be described as a central
quotient of the universal enveloping
algebra of the Lie superalgebra $\Hd(C,\fsl_2^\chi)$. Here we compose
the map $\Gamma \xrightarrow{\,\, \chi \,\,}  \bA^\vee$ with the adjoint representation
to form the local system $\fsl_2^\chi$ on $C$.

\subsubsection{}

By construction, we have subalgefbras
\[
  \cA_\chi(C) \supset \cA^{\ge 1}_\chi(C) \supset \cA^{2}_\chi
\]
generated by the cohomology classes of degree $\ge 1$ and $2$,
respectively. For the computation of the constant term, it suffices
to know $\Hd_c(\QMN)$ as a Frobenius-equivariant module over
$\cA^{2}_\chi$, which is central. However, it is even more natural
to keep track of the action of $\cA^{\ge 1}_\chi(C)$, which is
a Clifford algebra over $\cA^{2}_\chi$.

\subsubsection{}

There is a Galois-equivariant map
\[
  \sigma: \bX^\vee \to \textup{quadric cone} = \Spec \cA^2_{\chi=1}\,. 
\]
It induces a pullback map
\begin{equation}
  \label{eq:46}
  \sigma^*: \cA^{\ge 1}_\chi(C) \to \Cliff_{\bX^\vee, \Gamma}\,, 
\end{equation}
where $\Cliff_{\bX^\vee, \Gamma}$ is the sheaf of Clifford algebras
acting on $\Spin$. We recall that $\Spin$ is distinguished by being
a self-dual bundle such that $ \Cliff_{\bX^\vee, \Gamma}
= \End(\Spin)$. We can twist it by line bundles on $\bX^\vee$ to obtain other
sheaves of Clifford modules of the same rank.

From \eqref{eq:46}, we get $\cA^{\ge 1}_\chi(C)$-modules by pushing
forward Clifford modules under $\sigma$. In this pushforward, we can additionally
specify support to create various kinds of modules.

\subsubsection{} 
Our main result in this section, Theorem \ref{t2} below, says that
\begin{equation}
  \label{eq:47}
  \textup{suitable centering of $\Hd_c(\QMN)$} = \sigma_{*,\bfL}
\Spin \otimes \cO(\cM)
\end{equation}
as a $\cA^{\ge 1}_\chi(C)$-module. Here centering means a 
combination of cohomological shifts, Tate twists, et cetera, 
$\sigma_{*,\bfL}$ is the pushforward with support in a certain
Lagrangian $\bfL \subset \bX^\vee$,  and $\cO(\cM)$ is as in
\eqref{eq:25*2}.

The line bundle $\cM\in \Pic(C)$ appears in the left-hand side of
\eqref{eq:47} as the point at which the constant term, and its
categorification $\Hd_c(\QMN)$, are computed. It appears in the
righ-hand side via the map \eqref{eq:25*2} that takes line bundles
on $C$ to equivariant line bundles on $\bX^\vee$. 
%

The identification in \eqref{eq:47} is $\Fr \times
\bA\buu$-equivariant and preserves cohomological degree, thus giving a categorial lift to the relation between the
constant term and L-genera.

\subsection{Eisenstein series}

\subsubsection{}

A global function field is a field of the form $\Bbbk(C)$, where 
$C$ is curve over a finite field
$\Bbbk$. The curve $C$ can be assumed smooth,  
projective, and irreducible over the algebraic closure
$\overline{\Bbbk}$. Eisenstein
series for the group $PGL(2,\Bbbk(C))$ are
functions on the countable set 
\[
\Bun_{PGL(2)} = \{ \textup{$\bP^1$-bundles $\cP \to C$
defined over $\Bbbk$}\}/\cong
\]
defined by counting all $\Bbbk$-rational sections $s: C \to \cP$ with a certain
weight. Namely, for each
section $s$, we may consider the isomorphism class of the
normal bundle
\begin{equation}
\NN_s= s^*(N_{\cP/s(C)}) \in \Pic(C)(\Bbbk) \,. \label{eq:207}
\end{equation}
Here
\begin{equation}
  0 \to \underbrace{\Pic_0(C)(\Bbbk)}_\textup{finite} \to  \Pic(C)(\Bbbk)
  \xrightarrow{\quad \deg \quad} \Z \to 0  \label{eq:208}
\end{equation}
is the group of $\Bbbk$-points of $\Pic(C)$. Given a character
\[
\chi : \Pic(C)(\Bbbk)  \to \Ct \,, 
\]
we form the series
\begin{equation}
\Eis(\cP,\chi) = \sum_{s: C \to \cP}  \chi(\NN_s) \,. \label{eq:26}
\end{equation}
This is easily seen to converge to a rational
function of $\chi$ if $|\chi(\textup{ample})| \ll 1$.
It is convenient to normalize $\chi$ by 
\begin{equation}
\chi(\cL) = (q^{1/2} a)^{-\deg \cL} \, \chi_0(\cL)\,,\label{eq:209}
\end{equation}
where $\chi_0$ has finite order and $a \in \Ct$ is a continuous
variable. For any $\chi_0$, the series \eqref{eq:26} converges to a rational
function of $a$ for $|a|\gg 1$. 

\subsubsection{}\label{s_modif} 

Two bundles $\cP$ and $\cP'$ are said to be related by an elementary
modification or a \emph{Hecke modification} if 
\begin{equation}
  \label{eq:160}
  \xymatrix{
    & \Bl_y \cP = \Bl_{y'} \cP' \ar[ld] \ar[rd] \\
    \cP && \cP'\,, 
    }
\end{equation}
where $y$ and $y'$ are points in the fiber over $x$. In other words,
go from $\cP$ to $\cP'$, we first blow up some point $y$ in the fiber
over $x$ and then blow down the fiber as in \eqref{Modif}
\begin{gather}
  \label{Modif}
  \raisebox{-60pt}{\includegraphics[height=120pt]{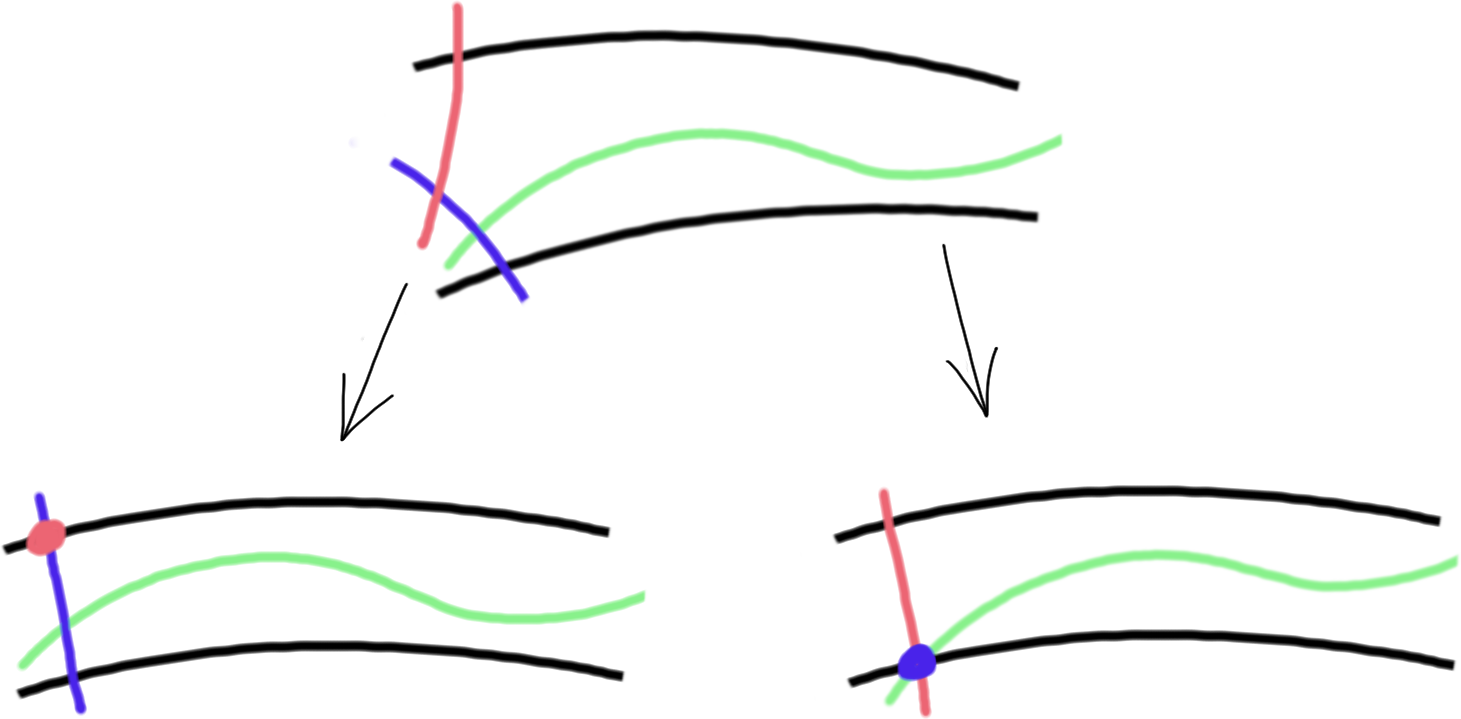}}\\
  \textup{\sl Modification of a $\bP^1$-bundle and its action on sections} \notag 
\end{gather}

\subsubsection{}

A section $s$ of $\cP$ in
\eqref{eq:160}
determines a unique section $s'$ of $\cP'$ and
\begin{equation}
  \label{eq:161}
  \NN_{s'} =
  \begin{cases}
    \NN_s(-x) \,, & y \in s \,, \\
    \NN_s(x) \,, & y \notin s \,. 
  \end{cases}
\end{equation}
The fiber of $\cP$ over $x\in \C$ is $\bP^1(\Bbbk_x)$, where
\[
[\Bbbk_x: \Bbbk] = \deg(\cO(x)) \overset{\textup{\tiny def}}= \deg x
\,.
\]
Therefore, there are 
$q^{\deg(x)}+1$ choices of $y$. Since only one is contained in $s$, we
infer that 
\begin{equation}
\sum_{\cP'=\textup{modification of $\cP$ at $x$}} \Eis(\cP',\chi) =
\lambda(\chi,x)\, \Eis(\cP,\chi) \,, \label{eq:211}
\end{equation}
where
\begin{equation}
  \label{eq:210}
  \lambda(\chi,x) = q^{\deg(x)/2}
\, 
  \tr
\begin{pmatrix}
  \chi_\rho(x) \\
  & \chi_\rho(x)^{-1} 
\end{pmatrix} \,, \quad \chi_\rho(x) = \chi(x) \, q^{\deg(x)/2} \,. 
\end{equation}
The relation \eqref{eq:211} and \eqref{eq:210} are the simplest
examples of the very general fact that Eisenstein series are
eigenfunctions of Hecke operators and that the corresponding
eigenvalues have something to do with Galois representations.
More precisely, since the \emph{geometric} Frobenius at the
point $x$ acts on $\Q_\ell(1)$ with weight $q^{-\deg(x)}$, we
can write
\begin{equation}
  \label{eq:302}
  \chi_\rho(x)  = \chi_0(x) a^{-\deg(x)} = \chi(x)
  \Tate{-\tfrac12}\,, 
\end{equation}
where the final bold parentheses denote minus half a of tensor product
with $\Q_\ell(1)$, that is, minus half of the Tate shift. The number
$\rho=1/2$ is the half sum of the positive roots for $PGL(2)$, which
is reflected in the notation in \eqref{eq:302}. Such shift is a very
general feature in theory of Eisenstein series. 

\subsection{Moduli of sections and quasisections} 

\subsubsection{}\label{s_quasis}
Recall that our goal in this section is to explain the relation between Eisenstein
series and L-genera on the \emph{categorical} level. On the Eisenstein
series side, the categorical lift is the obvious one. Namely,
$\Bbbk$-rational sections $s$ are $\Bbbk$-rational points of a certain 
moduli space of sections. Their counts are captured by the Frobenius
action on the compactly supported cohomology of the corresponding
moduli space (in general, with coefficients in a certain local
system). These cohomology groups, with the action of Frobenius,
provide the natural categorification of the Eisenstein series.

\subsubsection{}

Fix a rank 2 bundle $\cV$ over $C$ such that $\cP = \bP(\cV)$.
A section $s$ gives a rank $1$ subbundle $\cL \subset \cV$, that is,
an exact sequence of the form
\begin{equation}
  \label{eq:212}
   0 \to \cL \to \cV \to \textup{torsion free} \to 0  \,. 
 \end{equation}
 The requirement that the quotient in \eqref{eq:212} is
 torsion-free defines an open set in the larger moduli 
 space of quasimaps or quasisections. By definition, 
 \begin{equation}
 \QM(s: C \to \cP) = \{(\cL,s), \cL \in \Pic(C), s\in \bP(H^0(\cV \otimes
 \cL^{-1}))\} \,. 
\label{eq:214}
\end{equation}

\subsubsection{}

Clearly, for every quasisection $s$ there exists a unique
effective divisor $D$
 such that
 \begin{equation}
   \label{eq:213}
   \xymatrix{
     & \cL \ar[d] \ar[drr]^s\\
     0 \ar[r] &\cL(D)  \ar[rr]^{s(-D)\,\,\,\,} && \cV \ar[r] & \textup{torsion-free}
     \ar[r] & 0\,, 
     } 
 \end{equation}
 where the vertical arrow is the natural inclusion and the
 bottom row is exact. In other words, a quasisection is the same as
 a usual section and a collection of points of $C$. Thus, there
 is a very simple relation between counting sections and
 quasisections.

 Counts of quasisections are known as \emph{geometric} Eisenstein
 series \cite{BravGaits}. Their categorification, and its relation to
 categorification of L-genera, is easier to explore. We
 will focus on counts of quasisections and, correspondingly, on the cohomology of $\QM$.

 \subsubsection{}\label{s_bundle} 

 By construction, 
 \begin{equation}
   \label{eq:311}
   \xymatrix{
     \bP(H^0(\cV \otimes
     \cL^{-1})) \ar[r] \ar[d] & \QM \ar[d] \\
     \{ \cL\} \ar[r] & \Pic_{\deg \cL}(C)
     }
 \end{equation}
 is a projective space
 fibration over $\Pic(C)$. This fibration is empty for $\deg \cL \gg 0$
, smooth for $\deg \cL \ll 0$, and reflects interesting
 features of $C$ in the intermediate range. In the smooth
 range, we have
 \begin{equation}
   \label{eq:215}
   \dim \QM(s: C \to \cP)  = \deg \cV - 2 \deg \cL + 1-g =
   \deg \NN_s + 1 -g \,, \quad \deg \cL \ll 0 \,, 
 \end{equation}
 where $g$ is the genus of $C$ and
 \begin{equation}
   \label{eq:232}
   \NN_s = \det \cV/\cL 
 \end{equation}
 is the normal bundle to a quasisection $s$. Whenever nonempty, we
 define
 \begin{equation}
   \label{eq:215-2}
   \virdim \QM(s: C \to \cP)  = \deg \cV - 2 \deg \cL + 1-g \,. 
 \end{equation}
 The diagram \eqref{eq:311} equips $\QM$ and $\QMN$ with the
 tautological line bundes $\cO_{\QM}(1)$ and $\cO_{\QMN}(1)$

 \subsubsection{}
As already stressed in Section \ref{s_delicate}, Eisenstein series,
and hence also the cohomology groups $\Hd(\QM)$, depend
delicately on $C$ and $\cV$. There exist more robust moduli
spaces $\QMN$ which come up in the analysis of the constant term 
and will be discussed presently. 

In what follows, it will be convenient to normalize $\cV$ by requiring
that it fits into an exact sequence of the form
\begin{equation}
  \label{eq:226}
  0 \to \cM \to \cV \to \cO_C \to 0  \,. 
\end{equation}
We define 
\begin{align}
  \notag 
  \QMN= \{(\cV,\cL,s), &\textup{$\cV $ as in \eqref{eq:226}}\,,\\
  &\cL \in \Pic(C), s\in H^0(\cV \otimes
 \cL^{-1}), s\ne 0\}\Big/ \textup{iso fixing $\cM$ and $\cO_C$}\,, \label{eq:227}
\end{align}
where the line bundle $\cM$ is fixed and the isomorphisms by which
one is modding out in \eqref{eq:227} are required to be identity on
$\cM$ and $\cO_C$. The automorphisms of $(\cL,\cV)$ are
thus $\Aut(\cL) \times H^0(\cM)$, which act as a scalar  and as
unipotent transformations on the section $s$, respectively.
There is a natural action of $\Aut(\cM) = \Gm$
on $\QMN$. 

Possible choices of $\cV$ are parametrized
by $\Ext^1(\cO_C,\cM) = H^1(\cM)$ and from
\eqref{eq:215-2} we conclude 
 \begin{equation}
   \label{eq:215-3}
   \virdim \QMN= - 2 \deg \cL \,. 
 \end{equation}
 The last letter in the abbreviation $\QMN$ stand for the
 the unipotent freedom in choosing $\cV$ in \eqref{eq:226}. This is the
 geometric analog of the integration over the maximal
 unipotent subgroup in the automorphic setting.

\subsubsection{}\label{s_concreteQMN}

To have a more concrete description of \eqref{eq:227}, one may
note that the extension \eqref{eq:226} splits in the complement of
any point $y\in C$. Thus, $\cV$ is described by a 
transition function of the form
\[
T(t) = \begin{pmatrix}
 1 & T_{12}(t)  \\
 &1
\end{pmatrix} \,, \quad T_{12}(t) \in \Bbbk_y\pst\,, 
\]
between the formal neighborhood of $y$ and $C\setminus y$.
Here $t$ is a formal coordinate at $y$. The section $s$ is
described by 
\begin{equation}
s =
\begin{pmatrix}
  s_1(t) \\ 
  s_2(t) 
\end{pmatrix}\,,\quad s_1 \in H^0(C\setminus \{y\}, \cM \otimes \cL^{-1}
)\,,
s_2 \in H^0(C, \cL^{-1}
) \,, 
\label{eq:216}
\end{equation}
and the condition on $T$ and $s$ is that $Ts$ is regular at $t=0$.
Different choices of trivialization lead to the identification 
\[
(T,s) \cong (A T B^{-1}, B s) \,,
\]
where
\[
A \in  \left\{
    \begin{pmatrix}
      1 & \Bbbk_y\pTst\\
      0 & 1 
    \end{pmatrix} \right\}\,,
  \quad 
B \in  \left\{
    \begin{pmatrix}
      1 & H^0(C\setminus \{y\}, \cM) \\
      0 &1 
    \end{pmatrix} \right\}\,.
  \]
  Clearly, for $s_2(0) \ne 0$, the regularity of $Ts$ implies 
  \begin{equation}
    \label{eq:238}
    T_{12} = - \frac{s_1}{s_2}  \bmod \Bbbk_y\pTst \,, 
  \end{equation}
 which is well-defined modulo the action of $B$ and unique modulo the
  action of $A$. Since the choice of $y$ is arbitrary, we see that
  the locus $s_2 \ne 0$ in $\QMN$ is a smooth stack. The locus
  $s_2=0$ is also smooth, but $\QMN$ may not be smooth
  along that locus. 

  This nice feature of $\QMN$ can be observed for maps to more
  general targets $X$. Suppose that the attracting manifolds of a torus
  fixed point in $X$ ia an orbits of a certain unipotent group. If one allows
  the corresponding unipotent bundle to vary, the attracting manifold in the spaces
  of maps to $X$ becomes a smooth stacks with unipotent stabilizers.

 \subsection{Action of $\Hd(C,\fsl_2)$}\label{s_ef} 

 \subsubsection{}
 Consider the Lie superalgebra
 \begin{equation}
   \label{eq:37}
   \fsl_2 \otimes \Hd(C) = \Hd(C,\fsl_2)\,, 
 \end{equation}
 where in the right-hand side we consider the  cohomology of the
 trivial local system with fiber $\fsl_2$. Nontrivial local system will
 appear in Section \ref{s_Galois_e} below. 

 This Lie superalgebra acts by correspondences on 
 $\Hd(\QM)$ and $\Hd(\QMN)$. These correspondences 
 modify the line bundle $\cL$ and the section
 $s$. They do not affect the bundle $\cV$.

 The construction of $\fsl_2$-correspondences goes back to the classical work of
 Finkelberg and Kuznetsov \cite{FinkKuz} and forms the basic special case of
 many more advanced constructions in geometric representation theory,
 including the action of
Coulomb branches \cite{BFN1, BFN2, BFN3 ,Vermas, NakCoul} (as well as quantum groups of which Coulomb
branches tend to be quotients) on ordinary or extraordinary cohomology
of the moduli spaces of quasimaps.  We start by recalling the basic construction.




\subsubsection{}

Consider the value $s(x)$ of the section $s$ at a point $x\in C$. By
construction, this is a section of
 \begin{equation}
   \label{eq:325}
   \cN = \cV \otimes \cL^{-1} \otimes
\cO_{\QM}(1)\,, 
 \end{equation}
which is a rank 2 vector bundle over $C \times \QM$.
Here we interpret $\cL$ as the pullback to $C \times \QM$ of the
universal line bundle over $C \times \Pic(C)$. The
bundles $\cV$ and  $\cO_{\QM}(1)$ are   pulled back to $C \times \QM$ from the
first factor and the second factor, respectively.

The locus $s(x)=0$ is the image of the embedding 
 \begin{equation}
   \label{eq:320i}
   \iota: C \times \QM \hookrightarrow C \times \QM
 \end{equation}
 defined by
 \begin{equation}
   \label{eq:309}
   \iota((x,\cL,s))=(x,\cL(-x),s\otimes 1) \,. 
 \end{equation}
 Here $\cO_C\xrightarrow{\,\, 1 \,\,} \cO_C(x)$ is the canonical
 section. We may call $\iota^* \cN$ 
 the virtual normal bundle to \eqref{eq:320i}. It is the actual normal
 bundle for $\cL \ll 0$.

The inclusion \eqref{eq:320i} defines the functorial pullback map $\iota^*$ on
cohomology. The definition of the pushforward $\iota_*$ will require some
preparations.

\subsubsection{}
Consider the following abstract situation. Let $\iota: Y \to X$
be a closed inclusion with open complement $U$. Consider a relative
cohomology class $\varepsilon \in \Hd(X,U)$ and $\bar \varepsilon$ denote
its image in $\Hd(X)$. There is a factorization
\begin{equation}
  \label{eq:91}
  \xymatrix{
    \Hd(X) \ar[d]_{\iota^*} \ar[rr]^{\cup \bar \varepsilon} && \Hd(X)\\
    \Hd(Y) \ar[rr]^{\cup \varepsilon}  \ar@{.>}[rru]&& \Hd(X,U) \ar[u] 
    } 
\end{equation}
of the operator $\cup \bar \varepsilon$ of cup product by $\bar \varepsilon$,
which can be constructed as follows.

We denote by $\Lambda = \Q_\ell$
the constant sheaf of coefficients. Other sheaves of coefficients may
also be considered. By construction, we have
\begin{alignat}{2}
  \Hd(Y) &= \Hd(\iota^*\Lambda)&&= \Hd(\iota_* \iota^* \Lambda) \\
  \Hd(X,U) &= \Hd(\iota^!\Lambda)&&= \Hd(\iota_! \iota^! \Lambda)\,, 
\end{alignat}
where all maps are considered in the derived category of constructible
sheaves and $\iota_* =\iota_!$ since the map $\iota$ is proper. The
canonical maps $\Hd(X) \to \Hd(Y)$ and $\Hd(X,U) \to \Hd(X)$ come
from the adjunction maps $\Lambda \to \iota_* \iota^* \Lambda$ and
$\iota_! \iota^! \Lambda \to \Lambda$.

We have the following chain of maps
\begin{equation}
\Lambda \otimes \iota_! \iota^! \Lambda =
\iota_! ( \iota^* \Lambda \otimes \iota^! \Lambda) \to
\iota_! \iota^! \Lambda \to \Lambda \,,\label{eq:74}
\end{equation}
in which the first equality is the projection formula, see formula (2.6.19)
in \cite{KS} or Theorem 2.3.29 in \cite{Dimca}, while the
multiplication map $\iota^* \Lambda \otimes \iota^! \Lambda \to
\iota^! \Lambda$
is constructed in Proposition 3.1.11 in \cite{KS},
see also Theorem 3.2.13 in \cite{Dimca}. The sequence \eqref{eq:74}
yields the cup product map
\[
  \Hd(X) \otimes \Hd(X,U) \to \Hd(X)
\]
together with its factorization as in \eqref{eq:91}.

\subsubsection{} 

For our specific inclusion \eqref{eq:309}, we will construct the map
$\iota_*$ as the diagonal map in \eqref{eq:91} for the refined, also known
as localized, Euler class
 \begin{equation}
   \label{eq:233}
   \varepsilon = c_2(\cN) \in H^4( C \times \QM, U)\,. 
 \end{equation}
 Here $U$ is the complement of the image of $\iota$. The Euler class
  \eqref{eq:233} is constructed as follows.

  The bundle $\cN$ is a rank 2 vector bundle over $C\times \QM$ with a
  section $s(x)$. The data of a bundle with a section defines a map
  \begin{equation}
  \textup{clsf}: C\times \QM \to \mathbb{A}^2 / GL(2)\label{eq:90}
  \end{equation}
  to  the classifying stack for vector bundles with
  a section. By definition,
  \[
    \varepsilon = \textup{clsf}^*(c_2(\textup{Universal bundle})) = \textup{clsf}^*(
    [0/GL(2)]) \,. 
  \]
  Here 
  \[
   [0/GL(2)] \in H^4(\mathbb{A}^2 / GL(2), U_{\textup{universal}}) \,,
   \quad U_{\textup{universal}} = (\mathbb{A}^2\setminus 0) /
   GL(2)\,, 
 \]
 is the class of the origin in the target of \eqref{eq:90}. Since $U =
 \textup{clsf}^{-1}(U_{\textup{universal}})$, this gives the desired
 class \eqref{eq:233} and completes the construction of $\iota_*$. 

  

 An alternative definition of $\iota_*$ may be given as follows. From
 the computations in Section \ref{s_concreteQMN}, it is clear that
 allowing the bundle $\cV$ to vary gives a smooth total space of
 expected dimension. In
 this total space, $\iota$ is a regular embedding, hence defines a 
 pushforward in cohomology. The correspondence $\iota$ does not
 change $\cV$, therefore it defines an action on the cohomology of $\QM$ for
 fixed $\cV$. Since the image of $\iota$ is still the zero locus of
 the section $s(x)$ on the ambient space, the pushforward
 along a regular embedding may be equivalently defined in terms of the
 refined Euler class. As a result, the two definitions agree. 

 \subsubsection{}

 We define the maps 
 \begin{equation}
   \label{eq:313}
   \be, \bff : \Hd(C \times \QM) \xrightarrow{\quad
   } \Hd(\QM)
 \end{equation}
 by
 \begin{alignat}{2} 
  \be &= &&(\textstyle{\int_C} \otimes 1) \, \iota_* \,, \notag \\
   \bff &= -  &&(\textstyle{\int_C}\otimes 1) \, \iota^*\,,   \label{eq:322}
 \end{alignat}
 where $\textstyle{\int_C}$ denotes the pushforward along $C$. The
 operators \eqref{eq:322} increase the cohomological degree by $\pm
 2$, and the degree of $\cL$ by $\mp 1$, respectively. 

 As operators in \'etale cohomology, the operators $\be$ and
 $\bff$ also acquire a Galois action. We postpone this
 discussion to Section \ref{s_Galois_e}.

 \subsubsection{}

 Modifications at different points $x\ne x'$ commute, which means
 the commutator correspondence
\begin{equation}
  \label{eq:219}
  \xymatrix{
    \Hd( C \times C \times \QM) \ar[rr]^{ [ \be_{13}, \bff_{23}] }
    \ar[rd]_{\Delta^*_C}
    && \Hd(\QM) \\
    & \Hd( C \times \QM) \ar[ur]_{\bh} 
    }
\end{equation}
factors through the restriction $\Delta^*_C$ to the diagonal in
$C\times C$.
The subscripts in $ [ \be_{13}, \bff_{23}]$ mean that the operator
$\be$ is applied in the 1st and the 3d factor of the product, and
similarly for $\bff$. 
At this point, the diagram \eqref{eq:219} is the definition of the
operator $\bh$.

\subsubsection{}

It is clear that $\bh$ preserves cohomological degree and is given by a
correspondence that is supported over the diagonal in $\QM$. 
Therefore has the form
\begin{equation}
  \bh = (\textstyle{\int_C} \otimes 1) \circ
  \textup{cup product by a class in  $\bH^2(C \times \QM)$}
\,.\label{eq:223a}
\end{equation}
The divisor class in \eqref{eq:223a} will be computed presently. For
future reference, it will be convenient to list the tautological
divisor classes. We define
\begin{equation}
  \label{eq:235}
  \lambda = c_1(\cL)\,, \quad \mu = c_1(\cM)\,, \quad \tau =
  c_1(TC)\,, \quad \eta=c_1(\cO_{\QM}(1)) \,. 
\end{equation}
The exact sequence \eqref{eq:226} relates Chern classes of $\cV$ to
$c_1(\cM)$. In particular, we have
\begin{equation}
  \label{eq:38}
c_1(\cN) = \mu - 2 \lambda + 2 \eta \,. 
\end{equation}
 Being pulled back from $C$, the classes $\mu$ and $\tau$
 represent scalar operators in formulas like \eqref{eq:223a}.
 While in
a nonequivariant situation we have $\mu^2=\tau^2=0$, in an equivariant
situation both of these classes are nonzerodivisors. The ability to
divide by $\tau$ in a equivariant situation will come in handy in the
proof of Proposition \ref{p_sl2} below. 

\subsubsection{}

\begin{Proposition}\label{p_sl2}
We have
\begin{equation}
\bh = (\textstyle{\int_C} \otimes 1) \circ (\tau + c_1(\cN) ) 
\,.\label{eq:223}
\end{equation}
The operators $\be, \bff, \bh$ generate the Lie algebra
$\fsl_2$ in the sense that
\begin{equation}
  \label{eq:220}
  [\be, \bff] = \bh\, \Delta_C^* \,, \quad [\bh,\be] = 2 \be \, \Delta_C^* \,,
  \quad [\bh,\bff] = - 2 \bff\, \Delta_C^* \,. 
\end{equation}
\end{Proposition}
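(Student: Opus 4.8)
The plan is to prove \eqref{eq:223} first, and then read off the relations \eqref{eq:220} from it. Throughout I would use the second, ``regular embedding'' description of $\iota_*$ given above: after allowing $\cV$ to vary in its unipotent extensions the ambient total space is smooth of the expected dimension, $\iota$ is a regular embedding cut out by the tautological section of $\cN$ from \eqref{eq:325} (the locus $s(x)=0$), and $\be,\bff$ become an honest pushforward and pullback, so the projection formula and the self-intersection formula $\iota^*\iota_*=\cup\,e(N_\iota)$ are available; the two definitions of $\iota_*$ agree as observed in the text. At the level of gradings, $\iota$ replaces $\cL$ by $\cL(-x)$, so $\be$ lowers $\deg\cL$ by one, $\bff$ raises it by one, and $\bh$ preserves it, while all three preserve cohomological degree; this already explains why the relations \eqref{eq:220} can hold.

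To prove \eqref{eq:223}, the key geometric input is the one quoted above: modifications at distinct points $x\neq x'$ commute, so the commutator correspondence $[\be_{13},\bff_{23}]$ of \eqref{eq:219} is supported over the diagonal $\Delta_C\subset C\times C$ and factors through $\Delta_C^*$, which is exactly what defines $\bh$. I would identify $\bh$ by computing this diagonal term as a self-intersection: composing ``modify at $x$'' ($\iota_*$) with ``modify at $x'$'' ($\iota^*$) acquires, in the limit $x'\to x$, the excess class $e(N_\iota)=c_2(\cN')$, where $\cN'=\cN\otimes\cO_C(x)$ is the normal bundle of $\iota$ in the smooth model, the image of $\iota$ being the zero locus of the tautological section of $\cN'$. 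Expanding $c_2\big(\cN\otimes\cO_C(x)\big)=c_2(\cN)+c_1(\cN)\,c_1(\cO_C(x))+c_1(\cO_C(x))^2$ and restricting to the diagonal, the last term becomes the self-intersection $[\Delta_C]^2\big|_{\Delta_C}=e(TC)=\tau$, the middle term pushes down under one $\int_C$ to $c_1(\cN)$ on $C\times\QM$, and the $c_2(\cN)$ term drops after the diagonal restriction reconciles the two integrations over $C$. Using $c_1(\cN)=\mu-2\lambda+2\eta$ from \eqref{eq:38}, the surviving class on $C\times\QM$ is $\tau+c_1(\cN)$, which is \eqref{eq:223}; the overall sign is forced by the minus sign in \eqref{eq:322}, and it is convenient to run this equivariantly so that $\tau$ is a nonzerodivisor, as remarked before the Proposition, and only then specialize.

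Granting \eqref{eq:223}, the first relation in \eqref{eq:220} is the definition of $\bh$. For $[\bh,\be]$, interpreting the commutator over $C\times C\times\QM$ as in \eqref{eq:219}, note that $\bh$ acts by $(\int_C\otimes 1)$ applied to cup product with the divisor $D=\tau+\mu-2\lambda+2\eta$, while moving $\bh$ past the modification correspondence $\be$ replaces $D$ by $\iota^*D$: the modification fixes $\tau$ and $\mu$ (pulled back from $C$), fixes $\eta$ because $s$ and $s\otimes1$ span the same line in $\cO_{\QM}(1)$, and sends $\lambda=c_1(\cL)$ to $c_1(\cL(-x))=\lambda-\delta$, where $\delta$ is the diagonal class through which $(\int_C\otimes1)(\delta\cdot{-})$ becomes $\Delta_C^*$. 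Thus $\iota^*D-D=2\delta$, and $[\bh,\be]$ collapses to $2\,\be\,\Delta_C^*$; the same computation for $\bff$, which raises $\deg\cL$, gives $[\bh,\bff]=-2\,\bff\,\Delta_C^*$.

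The routine parts are the grading statements and the two $[\bh,\cdot]$ commutators, which are purely formal once \eqref{eq:223} is in hand. The real work — and the main obstacle — is the self-intersection bookkeeping in the middle step: isolating precisely the diagonal contribution to $[\be_{13},\bff_{23}]$, extracting the $\tau$-summand from the collision of the two modification points (this is where $T C$ enters, via $[\Delta_C]^2|_{\Delta_C}$), and keeping every sign, orientation, and cohomological shift consistent with \eqref{eq:322} and \eqref{eq:91}.
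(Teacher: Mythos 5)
Your overall strategy matches the paper's: factor the commutator through $\Delta_C^*$, compute the excess class via the self-intersection formula for the regular embedding $\iota$, use $\iota^*\cN = \cN\otimes TC$, and divide by $\tau$ equivariantly. The conclusion and the route are essentially the same, but two of your intermediate statements need correction to be a complete argument.

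First, the $c_2(\cN)$ term does not ``drop after the diagonal restriction reconciles the two integrations.'' It is cancelled because what we need is the \emph{commutator} $\iota^*\iota_* - \iota_*\iota^*$. The self-intersection formula gives $\iota^*\iota_* = \cup\, c_2(\iota^*\cN) = \cup\, c_2(\cN\otimes TC)$, while the factorization \eqref{eq:91} applied to the refined Euler class $\varepsilon = c_2(\cN)$ gives $\iota_*\iota^* = \cup\, c_2(\cN)$. Their difference is
\[
c_2(\cN\otimes TC) - c_2(\cN) = \tau\, c_1(\cN) + \tau^2 = \tau\bigl(\tau + c_1(\cN)\bigr),
\]
which is the whole content of the paper's \eqref{eq:224}. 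You must name this subtraction; it does not come for free from any of the $\int_C$'s.

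Second, the claim $[\Delta_C]^2\big|_{\Delta_C} = e(TC) = \tau$ is not quite right: restricting $[\Delta_C]^2$ to the diagonal gives $\tau^2$ (and the related ``pushes down under one $\int_C$ to $c_1(\cN)$'' step is similarly off by a factor of $\tau$). The missing mechanism is exactly the one the paper spells out in \eqref{eq:25*3}: precomposing with $\Delta_{C,*}$ turns the desired identity into $\tau\bh = [\be,\bff]\circ\Delta_{C,*}$, because $\Delta_C^*\Delta_{C,*} = \cup\,\tau$. One then computes the right-hand side to be $(\int_C\otimes 1)\circ\,\cup\,\tau(\tau+c_1(\cN))$, and cancels one factor of $\tau$. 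That cancellation is \emph{not} achieved by any geometric restriction or pushforward; it is a bona fide division by $\tau$, legitimate because (as you do note at the end, and as the paper emphasizes by passing to $C=\A^1$ with its $\Gm$-action) the computation is universal and equivariant, and there $\tau$ is a nonzerodivisor. Once the $\tau$-division is made explicit the two remaining expansion terms give $\tau + c_1(\cN)$ exactly as you conclude.

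Your treatment of $[\bh,\be]$ and $[\bh,\bff]$ via $\iota^*D - D$ is fine and consistent with the paper's terser ``follows from \eqref{eq:26*2}''; with the two corrections above, the argument is complete.
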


\begin{proof}
Consider the pushforward map
  \[
 \Delta_{C,*}:  \Hd( C \times \QM) \to \Hd( C \times C \times \QM)
 \,. 
 \]
 Since the composition of the pushforward and pullback is the multiplication
 by the Euler class of the normal bundle, we have 
 \[
 \Delta_{C}^* \circ \Delta_{C,*} = c_1(N_{C^2/\Delta C}) = c_1(TC) =
 \tau 
 \,. 
\]
Therefore
\begin{equation}
  \label{eq:25*3}
  \tau \bh = [ \be, \bff] \circ \Delta_{C,*} \,. 
\end{equation}
By construction of $\iota_*$, we have
\[
\iota^*  \iota_{*}  \circ \Delta_{C,*} =
\iota^*(\bar \varepsilon) \cup \,.
\]
where the refined Euler class $\varepsilon$ was defined in
\eqref{eq:233} and $\bar \varepsilon$ is the usual unrefined Euler class. To
compute the composition in the other direction, we use the factorization
\eqref{eq:91}, which gives
\[
\iota_*  \iota^* \circ \Delta_{C,*} =
\bar \varepsilon \cup \,.
\]
Therefore, 
  \begin{equation}
    [\iota^* , \iota_{*} ] \circ \Delta_{C,*} =
    (\iota^*(\bar\varepsilon) - \bar\varepsilon )  \cup \,, \label{eq:323}
 \end{equation}
 We have
 \begin{equation}
 \iota^*(\cN)  = \cN
 \otimes T C \,,\label{eq:26*2}
 \end{equation}
  whence 
  \begin{align}
 [\iota^* , \iota_{*} ] \circ \Delta_{C,*}  &= c_2(\cN
 \otimes T C) - c_2(\cN)\notag \\
    & = \tau(c_1(\cN)+\tau) \,. \label{eq:224}
\end{align}
To complete the proof of \eqref{eq:223}, it
remains to argue that we can divide by $\tau$ in
\eqref{eq:25*3} and \eqref{eq:224}. Clearly, the divisor in
\eqref{eq:223a} is  a linear combination of all possible divisors
\eqref{eq:235} involved, which is universal, that is, independent of the curve $C$.
Since our computations are fully equivariant,
the formula should also be true for $C=\A^1$ with the defining action
of $\Gm$. In this case, $\tau$ is the
 generator of $\Hd_{\Gm}(C,R)\cong R[\tau]$, where $R$ is the ring of
 coefficients, and is not a zerodivisor. This completes the proof of
 \eqref{eq:223}.

The other
commutators in \eqref{eq:220} follow from \eqref{eq:26*2}.

\end{proof}

\subsubsection{}
To convert the relations \eqref{eq:220} into a more familiar form, we
consider the map
\begin{equation}
 \fsl_2 \otimes \Hd(C)= \Hd(C,\fsl_2) \xrightarrow{\quad}  \End(\Hd({\textsf{QM}}))\label{eq:35}
 \end{equation}
such that
\begin{equation}
  \label{eq:34}
  \begin{pmatrix}
    0 & \gamma \\
    0 & 0 
  \end{pmatrix}
   \mapsto \be \of{\gamma}
  \overset{\textup{\tiny def}}=\be(\gamma \boxtimes \, \cdot \,) \,,
  \quad \gamma \in \Hd(C) \,, 
\end{equation}
and similarly for $\bff$ and $\bh$. Both sides in \eqref{eq:35} are
Lie superalgebras
with the $\Z/2$-grading determined by
the cohomological grading. From \eqref{eq:220} we deduce

\begin{Proposition}\label{p_sl22}
The map \eqref{eq:34} is a homomorphism of Lie superalgebras.
\end{Proposition}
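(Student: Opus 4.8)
The plan is to deduce Proposition~\ref{p_sl22} directly from Proposition~\ref{p_sl2} by unwinding the definition \eqref{eq:34} of the map $\Hd(C,\fsl_2) \to \End(\Hd(\QM))$ and matching the bracket relations term by term. First I would recall that the Lie superalgebra structure on $\fsl_2 \otimes \Hd(C)$ is the tensor product of the (purely even) Lie algebra $\fsl_2$ with the graded-commutative ring $\Hd(C)$, so that the bracket is $[X \otimes \gamma_1, Y \otimes \gamma_2] = (-1)^{|\gamma_1||Y|}\,[X,Y] \otimes (\gamma_1 \cup \gamma_2)$ --- since $\fsl_2$ sits in even degree, the sign is governed entirely by the cohomological degrees of the classes on $C$. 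On the operator side, the $\Z/2$-grading on $\End(\Hd(\QM))$ is the one induced by cohomological degree, and $\be\of{\gamma}$, $\bff\of{\gamma}$, $\bh\of{\gamma}$ have parity $|\gamma|$ (the operators $\be,\bff$ shift degree by $\pm 2$, which is even, and $\bh$ preserves degree, so in all three cases the parity of $\be\of{\gamma}$ equals $|\gamma| \bmod 2$). Thus both sides of \eqref{eq:35} are $\Z/2$-graded in a compatible way, and it remains to check the three bracket relations on generators $\begin{pmatrix} 0 & \gamma_1 \\ 0 & 0\end{pmatrix}$, $\begin{pmatrix} 0 & 0 \\ \gamma_2 & 0\end{pmatrix}$, $\begin{pmatrix} \gamma_3 & 0 \\ 0 & -\gamma_3\end{pmatrix}$ of $\Hd(C,\fsl_2)$.

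The key computation is the $[\be,\bff]$ relation. Starting from \eqref{eq:220}, $[\be,\bff] = \bh\,\Delta_C^*$ as a correspondence from $\Hd(C \times C \times \QM)$ to $\Hd(\QM)$, where on the left $\be$ acts in the first and third factors and $\bff$ in the second and third. Feeding in a decomposable class $\gamma_1 \boxtimes \gamma_2 \boxtimes (\,\cdot\,)$ and using that $\Delta_C^*(\gamma_1 \boxtimes \gamma_2) = \gamma_1 \cup \gamma_2$, the right-hand side becomes $\bh\of{\gamma_1 \cup \gamma_2}$, which is precisely the image of $[X,Y] \otimes (\gamma_1 \cup \gamma_2)$ under \eqref{eq:34} (with $[X,Y] = \begin{pmatrix} 1 & 0 \\ 0 & -1\end{pmatrix}$). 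The left-hand side, when I commute the external $\gamma_1$ past the operator $\bff$ associated to $\gamma_2$, produces exactly the Koszul sign $(-1)^{|\gamma_1||\gamma_2|}$ relative to the naive composition $\be\of{\gamma_1}\bff\of{\gamma_2} - \bff\of{\gamma_2}\be\of{\gamma_1}$ --- this is the point where the sign bookkeeping in the definition of the graded bracket must be made to agree with the Koszul rule governing how cohomology classes on $C$ move past correspondences. The relations $[\bh,\be] = 2\be\,\Delta_C^*$ and $[\bh,\bff] = -2\bff\,\Delta_C^*$ are handled identically, reproducing the $\fsl_2$ structure constants, with $\bh$ acting through $\int_C \otimes 1$ composed with cup product by $\tau + c_1(\cN)$ as in \eqref{eq:223} --- but since we only need the abstract bracket relations \eqref{eq:220}, the concrete divisor class plays no role here.

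The main obstacle I anticipate is purely the sign/Koszul-rule bookkeeping: verifying that the signs appearing when one slides a class $\gamma \in H^k(C)$ through a correspondence acting on the $\QM$-factor, and the signs in the definition of the bracket on $\fsl_2 \otimes \Hd(C)$, are consistently the same, so that \eqref{eq:220} --- which is stated with $\Delta_C^*$ but without explicit signs --- genuinely upgrades to a \emph{super}algebra homomorphism rather than just matching the even part. This is a standard but error-prone check; the cleanest way to organize it is to observe that all three operators $\be,\bff,\bh$ are built from a single correspondence $\iota$ (together with $\int_C$ and cup products pulled back from $C$), so that the Künneth/sign conventions are uniform, and then to note that the diagonal restriction $\Delta_C^*$ intertwines the graded-commutative product on $\Hd(C \times C)$ with that on $\Hd(C)$ with the correct Koszul signs, by functoriality of the cup product. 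Once this is set up, the three relations of \eqref{eq:220} map term by term onto the three defining relations of $\fsl_2 \otimes \Hd(C)$, and the proof is complete.
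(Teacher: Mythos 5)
Your proposal is correct and takes the same route as the paper, which simply says ``From \eqref{eq:220} we deduce'' before stating the proposition: unwinding \eqref{eq:220} via the K\"unneth decomposition of $\Delta_C^*$ and checking that the Koszul signs coming from sliding classes $\gamma\in\Hd(C)$ past the correspondences match the signs built into the super bracket on $\fsl_2\otimes\Hd(C)$. Your explicit observation that the operators $\be,\bff,\bh$ shift cohomological degree only by even amounts, so that the parity of $\be\of\gamma$, $\bff\of\gamma$, $\bh\of\gamma$ is exactly $|\gamma|\bmod 2$, is precisely the point that makes the sign bookkeeping work out and is left implicit in the paper.
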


\subsubsection{}\label{s_cAcE2}
Since the computations in Proposition \ref{p_sl2} are fully
equivariant, they determine the algebra $\cA(\A^1/\Gm)$ mentioned in Section
\ref{s_cAcE}  as the following quadratic extension of the
graded version of $\fsl_2\otimes R[\tau]$, where $R$ is the ring of
coefficients. 

To have a well-defined pushforward along $C$, we need to either
localize or work with compactly supported classes. Choosing the second
options, we fix the generator $\bpt \in H^2_c(\A^1/\Gm)$ such
that $\int_C \bpt = 1$. Since
\[
  \bpt^2 = \tau \bpt\,, 
\]
we see that $\fsl_2 \otimes H^2_c(\A^1/\Gm)$ is a graded version of
$\fsl_2$ over $R[\tau]$  in the sense that
\[
  [ \be\of{\bpt} , \bff\of{\bpt} ] = \tau \bh\of{\bpt}\,, 
\]
et cetera. Adding the element $\mu$, we see from Proposition
\ref{p_Cas} below that
\begin{equation}
  \label{eq:39}
  \cA(\A^1/\Gm) = \cU_\textup{graded}(\fsl_2) [\mu] \Big/
  \left(\mu^2 = \textup{Casimir element} + \tau^2 \right)\,. 
\end{equation}

\subsubsection{}

We note that Propositions \ref{p_sl2} and \ref{p_sl22} apply verbatim
to the moduli spaces $\QMN$ in place of $\QM$. Indeed, the
correspondences defining the $\fsl_2$-action act over the space
$H^1(\cM)$ of possible extensions $\cV$ and commute with the action of
$H^0(\cM)$. From now on, we will focus on the compactly
supported cohomology $\Hd_c(\QMN)$. 


\subsection{Casimir elements and the mirror $\bX^\vee$} 

\subsubsection{}

As an immediate corollary of the computation done in \eqref{eq:224},
we obtain the following

\begin{Proposition}\label{p_Cas} We have the equality 
  \begin{equation}
 \big( 2 \, \be_{13} \, \bff_{23} + 2 \, \bff_{23} \, \be_{13}  +
 \bh_{13} \bh_{23}  \big) \circ \Delta_{C,*} =
 \mu^2 - \tau^2\label{eq:36}
\end{equation}
of operators from $\Hd_c(C \times \QMN)$ to $\Hd_c(\QMN)$. 
\end{Proposition}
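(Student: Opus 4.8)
First I would rewrite the left‑hand side of \eqref{eq:36} purely in terms of the operators $\iota^*,\iota_*$ and of cup products, using $\be=(\textstyle{\int_C}\otimes 1)\iota_*$ and $\bff=-(\textstyle{\int_C}\otimes 1)\iota^*$ from \eqref{eq:322} together with $\bh=(\textstyle{\int_C}\otimes 1)(\tau+c_1(\cN))$ from \eqref{eq:223}. The decisive input is that the computation already carried out in the proof of Proposition \ref{p_sl2} yields, after precomposition with $\Delta_{C,*}$ (which fuses the two integrations over $C_1$ and $C_2$ into a single $\textstyle{\int_C}$ and reduces the mixed compositions to same‑point ones),
\[
\iota^*\iota_*\circ\Delta_{C,*}=c_2(\cN\otimes TC)\,\cup\,,\qquad \iota_*\iota^*\circ\Delta_{C,*}=c_2(\cN)\,\cup\,.
\]
Consequently the three terms $2\be_{13}\bff_{23}$, $2\bff_{23}\be_{13}$, $\bh_{13}\bh_{23}$, each composed with $\Delta_{C,*}$, become $(\textstyle{\int_C}\otimes 1)$ applied to cup product with $-2\,c_2(\cN)$, with $-2\,c_2(\cN\otimes TC)$, and with $(\tau+c_1(\cN))^2$ respectively.

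What is left is a one‑line Chern‑class identity. Since $TC$ is a line bundle with $c_1=\tau$ and $\cN$ has rank $2$, we have $c_2(\cN\otimes TC)=c_2(\cN)+\tau\,c_1(\cN)+\tau^2$, which is just a restatement of \eqref{eq:224}. Adding the three contributions, the $c_2(\cN)$‑terms and the $\tau\,c_1(\cN)$‑terms cancel in pairs and one is left with cup product by $c_1(\cN)^2-4\,c_2(\cN)-\tau^2$. Finally $c_1(\cN)^2-4\,c_2(\cN)$, the discriminant of the rank‑$2$ bundle $\cN$, is unchanged by the line‑bundle twist $\cN=\cV\otimes\cL^{-1}\otimes\cO_{\QM}(1)$, hence equals $c_1(\cV)^2-4\,c_2(\cV)$; and the extension \eqref{eq:226} gives $c(\cV)=1+\mu$, so this discriminant is $\mu^2$. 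The left side of \eqref{eq:36} therefore equals cup product with $\mu^2-\tau^2$, which is the assertion.

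The step that really needs care — and the part where I would simply invoke the preceding proof rather than redo it — is the bookkeeping with the two copies of $C$ carrying $\be_{13}$ and $\bff_{23}$: one must verify that after precomposition with $\Delta_{C,*}$ the compositions $\be_{13}\bff_{23}$ and $\bff_{23}\be_{13}$ collapse exactly to $-\iota_*\iota^*\circ\Delta_{C,*}$ and $-\iota^*\iota_*\circ\Delta_{C,*}$, i.e.\ that only the diagonal contribution survives and that the normal‑bundle factor is accounted for by $\Delta_C^*\circ\Delta_{C,*}=\tau$. This is precisely the mechanism of the factorization \eqref{eq:91} and of the proof of Proposition \ref{p_sl2}, so no new geometric idea is required; a less computational alternative would be to observe that \eqref{eq:36} must be cup product with a universal polynomial in the scalar classes $\mu$ and $\tau$ and then pin its coefficients down on the single example $C=\A^1$ with its $\Gm$‑action, but routing through \eqref{eq:224} as above is the cleaner path.
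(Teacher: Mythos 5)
Your proof is correct and follows the paper's suggested route: the Proposition is, as stated, an immediate corollary of the identity \eqref{eq:224} established in the proof of Proposition \ref{p_sl2}, and you have simply unwound the definitions of $\be$, $\bff$, $\bh$ to convert the three terms into $(\int_C\otimes 1)$ of cup products with $-2c_2(\cN)$, $-2c_2(\cN\otimes TC)$, and $(\tau+c_1(\cN))^2$. Your packaging of the final simplification via the twist-invariant discriminant $c_1(\cN)^2-4c_2(\cN)=c_1(\cV)^2-4c_2(\cV)=\mu^2$ (using the extension \eqref{eq:226}) is a clean way to arrive at $\mu^2-\tau^2$.
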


\noindent 
To convert this to a more familiar form, let $\bpt \in
H^2(C)$ be the generator that comes from the pushforward of $1\in
H^0(\pt)$ under 
$\pt \to C$. We may use the relations
\begin{align*}
  \Delta_{C,*} (\bpt) &= \bpt \otimes \bpt\,, \\
  \Delta_{C,*} (\alpha) &= \alpha \otimes \bpt + \bpt
                          \otimes \alpha \,, \quad \alpha \in H^1\,, 
\end{align*}
to deduce the following

\begin{Corollary}
  We have
 \begin{align}
    \label{eq:306}
    4 \, \be\of{\bpt} \, \bff\of{\bpt} + \bh\of{\bpt}^2 &=
                                                                            0
                                                                            \,,\\
                                                                            2 \, \be\of{\bpt} \, \bff\of{\alpha} + 2 \, \bff\of{\bpt} \, \be\of{\alpha}
                                                                            +
                                                                            \bh\of{\bpt}\,  \bh\of{\alpha}
   &=0 \,,       \label{eq:306-2}                                                  
 \end{align}
 where $\alpha \in H^1(C)$. 
\end{Corollary}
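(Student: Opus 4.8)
The plan is to obtain both relations by restricting the operator identity of Proposition~\ref{p_Cas} to the diagonal of $C\times C$ — that is, by precomposing with $\Delta_{C,*}$ applied to the two distinguished generators $\bpt\in H^2(C)$ and $\alpha\in H^1(C)$ — and then simplifying the resulting expressions with the $\fsl_2$-commutation relations of Proposition~\ref{p_sl2}. The first observation is that the right-hand side of \eqref{eq:36} contributes nothing in the case at hand: $\mu^2$ and $\tau^2$ are pulled back from $C$ and therefore lie in $H^4(C)=0$ (equivalently, we work nonequivariantly, cf.\ the remark preceding Proposition~\ref{p_sl2}), so $\bigl(2\be_{13}\bff_{23}+2\bff_{23}\be_{13}+\bh_{13}\bh_{23}\bigr)\circ\Delta_{C,*}=0$ as a map $\Hd_c(C\times\QMN)\to\Hd_c(\QMN)$.

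To get \eqref{eq:306} I would feed in $\Delta_{C,*}(\bpt)=\bpt\otimes\bpt$: each of the three two-factor operators extracts the relevant copy of $\bpt$ from the first two tensor factors, leaving the operators $\be\of{\bpt}\bff\of{\bpt}$, $\bff\of{\bpt}\be\of{\bpt}$ and $\bh\of{\bpt}^2$ acting on $\Hd_c(\QMN)$. Since $[\be\of{\bpt},\bff\of{\bpt}]=\bh\of{\bpt\cup\bpt}$ and $\bpt\cup\bpt\in H^4(C)=0$, the two mixed terms coincide and the identity collapses to $4\be\of{\bpt}\bff\of{\bpt}+\bh\of{\bpt}^2=0$. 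For \eqref{eq:306-2} I would feed in $\Delta_{C,*}(\alpha)=\alpha\otimes\bpt+\bpt\otimes\alpha$; expanding the two terms through the six pieces of the operator gives
\[
2\be\of{\alpha}\bff\of{\bpt}+2\bff\of{\bpt}\be\of{\alpha}+2\be\of{\bpt}\bff\of{\alpha}+2\bff\of{\alpha}\be\of{\bpt}+\bh\of{\alpha}\bh\of{\bpt}+\bh\of{\bpt}\bh\of{\alpha}=0 \,.
\]
Then I would invoke Proposition~\ref{p_sl2}: $\bh\of{\bpt}$ and $\bh\of{\alpha}$ commute (as $[\bh,\bh]=0$ in $\fsl_2$), while the super-brackets $[\be\of{\alpha},\bff\of{\bpt}]$ and $[\be\of{\bpt},\bff\of{\alpha}]$ both equal $\bh$ evaluated on $\bpt\cup\alpha\in H^3(C)=0$, hence vanish — here the Koszul sign is $+1$ since only one of the two arguments is odd, so the super-bracket is the ordinary commutator. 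Thus $\be\of{\alpha}\bff\of{\bpt}=\bff\of{\bpt}\be\of{\alpha}$ and $\be\of{\bpt}\bff\of{\alpha}=\bff\of{\alpha}\be\of{\bpt}$; substituting these and dividing by $2$ turns the displayed identity into $2\be\of{\bpt}\bff\of{\alpha}+2\bff\of{\bpt}\be\of{\alpha}+\bh\of{\bpt}\bh\of{\alpha}=0$, which is \eqref{eq:306-2}.

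The computation is essentially a bookkeeping exercise, so I do not expect a genuine obstacle. The only points demanding care are (i) keeping track of the order in which $\be\of{\cdot}$ and $\bff\of{\cdot}$ are applied once the two-factor operators $\be_{13},\bff_{23},\bh_{13},\bh_{23}$ are restricted to the diagonal, and (ii) the consistent handling of $\Z/2$-signs when one argument is the odd class $\alpha$ — both of which are controlled by the vanishing of $H^{\ge 3}(C)$ for a curve.
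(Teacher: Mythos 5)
Your proposal is correct and follows exactly the route the paper sketches: substitute $\Delta_{C,*}(\bpt)=\bpt\otimes\bpt$ and $\Delta_{C,*}(\alpha)=\alpha\otimes\bpt+\bpt\otimes\alpha$ into Proposition~\ref{p_Cas}, observe that the right-hand side $\mu^2-\tau^2$ vanishes nonequivariantly, and use the super-Lie-algebra relations (with $\bpt^2$ and $\bpt\cup\alpha$ vanishing in $H^{\ge 3}(C)=0$) to collapse the mixed $\be\bff$-terms and divide by $2$. You have merely filled in the elementary commutator bookkeeping that the paper leaves implicit, including the remark about dividing by $2$.
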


\noindent
In \eqref{eq:306-2}, we have divided by $2$ since for the point counts
we may assume that $2$ is invertible in the ring of coefficients $R$.


\subsubsection{}
It will be useful to have a formula for the action of the operators
$\bh$.

\begin{Proposition}\label{p_h} 
  We have
  \begin{equation}
    \label{eq:58}
    \bh\of\bpt = 2 \eta \,, \quad \bh\of\alpha = 2 \alpha\,, 
  \end{equation}
  viewed as operators of cup product by $\eta=c_1(\cO(1))$ and
  the pullback of $\alpha \in H^1(C) = H^1(\Pic C)$ to $\QMN$. 
\end{Proposition}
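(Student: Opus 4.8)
The plan is to combine the explicit formula for $\bh$ from Proposition~\ref{p_sl2} with the Chern class computation \eqref{eq:38}, and then extract the two relevant Künneth components. By Proposition~\ref{p_sl2} (which, as observed, applies verbatim with $\QMN$ in place of $\QM$) together with \eqref{eq:38}, for any $\gamma\in\Hd(C)$ we have
\[
\bh\of\gamma=(\textstyle{\int_C}\otimes 1)\bigl((\tau+\mu-2\lambda+2\eta)\cup(\gamma\boxtimes(\,\cdot\,))\bigr),
\]
with $\tau,\mu,\lambda,\eta$ the tautological classes of \eqref{eq:235}. First I would discard the classes pulled back from $C$: since $\tau=c_1(TC)$ and $\mu=c_1(\cM)$ lie in $H^2(C)$, we get $\tau\cup\bpt,\mu\cup\bpt\in H^4(C)=0$ and $\tau\cup\alpha,\mu\cup\alpha\in H^3(C)=0$, so only $-2\lambda+2\eta$ survives against either $\bpt$ or $\alpha$.

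Next I would treat the two surviving terms separately. The class $\eta$ is pulled back from $\QMN$, so $(\textstyle{\int_C}\otimes 1)\bigl(\eta\cup(\gamma\boxtimes\beta)\bigr)=\bigl(\textstyle{\int_C}\gamma\bigr)(\eta\cup\beta)$; this is $2\eta\cup\beta$ when $\gamma=\bpt$ (because $\textstyle{\int_C}\bpt=1$) and $0$ when $\gamma=\alpha\in H^1(C)$ (because $\textstyle{\int_C}\alpha=0$). For $\lambda=c_1(\cL)$ I would use the Künneth decomposition of the first Chern class of the universal (Poincaré) line bundle on $C\times\QMN$, namely $\lambda=d\,(\bpt\boxtimes 1)+\kappa$ with $d=\deg\cL$ locally constant on $\QMN$ and $\kappa\in H^1(C)\otimes H^1(\QMN)$ pulled back from the universal mixed class on $C\times\Pic(C)$, after normalising $\cL$ to be trivial along $\{x_0\}\times\QMN$ (which is exactly what makes $\eta$, equivalently $\cO_{\QMN}(1)$, unambiguous, since $-2\lambda+2\eta$ is independent of this choice). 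Then $\textstyle{\int_C}(\bpt\cup\lambda)=0$: the $\kappa$-part cups with $\bpt$ into $H^3(C)=0$, the $d\,\bpt$-part into $H^4(C)=0$, and the normalisation kills the would-be $H^2(\QMN)$-component. This gives $\bh\of\bpt=2\eta$. And $\textstyle{\int_C}(\alpha\cup\lambda)=\textstyle{\int_C}(\alpha\cup\kappa)$, which by the classical description of the Poincaré bundle equals $-\alpha$ once $H^1(\Pic C)$ is identified with $H^1(C)$ as in the statement; hence $\bh\of\alpha=-2(-\alpha)=2\alpha$, viewed as cup product by the pullback of $\alpha$ from $\Pic(C)$.

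None of this is deep. The one place that needs care is the bookkeeping in the last step: keeping track of which Künneth piece of $c_1(\cL)$ is detected by $\textstyle{\int_C}(\bpt\cup-)$ versus $\textstyle{\int_C}(\alpha\cup-)$, fixing the normalisation of $\cL$ (and hence of $\eta$) consistently, and recalling that the mixed Chern-class component of the Poincaré bundle is the canonical element of $H^1(C)\otimes H^1(\Pic C)$, so that the slant product returns $\alpha$ up to the universal sign, which then combines with the coefficient $-2$ to produce $+2\alpha$. Since every vanishing used here already holds nonequivariantly (where $H^{>2}(C)=0$), the computation need not be carried out equivariantly.
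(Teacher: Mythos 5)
Your proof is correct and follows essentially the same route as the paper's: both start from \eqref{eq:223} and \eqref{eq:38}, discard the $\tau$ and $\mu$ contributions by degree considerations in $\Hd(C)$, and compute the surviving $-2\lambda$ and $2\eta$ terms via the K\"unneth decomposition \eqref{eq:56} of the Poincar\'e bundle's first Chern class. The only difference is presentational: you make the Koszul sign in the $H^1$ slant-product step explicit, which the paper leaves implicit in citing \eqref{eq:56}.
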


\begin{proof}
This follows from \eqref{eq:223}, \eqref{eq:38} and the following
formula
\begin{equation}
     \label{eq:56}
     \lambda = (\deg \cL)\cdot \bpt \otimes 1 +  \sum \gamma_i \otimes
\gamma_i^\vee \,, 
\end{equation}
for the class of $\lambda \in H^2(C \times \QMN)$. Here
$\{\gamma_i^\vee\}$ is the dual basis to a
basis $\{\gamma_i\}$ of
$H^1(C)$ in the sense that $\gamma_i \cup \gamma^\vee_j = \delta_{ij}
\bpt$. Indeed, for $\deg \cL =0$, the universal bundle $\cL$ is the pullback
via $C \hookrightarrow \Pic_0(C)$ of the Poincar\'e line bundle, whose first Chern
class is 
\[
\sum \gamma_i \otimes
\gamma_i^\vee = \Delta \Theta - \Theta \otimes 1 - 1 \otimes \Theta
\,.
\]
Here $\Theta$ is the theta divisor and $\Delta$ is the comultiplication in the cohomology of the
group $\Pic_0(C)$. To obtain other values of $\deg \cL$, we may
multiply by line bundles pulled back from $C$. 
\end{proof}

\subsubsection{}

Formulas \eqref{eq:306} and \eqref{eq:306-2} may be given the
following geometric interpretation. The algebra
\begin{equation}
  \label{eq:37*2}
  \cA(C) = \cU(\Hd(C,\fsl_2)) /  \textup{equations
    \eqref{eq:36}}\,, 
\end{equation}
acts in $\Hd(\QMN)$. Inside this algebra we have subalgebras
\begin{equation}\label{eq:38*2}
  \cA(C) \supset \cA^{\ge 1}(C) \supset \cA^{2}= \textup{center of
    $\cA^{\ge 1}(C)$}\,, 
\end{equation}
generated by elements of cohomological degree $\ge 1$ and $2$,
respectively. The notation here reflects the fact that the
subalgebra $\cA^2$ does not depend on the curve
$C$. Equation \eqref{eq:306} says that
\begin{align}
 \bX^\vee_0 &\eqdef  \Spec \cA^{2}  \notag \\
 & = \textup{quadric cone}  \,,  \label{eq:39*2}
\end{align}
while \eqref{eq:306-2} says that 
\begin{equation}
  \label{eq:40*2}
   \textup{$\cA^{2}$-span of  $H^1(C,\fsl_2)$} \cong
   H^1(C,\Omega^1(\bX^\vee_0)))\,, 
\end{equation}
where $\Omega^1(\bX^\vee_0)$ denotes the module of K\"ahler
differentials over the base field. For the time being, $\Omega^1(\bX^\vee_0)$ forms a trivial
local system on $C$. Nontrivial local system will appear in Section
\ref{s_Galois_e}.

\subsubsection{}

A shift is required to make the isomorphism \eqref{eq:40*2} preserve
cohomological dimension and the action of Frobenius. Indeed, we have 
\[
  \deg \be\of{\alpha} = \deg \be\of{\bpt}-1\,, \quad
   \textup{weight}\, \be\of{\alpha} = \textup{weight} \, 
   \be\of{\bpt}-\tfrac12\,, 
 \]
where the degree is the cohomological degree and weight refers to the
absolute values of the Frobenius eigenvalues. So, if we want to get
these operators as $H^1$, we need both a shift in cohomology and a
Tate twist. We define
\begin{equation}
  \label{eq:42}
  H\Omega_0 =  H^1(C,\Omega^1(\bX^\vee_0)))\sTate{2}\,, 
\end{equation}
where
\[
  \sTate{2}=[2]\Tate{1}
\]
denotes the pure combination of the cohomological shift and 
Tate twist, and 
where $\Omega^1$ was originally placed in degree $0$, compare with
\eqref{eq:6}. 
{}From the Poisson structure on the cone and multiplication in cohomology, we get a symmetric
pairing 
\begin{equation}
  \label{eq:41}
  S^2 H\Omega_0
  \xrightarrow{\quad  \cup \otimes \{ \, \cdot \, , \, \cdot \, \}
    \quad}
  H^2(C,\cO_{\bX^\vee_0}) \sTate{2}=\cO_{\bX^\vee_0}= \cA^2 \,. 
\end{equation}
We define
\[
\Cliff_{\bX^\vee_0} = \textup{Clifford}(H\Omega_0) \,.
\]
Formulas \eqref{eq:306} and \eqref{eq:306-2}, and the naturality of
all operations, give the following 

\begin{Proposition}\label{p_sl2_Cliff} 
  We have a degree-preserving
  isomorphism $ \cA^{\ge 1}(C)\cong \Cliff_{\bX^\vee_0} $. Moreover,
  this isomorphism is Frobenius-equivariant. 
\end{Proposition}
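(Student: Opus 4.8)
The plan is to give $\cA^{\ge 1}(C)$ an explicit presentation over its centre $\cA^2$ and to match it, term by term, with the defining presentation of $\Cliff_{\bX^\vee_0}$; the whole statement is then a repackaging of Propositions \ref{p_sl2}--\ref{p_h} together with the bookkeeping of Tate twists.

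First I would record, using \eqref{eq:38*2} and \eqref{eq:39*2}, that the centre $\cA^2$ of $\cA^{\ge 1}(C)$ is the coordinate ring $\C[e,f,h]/(4ef+h^2)$ of the quadric cone $\bX^\vee_0$, with $e=\be\of\bpt$, $f=\bff\of\bpt$, $h=\bh\of\bpt$, the relation $4ef+h^2=0$ being \eqref{eq:306}. Since $H^{\ge 1}(C)=H^1(C)\oplus\C\bpt$ and the descendants of $\bpt$ generate $\cA^2$, the algebra $\cA^{\ge 1}(C)$ is generated over $\cA^2$ by the odd generators $\be\of\alpha,\bff\of\alpha,\bh\of\alpha$ with $\alpha\in H^1(C)$. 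Because $\alpha\cup\beta\in H^2(C)=\C\bpt$ for $\alpha,\beta\in H^1(C)$, Proposition \ref{p_sl22} and the relations \eqref{eq:220} show that every anticommutator of two odd generators already lies in $\cA^2$; thus $\cA^{\ge 1}(C)$ is a Clifford-type algebra over $\cA^2$. A short computation from \eqref{eq:220} together with Propositions \ref{p_Cas} and \ref{p_h} then identifies the resulting symmetric $\cA^2$-valued form on the $\cA^2$-span of the odd generators with the pairing \eqref{eq:41}: the $\fsl_2$-bracket supplies the Poisson factor on $\bX^\vee_0$, while the cup product $H^1(C)\otimes H^1(C)\to H^2(C)$ supplies the intersection-form factor.

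Second I would identify the module of odd generators. Sending $de\mapsto\be\of\alpha$, $df\mapsto\bff\of\alpha$, $dh\mapsto\bh\of\alpha$ maps the free $\cA^2$-module on $\{de,df,dh\}\otimes H^1(C)$ onto the $\cA^2$-span of the odd generators, and under this map the relation \eqref{eq:306-2} becomes precisely $d(4ef+h^2)\otimes\alpha=0$. Hence this span is the module of Kähler differentials $\Omega^1(\bX^\vee_0)\otimes H^1(C)$, which is $H\Omega_0$ after the cohomological shift and Tate twist in \eqref{eq:42}; this is exactly \eqref{eq:40*2}. Consequently there is a surjection $\mathrm{Clifford}_{\cA^2}(H\Omega_0)=\Cliff_{\bX^\vee_0}\twoheadrightarrow\cA^{\ge 1}(C)$, compatible with the $\cA^2$-structure and with the cohomological grading.

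Third, to see this surjection is an isomorphism I would argue that no relation survives in $\cA^{\ge 1}(C)$ beyond \eqref{eq:306}, \eqref{eq:306-2} and the super-commutation relations \eqref{eq:220}. By \eqref{eq:37*2}, $\cA(C)$ is $\cU(\Hd(C,\fsl_2))$ modulo the ideal generated by the Künneth components of \eqref{eq:36}; the only such components involving solely degree-$\ge 1$ descendants are \eqref{eq:306} and \eqref{eq:306-2}, the remaining ($H^0(C)$-)component merely expressing the descendants of $1\in H^0(C)$, which do not lie in $\cA^{\ge 1}(C)$. A PBW argument for $\cU(\Hd(C,\fsl_2))$ then shows that \eqref{eq:306}, \eqref{eq:306-2} and \eqref{eq:220} are a complete set of relations for $\cA^{\ge 1}(C)$, and these are exactly the Clifford relations for $H\Omega_0$ with the form \eqref{eq:41}, so the kernel is trivial. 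Frobenius-equivariance is then pure bookkeeping: every operation used (cup products, the correspondences $\iota_*,\iota^*$, the pushforward $\int_C$, which is $\Fr$-equivariant up to the twist $\Tate{1}$ coming from $H^2(C)$) is defined over the ground field and hence $\Fr$-equivariant, and the $-\tfrac12$ weight gap between $\be\of\alpha$ and $\be\of\bpt$ is precisely absorbed by the $\sTate{2}=[2]\Tate{1}$ built into $H\Omega_0$ in \eqref{eq:42}. The main obstacle is the rank-matching in this last paragraph — confirming via PBW that \eqref{eq:306}, \eqref{eq:306-2}, \eqref{eq:220} exhaust the relations of $\cA^{\ge 1}(C)$; an alternative route that also forces injectivity is to check directly that the tautological action on $\Hd_c(\QMN)$ is already faithful on $\Cliff_{\bX^\vee_0}$.
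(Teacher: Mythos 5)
Your plan follows the paper's own sketch: the paper establishes \eqref{eq:306}, \eqref{eq:306-2}, and the pairing \eqref{eq:41}, and then simply cites ``naturality of all operations'' for the isomorphism. You make the map $\Cliff_{\bX^\vee_0}\to\cA^{\ge 1}(C)$ explicit and add the surjectivity/injectivity bookkeeping; your identifications of the centre with $\cO_{\bX^\vee_0}$, of the anticommutator form with \eqref{eq:41}, and of $d(4ef+h^2)\otimes\alpha=0$ with \eqref{eq:306-2} are all correct, and the Frobenius-equivariance discussion is fine.

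The injectivity paragraph, however, contains a concrete inaccuracy that undermines the PBW argument as stated. Using $\Delta_{C,*}(1)=1\otimes\bpt+\sum_i\alpha_i\otimes\alpha_i^\vee+\bpt\otimes 1$, the $\gamma=1$ K\"unneth component of \eqref{eq:36} relates products of $H^0$- and $H^2$-descendants, such as $\be\of1\bff\of\bpt$, to the element $\sum_i\big(2\be\of{\alpha_i}\bff\of{\alpha_i^\vee}+2\bff\of{\alpha_i^\vee}\be\of{\alpha_i}+\bh\of{\alpha_i}\bh\of{\alpha_i^\vee}\big)$, which \emph{does} lie in $\cA^{\ge 1}(C)$. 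So this component does not ``merely express the descendants of $1\in H^0(C)$''; and since $\be\of\bpt,\bff\of\bpt,\bh\of\bpt$ are not invertible in $\cA^2=\cO_{\bX^\vee_0}$, one cannot simply solve for the $H^0$-descendants. What you actually need is that the two-sided ideal of $\cU(\Hd(C,\fsl_2))$ generated by \emph{all} K\"unneth components of \eqref{eq:36}, intersected with $\cU(\Hd^{\ge 1}(C,\fsl_2))$, is exactly the ideal generated by \eqref{eq:306} and \eqref{eq:306-2}; the ``PBW argument'' in your last paragraph is precisely this claim and should be argued (for instance by passing to the associated graded and exhibiting $\cA(C)$ as a free module over the candidate $\cA^{\ge 1}(C)$). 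Your alternative route --- verifying that $\Cliff_{\bX^\vee_0}$ acts faithfully on $\Hd_c(\QMN)$, which is in effect supplied by the proof of Theorem \ref{t2} (e.g.\ Proposition \ref{p_outside}) --- is a cleaner way to close the gap, provided one checks that the relevant portion of that proof does not already presuppose Proposition \ref{p_sl2_Cliff}.
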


\subsubsection{}
We define
\begin{equation}
  \label{eq:42*2}
 \sigma: \bX^\vee \cong T^* \bP^1 \to \bX^\vee_0 
\end{equation}
as the blow-up at the vertex of the cone. This is the world's most
basic equivariant symplectic resolution. As above, $\bX^\vee$
gets its own sheaf of Clifford algebras $\Cliff_{\bX^\vee}$ with
a homomorphism
\[
  \sigma^*: \Cliff_{\bX^\vee_0}  \to \Cliff_{\bX^\vee} \,. 
\]
There is thus a well-defined pushforward $\sigma_*$ of Clifford modules.

In modern representation theory, understanding the representation of
quantizations of Poisson varieties via (derived) pullbacks and
pushforwards under maps like $\sigma$ is a very important technical
tool known as \emph{localization}. Our situation is simpler in that
we deal with sheaves of Clifford algebras instead of really
noncommutative algebras like the full $\cA(C)$. In the end,
however, the full structure of the $\cA(C)$-module is uniquely
determined.

We will show, and this will be the main result of this section, that
the module $\Hd(\QMN)$ is the pushforward of a suitable twist of the spinor bundle
$\Spin_{\bX^\vee}$. This will be, however, not the
ordinary pushforward, but rather pushforward with support in
a certain Lagrangian $\bfL \subset \bX^\vee$.

\subsubsection{}

It will be convenient to visualize various geometric notions
using the toric diagram of $\bX^\vee=T^*\bP^1$ as seen on the right in Figure
\eqref{eq:301} below. It is also
known as the Newton or moment polyhedron of $T^*\bP^1$. Its two vertices
correspond to the fixed points $0,\infty \in \bP^1$. Edges
correspond to the 1-dimensional torus orbits.

Canonically, diagrams like \eqref{eq:301} lie in the 
dual of the Lie algebra of a maximal torus
\[
\{(a,q)\} \subset \Aut(T^*\bP^1)
\]
where $a$ acts by $\diag(a^{-1},a) \in SL(2)$ and $q$ scales the
cotangent fibers with weight $q^{-1}$. Lattice points in toric
diagram are the weights of sections of line 
bundles. Specifically, in  \eqref{eq:301} we see the
weights $H^0(\cO_{\bX^\vee})$ and
$H^0(\cO_{\bX^\vee}(8))$. Here $\cO_{\bX^\vee}(8)$ is a random ample
line bundle. For the natural linearization, the weights of the corner
vertices for $\cO_{\bX^\vee}(8)$ are $a^{8}$ and $a^{-8}$,
respectively. In Figure \ref{eq:301} we indicated the weights of
neighboring vertices relative to the weights of the corner vertices. 
\begin{equation}
  \label{eq:301}
  \raisebox{-1.5cm}{
    \begin{tikzpicture}[scale=1]
      \draw [line width=1pt] (-8.2,2.2)--(-6,0)--(-3.8,2.2);
   \fill [fill=gray!20] (-8.2,2.2)--(-6,0)--(-3.8,2.2)--(-8.2,2.2);
    %
\node at (-6,0) {${\scriptstyle \bullet}$};
\node at (-13/2,1/2) {${\scriptstyle \bullet}$};
\node at (-6,1/2) {${\star}$};
\node at (-11/2,1/2) {${\scriptstyle \bullet}$};
\node at (-7,1) {${\scriptstyle \bullet}$};
\node at (-13/2,1) {${\scriptstyle \bullet}$};
\node at (-6,1) {${\scriptstyle \bullet}$};
\node at (-11/2,1) {${\scriptstyle \bullet}$};
\node at (-5,1) {${\scriptstyle \bullet}$};
\node at (-15/2,3/2) {${\scriptstyle \bullet}$};
\node at (-7,3/2) {${\scriptstyle \bullet}$};
\node at (-13/2,3/2) {${\scriptstyle \bullet}$};
\node at (-6,3/2) {${\scriptstyle \bullet}$};
\node at (-11/2,3/2) {${\scriptstyle \bullet}$};
\node at (-5,3/2) {${\scriptstyle \bullet}$};
\node at (-9/2,3/2) {${\scriptstyle \bullet}$};
\node at (-8,2) {${\scriptstyle \bullet}$};
\node at (-15/2,2) {${\scriptstyle \bullet}$};
\node at (-7,2) {${\scriptstyle \bullet}$};
\node at (-13/2,2) {${\scriptstyle \bullet}$};
\node at (-6,2) {${\scriptstyle \bullet}$};
\node at (-11/2,2) {${\scriptstyle \bullet}$};
\node at (-5,2) {${\scriptstyle \bullet}$};
\node at (-9/2,2) {${\scriptstyle \bullet}$};
\node at (-4,2) {${\scriptstyle \bullet}$};
\node[rectangle,anchor=east, inner sep=0] (2) at (-6.7,0.5)  {$\bff\of{\bpt}$};
    \node[rectangle,anchor=west,inner sep=0] (3) at (-5.3,0.5) {$\be\of{\bpt}$};
    \draw [line width=1pt] (-2.2,2.2)--(0,0)--(4,0)--(6.2,2.2);
    \fill [fill=gray!20] (-2.2,2.2)--(0,0)--(4,0)--(6.2,2.2)--(-2.2,2.2);
    \node[rectangle,anchor=east, inner sep=0] (2) at (-0.73,0.5)  {$qa^{2}$};
    \node[rectangle,anchor=west,inner sep=0] (3) at (4.7,0.5) {$qa^{-2}$};
   \node[rectangle,anchor=north,inner sep=0] (4) at (0.55,-0.1)
   {$a^{-2}$};
   \node[rectangle,anchor=north,inner sep=0] (5) at (3.5,-0.1)
   {$a^{2}$};
   \node[rectangle,anchor=north east,inner sep=1] (6) at (0,0)
   {$0$};
    \node[rectangle,anchor=north west,inner sep=1] (7) at (4,0)
    {$\infty$};
    %

    \draw[ultra thick, ->] (-3,0.7) arc (-60:-300:0.5); 
    
    \node at (0,0) {${\scriptstyle \bullet}$};
\node at (1/2,0) {${\scriptstyle \bullet}$};
\node at (1,0) {${\scriptstyle \bullet}$};
\node at (3/2,0) {${\scriptstyle \bullet}$};
\node at (2,0) {${\scriptstyle \bullet}$};
\node at (5/2,0) {${\scriptstyle \bullet}$};
\node at (3,0) {${\scriptstyle \bullet}$};
\node at (7/2,0) {${\scriptstyle \bullet}$};
\node at (4,0) {${\scriptstyle \bullet}$};
\node at (-1/2,1/2) {${\scriptstyle \bullet}$};
\node at (0,1/2) {${\scriptstyle \bullet}$};
\node at (1/2,1/2) {${\scriptstyle \bullet}$};
\node at (1,1/2) {${\scriptstyle \bullet}$};
\node at (3/2,1/2) {${\scriptstyle \bullet}$};
\node at (2,1/2) {${\scriptstyle \bullet}$};
\node at (5/2,1/2) {${\scriptstyle \bullet}$};
\node at (3,1/2) {${\scriptstyle \bullet}$};
\node at (7/2,1/2) {${\scriptstyle \bullet}$};
\node at (4,1/2) {${\scriptstyle \bullet}$};
\node at (9/2,1/2) {${\scriptstyle \bullet}$};
\node at (-1,1) {${\scriptstyle \bullet}$};
\node at (-1/2,1) {${\scriptstyle \bullet}$};
\node at (0,1) {${\scriptstyle \bullet}$};
\node at (1/2,1) {${\scriptstyle \bullet}$};
\node at (1,1) {${\scriptstyle \bullet}$};
\node at (3/2,1) {${\scriptstyle \bullet}$};
\node at (2,1) {${\scriptstyle \bullet}$};
\node at (5/2,1) {${\scriptstyle \bullet}$};
\node at (3,1) {${\scriptstyle \bullet}$};
\node at (7/2,1) {${\scriptstyle \bullet}$};
\node at (4,1) {${\scriptstyle \bullet}$};
\node at (9/2,1) {${\scriptstyle \bullet}$};
\node at (5,1) {${\scriptstyle \bullet}$};
\node at (-3/2,3/2) {${\scriptstyle \bullet}$};
\node at (-1,3/2) {${\scriptstyle \bullet}$};
\node at (-1/2,3/2) {${\scriptstyle \bullet}$};
\node at (0,3/2) {${\scriptstyle \bullet}$};
\node at (1/2,3/2) {${\scriptstyle \bullet}$};
\node at (1,3/2) {${\scriptstyle \bullet}$};
\node at (3/2,3/2) {${\scriptstyle \bullet}$};
\node at (2,3/2) {${\scriptstyle \bullet}$};
\node at (5/2,3/2) {${\scriptstyle \bullet}$};
\node at (3,3/2) {${\scriptstyle \bullet}$};
\node at (7/2,3/2) {${\scriptstyle \bullet}$};
\node at (4,3/2) {${\scriptstyle \bullet}$};
\node at (9/2,3/2) {${\scriptstyle \bullet}$};
\node at (5,3/2) {${\scriptstyle \bullet}$};
\node at (11/2,3/2) {${\scriptstyle \bullet}$};
\node at (-2,2) {${\scriptstyle \bullet}$};
\node at (-3/2,2) {${\scriptstyle \bullet}$};
\node at (-1,2) {${\scriptstyle \bullet}$};
\node at (-1/2,2) {${\scriptstyle \bullet}$};
\node at (0,2) {${\scriptstyle \bullet}$};
\node at (1/2,2) {${\scriptstyle \bullet}$};
\node at (1,2) {${\scriptstyle \bullet}$};
\node at (3/2,2) {${\scriptstyle \bullet}$};
\node at (2,2) {${\scriptstyle \bullet}$};
\node at (5/2,2) {${\scriptstyle \bullet}$};
\node at (3,2) {${\scriptstyle \bullet}$};
\node at (7/2,2) {${\scriptstyle \bullet}$};
\node at (4,2) {${\scriptstyle \bullet}$};
\node at (9/2,2) {${\scriptstyle \bullet}$};
\node at (5,2) {${\scriptstyle \bullet}$};
\node at (11/2,2) {${\scriptstyle \bullet}$};
\node at (6,2) {${\scriptstyle \bullet}$};
  \end{tikzpicture}}
\end{equation}
Note that $H^0(\cO_{\bX^\vee})=\cA^2$. We have marked the
weights of $\bff\of{\bpt}$ and $\be\of{\bpt}$, while
the weight of $\bh\of{\bpt}$ is marked by a star $\star$.
Since 
$H^0(\cO_{\bX^\vee}(8))$ is a module over $H^0(\cO_{\bX^\vee})$,
the left
part of \eqref{eq:301} acts on the right part by addition, as 
indicated by the curved arrow.

As customary in toric geometry, one may view the shapes in Figure
\eqref{eq:301} as a combinatorial equivalent of the geometric shapes
of the cone $\bX^\vee_0$ and its resolution $\bX^\vee$, respectively.

\subsubsection{}

We define 
\begin{align}
  \label{eq:328}
  \bfL &= \{ \bff\of{\bpt} = 0 \}  \\
  \label{eq:336}
  &= \{ x \in \bX^\vee, \exists \lim_{a\to 0} a\cdot x \} \,. 
\end{align}
This is the union of two attracting manifolds, namely 
\begin{alignat}{2}
  \label{eq:337}
  \bfL_0 &= \{ x \in \bX^\vee, \lim_{a\to 0} a\cdot x =
  0\}&&=\textup{zero section without $\infty$} \,, \\
  \bfL_\infty &= \{ x \in \bX^\vee, \lim_{a\to 0} a\cdot x =
  \infty\}&&=\textup{the fiber over $\infty$} \,,
\end{alignat}
see Figure \eqref{eq:301-2}.

\subsubsection{}\label{s_local coh} 

To get a feeling for what the pushforward with support in $\bfL$ looks
like, let us compute it for $\cO_{\bX^\vee}(8)$, which means we
compute 
consider the local homology groups $\Hd_{\bfL}(\cO_{\bX^\vee}(8))$.
The exact sequence of a pair for $\bfL_\infty \subset \bfL$ expresses
$\Hd_{\bfL}$ in terms of $\Hd_{\bfL_\infty}$ and
$\Hd_{\bfL_0}$. Both of these can be computed in a
single affine chart. This gives
 \[
  H^i_{\bfL_\infty}(\cO(8))= H^i_{\bfL_0}(\cO(8))=0 \,, \quad i \ne 1 \,,
  \] 
and 
\begin{equation}
  \label{eq:329}
  \xymatrix{
    0 \ar[r] & H^1_{\bfL_\infty}(\cO(8))  \ar@{=}[d] \ar[r] & H^1_{\bfL}(\cO(8))
    \ar[r] & H^1_{\bfL_0}(\cO(8)) \ar[r]
    \ar@{=}[d] & 0\,,\\
    &
  a^{8} (\be \bh^{-1})^{8}  \, R[\be, (\bff \bh^{-1})^{\pm 1}]/ R[\be , \bff \bh^{-1}]
    &&
    a^{8} R[\be \bh^{-1}, \bff^{\pm 1}]/ R[\be \bh^{-1}, \bff]
    }
  \end{equation}
  where $R$ is base ring of coefficients. See the
  picture of the corresponding modules in Figure
  \eqref{eq:301-2}, where the submodule is shaded in a darker shade of
  gray. The middle term \eqref{eq:329} is the unique
  nontrivial extension between them.
\begin{equation}
  \label{eq:301-2}
  \raisebox{-1cm}{\begin{tikzpicture}[scale=1]
      \draw [line width=0.5pt] (-2.2,2.2)--(0,0);
      \draw [line width=2pt] (0,0)--(3.85,0);
      \draw [line width=2pt] (4,0)--(6.2,2.2);
      \node at (0,0) {${\scriptstyle \bullet}$};
      \node at (4,0) {${\scriptstyle \bullet}$};
      \fill [fill=gray!40]
      (6.7,2.2)--(4.5,0)--(7.7,0)--(7.7,2.2)--(6.7,2.2);
      \fill [fill=gray!20]
      (2.2,-2.2)--(0.5,-0.5)--(7.7,-0.5)--(7.7,-2.2)--(2.2,-2.2); 

      \node[rectangle,anchor=south, inner sep=0] at (2.1,0.1)
      {$\bfL_0$};
      \node[rectangle,anchor=south east, inner sep=0] at (5,1.1) {$\bfL_\infty$}; 
      
    %
 \node at (1/2,-1/2) {${\scriptstyle \bullet}$};
\node at (1,-1/2) {${\scriptstyle \bullet}$};
\node at (3/2,-1/2) {${\scriptstyle \bullet}$};
\node at (2,-1/2) {${\scriptstyle \bullet}$};
\node at (5/2,-1/2) {${\scriptstyle \bullet}$};
\node at (3,-1/2) {${\scriptstyle \bullet}$};
\node at (7/2,-1/2) {${\scriptstyle \bullet}$};
\node at (4,-1/2) {${\scriptstyle \bullet}$};
\node at (9/2,-1/2) {${\scriptstyle \bullet}$};
\node at (5,-1/2) {${\scriptstyle \bullet}$};
\node at (11/2,-1/2) {${\scriptstyle \bullet}$};
\node at (6,-1/2) {${\scriptstyle \bullet}$};
\node at (13/2,-1/2) {${\scriptstyle \bullet}$};
\node at (7,-1/2) {${\scriptstyle \bullet}$};
\node at (15/2,-1/2) {${\scriptstyle \bullet}$};
\node at (1,-1) {${\scriptstyle \bullet}$};
\node at (3/2,-1) {${\scriptstyle \bullet}$};
\node at (2,-1) {${\scriptstyle \bullet}$};
\node at (5/2,-1) {${\scriptstyle \bullet}$};
\node at (3,-1) {${\scriptstyle \bullet}$};
\node at (7/2,-1) {${\scriptstyle \bullet}$};
\node at (4,-1) {${\scriptstyle \bullet}$};
\node at (9/2,-1) {${\scriptstyle \bullet}$};
\node at (5,-1) {${\scriptstyle \bullet}$};
\node at (11/2,-1) {${\scriptstyle \bullet}$};
\node at (6,-1) {${\scriptstyle \bullet}$};
\node at (13/2,-1) {${\scriptstyle \bullet}$};
\node at (7,-1) {${\scriptstyle \bullet}$};
\node at (15/2,-1) {${\scriptstyle \bullet}$};
\node at (3/2,-3/2) {${\scriptstyle \bullet}$};
\node at (2,-3/2) {${\scriptstyle \bullet}$};
\node at (5/2,-3/2) {${\scriptstyle \bullet}$};
\node at (3,-3/2) {${\scriptstyle \bullet}$};
\node at (7/2,-3/2) {${\scriptstyle \bullet}$};
\node at (4,-3/2) {${\scriptstyle \bullet}$};
\node at (9/2,-3/2) {${\scriptstyle \bullet}$};
\node at (5,-3/2) {${\scriptstyle \bullet}$};
\node at (11/2,-3/2) {${\scriptstyle \bullet}$};
\node at (6,-3/2) {${\scriptstyle \bullet}$};
\node at (13/2,-3/2) {${\scriptstyle \bullet}$};
\node at (7,-3/2) {${\scriptstyle \bullet}$};
\node at (15/2,-3/2) {${\scriptstyle \bullet}$};
\node at (2,-2) {${\scriptstyle \bullet}$};
\node at (5/2,-2) {${\scriptstyle \bullet}$};
\node at (3,-2) {${\scriptstyle \bullet}$};
\node at (7/2,-2) {${\scriptstyle \bullet}$};
\node at (4,-2) {${\scriptstyle \bullet}$};
\node at (9/2,-2) {${\scriptstyle \bullet}$};
\node at (5,-2) {${\scriptstyle \bullet}$};
\node at (11/2,-2) {${\scriptstyle \bullet}$};
\node at (6,-2) {${\scriptstyle \bullet}$};
\node at (13/2,-2) {${\scriptstyle \bullet}$};
\node at (7,-2) {${\scriptstyle \bullet}$};
\node at (15/2,-2) {${\scriptstyle \bullet}$};
\node at (9/2,0) {${\scriptstyle \bullet}$};
\node at (5,0) {${\scriptstyle \bullet}$};
\node at (11/2,0) {${\scriptstyle \bullet}$};
\node at (6,0) {${\scriptstyle \bullet}$};
\node at (13/2,0) {${\scriptstyle \bullet}$};
\node at (7,0) {${\scriptstyle \bullet}$};
\node at (15/2,0) {${\scriptstyle \bullet}$};
\node at (5,1/2) {${\scriptstyle \bullet}$};
\node at (11/2,1/2) {${\scriptstyle \bullet}$};
\node at (6,1/2) {${\scriptstyle \bullet}$};
\node at (13/2,1/2) {${\scriptstyle \bullet}$};
\node at (7,1/2) {${\scriptstyle \bullet}$};
\node at (15/2,1/2) {${\scriptstyle \bullet}$};
\node at (11/2,1) {${\scriptstyle \bullet}$};
\node at (6,1) {${\scriptstyle \bullet}$};
\node at (13/2,1) {${\scriptstyle \bullet}$};
\node at (7,1) {${\scriptstyle \bullet}$};
\node at (15/2,1) {${\scriptstyle \bullet}$};
\node at (6,3/2) {${\scriptstyle \bullet}$};
\node at (13/2,3/2) {${\scriptstyle \bullet}$};
\node at (7,3/2) {${\scriptstyle \bullet}$};
\node at (15/2,3/2) {${\scriptstyle \bullet}$};
\node at (13/2,2) {${\scriptstyle \bullet}$};
\node at (7,2) {${\scriptstyle \bullet}$};
\node at (15/2,2) {${\scriptstyle \bullet}$};
  \end{tikzpicture}}
\end{equation}
\noindent
In \eqref{eq:329}, we picked a specific linearization of
$\cO(8)$. Choosing a different linearization translates the whole
picture \eqref{eq:301-2}, while replacing $\cO(8)$ by a general $\cO(m)$
makes the two shaded pieces slide relative each other.

The diagram \eqref{eq:301-2} visualizes the structure of
$\Hd_{\bfL}(\cO_{\bX^\vee}(m))$ as a $\cA^2$-module. In particular,
$\bff$ and $\bh$ act locally nilpotently, while $\be$-torsion is
finite-dimensional. Evidently, one can also describe the result of
the computation as follows
\[
H^1_{\bfL}(\cO(8)) = H^0_{\bfL}(\cO(8))[\bff^{-1}] \big/
H^0_{\bfL}(\cO(8)) \,.
\]

\subsubsection{}

As a result of taking the cohomology with support in the
attracting manifold $\bfL$, the $a$-grading of all resulting modules
is bounded from above. In the Eisenstein series context, the variable $a$ weights
a quasisection $s$ by $a^{-\deg(s)}$. Since $\deg(s)$ is bounded below
for fixed $\cM$, it makes sense to look for a description of the
$\cA^{\ge 1}(C)$-module $\Hd_c(\QMN)$ as a
pushforward with support in $\bfL$.


\subsubsection{}
In what follows, we will always include a cohomological shift by the
number $\delta=\frac12 \dim \Fix$ in the
definition of $\sigma_{*,\bfL}$. With this definition, a line bundle
on $\Fix$ pushes forward to object in zero cohomological degree. 

\subsection{Galois action}\label{s_Galois_e}

\subsubsection{} 

For applications to Eisenstein series, we will need cohomology of $\QMN$
with coefficients in local systems. Via the natural map 
\begin{equation}
  \label{eq:296}
  \QMN(s: C \to \cP) \xrightarrow{\quad \NN_s \quad } \Pic(C) 
\end{equation}
we can pull back a local system $\chi$ from $\Pic(C)$. We associate a
local systems on $\Pic(C)$ to characters $\chi$ of $\Pic(C)$ by requiring
that the geometric
Frobenius acts on the fiber over $\cO_C(x)\in \Pic_1(C)$ with eigenvalue
$\chi(x)$.

\subsubsection{}\label{s_Galois_SL2}
We have
\begin{equation}
  \label{eq:297}
  \NN_{s(x)} = \NN_s \otimes \cO_C(x)^2 \,. 
\end{equation}
Therefore, we have a well-defined degree-preserving
Galois-equivariant correspondences
\begin{alignat}{2}
  &\Hd_c(C \times \QMN, &\chi^2\,&\boxtimes \chi) 
  \xrightarrow{\quad \be\quad } \Hd_c(\QMN, \chi)\sTate{2} \,, \notag \\
  &\Hd_c(C \times \QMN, & \Q_\ell&\boxtimes \chi)  
  \xrightarrow{\quad \bh\quad } \Hd_c(\QMN, \chi) \,,  \label{eq:298} \\ 
  &\Hd_c(C \times \QMN, &\chi^{-2} &\boxtimes \chi)  
  \xrightarrow{\quad \bff\quad } \Hd_c(\QMN, \chi)\sTate{-2} \,. \notag
\end{alignat}
The cohomological shifts and Tate twists in these formulas appear
because pushforwards in the construction of $\be$ and
$\bff$ introduce a shift by $\sTate{2\delta}$, where 
$\delta$ is the difference of
(virtual) dimensions between their target and domain.

\subsubsection{}

To get rid of the cohomological shifts in the right-hand side of
\eqref{eq:298}, we will center the cohomology of $\QMN$ around the
middle degree
 \begin{equation}
 \virdim \QMN = - 2 \deg \cL \,. \label{eq:308}
 \end{equation}
 For the eventual comparison with the spinor bundles, it is
 a good idea to also symmetrize the module $\Hd_c(\QMN)$ with
 respect to its grading by the degree $\deg \cL$ and also
 with respect to the Galois action. Neither of these shifts
 affects the $\fsl_2$-action.

We define 
\begin{equation}
  \label{eq:222}
  \widehat{H}(\QMN,\chi) = \Hd_c(\QMN,\chi)\sTate{-2 \deg \cL+g-1} \otimes
\chi(\cK_C)^{-1}  \,. 
\end{equation}
where the additional twist comes from 
 \begin{equation}
   \label{eq:70}
\left(\Lambda^{\textup{top}} \Hd(C,\chi^2) \right)^{-1/2} = 
\chi(\cK_C)^{-1} \sTate{g-1} \,. 
 \end{equation}
 This is a character of the group $\Fr \times \bA\buu$ via the
 action of Frobenius in the fiber of the local system $\chi$ on $\Pic(C)$ over
 the canonical bundle $\cK_C$. As a function of $a$, it is a multiple of $a^{2g-2}$ in
 accordance with \eqref{eq:302}. See 
 \eqref{eq:68} below for more on the equality in \eqref{eq:70}.

 Note that the implicit dependence of $\chi(\cM)$ on $a \in \bA\buu$ will
 also have a centering effect for the grading by $\bA\buu$, equivalently,
 the grading by $\deg \cL$, which one can see in Lemma \ref{l_w_gen}
 below.


%

\subsubsection{}

Consider the Galois representation
\begin{equation}
  \label{eq:299}
  \begin{pmatrix}
    \chi \\
    & \chi^{-1}
  \end{pmatrix} \in SL(2) \,,
\end{equation}
and the corresponding local system $\fsl_{2}^\chi$ obtained by
composing \eqref{eq:299} with the adjoint representation.
K\"unneth decomposition in \eqref{eq:298} results in the following

\begin{Proposition}
  We have a Lie superalgebra action
  \begin{equation}
    \label{eq:43}
    \Hd(C, \fsl_2^\chi) \otimes \widehat{H}(\QMN) \to
    \widehat{H}(\QMN)\,, 
  \end{equation}
  which is Frobenius-equivariant and degree-preserving. 
\end{Proposition}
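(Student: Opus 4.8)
The plan is to deduce the Proposition from the already-established non-equivariant statement together with the compatibility of the correspondences $\be,\bff,\bh$ with the Galois action recorded in \eqref{eq:298}. First I would recall that Proposition \ref{p_sl2} and Proposition \ref{p_sl22} give a Lie superalgebra homomorphism $\Hd(C,\fsl_2)\to\End(\Hd(\QMN))$ purely from the intersection-theoretic computations in \eqref{eq:224}–\eqref{eq:26*2}, and that the same computations go through verbatim for $\QMN$ and with \'etale coefficients in the local system $\chi$, since $\iota$, $\Delta_C$, the refined Euler class, and the pushforward $\int_C$ are all defined over the ground field and therefore Galois-equivariant. The point is that replacing the trivial local system $\fsl_2$ by $\fsl_2^\chi$ is dictated exactly by \eqref{eq:297}: the correspondence $\be$ twists $\NN_s$ by $\cO_C(x)^2$, so pulling back $\chi$ along $\NN_s$ forces the $\chi^2$ on the $C$-factor, $\bff$ forces $\chi^{-2}$, and $\bh$ leaves it trivial, which is precisely the weight pattern of the adjoint representation of the diagonal torus in $SL(2)$ acting on the triple $(\be,\bh,\bff)$.

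The key steps, in order, would be: (i) check that the maps $\iota^*,\iota_*$ and hence $\be,\bff$ in \eqref{eq:298} are well-defined on $\Hd_c(\QMN,\chi)$ with the indicated source local systems and Tate twists — this is essentially the content of \eqref{eq:296}, \eqref{eq:297} plus the observation that the shift by $\sTate{2\delta}$ is the virtual-dimension shift of the correspondence; (ii) verify that after the centering \eqref{eq:222}, which absorbs both the $\sTate{-2\deg\cL+g-1}$ and the $\chi(\cK_C)^{-1}$ twist via \eqref{eq:70}, the three operators become genuine degree-preserving, Frobenius-equivariant maps out of the corresponding $H^1$'s of $C$ with coefficients in the weight spaces of $\fsl_2^\chi$; (iii) observe that the commutator identities \eqref{eq:220} and the Casimir relation \eqref{eq:36} are identities between Galois-equivariant correspondences, so they continue to hold in the twisted setting once the weight bookkeeping is consistent — and here the only thing to confirm is that $\chi^2\cdot\chi^{-2}=\Q_\ell$ matches the trivial coefficients on the $\bh$-line, which it does; (iv) assemble the K\"unneth decomposition of each correspondence along the $C$-factor, using the fact that $\Hd(C,\fsl_2^\chi)=\bigoplus \Hd(C,\chi^{\pm2})\oplus\Hd(C,\Q_\ell)$ as the underlying graded vector space of the Lie superalgebra, to read off that the resulting map is exactly \eqref{eq:43}.

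I expect the only genuine subtlety — the "hard part" — to be bookkeeping of the cohomological shifts, Tate twists, and the square-root $\varepsilon$-factor twist so that everything lands in degree zero after centering and so that the Frobenius-equivariance is exact rather than merely up to a scalar. Concretely, one must be careful that the twist $\chi(\cK_C)^{-1}\sTate{g-1}=(\Lambda^{\mathrm{top}}\Hd(C,\chi^2))^{-1/2}$ from \eqref{eq:70} is compatible with the analogous twists forced on the $C$-factor by $\be$ and $\bff$ (which carry $\chi^{\pm2}$), so that the action map \eqref{eq:43} is homogeneous of degree $0$ for the Frobenius grading; this is where \eqref{eq:302} and the half-sum-of-roots shift $\rho=1/2$ enter, and getting the signs and half-integer weights to cancel is the place an error would hide. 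Everything else is a formal consequence of the untwisted Propositions \ref{p_sl2}, \ref{p_sl22} together with functoriality of the six-operations constructions used to define $\be,\bff,\bh$.
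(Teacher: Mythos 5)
Your proposal is correct and takes essentially the same route as the paper: the Proposition is stated immediately after \eqref{eq:298} as its K\"unneth decomposition, and your steps (i)--(iv) simply spell out the bookkeeping that this one-line derivation relies on (the local-system weights forced by \eqref{eq:297}, the absorption of the $\sTate{\pm 2}$ shifts by the $\sTate{-2\deg\cL}$ in \eqref{eq:222}, and the persistence of the relations \eqref{eq:220}, \eqref{eq:36} because the defining correspondences are Galois-equivariant). One small remark: the part you flag as the ``hard part'' --- compatibility of the $\chi(\cK_C)^{-1}\sTate{g-1}$ twist --- is in fact inert for this Proposition, since it is a tensoring by a fixed one-dimensional $\Gal$-module and hence central; the paper says explicitly that ``neither of these shifts affects the $\fsl_2$-action.'' The centering matters for matching weights with the spinor-bundle side later (Lemma \ref{l_w_gen}), not for the existence or equivariance of the action \eqref{eq:43} itself, so your caution there is safe but unnecessary.
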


\noindent
Note the small, but important difference between \eqref{eq:299} and
\eqref{eq:210}. With the symmetric normalization 
\eqref{eq:222}, the shift by $\rho$ as in  $\chi_\rho =
\chi\Tate{-\tfrac12}$ is not required. Of course, the shift will reappear in
any computation with actual Eisenstein series, because point counts
are given by Frobenius eigenvalues in $\Hd_c$, not in $\widehat{H}$.
One can also say that centering as in \eqref{eq:222} provides a more geometric way
to introduce the shift by $\rho$.

More generally, for moduli spaces of maps $f: C\to \bY$ to any
target $\bY$, we have
\[
  \virdim = \deg f^*(T \bY) + \const = \left\langle c_1(\bY), \deg f
  \right \rangle + \const\,, 
\]
with the same constant $(1-g)\dim \bY$ in both instances. Thus
the coefficient $2$ in \eqref{eq:222} should, in general, be replaced by
$c_1(\bY)$.

\subsubsection{}
We denote by $\cA_\chi(C)$ the algebra generated by $\Hd(C, \fsl_2^\chi)$. 
As before, we have subalgebras 
\begin{equation}\label{eq:38*3}
  \cA_\chi(C) \supset \cA^{\ge 1}_\chi(C) \supset \cA^{2}_\chi 
\end{equation}
generated by elements of cohomological degree $\ge 1$ and $2$,
respectively. Proposition \ref{p_sl2_Cliff} generalizes as follows 

  \begin{Proposition}\label{p_sl2_Cliff_chi} 
  We have a Frobenius-equivariant, degree-preserving
  isomorphism
  \begin{equation}
    \label{eq:44}
    \cA^{\ge 1}_\chi(C)\cong \Cliff_{\bX^\vee_0,\Gamma} \,. 
  \end{equation}
\end{Proposition}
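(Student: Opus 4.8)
The plan is to derive Proposition \ref{p_sl2_Cliff_chi} from Proposition \ref{p_sl2_Cliff} by incorporating the Galois (Frobenius) action as a formal bookkeeping device, exactly parallel to how \eqref{eq:306} and \eqref{eq:306-2} produced Proposition \ref{p_sl2_Cliff} in the untwisted case. First I would observe that the entire construction of the operators $\be,\bff,\bh$ in Section \ref{s_ef}, and the computation of their commutators in Proposition \ref{p_sl2}, goes through verbatim with $\Q_\ell$-coefficients replaced by the local system $\chi$ pulled back from $\Pic(C)$ via the normal-bundle map \eqref{eq:296}: the only inputs are the functorial pullback $\iota^*$, the refined Euler class pushforward $\iota_*$, the projection formula \eqref{eq:74}, and the relation \eqref{eq:26*2}, all of which are compatible with twisting by a local system. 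The twisting rule \eqref{eq:297}, namely $\NN_{s(x)}=\NN_s\otimes\cO_C(x)^2$, dictates precisely the pattern $\chi^2,\ \Q_\ell,\ \chi^{-2}$ of coefficient local systems on the source factor in \eqref{eq:298}, and hence that the K\"unneth components $\be\of{\gamma},\bff\of{\gamma},\bh\of{\gamma}$ assemble into an action of $\Hd(C,\fsl_2^\chi)$ rather than $\Hd(C,\fsl_2)$, where $\fsl_2^\chi$ is the adjoint local system attached to \eqref{eq:299}.

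The core of the argument is then that Proposition \ref{p_Cas} and its Corollary hold verbatim in the twisted setting, because the computation \eqref{eq:224} of $[\iota^*,\iota_*]\circ\Delta_{C,*}$ is a statement about Chern classes of $\cN$ and $\cN\otimes TC$ that is insensitive to the local system coefficients, and the division by $\tau$ is justified as before by specializing to $C=\A^1$ with its $\Gm$-action (on which the twist by $\chi$ is trivial anyway). Thus the defining quadratic relations \eqref{eq:306}, \eqref{eq:306-2} of $\cA(C)$ acquire Galois weights, and with the symmetric centering \eqref{eq:222}—whose twist by $\chi(\cK_C)^{-1}\sTate{g-1}$ is exactly the square-root-of-$\varepsilon$-factor normalization from \eqref{eq:70}—both sides of \eqref{eq:306-2} become degree-preserving and Frobenius-equivariant, with no residual $\rho$-shift. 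This identifies $\cA^{\ge1}_\chi(C)$ with the Clifford algebra of $H\Omega_{\bX^\vee_0,\Gamma}=H^1(\Gamma',\Omega^1\bX^\vee_0\big|_\Fix)\sTate{2}$, where the symmetric pairing is $\cup\otimes\{\,\cdot\,,\,\cdot\,\}$ as in \eqref{eq:29cat}; this is the assertion \eqref{eq:44}, with the right-hand side being $\Cliff_{\bX^\vee_0,\Gamma}$ in the notation of Section \ref{PoissonM}.

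Concretely I would carry out the steps in the order: (i) check that $\be,\bff$ defined by \eqref{eq:322} make sense with $\chi$-coefficients and satisfy \eqref{eq:298}, using \eqref{eq:297} for the precise coefficient bookkeeping; (ii) re-run Proposition \ref{p_sl2} with $\chi$-coefficients to get the twisted $\fsl_2$-relations \eqref{eq:43}, noting the Chern-class computation is unchanged; (iii) re-run Proposition \ref{p_Cas} and its Corollary to get the twisted quadratic relations; (iv) apply the centering \eqref{eq:222} and invoke \eqref{eq:70} to kill all cohomological shifts and Tate twists, so that $H^1(C,\fsl_2^\chi)$ sits in a single degree with the correct Frobenius weight; (v) match the resulting presentation against the Clifford presentation of $\Cliff_{\bX^\vee_0,\Gamma}$, i.e.\ verify that the Galois-equivariant pairing on $H^1(\Gamma',\Omega^1\bX^\vee_0)\sTate2$ induced by the Poisson bracket on the quadric cone and cup product on $H^1(C)$ agrees with the pairing read off from \eqref{eq:306-2}. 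The main obstacle, I expect, is step (iv)–(v): keeping the Tate twists, cohomological shifts, and the half-$\varepsilon$-factor normalization \eqref{eq:70} bookkept consistently so that the isomorphism is genuinely Frobenius-\emph{equivariant} on the nose and not merely up to a scalar, and checking that the sign/symmetry of the Clifford pairing (symmetric, as required by \eqref{eq:29cat} after the weight-$q$ symplectic form is folded in) comes out correctly from the skew-symmetry of the Poisson bracket combined with the skew-symmetry of the cup product on $H^1(C)$. Everything else is a routine "with coefficients in $\chi$" transcription of Section \ref{s_ef} that the naturality of all the operations involved makes automatic.
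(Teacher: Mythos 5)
Your proposal matches the paper's implicit proof: Proposition~\ref{p_sl2_Cliff_chi} is stated as a direct generalization of Proposition~\ref{p_sl2_Cliff}, obtained precisely by re-running Section~\ref{s_ef} with coefficients in the local system $\chi$, using the twisting rule \eqref{eq:297} and the Galois-equivariant correspondences \eqref{eq:298}, and the Chern-class computations in Propositions~\ref{p_sl2} and~\ref{p_Cas} are indeed coefficient-insensitive as you note. One small point to tighten in your step~(iv): the degree-preservation and Frobenius-equivariance of the \emph{algebra} isomorphism \eqref{eq:44} come from the $\sTate{2}$ shift already built into the definition of $H\Omega$ in \eqref{eq:42} and its $\Gamma$-equivariant version \eqref{eq:6}, not from the module centering \eqref{eq:222}; the latter, together with \eqref{eq:70}, is what normalizes the action on $\widehat{H}(\QMN)$ in \eqref{eq:43}, which is a closely related but separate statement.
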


\subsubsection{} 

When $\chi_0^2 \ne 1$, there is a dramatic simplification in the
structure of these algebras. Indeed, the raising operators are
limited to
\begin{equation}
  \label{eq:303}
  \be\of{\gamma} \,  = \be(\gamma \boxtimes \, \cdot \,) \,,
  \quad \gamma \in H^1(C,\chi^2_\rho) \,, 
\end{equation}
and similarly for the lowering operators
$\bff\of{\gamma}$. In principle, the commutators of these operators
may involve $\bh\of{\bpt}$, which satisfies $\bh\of{\bpt}^2=0$ as
a corollary of \eqref{eq:306-2}. We will see that, in fact,
$\bh\of{\bpt}$ acts by $0$ in \eqref{eq:43}, except for one
exceptional case. 

\subsubsection{}
The vanishing $\bh\of{\bpt}$ of admits the following geometric
interpretation and generalization. If $\chi^2_0\ne 0$ then scheme-theoretic fixed locus
$\Fix_0 = \left(\bX^\vee_0\right)^{\Gamma'}$ is a length two scheme with
coordinate ring generated by $\bh\of{\bpt}$ modulo $\bh\of{\bpt}^2$.
The $\Gamma'$-fixed locus $\Fix = \left(\bX^\vee\right)^{\Gamma'}$ is,
however, reduced. Therefore, $\bh\of{\bpt}$ acts by zero in any
module in the image of $\sigma_*$, where
  \begin{equation}
  \sigma^*: \Cliff_{\bX^\vee_0,\Gamma}  \to \Cliff_{\bX^\vee,\Gamma}  \label{eq:43*2}
  \end{equation}
  is the pullback map. More generally, we have
  \begin{equation}
    \label{eq:45}
    \Big( \chi^2_0\ne 0 \Big) \quad \Rightarrow \quad 
    \be\bff, \bff\be
    \in \Ker \sigma^*\,. 
  \end{equation}
  %
  In particular, $\bh\of{\bpt} \in \Ker \sigma^*$ as a
  commutator of classes in $H^1(C, \fsl_2^\chi)$.

\subsection{Categorification of the L-genus}

\subsubsection{}

The Clifford algebra $\Cliff_{\bX,\Gamma}$ has a natural spinor bundle
$\Spin$ that may be constructed as follows. There is a canonical
extension
\[
  0 \to
  \Omega^1_\hor \bX^\vee \to \Omega^1 \bX^\vee \to
  \Omega^1_\ver \bX^\vee \to 0
\]
where
\[
\Omega^1_\hor \bX^\vee \cong p^* \left(T^*\bP^1 \right) \subset \Omega^1 \bX^\vee
\]
is the Lagrangian subbundle
formed by $1$-forms that vanish on the fibers of the projection $p:
\bX^\vee \to \bP^1$. It gives rise to an isotropic subbundle
\begin{equation}
  H\Omega_{\bX^\vee,\hor} = H^1\left(C, \Omega^1_\hor \bX^\vee \right)\sTate{2} \subset
  H\Omega_{\bX^\vee}\,, \label{eq:49} 
\end{equation}
and hence to the spinor bundle
\begin{equation}
  \label{eq:48}
  \Spin = \sqrt{\det} \otimes \Cliff_{\bX^\vee, \Gamma} \Big/
  \left(H\Omega_{\bX^\vee,\hor}\right) \,, 
\end{equation}
where we mod out by the left ideal generated by \eqref{eq:49} and 
\begin{equation}
  \label{eq:66}
  \det = \Lambda^{\textup{top}} H\Omega_{\bX^\vee,\hor}
  = \left( \Lambda^{\textup{top}} H\Omega_{\bX^\vee,\ver}\right)^{-1}\,. 
\end{equation}
%
The existence of an equivariant square root is clear both when $\chi_0^2=1$ and
when $\chi_0^2\ne 1$. In the former case, we have the cohomology with
coefficients in a trivial local system, while in the latter case
$\dim \Fix=0$. See Section \ref{s_epsilon}
for further discussion.

\subsubsection{}

Following \eqref{eq:33}, we define
\begin{equation}
  \label{eq:51}
  \cOh_{\Fix,\vir} = \Spin \sTate{-\delta}\,,
\end{equation}
where
\begin{equation}
  \label{eq:52}
  \delta = \tfrac 12 \dim \Fix =
  \begin{cases}
    1\,, \quad &\chi_0^2 = 1 \,, \\
    0 \,, \quad  &\chi_0^2 \ne 1\,.
  \end{cases}
\end{equation}
By construction, we have
\[
  \left[ \cOh_{\Fix,\vir}  \right] = \bL(\bP^1) =
  \bL_{\tfrac12}(T^*\bP^1) \,,
\] 
which may be interpreted either as equality of classes in equivariant
K-theory of $\bX^\vee$, or as equality of distributions after pushing
forward to point modulo group. Our goal in this section is to lift
the relation between L-genera and constant terms to a relation between
$\widehat{H}(\QMN)$ and \eqref{eq:51}.

\subsubsection{}
Constant term is a function of $\cM\in \Pic(C)$ and in order to
introduce an $\cM$-dependent twist of $\Spin$ we need a homomorphism
   \begin{equation}
   \Pic(C)(\Bbbk) \owns \cM \mapsto \cO(\cM) \in 
   \Pic_{\Fr \times \bA\buu}(\bX^\vee)\,, \label{eq:25-2}
   \end{equation}
 such that the equivariant weights $\Fr \times \bA\buu$-weights of the
 restriction of $\cO(\cM)$ to the fixed points $\{0,\infty\}$ are
 $\chi(\cM)^{-1}$ and $\chi(\cM)$, respectively.

 In general, $\Gamma'$, or any other group, will act nontrivially on the
 restriction of the natural line bundles to the $\Gamma'$-fixed
 locus. In particular, consider the line bundle $\cO(1)$ with
 its natural linearization. This means that the group $\bA\buu =
 \{ \diag(a^{-1},a)\}$ acts with the weights $a^{\pm 1}$ over
 over the
 fixed points $\{0,\infty\}$,
 respectively. It follows from \eqref{eq:299} that an element
 $\gamma\in \Gamma$ will act by $\chi(\gamma)^{\mp1}$
 on the same fibers. Unless $\chi_0 =1$, this does not factor through
 $\Gamma/\Gamma'$.

Therefore, if we want $\Fr$ to act on $\cO(1)$, we need
 to pick its lift to $\Gamma/[\Gamma,\Gamma]$. We get such a lift if we fix a
 point $x\in C$ of degree $1$ or, more generally, a divisor $\cM$
 of degree $1$. We denote $\cO(\cM)$ the bundle $\cO(1)$ with the
 corresponding action of $\Fr$. This factors through $\Pic_1(C)(\Bbbk)$
 because $\chi_0$ is a character of $\Pic_0(C)(\Bbbk)$. By linearity,
 this assignment extends to \eqref{eq:25-2}. 

 \subsubsection{}
 Our main result in this section is the following

 \begin{Theorem}\label{t2}
  For any $\cM$ we have
     \begin{equation}
    \widehat{H}(\QMN) = \sigma_{*,\bfL} \left(\cOh_{\Fix,\vir} \otimes \cO(\cM)
    \right)
    \,,\label{eq:324}
  \end{equation}
  as a $\cA^{\ge 1}_\chi(C)$-module, unless 
  \begin{equation}
    \label{eq:63}
   (g,\cM,\chi_0^2) = (2,\cO_C, \textup{nontrivial}) \,. 
  \end{equation}
\end{Theorem}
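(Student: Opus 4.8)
## Proof plan for Theorem \ref{t2}

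The plan is to establish the identification \eqref{eq:324} by realizing both sides as modules over the Clifford algebra $\Cliff_{\bX^\vee_0,\Gamma}\cong\cA^{\ge1}_\chi(C)$ (Proposition \ref{p_sl2_Cliff_chi}) and matching them by computing both sides explicitly in the affine charts provided by torus localization on $\bX^\vee=T^*\bP^1$. First I would treat the generic case $\chi_0^2\neq1$, where $\Fix=\{0,\infty\}$ is zero-dimensional, $\delta=0$, and by \eqref{eq:45} the operators $\be\bff$ and $\bff\be$ annihilate everything in the image of $\sigma_*$; so the right-hand side of \eqref{eq:324} is just the pushforward with support in $\bfL=\bfL_0\cup\bfL_\infty$ of a rank-one (over $\cO_\Fix$) spinor module twisted by $\cO(\cM)$. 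Concretely this is the local-cohomology computation of Section \ref{s_local coh} run with coefficients in the local system $\fsl_2^\chi$: the chart at $0$ contributes the attracting piece along $\bfL_0$, the chart at $\infty$ the piece along $\bfL_\infty$, and the exact sequence of the pair $\bfL_\infty\subset\bfL$ glues them into the unique nontrivial extension, exactly as in \eqref{eq:329}. On the left-hand side I would use the $\bP(H^0(\cV\otimes\cL^{-1}))$-fibration \eqref{eq:311} over $\Pic(C)$ to compute $\Hd_c(\QMN,\chi)$: the Leray spectral sequence and the delta-function nature of $\chi$ over $\Pic(C)$ collapse the $\Pic(C)$-direction, and the projective-space fibers contribute the $a$-graded pieces; the centering \eqref{eq:222}, together with the twist \eqref{eq:70}, is precisely engineered to line up the cohomological degrees, the Frobenius weights, and (via the hidden $\chi(\cM)$-dependence, cf.\ Lemma \ref{l_w_gen}) the $\bA\buu$-grading with the right-hand side. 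Matching the two $\cA^{\ge1}_\chi(C)$-module structures then reduces, by Propositions \ref{p_sl2}, \ref{p_Cas} and \ref{p_h}, to checking the action of the generators $\be\of{\gamma}$, $\bff\of{\gamma}$ on a set of generators, which is a finite verification once both sides are written down in a single affine chart.

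For the case $\chi_0^2=1$, where $\Fix=\bX^\vee$ is two-dimensional and $\delta=1$, the same strategy applies but now $\Spin$ is the genuine spinor bundle \eqref{eq:48} built from $H\Omega_{\bX^\vee}$ with its isotropic subbundle $H\Omega_{\bX^\vee,\hor}$ coming from the horizontal cotangent directions. Here the pushforward $\sigma_{*,\bfL}$ is along the resolution $\sigma:T^*\bP^1\to\bX^\vee_0$ and the support $\bfL$ cuts out the attracting Lagrangian; I would again compute this in the two toric charts, now keeping the full $\Omega^1$-worth of Clifford generators, and compare with $\widehat{H}(\QMN)$ computed from \eqref{eq:311}, where the $\Pic(C)$-direction now contributes its full cohomology $\Hd(\Pic(C))\cong\Ld H^1(C)$ — this is exactly what produces the Clifford-module structure on the left, matching the Clifford structure of $\Spin$ on the right. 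The $\fsl_2^\chi$-action identities \eqref{eq:306}, \eqref{eq:306-2} and the formula $\bh\of{\bpt}=2\eta$ from Proposition \ref{p_h} pin down the module structure, and naturality of all the correspondences forces the isomorphism once it is checked on generators.

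I expect the main obstacle to be the exceptional case analysis: one must show that, outside the single triple $(g,\cM,\chi_0^2)=(2,\cO_C,\text{nontrivial})$, the left-hand side $\widehat{H}(\QMN)$ has no "extra" cohomology beyond what $\sigma_{*,\bfL}$ produces — i.e.\ that the projective-space fibration \eqref{eq:311} is sufficiently well-behaved (empty for $\deg\cL\gg0$, of expected dimension \eqref{eq:215} for $\deg\cL\ll0$) that the only contribution to compactly supported cohomology in intermediate degrees is accounted for by the attracting-manifold pushforward. The genus-$2$, $\cM=\cO_C$, nontrivial-$\chi_0^2$ exception arises because for exactly this triple the relevant Brill--Noether locus in $\Pic_0(C)$ — the vanishing locus of a section of a degree-zero line bundle, i.e.\ a translate of the theta divisor meeting the origin — behaves anomalously, so $\bh\of{\bpt}$ fails to act by $0$ and the clean identification with $\sigma_{*,\bfL}\Spin$ breaks. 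Pinning down precisely that this is the \emph{only} failure, by a dimension count on $\Hd_c(\QMN,\chi)$ against $\Hd_{\bfL}(\Spin\otimes\cO(\cM))$ term by term in the $\deg\cL$-grading, is the step I would allocate the most care to; everything else is the bookkeeping of degrees, Tate twists, and the $\Fr\times\bA\buu$-equivariant structure, which the normalizations \eqref{eq:222} and \eqref{eq:25-2} have been set up to make automatic.
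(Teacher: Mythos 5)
Your broad strategy---match both sides as $\Cliff_{\bX^\vee_0,\Gamma}\cong\cA^{\ge1}_\chi(C)$-modules, use localization on $\bX^\vee=T^*\bP^1$, and split into the two cases $\chi_0^2=1$, $\chi_0^2\ne1$---does overlap with the paper's approach, in particular with its ``localization is localization'' step (Proposition~\ref{p_loc_loc}). But there are genuine gaps where your plan would not go through as written.

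\textbf{The organizing step you are missing is the attracting-manifold decomposition of $\QMN$ under $\Gm=\Aut(\cM)$.} You propose to compute $\Hd_c(\QMN,\chi)$ from the $\bP(H^0(\cV\otimes\cL^{-1}))$-fibration over $\Pic(C)$ and have ``the delta-function nature of $\chi$ over $\Pic(C)$ collapse the $\Pic(C)$-direction.'' That collapse happens only in the stable range $\deg\cL\ll 0$ (where the map is a genuine projective bundle); in the intermediate range, where the interesting contributions live, the fibers jump, and the Leray calculation does not close. What the paper does instead is use the $\Gm=\Aut(\cM)$-action on $\QMN$ (not the $\bA\buu$-torus on the dual side), whose two attracting strata have $\Gm$-fixed loci isomorphic to symmetric products $\Sd C$. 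Via Macdonald/Nakajima (Proposition~\ref{p_Verma_0}), $\Hd(\Sd C,\chi)$ is a Verma module for a Heisenberg--Clifford algebra, and the resulting exact sequence $0\to\text{Verma}\to\widehat{H}(\QMN)\to\text{dual Verma}\to 0$ is what gets matched against the local-cohomology triangle $0\to\sigma_{\bfL_\infty,*}\to\sigma_{\bfL,*}\to\sigma_{\bfL_0,*}\to 0$. Without this piece, you have no handle on the module structure in the middle range.

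\textbf{You do not address the extension class.} Matching gradings, centering shifts, and Tate twists (as you invoke via \eqref{eq:222}, \eqref{eq:70}, Lemma~\ref{l_w_gen}) identifies the associated graded of the two modules, not the extension itself. The paper closes this gap differently in the two cases. For $\chi_0^2=1$, the submodule is free over $\be\of\bpt$, so the extension class is determined after inverting $\be\of\bpt$, i.e.\ in the stable range; that is the content of Proposition~\ref{p_outside} and its Corollary. For $\chi_0^2\ne 1$, the claim is equivalent to the splitting of $\widehat{H}(\QMN)$, which reduces to $\bh\of\bpt=0$. Your proposal never confronts either of these reductions.

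\textbf{The exceptional case is pinned down by a degree-and-centrality argument, not by a Brill--Noether anomaly.} You offer a heuristic (``a translate of the theta divisor meeting the origin behaves anomalously'') and suggest a term-by-term dimension count; but equal dimensions do not preclude a nonsplit extension, so a dimension count cannot isolate the exceptional triple. The paper argues: $\bh\of\bpt$ has rank $\le1$ for degree reasons and is central, so if nonzero it must send the generator of the quotient to the cogenerator of the submodule; comparing weights (Lemma~\ref{l_w_gen}) shows this is only possible when $g=2$ and $\deg\cM=0$, and an explicit computation in Section~\ref{s_exception} (with $\widetilde{\QMN}_{\cK^{-1}}\subset\bP(K^{\oplus 2})\times K^*$ and the move of a line to infinity by $\Gm$) shows $\bh\of\bpt\ne0$ exactly when $\cM\cong\cO_C$. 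This sharp characterization is the step your plan leaves open, and the theta-divisor narrative as stated does not supply it.
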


\noindent
The proof of this theorem will occupy Section \ref{s_pt2}. The
exceptional case is analyzed in section \ref{s_exception} \,.
Among classical results to which Theorem \ref{t2} may be
directly comparable we mention \cites{Semiinf2, Semiinf1}. 




\section{Proof of Theorem 2}\label{s_pt2}

\subsection{Attracting manifolds and fixed loci} 

  \subsubsection{}
Consider the action of the group $\Aut(\cM) \cong \Gm$
  on the space $\QMN$. It is clear, for instance from the description in section
  \ref{s_concreteQMN}, that it has two
  basins of attraction as $z \in \Gm$ tends to $0 \in \overline{\Gm}$.
  Namely, 
  \begin{equation}
    \Attr(0) = \{ s_2 = 0 \} \,, \quad \Attr(\infty) = \{ s_2 \ne 0 \} \,.
    \label{eq:239}
\end{equation}
We denote the corresponding fix loci by
\[
\QMN^{\Gm} = \QM(0) \sqcup \QM(\infty) \,.
\]
On these fixed loci, we have $\cV = \cM \oplus \cO$
and $s$ is a quasisection of $\bP(\cM)$ or $\bP(\cO)$ respectively.
These are both $\bP^0$-bundles over $C$.

We observe that both $\QM(0)$ and  $\QM(\infty)$ are isomorphic
to the symmetric product  $\Sd = \bigsqcup \bS^d C$ or, equivalently,
the Hilbert scheme of points of $C$. Indeed, the isomorphism 
\begin{equation}
\Sd C \to  \QM(\bP(\cM)) = \{ \cL, s \in \bP(H^0(\cM\otimes \cL^{-1}))\}
  \label{eq:240}
\end{equation}
takes a divisor $D\in \Sd C$ to the quasimap
\[
\cL = \cM(-D) \xrightarrow{\quad} \cM \,.
\]

 \subsubsection{}

 The following table
 \begin{equation}
   \label{eq:305}
\begin{tabular}{ c| ccc} 
  & $\NN_s$ & $N_{\Attr/\textup{\textsf{fixed}}}$ & $\rk
                                                    N_{\Attr/\textup{\textsf{fixed}}}$
  \\ [0.5ex] 
\hline 
 $\QM(0)\phantom{\Big|}$ & $\cM^{-1}(2D)$ & $-\Hd(\cM)$ & $-\deg \cM +
  g-1$\\ 
 $\QM(\infty)$ & $\cM(2D)$ & $\Hd(\cM(D))-\Hd(\cM)$ & $\deg D$\\ 
\end{tabular}
 \end{equation}
collects the information about the normal bundle \eqref{eq:232}
to the fixed sections and about the normal bundle to the fixed
locus inside the attracting manifold. In particular,
%
\begin{alignat}{2}
  \notag 
  \Hd_c(\Attr(\infty)) &= \Hd(\QM(\infty)) &&\sTate{-2\deg D} \,, \\
  \Hd_c(\Attr(0)) &= \Hd(\QM(0)) &&\sTate{2\deg \cM
    +2-2g} \label{eq:244}
  \,.
 \end{alignat}

 \subsubsection{}
Consider the
 long exact sequence of a pair 
 \begin{equation}
   \label{eq:242}
   \xymatrix{
     & \Hd_c(\QMN) \ar[dr] \\
     \Hd_c(\Attr(\infty))  \ar[ur] && \Hd_c(\Attr(0))
     \ar@{.>}[ll]_{[1]} }
  \end{equation}
  for cohomology with compact supports. Since both
  cohomology groups in \eqref{eq:244} are pure, we
  conclude the following

  \begin{Lemma}
   The connecting dotted map in \eqref{eq:242} vanishes. 
  \end{Lemma}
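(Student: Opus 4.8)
The plan is to exploit the standard fact that the connecting homomorphism in a long exact sequence of a pair vanishes as soon as the two flanking terms are pure of the same weight. Concretely, the triangle \eqref{eq:242} comes from the closed–open decomposition of $\QMN$ into $\Attr(\infty)$ (the open locus $s_2\neq 0$) and $\Attr(0)$ (the closed locus $s_2=0$), so the dotted arrow is a map $\Hd_c(\Attr(0))\to \Hd_c(\Attr(\infty))[1]$, i.e. it raises cohomological degree by one. A morphism of mixed Hodge structures, or of Frobenius modules, strictly preserves weights; if the source has weight $w$ in cohomological degree $i$ and the target has weight $w$ in cohomological degree $i+1$, then since purity forces the weight in degree $i+1$ of a pure complex to be exactly $i+1$ on the nose, a degree-raising map between pure complexes of the \emph{same} weight-normalization must vanish. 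So the whole proof reduces to checking that both $\Hd_c(\Attr(\infty))$ and $\Hd_c(\Attr(0))$ are pure with matching weights.

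First I would invoke \eqref{eq:244}: both $\QM(0)$ and $\QM(\infty)$ are isomorphic to the symmetric products $\bigsqcup_d \bS^d C$, which are smooth projective; hence $\Hd(\QM(0))$ and $\Hd(\QM(\infty))$ are pure by the classical purity of cohomology of smooth projective varieties (Deligne). The shifts $\sTate{\cdot}$ recorded in \eqref{eq:244}, coming from the normal bundles to the fixed loci inside the attracting cells as tabulated in \eqref{eq:305}, are pure Tate twists combined with cohomological shifts, so they preserve purity; one then checks that after the centering conventions the two sides land in the same weight-versus-degree line. This is where I would spell out that $\Hd_c(\Attr(\infty)) = \Hd(\QM(\infty))\sTate{-2\deg D}$ is pure of weight equal to its degree, and likewise for $\Hd_c(\Attr(0))$, so the connecting map — which shifts degree by $1$ but (being a map of Frobenius modules / MHS) preserves weight — has no room to be nonzero.

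The main obstacle I anticipate is purely bookkeeping: making sure the Tate twists and degree shifts in \eqref{eq:244} are correctly aligned so that the two flanking groups really are pure with the \emph{same} weight filtration normalization, rather than merely each pure in isolation. A subtlety is that the components of $\bigsqcup_d \bS^d C$ have different dimensions, so one must carry the shifts component-by-component in $d$ and confirm the weight bookkeeping holds uniformly; the table \eqref{eq:305} is exactly what makes this work, since $\rk N_{\Attr/\mathsf{fixed}}$ varies with $d$ in lockstep with $\dim \bS^d C$. Once that alignment is in place, the vanishing is immediate: a weight-strict map cannot increase cohomological degree between pure objects sitting on the same line, so the dotted arrow in \eqref{eq:242} is zero, which is the assertion of the Lemma. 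I would also remark that this is the geometric reason the long exact sequence \eqref{eq:242} splits into short exact sequences, a fact that will be used downstream in reconstructing $\widehat{H}(\QMN)$.
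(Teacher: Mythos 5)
Your proposal is correct and fleshes out the paper's own argument, which simply observes that both cohomology groups in \eqref{eq:244} are pure and concludes the vanishing of the connecting homomorphism. Your elaboration — that a Frobenius-equivariant, degree-raising map between modules pure in the sense that weight equals cohomological degree must vanish, and that the shifts $\sTate{\cdot}$ preserve this purity because they combine a cohomological shift with the matching Tate twist — is exactly the reasoning the paper leaves implicit.
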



\subsubsection{}

The fix loci have their own correspondences as in \eqref{eq:216},
which we denote by $\be^{(0)}$ and $\be^{(\infty)}$, respectively.
The main difference between quasisections of $\bP^k$-bundles
for different $k$ is  the rank of normal bundle $\cN$ as in
\eqref{eq:325}. This rank equals $k+1$ and hence $c_{k+1}(\cN)$ is
used 
in the definition of the correspondences. In particular, the
analog of \eqref{eq:224} for $\bP^0$-bundles reads
  \begin{equation}
 [\iota^* , \iota_{*} ] \circ \Delta_{C,*}  = c_1(\cN
 \otimes T C) - c_1(\cN)
    = \tau\,. \label{eq:224*2}
\end{equation}
 Thus
 \begin{equation}
   \label{eq:241}
  \left[ \be^{(0)}, \bff^{(0)} \right] = (\textstyle{\int_C}
  \otimes 1) \,  \Delta_C^*  \,, 
 \end{equation}
 meaning that these correspondences generate a Heisenberg--Clifford
 algebra $\Heis$ in place of $\fsl_2$.

 \subsubsection{}

 As before, a local system $\chi$ defines a local system $\Heis_{\chi}$, in which
 $\be$ and $\bff$ are twisted by $\chi$ and $\chi^{-1}$,
 respectively. Note that,  in accordance with the first column in
 \eqref{eq:305} and the Tate shift that
 is present in \eqref{eq:244}, we need the local system
 $\Heis_{\chi^2}$ on $\QM(0)$ and the local system $\Heis_{q\chi^2}$
 on $\QM(\infty)$ to make the triangle
 \eqref{eq:242} Galois-equivariant. 

 \subsubsection{}

 The following result is well-known and constitutes a $1$-dimensional analog
 of Nakajima's action on the Hilbert schemes of points of surfaces
 \cite{NakHilb}. 

 \begin{Proposition}\label{p_Verma_0}
$\Hd(\Sd C, \chi)$ is a Verma module for $\Hd(C,\Heis_{\chi})$. 
 \end{Proposition}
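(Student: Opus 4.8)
The statement to establish is Proposition \ref{p_Verma_0}: that $\Hd(\Sd C,\chi)$ is a Verma module for the algebra $\Hd(C,\Heis_{\chi})$. The strategy is the standard one for identifying a geometric representation with a Verma module: exhibit a highest-weight (here rather lowest-weight) vector, check that it generates, and then match characters (Hilbert series, graded by $\deg \cL = \deg D$ and by cohomological degree, together with the Frobenius/$\bA\buu$-weights).

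\textbf{Step 1: the vacuum vector.} First I would locate the component $\bS^0 C = \pt$, corresponding to the empty divisor $D=0$, equivalently to the quasisection $\cL = \cM \xrightarrow{\sim} \cM$. Its cohomology is one-dimensional and I claim its fundamental class is annihilated by all lowering operators $\bff^{(0)}\of{\gamma}$, because $\bff$ decreases $\deg \cL$ (equivalently pushes along the inclusion $\iota$ of the locus where the section vanishes, decreasing the degree of the divisor) and there is no component $\bS^{-1} C$. Concretely, from \eqref{eq:322} and \eqref{eq:313} one reads that $\bff$ lowers the degree of $\cL$ by one, so it kills everything supported in degree $0$. Hence this class is a candidate vacuum vector for the Verma module.

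\textbf{Step 2: generation.} Next I would show the $\Hd(C,\Heis_{\chi})$-module generated by this vacuum vector is all of $\bigoplus_d \Hd(\bS^d C,\chi)$. By the relations \eqref{eq:241} (the Heisenberg--Clifford relation $[\be^{(0)},\bff^{(0)}] = (\int_C \otimes 1)\Delta_C^*$, i.e. a central/scalar commutator, with no $\bh$ on the right since we are now in the $\bP^0$ situation), the universal enveloping algebra acting on the vacuum is a polynomial/exterior algebra in the raising operators $\be^{(0)}\of{\gamma}$, $\gamma$ running over a basis of $\Hd(C,\chi^{\pm})$. So it suffices to check that repeated application of $\be^{(0)}$-operators spans the cohomology of every $\bS^d C$. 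Here I would invoke exactly the $1$-dimensional Nakajima mechanism referenced in the statement: the boundary stratification of $\bS^d C$ by $\bS^{d-1}C \hookrightarrow \bS^d C$ (adding a point) realizes $\Hd(\bS^d C)$ inductively as the image of raising correspondences, and the vanishing of the connecting map (the Lemma just above, or purity of the relevant cohomology) makes the induction clean. This reduces the claim to the classical fact that $\bigoplus_d \Hd(\bS^d C)$ is a free module over the symmetric algebra on $\Hd(C)$ (with appropriate parity), which is the statement that $\zeta_C$ "factorizes" — completely standard, see e.g.\ Macdonald's computation of the cohomology of symmetric products.

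\textbf{Step 3: matching characters and twists.} Finally I would verify that the module is \emph{free}, i.e.\ a genuine Verma and not a proper quotient, by comparing graded dimensions. The Verma module for $\Hd(C,\Heis_\chi)$ with the given vacuum has graded dimension equal to $\prod$ over a basis of $H^1(C,\chi^{\pm1})$ of geometric-series/exterior factors times the bosonic contribution of $H^0,H^2$; Macdonald's formula for $\sum_d [\bS^d C] t^d$ gives exactly the same product (this is, after unwinding, the identity behind $\zeta_C(t)$ in \eqref{eq:269}). One must be careful to carry the local system twist: as noted before the statement, $\Hd(\Sd C,\chi)$ should be regarded with the $\Heis_{\chi}$-action obtained from $\chi$ on $\Pic(C)$ pulled back along $D\mapsto$ (class of the quasisection), and the Frobenius eigenvalues on $\bS^d C$ are governed by $H^1(C)\otimes\chi$; this is precisely what makes the raising operators carry the weight $\chi$ recorded in \eqref{eq:298}/\eqref{eq:303}. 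Tracking the Tate twists through \eqref{eq:244} is bookkeeping but needs to be done to get a degree-preserving isomorphism.

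\textbf{Main obstacle.} The conceptual content is entirely in Step 2 — proving that the raising correspondences generate, which is the Nakajima-type irreducibility/generation argument. The computation of $[\be^{(0)},\bff^{(0)}]$ is already done in \eqref{eq:224*2}--\eqref{eq:241}, so the algebra is fixed; what remains is the geometric input that the module is cyclic. The cleanest route is to run the induction on $d$ using the open--closed decomposition $\bS^{d-1}C \sqcup (\bS^d C \setminus \bS^{d-1}C)$ and the fact (Step 1 style) that $\bff^{(0)}$ exhausts the "new" classes; combined with the dimension count of Step 3, cyclicity plus the right graded dimension forces the module to be exactly the Verma. The only mild subtlety beyond that is ensuring all of this is Frobenius-equivariant, which follows because every correspondence used ($\iota_*$, $\iota^*$, $\int_C$, and the boundary inclusions) is defined over $\Bbbk$ and hence Galois-equivariant by construction, as already established for $\be,\bff$ in Section \ref{s_Galois_e}.
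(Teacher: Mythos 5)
Your outline is workable in principle, but the paper proves the statement by a genuinely different and sharper route, especially when $\chi$ is nontrivial, and it is worth seeing the contrast. For trivial $\chi$ the paper simply cites Macdonald's computation of $\Hd(\bS^d C)$, which in effect packages your Steps 2 and 3 into a classical reference. For nontrivial $\chi$ the paper runs no Nakajima-type generation induction at all. Instead it observes that when $\chi$ is nontrivial one has $H^0(C,\chi^{\pm 1})=H^2(C,\chi^{\pm 1})=0$, so the noncentral part of $\Hd(C,\Heis_\chi)$ sits entirely in cohomological degree $1$; the resulting algebra is a finite-dimensional Clifford algebra, which is semisimple with a unique irreducible module (the spin module $\cong\Lambda H^1(C,\chi)$). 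Hence $\bigoplus_d \Hd(\bS^d C,\chi)$ is automatically a direct sum of copies of that spin module, and the whole proof collapses to showing there is exactly one copy. Since $\Hd(\bS^0 C,\chi)$ is one-dimensional and each spin-module copy occupies a window of length $\dim H^1(C,\chi)=2g-2$ in the $d$-grading, this is exactly the content of the Lemma the paper proves immediately afterwards: $\Hd(\bS^d C,\chi)=0$ for $d>2g-2$. That Lemma is a one-line argument from the projective-bundle structure $\bS^d C\to\Pic_d(C)$ in the range $d>2g-2$ and the vanishing of cohomology of an abelian variety with nontrivial coefficients. In other words, the hard part of your plan (Step 2, cyclicity) is entirely bypassed by semisimplicity for nontrivial $\chi$, and the only geometric input needed is the vanishing lemma, not a generation-by-correspondences argument. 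Your route would also get there, but you would need to supply both a purity statement for $\bS^d C$ and an explicit surjectivity argument for the raising operators, whereas the paper buys both for free from Clifford-algebra representation theory.

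Two small cautions about the details as written. First, the Lemma you cite for vanishing of a connecting map is the one attached to \eqref{eq:242}, which concerns the attracting-manifold decomposition of $\QMN$, not symmetric products of $C$; it does not directly apply in your Step 2, and you would need a separate purity/degeneration argument there. Second, the roles of $\be$ and $\bff$ are lightly scrambled in your Step 1: by \eqref{eq:322}, $\be=(\textstyle\int_C\otimes 1)\,\iota_*$ is the pushforward correspondence (which raises $\deg D$, i.e.\ lowers $\deg\cL$), while $\bff=-(\textstyle\int_C\otimes 1)\,\iota^*$ is the pullback (which lowers $\deg D$). Your conclusion that the class of $\bS^0 C$ is a lowest-weight vector annihilated by $\bff$ is still correct, but the parenthetical justification has pushforward and pullback, and the monotonicity in $\deg\cL$ versus $\deg D$, interchanged.
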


 \noindent
 For the trivial local system, this goes at least as far back as the
 work of Macdonald \cite{Macd}. For a nontrivial local, this is a
 consequence of representation theory of Clifford algebras and the
 following

 \begin{Lemma}
 If $\chi$ is nontrivial then $\Hd(\bS^d C, \chi)=0$ for $d>2g-2$. 
\end{Lemma}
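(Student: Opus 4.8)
The statement to prove is: if $\chi$ is a nontrivial character of $\Pic(C)(\Bbbk)$, then $\Hd(\bS^d C, \chi) = 0$ for $d > 2g-2$. Here $\chi$ on $\bS^d C$ means the local system pulled back along the Abel--Jacobi map $\bS^d C \to \Pic_d(C)$, $D \mapsto \cO_C(D)$. The plan is to reduce the cohomology of $\bS^d C$ to the cohomology of $\Pic_d(C)$ with coefficients in the pushforward of the local system, and then to identify that pushforward.

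First I would recall that for $d \ge 2g-1$ the Abel--Jacobi map $\pi_d: \bS^d C \to \Pic_d(C)$ is a Zariski-locally-trivial $\bP^{d-g}$-bundle, since for a line bundle $L$ of degree $d \ge 2g-1$ one has $h^0(L) = d - g + 1$ by Riemann--Roch and vanishing of $H^1$, and the fiber over $[L]$ is $\bP(H^0(L))$. Then by the projection formula (or the Leray spectral sequence, which degenerates because the fibers have only even cohomology concentrated in a way that splits off), $\Hd(\bS^d C, \pi_d^* \chi) = \Hd(\Pic_d(C), \chi \otimes \pi_{d,*}\Q_\ell)$, and $\pi_{d,*}\Q_\ell$ is a direct sum of Tate twists of the constant sheaf (the cohomology of $\bP^{d-g}$). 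So it suffices to show $\Hd(\Pic_d(C), \chi) = 0$ for $\chi$ nontrivial. Next I would use that $\Pic_d(C)$ is a torsor over the abelian variety $\Pic_0(C)$, so after choosing a point it is identified with an abelian variety $A$ of dimension $g$, and the local system $\chi$ corresponds to a nontrivial character of $\pi_1(A) = H_1(A,\Z)$ (or rather of the relevant fundamental group — one should be careful here: $\chi$ is a character of the finite group $\Pic_0(C)(\Bbbk)$, which via the Lang torsor / the Frobenius gives a character of $\pi_1^{\text{geom}}$ or better a rank-one \'etale local system of finite order). The cohomology of an abelian variety with coefficients in a nontrivial rank-one local system of finite order vanishes in all degrees: this is the standard fact that $\Hd(A, \cL_\psi) = 0$ for $\psi \ne 1$, provable via the Künneth-type decomposition $\Hd(A, \cL_\psi) = \Lambda^\bullet H^1(A,\cL_\psi)$ together with $H^1(A, \cL_\psi) = 0$ (which in turn follows from $H^0 = H^{2g} = 0$ by duality plus the Euler characteristic being zero since translation acts trivially on the constant sheaf but $\cL_\psi$ has no global sections), or directly from the fact that $\cL_\psi$ is a nontrivial character sheaf.

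The bound $d > 2g-2$, i.e.\ $d \ge 2g-1$, is exactly what makes the $\bP^{d-g}$-bundle structure work, so that is where the hypothesis enters. The main point to get right — and the only place where any real care is needed — is the translation between "$\chi$ a character of $\Pic(C)(\Bbbk)$" in the arithmetic setup and "$\chi$ a rank-one \'etale local system" to which the cohomological vanishing for abelian varieties applies; concretely one checks that a nontrivial such $\chi$ gives a local system which is \emph{not} geometrically trivial (equivalently, its restriction to $\Pic_0(C)$ is nontrivial), and then invokes vanishing of the cohomology of the abelian variety $\Pic_d(C)$ with these coefficients. Everything else is a routine application of Riemann--Roch for the bundle structure and the projection formula.

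Alternatively, and perhaps more cleanly for a nontrivial-local-system statement, one could run the argument entirely on $\bS^d C$ by a generating-function / Macdonald-type computation: the generating series $\sum_d \chi^{-d} \cdot [\Hd(\bS^d C, \chi)]$ (weighted by Frobenius/$a$) equals, up to the contribution of $H^1(C,\chi)$ in the exterior direction, something like $\Lambda^\bullet(H^1(C,\chi)) \otimes \text{Sym}^\bullet(H^0 \oplus H^2)$-type expression, but since $\chi$ nontrivial kills $H^0(C,\chi)$ and $H^2(C,\chi)$, only the exterior factor $\Lambda^\bullet H^1(C,\chi)$ survives, and $\dim H^1(C,\chi) = -\chi_{\text{top}}(C,\chi) = 2g-2$ by the Euler characteristic formula for a nontrivial rank-one local system on $C$; hence the series is a polynomial in $\chi^{-1}$ of degree $2g-2$, which is exactly the assertion. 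I would present the bundle-theoretic argument as the main line and mention this second route as a remark, since it makes the appearance of the number $2g-2$ transparent.
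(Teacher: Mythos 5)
Your main argument is exactly the paper's proof: $\bS^d C \to \Pic_d(C)$ is a projective bundle for $d > 2g-2$ by Riemann--Roch, the pushforward of the constant sheaf is a trivial (direct sum of Tate twists) sheaf on the base, $\chi$ is pulled back from the base, and the cohomology of the abelian variety $\Pic_d(C)$ with nontrivial coefficients vanishes. Your care about translating ``$\chi$ nontrivial'' into ``geometrically nontrivial rank-one local system on $\Pic_0(C)$'' is the right scruple to have here, and the paper handles it implicitly by the same convention.

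One small caution on the alternative route you sketch: in the paper's logic this lemma is a \emph{supporting} step toward Proposition~5.20 ($\Hd(\bS^\bullet C,\chi)$ as a Verma/Clifford module), so if your Macdonald-type generating-function argument invokes the exterior-algebra structure $\bigoplus_d \Hd(\bS^d C,\chi)\cong\Lambda^\bullet H^1(C,\chi)$ it would be circular in that context. It is a perfectly valid independent proof if one cites the local-system version of Macdonald's theorem directly, and it has the virtue of making the bound $2g-2=\dim H^1(C,\chi)$ conceptually transparent, but the bundle argument is the one that fits the paper's order of deduction.
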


\begin{proof}
  The map $S^d C \to \Pic_d(C)$ is projective bundle for $d > 2g-2$ and
  the pushforward of the constant sheaf under this map forms a trivial
  local system on the base. Since the local system $\chi$ is pulled
  back from the base, and the cohomology of an abelian variety with
  nontrivial coefficients vanishes, the claim follows. 
\end{proof}

\noindent 
As a corollary of the proof, we note that $\Hd(\bS^{2g-2} C, \chi)$
vanishes except for
\begin{equation}
  \label{eq:67}
   H^{2g-2}(\bS^{2g-2} C, \chi) = \chi(\cK_C) \otimes
   H^{2g-2}(\bP^{g-1}) \,, 
\end{equation}
where $\cK_C$ is the canonical bundle of $C$ and
$\bP^{g-1} \subset \bS^{2g-2} C$ is the canonical linear system. From
this and Proposition \ref{p_Verma_0} one may observe the
classical fact that
\begin{equation}
  \Lambda^{\textup{top}} \Hd(C,\chi) =
  \chi(\cK_C)\sTate{2-2g}\label{eq:68}
\end{equation}
for any character $\chi$, trivial or not.


 \subsection{Localization is localization}\label{two_local}

\subsubsection{}

Equivariant localization in topology 
relates  equivariant cohomology of a space, ordinary and
extraordinary, with the cohomology of the fixed loci.
In our case, this comparison is provided by exact
sequence \eqref{eq:242} and the isomorphisms
\eqref{eq:244}.

Using these isomorphisms, we may express the
$\fsl_2$-action in terms of the Heisenberg operators.
Our goal in this subsection is to identify this topological 
equivariant localization with localization in
representation theory as discussed in section
\ref{p_sl2_Cliff}.

  \subsubsection{}

  The map $\Hd_c(\QMN) \to \Hd_c(\Attr(0))$ is pullback under
  the embedding and thus commutes with the pullback $\iota^*$.
  For the pushforward, we need to look at the normal bundle $\cN$,
  which fits into the sequence
  \begin{equation}
    \label{eq:326}
  0 \to \underbrace{\cM \otimes \cL^{-1} \otimes
\cO_{\QM}(1)}_{\cN^{(0)}}\to  \cN \to  \underbrace{ \cL^{-1} \otimes
\cO_{\QM}(1) }_{\cN^{(\infty)}}\to 0 \,.
\end{equation}
The map $\iota^{(0)}_*$ lacks the quotient term in \eqref{eq:326},
whence 
  the following

  \begin{Proposition}
   The diagrams
  \begin{equation}
    \label{eq:243}
    \xymatrix{
      \Hd_c(C\times \QMN) \ar[r]  &\Hd_c(C\times \Attr(0)) \\
      \Hd_c(C\times \QMN) \ar[r]  \ar[u]^{\iota^*} & \Hd_c(C\times \Attr(0))
      \ar[u]_{\iota^{(0),*}}
    } \qquad
    \xymatrix{
      \Hd_c(C\times \QMN) \ar[r] \ar[d]_{\iota_*}  &\Hd_c(C\times \Attr(0))
      \ar[d]^{\square_*}\\
      \Hd_c(C\times \QMN) \ar[r]  & \Hd_c(C\times \Attr(0))
    } 
  \end{equation}
  where
  \[
    \square_* = c_1(\cN^{(\infty)}) \, \iota^{(0)}_*= \iota^{(0)}_*
    c_1(\cN^{(\infty)}\otimes TC)\,, 
    \] 
  commute. 
  \end{Proposition}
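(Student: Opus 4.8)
The plan is to prove the two commuting squares in \eqref{eq:243} by unwinding the construction of $\iota^*$ and $\iota_*$ from Section \ref{s_ef} and comparing them against the analogous constructions for the fixed locus $\Attr(0)$. First I would recall that $\iota$ is the embedding \eqref{eq:320i}–\eqref{eq:309} sending $(x,\cL,s)\mapsto (x,\cL(-x),s\otimes 1)$, and that this same set-theoretic formula defines the correspondence $\iota^{(0)}$ on $C\times \Attr(0)$, because on $\Attr(0)$ we have $\cV=\cM\oplus\cO$ and the quasisection $s$ lives inside $\bP(\cM)$; the map $\Hd_c(\QMN)\to \Hd_c(\Attr(0))$ is restriction (pullback) along the closed embedding $\Attr(0)\hookrightarrow\QMN$, which is $\Gm$-equivariant and commutes with $\iota$ set-theoretically. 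Since $\iota^*$ is defined purely functorially as pullback of cohomology classes along $\iota$, and since pullbacks along a commuting square of closed embeddings commute (this is base change, or just functoriality of $(-)^*$), the left square in \eqref{eq:243} commutes essentially formally. This is the easy half.

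For the right square, involving the pushforwards, I would use the description of $\iota_*$ via the refined (localized) Euler class $\varepsilon = c_2(\cN)$ of the rank $2$ bundle $\cN=\cV\otimes\cL^{-1}\otimes\cO_{\QM}(1)$ carrying the section $s(x)$, as set up in \eqref{eq:233}–\eqref{eq:90}. The key input is the exact sequence \eqref{eq:326}
\[
0\to \cN^{(0)}\to \cN\big|_{\Attr(0)}\to \cN^{(\infty)}\to 0\,,
\]
which says that the restriction of the ambient normal bundle $\cN$ to $\Attr(0)$ acquires a quotient line bundle $\cN^{(\infty)}=\cL^{-1}\otimes\cO_{\QM}(1)$, while the section $s(x)$ restricted to $\Attr(0)$ lies entirely in the subbundle $\cN^{(0)}$. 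Consequently, the refined Euler class $\varepsilon^{(0)}=c_1(\cN^{(0)})$ of the section on $\Attr(0)$ and the restriction of $\varepsilon$ differ by the Euler class of the quotient: $\varepsilon\big|_{\Attr(0)} = c_1(\cN^{(\infty)})\cup \varepsilon^{(0)}$, as relative classes, since the zero locus of $s(x)$ on $\Attr(0)$ is the same as on $\QMN$ (this uses that $s$ takes values in $\cN^{(0)}$ on the fixed locus). Pushing the factorization diagram \eqref{eq:91} through the restriction map then yields $\iota_*\big|_{\Attr(0)} = c_1(\cN^{(\infty)})\cup \iota^{(0)}_* = \square_*$, and the second equality $\square_* = \iota^{(0)}_*\,c_1(\cN^{(\infty)}\otimes TC)$ follows from the projection formula for the regular embedding $\iota^{(0)}$ together with the identity $\iota^{(0),*}(\cN^{(\infty)}) = \cN^{(\infty)}\otimes TC$, which is the $\bP^0$-analog of \eqref{eq:26*2}.

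The main obstacle I expect is the careful bookkeeping of the \emph{relative} cohomology groups, i.e. making sure that the identity $\varepsilon\big|_{\Attr(0)} = c_1(\cN^{(\infty)})\cup\varepsilon^{(0)}$ holds already at the level of classes in $\Hd(C\times\Attr(0), U^{(0)})$ and not merely after pushing to absolute cohomology — because the whole point of the refined Euler class machinery is to track the relative lift. Here one must check that $U\cap (C\times\Attr(0)) = U^{(0)}$, i.e. that the zero locus of $s(x)$ computed in the ambient space, when intersected with the fixed locus, agrees with the zero locus of $s(x)$ computed intrinsically on $\Attr(0)$; this is exactly the content of the sub/quotient splitting \eqref{eq:326} and the fact that on $\Attr(0)$ the section avoids the $\cN^{(\infty)}$-direction. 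Once this compatibility of open complements is in hand, the relative multiplicativity of localized Euler classes under the extension \eqref{eq:326} — a standard property of the classes pulled back from $\A^k/GL(k)$, since the map $\textup{clsf}$ for $\cN\big|_{\Attr(0)}$ factors through $\A^1/GL(1)\times BGL(1)$ — gives the result. I would also remark that the $\Gm$-equivariance of the whole picture, hence purity, is what guarantees these identities propagate to the $\fsl_2$- and Heisenberg-module structures used later in Section \ref{two_local}.
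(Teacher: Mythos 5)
Your proof is correct and follows the paper's own route: the left square is functoriality of pullback along closed embeddings, and the right square is multiplicativity of the refined Euler class along the filtration \eqref{eq:326}, using precisely the fact that $s(x)$ factors through the subbundle $\cN^{(0)}$ on $\Attr(0)$ (so ``$\iota^{(0)}_*$ lacks the quotient term''), with the two expressions for $\square_*$ reconciled by the projection formula and $\iota^{(0),*}\cN^{(\infty)}=\cN^{(\infty)}\otimes TC$. One harmless slip: on $\Attr(0)=\{s_2=0\}$ it is not true that $\cV\cong\cM\oplus\cO$ (that only holds on the fixed locus $\QM(0)$), but your argument in fact uses only that $s$ factors through $\cM\subset\cV$ on $\Attr(0)$, which does hold, so nothing breaks.
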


  \noindent
  Note these formulas are consistent with interpreting the
  virtual bundle 
  \[
  N_{\QMN/\Attr(0)} = \Hd(C,\cL^{-1}) \otimes \cO_{\QM}(1) \,. 
\]
as the virtual normal bundle to $\Attr(0)$ in $\QMN$.  The
correspondence $\iota^{(0)}$ changes it by
\[
  \iota^{(0),*} N_{\QMN/\Attr(0)}  - N_{\QMN/\Attr(0)} =
  \cN^{(\infty)}\otimes TC \,. 
\]

  \subsubsection{}

The normal bundle 
\begin{equation}
  \label{eq:245}
  N_{\QMN/\QM(\infty)} = \left( \Hd(\cM\otimes \cL^{-1}) -
  \Hd(\cM)\right) \otimes \cO_{\QM}(1) 
\end{equation}
is a vector bundle of  rank $-\deg \cL$ over
$\QM(\infty)$.
The map $ \Hd_c(\Attr(\infty)) \to \Hd_c(\QMN)$ commutes
 with $\iota_*$, since both maps are pushforwards. For pullbacks, we
 need to look at the effect of $\iota^{\infty}$ on the normal bundle
 \eqref{eq:245}. We deduce

 \begin{Proposition}
 The diagrams
 \begin{equation}
   \label{eq:246}
   \xymatrix{
      \Hd_c(C\times\Attr(\infty))  \ar[r] & \Hd_c(C\times\QMN)  \\
     \Hd_c(C\times\Attr(\infty))
     \ar[u]^{\square^*}  \ar[r]  & \Hd_c(C\times\QMN) \ar[u]^{\iota^*} 
    } \quad
    \xymatrix{
     \Hd_c(C\times\Attr(\infty))
      \ar[d]_{\iota^{(\infty)}_*} \ar[r] &  \Hd_c(C\times\QMN) \ar[d]^{\iota_*}  \\
     \Hd_c(C\times\Attr(\infty)) \ar[r]   & \Hd_c(C\times\QMN) 
    } 
  \end{equation}
  where
  \[
    \square^* = c_1(\cN^{(0)}\otimes TC) \, \iota^{(\infty),*} =\iota^{(\infty),*}
   c_1(\cN^{(0)})\,, 
    \] 
  commute. 
 \end{Proposition}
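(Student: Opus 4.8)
The plan is to prove this as the $0\leftrightarrow\infty$ mirror of the preceding Proposition: run exactly the same bookkeeping, but now reading the short exact sequence \eqref{eq:326} of normal bundles from its quotient term $\cN^{(\infty)}$ rather than its subbundle $\cN^{(0)}$.

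First I would pin down the geometry of the restriction. The correspondence $\iota$ of \eqref{eq:309} sends $(x,\cL,s)$ to $(x,\cL(-x),s\otimes 1)$, and the image $\overline{s\otimes 1}$ of $s\otimes 1$ in the quotient line bundle $\cN^{(\infty)}$ of \eqref{eq:326} is simply $\bar s$ multiplied by the canonical section of $\cO(x)$; hence it is nonzero exactly when $\bar s$ is, so $\iota^{-1}(\Attr(\infty))=\Attr(\infty)$ and $\iota$ restricts to a correspondence $\iota^{(\infty)}$ on $C\times\Attr(\infty)$ over the open inclusion $j\colon C\times\Attr(\infty)\hookrightarrow C\times\QMN$, and preserving the fixed locus $\QM(\infty)\cong\Sd C$ inside $\Attr(\infty)$, where it becomes the $\bP^0$-bundle correspondence for quasisections of $\bP(\cO)$. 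I would also note, as in section \ref{s_concreteQMN}, that $\Attr(\infty)=\{s_2\ne0\}$ is smooth and that $\iota^{(\infty)}$ is there a regular embedding of codimension two with normal bundle the restriction of the rank-two bundle $\cN$ of \eqref{eq:325}.

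For the right square (pushforwards) this smoothness is all that is needed: $j_!$ (extension by zero from the open $\Attr(\infty)$) and $\iota_*$ are both pushforwards, and on $\Attr(\infty)$ the localized Euler class \eqref{eq:233}, i.e.\ the pullback of $[0/GL(2)]$ under the classifying map \eqref{eq:90}, is the restriction of the one used on $\QMN$; hence $\iota_*\circ j_!=j_!\circ\iota^{(\infty)}_*$ with no correction term. For the left square (pullbacks), the discrepancy between $\iota^*$ and the intrinsic $\iota^{(\infty),*}$, after restriction to $\Hd_c(C\times\Attr(\infty))$, is governed by the change of the virtual normal bundle \eqref{eq:245} of $\QM(\infty)$ in $\QMN$ under $\cL\mapsto\cL(-x)$. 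From the exact triangle $0\to\cM\otimes\cL^{-1}\to\cM\otimes\cL^{-1}(x)\to(\cM\otimes\cL^{-1}\otimes TC)\big|_x\to 0$ one gets
\[
 N^{\cL(-x)}_{\QMN/\QM(\infty)}-N^{\cL}_{\QMN/\QM(\infty)}=\bigl(\cM\otimes\cL^{-1}\otimes TC\otimes\cO_{\QM}(1)\bigr)\big|_x=\bigl(\cN^{(0)}\otimes TC\bigr)\big|_x ,
\]
an honest line bundle, which enters multiplicatively through its first Chern class exactly as in the computation \eqref{eq:224}; this produces the factor $c_1(\cN^{(0)}\otimes TC)$ in $\square^*$. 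The two displayed forms of $\square^*$ then agree by the identity $\iota^{(\infty),*}\cN^{(0)}=\cN^{(0)}\otimes TC$ --- the point modification twists a bundle pulled back from $C$ by $TC$, as in \eqref{eq:26*2} --- which lets one move $c_1$ across $\iota^{(\infty),*}$. Finally I would check that all the shifts $\sTate{\cdot}$ and the local-system twists are the ones dictated by the table \eqref{eq:305} and the isomorphisms \eqref{eq:244}, since $\cN^{(0)}$ and $\cO(x)\cong TC$ carry their evident Frobenius weights.

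The step I expect to be the main obstacle is the normal-bundle computation in the left square: one must be sure that the correction is exactly $\cN^{(0)}\otimes TC$, i.e.\ the subbundle of \eqref{eq:326} with a single tangent twist and no further Tate correction, and that the resulting signs and Frobenius weights match those of the $\Attr(0)$ version of the preceding Proposition under the symmetry exchanging the two torus-fixed points of $\bP^1$. The rest is formal manipulation of pullbacks, pushforwards, and localized Euler classes.
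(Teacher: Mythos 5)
Your proposal is correct and follows the same line of reasoning the paper indicates (but does not spell out): the pushforward square is immediate since $j_!$ and $\iota_*$ are both pushforwards and the localized Euler class \eqref{eq:233} restricts to $\Attr(\infty)$; the pullback square is then obtained by computing the shift of the virtual normal bundle \eqref{eq:245} under $\cL\mapsto\cL(-x)$, and your exact-sequence computation correctly identifies the correction as $(\cN^{(0)}\otimes TC)$, with the two forms of $\square^*$ agreeing via $\iota^{(\infty),*}\cN^{(0)}=\cN^{(0)}\otimes TC$. The paper also notes the alternative route through the trivial square and the multiplicative relations \eqref{eq:229}, but your direct normal-bundle calculation is the argument the paper's text is alluding to.
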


 \noindent 
 Note that the nontrivial squares in \eqref{eq:243} and \eqref{eq:246}
 also follow from the trivial ones and the relations \eqref{eq:229}
 below.

 \subsubsection{}
 One has the following analog of the Casimir element equation
 \eqref{eq:36}
 \begin{equation}
   \label{eq:55}
   \bff^{(0)}_{13} \, \be^{(0)}_{23} \circ \Delta_{C,*} = \bh^{(0)}
   \eqdef  (\textstyle{\int_C} \otimes 1) \circ c_1(\cN^{(0)} \otimes
   TC ) \,. 
 \end{equation}
 Indeed, \eqref{eq:55} is just the composition of pushforward and
 pullback by the same map. We can use \eqref{eq:56} to  expand out \eqref{eq:55} along the
 lines of \eqref{eq:306}.

 \begin{Corollary}
  We have
 \begin{align}
    \be^{(0)}\of{\bpt} \, \bff^{(0)}\of{\bpt} &=\eta \,, \notag\\
    \be^{(0)}\of{\bpt} \, \bff^{(0)}\of{\alpha} + \be^{(0)}\of{\alpha} \, \bff^{(0)}\of{\bpt}  &=\alpha 
       \label{eq:306-0}                                                  
 \end{align}
 where $\alpha \in H^1(C)=H^1(\Pic(C))$ acts on $\QMN$ by
 multiplication. 
\end{Corollary}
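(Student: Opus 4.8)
The plan is to treat \eqref{eq:306-0} as the $\bP^0$-counterpart of \eqref{eq:306} and \eqref{eq:306-2}, running the same computation in the simpler Heisenberg--Clifford setting, where the only geometric input needed is \eqref{eq:55}: for a $\bP^0$-bundle the composite $\bff^{(0)}\be^{(0)}$ is a genuine pull--push along the single map $\iota^{(0)}$, hence equals $(\int_C\otimes 1)\circ c_1(\cN^{(0)}\otimes TC)$. First I would make the right-hand side of \eqref{eq:55} explicit: from the defining sequence \eqref{eq:326}, $\cN^{(0)} = \cM\otimes\cL^{-1}\otimes\cO_{\QM}(1)$, so in the notation \eqref{eq:235} one has $c_1(\cN^{(0)}\otimes TC) = \tau + \mu + \eta - \lambda$, and substituting \eqref{eq:56} for $\lambda$ expresses this as a sum of pure K\"unneth tensors on $C\times\QMN$.

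Next I would evaluate this operator on $\Delta_{C,*}(\bpt) = \bpt\otimes\bpt$ and on $\Delta_{C,*}(\alpha) = \alpha\otimes\bpt + \bpt\otimes\alpha$ and push forward along the relevant $C$-factor. Because $\bpt$ is the top class of the curve, the pullbacks of $\tau$ and $\mu$ and the scalar part of $\lambda$ all die against $\bpt$, and $\gamma_i\cup\bpt = 0$ for degree reasons, so only the $\eta$-term survives, yielding $\bh^{(0)}\of{\bpt} = \eta$. For the $\alpha$-piece the mixed term $\sum\gamma_i\otimes\gamma_i^\vee$ of \eqref{eq:56} contributes through $\gamma_i\cup\gamma_j^\vee = \delta_{ij}\bpt$, giving $\bh^{(0)}\of{\alpha} = \alpha$; this is the $\bP^0$-analogue of Proposition \ref{p_h}, with the line bundle $\cN^{(0)}$ replacing the rank-$2$ bundle $\cN$. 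Unwinding $\be^{(0)}\of{\gamma} = \be^{(0)}(\gamma\boxtimes\,\cdot\,)$ on these two K\"unneth pieces, equation \eqref{eq:55} then reads $\bff^{(0)}\of{\bpt}\be^{(0)}\of{\bpt} = \eta$ and $\bff^{(0)}\of{\alpha}\be^{(0)}\of{\bpt} + \bff^{(0)}\of{\bpt}\be^{(0)}\of{\alpha} = \alpha$.

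Finally I would reorder the products into the form stated in \eqref{eq:306-0}. For this the Heisenberg relation \eqref{eq:241} is the tool: the graded commutator $[\be^{(0)}\of{\gamma_1},\bff^{(0)}\of{\gamma_2}]$ is $(\int_C\otimes 1)\circ\Delta_C^*$ applied to $\gamma_1\otimes\gamma_2$, and both $\Delta_C^*(\bpt\otimes\bpt) = \bpt\cup\bpt$ and $\Delta_C^*(\bpt\otimes\alpha) = \bpt\cup\alpha$ vanish in $H^{\ge 3}(C) = 0$; we work nonequivariantly and with $2$ invertible in the coefficient ring, exactly as in the Corollary following Proposition \ref{p_Cas}. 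Hence $\be^{(0)}$ and $\bff^{(0)}$ may be freely (super)commuted against $\bpt$ and $\alpha$, and the two identities above become \eqref{eq:306-0}. The only genuinely delicate point is the Koszul-sign bookkeeping forced by $\alpha\in H^1(C)$ being odd: this is precisely what converts the minus sign of the naive commutator into the plus sign of the symmetrized second relation, and it must be kept consistent with the orientation conventions for $\int_C$ and $\Delta_{C,*}$ fixed in \eqref{eq:56}. Beyond that, everything is the routine translation of the $\fsl_2$ computation \eqref{eq:224}--\eqref{eq:306-2} into the $\bP^0$ setting.
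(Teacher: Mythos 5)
Your argument is correct and is essentially the paper's own: the Corollary is derived by expanding \eqref{eq:55} via \eqref{eq:56}, just as the analogous $\fsl_2$ identities \eqref{eq:306}--\eqref{eq:306-2} are derived from \eqref{eq:36}, which is exactly what the text indicates with ``expand out \eqref{eq:55} along the lines of \eqref{eq:306}.'' The one step you make explicit that the paper leaves tacit is the reordering from $\bff^{(0)}\be^{(0)}$ (as it appears in \eqref{eq:55}) to $\be^{(0)}\bff^{(0)}$ (as stated in \eqref{eq:306-0}), which you correctly justify via the Heisenberg relation \eqref{eq:241} and the vanishing of $\Delta_C^*$ on $\bpt\otimes\bpt$ and $\bpt\otimes\alpha$; this is a genuine, if small, gap you filled.
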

 
\noindent
For brevity, let us denote by $FE$ the following composition 
 \begin{equation}
   \label{eq:53}
   FE =  \bff_{13} \, \be_{23}  \circ \Delta_{C,*} = c_2(\cN \otimes TC) 
 \end{equation}
of pushforward $\be$ and the pullback $\bff$. Let
$FE^{(0)}$ denote the analogous expression appearing in \eqref{eq:55}. From \eqref{eq:326},
we conclude 
 \begin{alignat}{2}
   \label{eq:229}
   FE & = FE^{(0)} \,
   c_1(\cN^{(\infty)}\otimes TC) &&=
   c_1(\cN^{(\infty)}\otimes TC) \, FE^{(0)}
  \\
                  &= FE^{(\infty)} \, c_1(\cN^{(0)}\otimes TC)&&
                  =  c_1(\cN^{(0)}\otimes TC) \, FE^{(\infty)}   \,,   \notag 
\end{alignat}
which provides a different computations of the nontrivial squares in
\eqref{eq:243} and \eqref{eq:246}. 

\subsubsection{}
Writing out the equations \eqref{eq:243} using 
\eqref{eq:306-0}, we obtain
\begin{align}
  \bff\of{\bpt} &= \bff^{(0)}\of{\bpt} \notag = Y \\
  \be\of{\bpt} &= \eta \, \be^{(0)}\of{\bpt} = X^2 Y \notag \\
  \bff\of{\alpha} &= \bff^{(0)}\of{\alpha} = dY \notag \\
  \be\of{\alpha } &= \eta \, \be^{(0)}\of{\alpha} + \alpha \,
                    \be^{(0)}\of{\bpt} = 2 XY dX + X^2 dY \,, 
                   \label{eq:57}
\end{align}
where we use the shorthand
\[
  X = \be^{(0)}\of{\bpt}\,, \quad Y=\bff^{(0)}\of{\bpt}\,, \quad
  dX = \be^{(0)}\of{\alpha}\,, \quad dY=\bff^{(0)}\of{\alpha} \,.
\]
These formulas have a geometric interpretation which may be
illustrated using Figure \eqref{eq:301}. Let $X$ and $Y$ be the
coordinates around $0 \in \bX^\vee$ with weights $a^{-2}$ and $q a^2$,
respectively. Clearly, the first two lines in \eqref{eq:57} describe
the map $\sigma$ in the coordinates $X$ and $Y$, while the second two
lines describe the action of $\sigma^*$ on $\Omega^1$.
Let
\[
  \textup{Chart}_0= \bX^{\vee}\setminus T^*_\infty \,, \quad
  \textup{Chart}_\infty = \bX^{\vee}\setminus T^*_0 \,,
\]
be two affine charts covering $\bX^\vee$, where $T^*_0$ and $T^*_\infty$ denote the cotangent fibers over $0,
\infty \in \bP^1$. We rephrase \eqref{eq:57} as follows

\begin{Proposition}\label{p_loc_loc} 
  The equivariant localization homomorphism
  \[
    \cA^{\ge 1}(C) \to \cA^{(0),\ge 1}(C) \oplus \cA^{(\infty),\ge
      1}(C)
  \]
  given in \eqref{eq:243} and \eqref{eq:246} coincide with the
  natural maps of the Clifford algebras
  \[
    \Cliff_{\bX^\vee_0} \xrightarrow{\quad \textup{restriction} \, \circ \, 
      \sigma^*\quad }  \Cliff_{\textup{Chart}_0} \oplus
    \Cliff_{\textup{Chart}_\infty}\,. 
  \]
 \end{Proposition}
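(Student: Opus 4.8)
The plan is to rely on the explicit formulas \eqref{eq:57} — which are the output of the equivariant localization described by the commuting squares \eqref{eq:243} and \eqref{eq:246} — and recognize their right-hand sides as the coordinate expression of $\mathrm{restriction}\circ\sigma^*$. Thus the bulk of the work is in making the three algebra identifications precise enough that the comparison becomes a matter of inspection rather than a genuine computation.

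First I would fix the coordinate dictionaries. The $\bP^0$-analog of Proposition~\ref{p_sl2_Cliff}, which follows from Proposition~\ref{p_Verma_0} and the representation theory of Clifford algebras exactly as in Section~\ref{two_local}, identifies $\cA^{(0),\ge1}(C)$ with the Clifford algebra $\Cliff_{\mathrm{Chart}_0}$ of the affine chart $\mathrm{Chart}_0=\bX^\vee\setminus T^*_0^{\,\vee}$ — i.e.\ $\bX^\vee\setminus T^*_\infty\cong\A^2$ — whose central subalgebra $\cA^{(0),2}$ is the coordinate ring $R[X,Y]$ with $X=\be^{(0)}\of{\bpt}$ and $Y=\bff^{(0)}\of{\bpt}$ of weights $a^{-2}$ and $qa^2$, and over which the Clifford generators are $dX=\be^{(0)}\of{\alpha}$ and $dY=\bff^{(0)}\of{\alpha}$ (tensored with $H^1(C)$), with symmetric pairing built from the symplectic form $dX\wedge dY$ and the intersection form on $H^1(C)$; likewise $\cA^{(\infty),\ge1}(C)\cong\Cliff_{\mathrm{Chart}_\infty}$ with coordinates $X',Y'$ of weights $a^2$, $qa^{-2}$. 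On the source side, Proposition~\ref{p_sl2_Cliff} identifies $\cA^{\ge1}(C)$ with $\Cliff_{\bX^\vee_0}$, whose central part $\cA^2$ is the coordinate ring of the quadric cone $\bX^\vee_0$ generated by $\be\of{\bpt},\bff\of{\bpt},\bh\of{\bpt}$, and whose degree-$1$ generators, by \eqref{eq:40*2}, are $\be\of{\alpha},\bff\of{\alpha},\bh\of{\alpha}$, corresponding to $\alpha\otimes d\be\of{\bpt}$, $\alpha\otimes d\bff\of{\bpt}$, $\alpha\otimes d\bh\of{\bpt}$ in $H^1\bigl(C,\Omega^1(\bX^\vee_0)\bigr)$ (the relation among the latter three being the differential of the cone equation, which is exactly \eqref{eq:306-2}).

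Now the blow-down $\sigma$ of \eqref{eq:42*2}, read in $\mathrm{Chart}_0$, is the ring map $\sigma^*\bff\of{\bpt}=Y$, $\sigma^*\be\of{\bpt}=X^2Y$, $\sigma^*\bh\of{\bpt}=2XY$ — the unique weight-homogeneous chart of the resolution at the fixed point $0$, with weights $qa^2$, $qa^{-2}$, $q$ matching. Since $\sigma^*$ acts on K\"ahler differentials by differentiation, it sends $\bff\of{\alpha}=\alpha\otimes d\bff\of{\bpt}$ to $\alpha\otimes dY=dY$ and $\be\of{\alpha}=\alpha\otimes d\be\of{\bpt}$ to $\alpha\otimes d(X^2Y)=2XY\,dX+X^2\,dY$. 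These four images are precisely the four right-hand sides of \eqref{eq:57}, so the first component $\cA^{\ge1}(C)\to\cA^{(0),\ge1}(C)$ of the localization homomorphism agrees with $\mathrm{restriction}\circ\sigma^*$ on generators; both being algebra maps and $\cA^{\ge1}(C)$ being generated by $\Hd(C,\fsl_2)$, they coincide. For the second component I would repeat the derivation of \eqref{eq:57} using the triangle and commuting squares of \eqref{eq:246}, the Casimir identity \eqref{eq:55} for $\QM(\infty)$, and the relations \eqref{eq:229}; this expresses $\cA^{\ge1}(C)\to\cA^{(\infty),\ge1}(C)$ in the coordinates $X',Y'$ as the complementary chart of the blow-down, i.e.\ as $\mathrm{restriction}\circ\sigma^*$ onto $\Cliff_{\mathrm{Chart}_\infty}$. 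Assembling the two components gives the Proposition, and the same argument applies verbatim in the $\chi$-twisted setting of Proposition~\ref{p_sl2_Cliff_chi}.

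The main obstacle is purely one of bookkeeping: one must carry cohomological degrees and Tate twists through the isomorphisms \eqref{eq:244} and through the shifts that the pushforwards in $\be$ and $\be^{(0)}$ introduce, and one must check that the normalizations making $\cA^{(0),\ge1}(C)=\Cliff_{\mathrm{Chart}_0}$ are the \emph{same} normalizations as those making $\cA^{\ge1}(C)=\Cliff_{\bX^\vee_0}$, so that the two sides of the claimed equality are literally comparable rather than merely isomorphic. The asymmetry in \eqref{eq:305} between $\QM(0)$, whose virtual normal bundle $-\Hd(\cM)$ has negative rank, and $\QM(\infty)$ is a likely source of stray signs and twists; the Weyl involution $a\leftrightarrow a^{-1}$ interchanging $0$ and $\infty$, together with the uniform shape of \eqref{eq:229}, is what keeps this under control.
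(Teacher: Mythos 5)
Your argument is correct and follows the paper's own route: derive the explicit formulas \eqref{eq:57} from the localization squares and the $\QM(0)$ Casimir relations, identify $X=\be^{(0)}\of\bpt$, $Y=\bff^{(0)}\of\bpt$ as the affine coordinates on $\textup{Chart}_0$ via $\eta=XY$, and then recognize the right-hand sides of \eqref{eq:57} as the blow-down $\sigma$ and its action on K\"ahler differentials. The paper leaves the $\textup{Chart}_\infty$ side and the twist/shift bookkeeping implicit, as you do; your filled-in coordinate dictionary and the explicit check that $\sigma^*$ differentiates through $d(X^2Y)=2XY\,dX+X^2\,dY$ is exactly the unstated content of the paper's remark that the second pair of lines in \eqref{eq:57} ``describe the action of $\sigma^*$ on $\Omega^1$.''
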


 \noindent
 This forms a direct link between equivariant localization and
 localization in representation theory.

 \subsection{The stable range}

 \subsubsection{}

 Our next goal is to prove Theorem \ref{t2} outside the vertex of the
 cone, that is, one the set where $\sigma$ is an isomorphism.
 This set is covered by two affine charts 
 \[
 \bX_0^\vee \setminus \{0\} = \{\textup{$\be\of\bpt$ is invertible} \}
   \cup
   \{\textup{$\bff\of\bpt$ is invertible} \} \,. 
   \]
   Since the degree of a quasimap is always bounded below, the lowering
   operators $\bff$ are locally nilpotent. Thus both sides of equality
   in Theorem \ref{t2} restrict to zero on the chart where
   $\bff\of\bpt$ is invertible.

   Since $\be\of\bpt$ is a raising operator, inverting it involves
   understanding of the $\fsl_2$-action in the stable range $\deg \cL
   \ll 0$. We will see
   in Section \ref{s_p_compl} that 
 all cohomology vanish in this range if $\chi_0^2\ne 1$. Therefore, our
 focus in this section is on cohomology with trivial local system
 coefficients.

 \subsubsection{}
If $\be\of\bpt$ is invertible then the Casimir relation \eqref{eq:36}
determine the action of $\bff$ from the action of $\be$ and $\bh$,
that is, from the action of the Borel subalgebra $\mathfrak{b} \subset
\fsl_2$. 
We recall from Proposition \ref{p_h} that the operators $\bh\of\bpt$
and $\bh\of\alpha$ act by a multiple of $\eta$ and $\alpha$,
respectively, where $\alpha\in H^1(C) = H^1(\Pic C)$ acts by cup 
product with the pullback of $\alpha$ to $\QMN$. The commutator 
\begin{equation}
  \label{eq:66*2}
\left[ \alpha_1, \be\of{\alpha_2} \right] = \be\of{\alpha_1 \cup \alpha_2}
\end{equation}
is the only nontrivial commutator in $\mathfrak{b}\otimes H^{\ge
  1}(C)$. 

\subsubsection{}

Let $\cF$ be any locally free Clifford module on $\bX_0^\vee$. It is
clear from the discussion in Section \ref{s_local coh} that 
\begin{equation}
  H^1_{\bfL}(\bX_0^\vee \setminus \{0\},\cF)
  \cong \left(\Q_\ell[\eta^{\pm 1}] \big/ \Q_\ell[\eta] \right) \otimes
  \underbrace{\cF \Big|_{\bfL_\infty \setminus \{0\}}}_{\Ker
    \eta} \,, \label{eq:67*2}
\end{equation}
as $\cA^{\ge 1}$-modules, while $H^i_{\bfL} =0$ for $i\ne 1$. One may
call vectors annihilated
by $\eta$ primitive. We say that a module of the form \eqref{eq:67*2} is
freely $\eta$-cogenerated by its primitive elements.

\begin{Proposition}\label{p_outside}
The $\cA^{\ge 1}$-module $\widehat{H}(\QMN)$ with $\be\of\bpt$ inverted
is freely $\eta$-cogenerated by its primitive elements. The primitive
elements are freely generated under the action of the operators
$\be\of{\bpt}^{\pm 1}$ and $\be\of\alpha$,
$\alpha \in H^{1}(C)$,
by any $1$-dimensional subspace of the form 
$H^{\textup{top}}_c(\QMN_d)$, where $d=\deg \cL \ll 0$. 
\end{Proposition}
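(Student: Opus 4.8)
The statement to prove is Proposition \ref{p_outside}, which describes $\widehat{H}(\QMN)$ after inverting the raising operator $\be\of\bpt$ in the regime where $\chi_0^2 = 1$ (the case $\chi_0^2 \ne 1$ having been reduced away by vanishing, to be established in Section \ref{s_p_compl}). My plan is to combine three ingredients already essentially in place: the equivariant localization triangle \eqref{eq:242} together with the purity splitting (the vanishing Lemma after \eqref{eq:242}); the explicit description of the $\fsl_2$-action through the Heisenberg operators $\be^{(0)}, \bff^{(0)}$ on $\Hd(\Sd C)$ via Proposition \ref{p_Verma_0}; and the Casimir relation \eqref{eq:36}, which as noted reconstructs the lowering operators $\bff$ from the Borel $\fb \otimes H^{\ge 1}(C)$ once $\be\of\bpt$ is invertible.

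\textbf{Step 1: reduce to the Borel action and the fixed loci.} First I would invoke the Lemma after \eqref{eq:242}: the connecting map vanishes, so $\Hd_c(\QMN) \cong \Hd_c(\Attr(\infty)) \oplus \Hd_c(\Attr(0))$ as graded vector spaces, and by \eqref{eq:244} both summands are (shifted) cohomologies of symmetric products $\Sd C$. Inverting $\be\of\bpt$ — which by \eqref{eq:57} acts as $\eta \, \be^{(0)}\of\bpt = X^2 Y$ — forces us into the locus where $\eta$ and the Heisenberg raising operator $\be^{(0)}\of\bpt$ are both invertible; on $\QM(\infty)$ the operator $\be^{(0)}\of\bpt$ is already locally nilpotent (degrees bounded below) so that summand dies, and we are left analyzing $\Hd(\QM(0)) = \Hd(\Sd C)$ as a module over the localized Borel, using \eqref{eq:57} and the commutator \eqref{eq:66*2}.

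\textbf{Step 2: identify the primitive part and cofree $\eta$-structure.} Here I would use Proposition \ref{p_Verma_0}: $\Hd(\Sd C)$ is a Verma module for $\Hd(C,\Heis)$, so it is freely generated over the raising operators $\be^{(0)}\of\bpt$ and $\be^{(0)}\of\alpha$ (equivalently the exterior algebra on $H^1(C)$ times the polynomial algebra on $\be^{(0)}\of\bpt$) from the lowest-degree line $H^{\textup{top}}_c(\QMN_d)$, $d \ll 0$. Translating via \eqref{eq:57} — $\be\of\bpt = X^2Y$, $\be\of\alpha = 2XY\,dX + X^2\,dY$ — and localizing $\be\of\bpt$, one checks the operators $\be\of\bpt^{\pm 1}$ and $\be\of\alpha$ generate the same space as $X^{\pm 1}$ and $dX$ do, i.e. the module with $\be\of\bpt$ inverted is freely generated by the same $1$-dimensional subspace. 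Simultaneously, comparing the $\eta$-action on this module with the model computation \eqref{eq:67*2} of $H^1_{\bfL}(\bX_0^\vee\setminus\{0\},\cF)$ — where $\eta$ acts invertibly off $\bfL_\infty$ and the module is $(\Q_\ell[\eta^{\pm1}]/\Q_\ell[\eta]) \otimes \Ker\eta$ — shows the module is freely $\eta$-cogenerated by its primitive ($\eta$-annihilated) vectors, with those primitives being exactly the classes supported "at the far end" of the Verma. This matches the desired assertion.

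\textbf{The main obstacle.} The delicate point is bookkeeping: one must verify that the change of variables \eqref{eq:57} really converts "freely generated over the Heisenberg raising operators" into the precise statement "freely $\eta$-cogenerated, with primitives freely generated by $\be\of\bpt^{\pm1}$ and $\be\of\alpha$" — in particular that no relations among $\eta$, $\be\of\bpt$, $\be\of\alpha$ sneak in beyond \eqref{eq:66*2}, and that the grading shifts in \eqref{eq:244} and the centering \eqref{eq:222} line up so that the $1$-dimensional generating space is genuinely $H^{\textup{top}}_c(\QMN_d)$ for $d = \deg\cL \ll 0$ and not some twist of it. This is where the Tate-shift arithmetic of Section \ref{s_Galois_e} and the explicit form of $\lambda$ in \eqref{eq:56} have to be tracked carefully; I expect the rest to be a formal consequence of Propositions \ref{p_sl2_Cliff}, \ref{p_Verma_0}, and \ref{p_loc_loc}.
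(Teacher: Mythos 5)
Your proposed route is genuinely different from the paper's, and unfortunately it contains a concrete error and leaves the substance of the proof undone. The paper's proof of Proposition \ref{p_outside} is direct and computational: for $\deg\cL\ll 0$ the projective bundle theorem gives the explicit stable-range description \eqref{eq:54}, namely $H_c(\QMN_d)\cong \Ld\,H^1(C)[\eta]/\bC_{\cO\oplus\cM}(\eta)$, the Chern polynomial $\bC$ is written out in \eqref{eq:61}, and the Lemma containing \eqref{eq:68-3}--\eqref{eq:68-2} gives closed formulas for the $\fsl_2$-action in these coordinates — in particular $\be\of\bpt$ acts by $\eta^2$. Inverting $\be\of\bpt$ is then literally inverting $\eta$, which turns the quotient by $\bC(\eta)$ into the $\eta$-cofree module on its primitive part, and the $2^{2g}$-dimensional primitive space is produced by applying $\be\of{\alpha}$ to the top class $\eta^{\max}\Lambda^{\textup{top}}H^1(C)$. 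None of the localization triangle \eqref{eq:242}, the purity lemma, or the Heisenberg Verma structure \eqref{p_Verma_0} enters here; those tools are deployed in Section \ref{s_p_compl} for the \emph{completion} of Theorem \ref{t2}, after Proposition \ref{p_outside} has already been established.

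Now for the error in your Step 1: you claim that inverting $\be\of\bpt$ kills the $\QM(\infty)$ summand ``because $\be^{(0)}\of\bpt$ is already locally nilpotent'', leaving you with $\Hd(\QM(0))$. This is backwards. As the discussion around \eqref{eq:310} makes explicit, the \emph{submodule} $\Hd_c(\Attr(\infty))$ is the Verma piece, free over the commuting raising operators $\be$ (the trivial square in \eqref{eq:246} shows $\be\of\bpt$ restricts to the free Heisenberg raising operator $\be^{(\infty)}\of\bpt$ there); it is the quotient $\Hd_c(\Attr(0))$, the dual Verma, on which $\be\of\bpt = \eta\,\be^{(0)}\of\bpt = X^2 Y$ contains the lowering factor $Y=\bff^{(0)}\of\bpt$ and hence acts locally nilpotently. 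So it is $\Attr(0)$ that dies, not $\Attr(\infty)$. Fixing this forces a second change: the formulas \eqref{eq:57} you invoke are the localization through $\textup{Chart}_0$, whereas the surviving piece lives in $\textup{Chart}_\infty$ and needs the analogue coming from \eqref{eq:246}. Finally, you explicitly defer the ``bookkeeping'' that converts the Verma structure into free $\eta$-cogeneration plus free generation of the primitives by $\be\of\bpt^{\pm 1}$, $\be\of{\alpha}$ from a specific top line $H^{\textup{top}}_c(\QMN_d)$ with the correct shifts — but this conversion \emph{is} the proposition, and without carrying it out there is no proof.
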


\noindent
Evidently, $H^{\textup{top}}_c(\QMN_d)$ is annihilated by cup products
by $\alpha \in H^1(C)$. Thus, the above proposition and \eqref{eq:66*2}
completely determines the structure of this module. The shifts and
twists in definition of both sides of \eqref{eq:324} are 
arranged so that primitive elements equal the 
restriction of $\Spin \otimes \cO(\cM) \Tate{\tfrac{\deg
    \cM+\delta}{2}}$ to $\bfL_\infty \setminus \{0\}$. Therefore,
we have the following

\begin{Corollary}
Theorem \ref{t2} is true outside the vertex of the
 cone. 
\end{Corollary}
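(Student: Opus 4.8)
The plan is to deduce the Corollary from Proposition \ref{p_outside} by matching shifts, so the real content is a proof of Proposition \ref{p_outside}. First I would set up the reduction already begun in Section \ref{two_local}: the locus where $\sigma$ is an isomorphism, $\bX_0^\vee\setminus\{0\}$, is covered by the two affine charts $\{\be\of{\bpt}\text{ invertible}\}$ and $\{\bff\of{\bpt}\text{ invertible}\}$. Since the degree of a quasisection is bounded below, the lowering operators $\bff$ act locally nilpotently on $\widehat{H}(\QMN)$, so the left side of \eqref{eq:324} vanishes on the chart where $\bff\of{\bpt}$ is invertible; the right side vanishes there too, being cohomology with support on $\bfL$ restricted away from $\bfL$. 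Everything is thus concentrated on the chart where $\be\of{\bpt}$ is invertible. When $\chi_0^2\ne1$ the cohomology in the stable range $\deg\cL\ll0$ vanishes (this is checked in Section \ref{s_p_compl}), so one may take the coefficient local system trivial and must identify $\widehat{H}(\QMN)$ with $\be\of{\bpt}$ inverted with the restriction of the right side of \eqref{eq:324} to $\bX_0^\vee\setminus\{0\}$.

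For the right side I would record the structural model \eqref{eq:67*2}: by the local-cohomology computation of Section \ref{s_local coh}, performed one affine chart at a time, for any locally free Clifford module $\cF$ on $\bX_0^\vee$ one has $H^1_{\bfL}(\bX_0^\vee\setminus\{0\},\cF)\cong(\Q_\ell[\eta^{\pm1}]/\Q_\ell[\eta])\otimes\cF|_{\bfL_\infty\setminus\{0\}}$ and $H^i_{\bfL}=0$ for $i\ne1$, the last tensor factor being the $\eta$-torsion (``primitive'') part. Hence $\sigma_{*,\bfL}(\cOh_{\Fix,\vir}\otimes\cO(\cM))$, restricted to $\bX_0^\vee\setminus\{0\}$, is freely $\eta$-cogenerated by its primitive elements, and those primitives form a single line over each component, freely generated under $\be\of{\bpt}^{\pm1}$ and $\be\of{\alpha}$, $\alpha\in H^1(C)$, subject only to the commutation relation \eqref{eq:66*2}. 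The shifts and Tate twists built into $\widehat{H}$ in \eqref{eq:222} and into $\cOh_{\Fix,\vir}\otimes\cO(\cM)$ are arranged precisely so that these primitives equal the restriction of $\Spin\otimes\cO(\cM)\Tate{\tfrac{\deg\cM+\delta}{2}}$ to $\bfL_\infty\setminus\{0\}$.

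For the left side I would prove the same structure by transporting the topological equivariant localization into representation theory. In the stable range $\QMN$ is a projective-space bundle over $\Pic(C)$, in particular smooth, and the exact triangle \eqref{eq:242} together with the purity isomorphisms \eqref{eq:244} expresses its cohomology through the fixed loci $\QM(0)\cong\QM(\infty)\cong\Sd C$. By Proposition \ref{p_Verma_0} the cohomology of $\Sd C$ is a Verma module for the Heisenberg--Clifford algebra, and by Proposition \ref{p_loc_loc} the localization homomorphism coincides with restriction-after-$\sigma^*$ of Clifford algebras to the two charts. Feeding in the explicit formulas \eqref{eq:57} — $\be\of{\bpt}=X^2Y$, etc., with $X,Y,dX,dY$ the Heisenberg operators of the fixed loci — shows that inverting $\be\of{\bpt}$ amounts to inverting $\eta=X^2$ and freely $\eta$-cogenerates $\widehat{H}(\QMN)$ from its primitive vectors, which by the Verma description of $\Hd(\Sd C,\chi)$ and the vanishing lemma ($\Hd(\bS^dC,\chi)=0$ for $d>2g-2$ and $\chi$ nontrivial, with the residual $H^{2g-2}(\bP^{g-1})$ at $d=2g-2$ in the trivial case) form a single line $H^{\mathrm{top}}_c(\QMN_d)$ for $d\ll0$, freely generated under $\be\of{\bpt}^{\pm1}$ and $\be\of{\alpha}$ (note $H^{\mathrm{top}}_c(\QMN_d)$ is killed by $\cup\,\alpha$). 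Matching the two descriptions, with the normalization \eqref{eq:222} chosen so that $H^{\mathrm{top}}_c(\QMN_d)$ lands on the restriction of $\Spin\otimes\cO(\cM)\Tate{\tfrac{\deg\cM+\delta}{2}}$, yields \eqref{eq:324} after inverting $\be\of{\bpt}$, hence over all of $\bX_0^\vee\setminus\{0\}$; combined with the vanishing on the other chart, this is the Corollary.

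The step I expect to be the main obstacle is the equivariant bookkeeping: making the identification $\Fr\times\bA\buu$-equivariant and degree-preserving forces one to carry all cohomological shifts, Tate twists, and the $\deg\cL$- and $\deg\cM$-dependent weights simultaneously through \eqref{eq:242}, the purity isomorphisms \eqref{eq:244}, the Verma-module normalization, and the local-cohomology model \eqref{eq:67*2}, and to verify that the generating space is genuinely one-dimensional. The latter is exactly where the vanishing lemma and the purity of \eqref{eq:244} are essential, and it is the point at which the excluded case \eqref{eq:63} ($g=2$, $\cM=\cO_C$, $\chi_0^2$ nontrivial) would fail — there the stable-range cohomology does not behave as required, and that case is handled separately in Section \ref{s_exception}. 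Everything away from this bookkeeping is a formal consequence of Propositions \ref{p_Verma_0} and \ref{p_loc_loc} and the local-cohomology computation of Section \ref{s_local coh}.
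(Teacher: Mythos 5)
Your deduction of the Corollary from Proposition~\ref{p_outside}, the two-chart covering of $\bX_0^\vee\setminus\{0\}$, and the vanishing argument on the chart where $\bff\of{\bpt}$ is invertible all match the paper. Where you diverge is in the proposed proof of Proposition~\ref{p_outside} itself: the paper's proof is a direct stable-range computation from the projective bundle theorem --- formula \eqref{eq:54} for $H_c(\QMN_d)$, the Chern polynomial \eqref{eq:61}, and the explicit action formulas \eqref{eq:68-3}--\eqref{eq:68-2} --- and does not invoke the fixed-locus triangle \eqref{eq:242}, the purity isomorphisms \eqref{eq:244}, Proposition~\ref{p_Verma_0}, or Proposition~\ref{p_loc_loc}. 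You instead propose to funnel the left-hand side through the Verma-module structure on the fixed loci and the localization dictionary \eqref{eq:57}. This is a genuinely different route; those ingredients do all precede Proposition~\ref{p_outside} in the paper's logical order, so there is no circularity, but the route has not been carried out correctly.

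The concrete error is the sentence ``inverting $\be\of{\bpt}$ amounts to inverting $\eta = X^2$''. First, $\eta \ne X^2$: by \eqref{eq:306-0} one has $\eta = \be^{(0)}\of{\bpt}\,\bff^{(0)}\of{\bpt} = XY$, the product of the Heisenberg raising and lowering operators, not the square of one of them. Second, and more seriously, $\eta$ must \emph{not} become invertible in $\widehat{H}(\QMN)[\be\of{\bpt}^{-1}]$ --- the very statement you are trying to prove says that this localized module is freely $\eta$-cogenerated by its primitive ($\eta$-torsion) vectors, which would vanish if $\eta$ were inverted. Your own sentence asserts both things at once and is therefore self-contradictory. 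What actually happens is that inverting $\be\of{\bpt}$ restricts to the chart $\{\be\of{\bpt}\ne 0\}\subset\bX_0^\vee$, where the Casimir relation \eqref{eq:306} solves $\bff\of{\bpt}=-\eta^2/\be\of{\bpt}$; in this chart the support $\sigma(\bfL)\setminus\{0\}$ is cut out by $\eta$, and the local-cohomology model \eqref{eq:67*2} produces the $\eta$-cogeneration by primitives. You would then still need to identify the primitive space concretely, which is precisely what the paper's explicit projective-bundle computation delivers (a line $H^{\textup{top}}_c(\QMN_d)$ freely generating a $2^{2g}$-dimensional primitive space under $\be\of{\bpt}^{\pm 1}$, $\be\of{\alpha}$); the Verma-module description \eqref{eq:310} could in principle substitute, but as written the key relation between the $\fsl_2$-generators and the Heisenberg coordinates has been misremembered.
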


\noindent
The proof of Proposition \ref{p_outside} will occupy the rest of this
subsection.

 \subsubsection{}
 Let $\QM_d \subset \QM$ denote the component with $\deg \cL = d$, 
 and similarly for $\QMN_d$. We recall from Section \ref{s_bundle}
 that $\QM_d \to \Pic_d(C)$ is a projective bundle for $d\ll 0$. Similarly, $\QMN_d$ is
 the quotient of a projective bundle over $\Pic_d(C) \times
 H^1(\cM)$ by the action of the unipotent group $H^0(\cM)$.
 By the projective bundle theorem, we have 
 \begin{equation}
 H_c(\QMN_d)
 \cong
 \underbrace{\Ld \, H^1(C)}_{\textup{coefficients}} \, \big[\eta\big]
 \Big/ \bC_{\cO\oplus \cM} (\eta) 
 \,, \quad d \ll 0 \,, \label{eq:54}
\end{equation}
as a module over $\eta$ and $H^1(C)$, where
\[
\bC_{\cV} (\eta) = \textup{Chern
   polynomial}(H^0(C,\cV\otimes\cL^{-1}),\eta) \,. 
\]
While we can argue with just abstract properties of this Chern
polynomial, it is easy to describe it concretely.

\subsubsection{}

Since $H^0(C,\cV\otimes \cL^{-1})$ is obtained by pushforward
along $C$, which is $1$-dimensional, we conclude that
\[
\textup{ch}_i \left(H^0(\cV\otimes \cL^{-1})\right) =0 \,, \quad i>1 \,. 
\]
On the other hand, by the definition and symmetry of the theta divisor
$\Theta$, we have
\begin{equation}
  \label{eq:59}
  c_1 \left(H^0(\cV\otimes\cL^{-1})\right) = \rk(\cV) \, \Theta\,,
  \quad \Theta = \sum_{i=1}^g \gamma_i
  \gamma^\vee_i\,. 
\end{equation}
Therefore,
  \begin{equation}
    \label{eq:61}
    \bC_{\cV} (\eta)= \eta^{\rk H^0(\cV \otimes \cL^{-1})} 
 e^{-\textup{ch}_1/\eta} = \eta^{-rd + \const}
 \prod_{i=1}^g (\eta - r \gamma_i
  \gamma^\vee_i) \,, \quad r = \rk(\cV) \,, 
\end{equation}
where the constant equals $\deg \cV + r-(r+1)g$. Importantly, this
stabilizes in that it depends on $d$ only through the $\eta^{-rd}$
factor.

\subsubsection{}
It is easy to describe the action of the operators $\be$ and $\bff$ in
the stable range. Let $P=P(\eta, \alpha_1,\alpha_2, \dots)$ be an
element of the ring in the right-hand side of \eqref{eq:54}.

\begin{Lemma}
  We have
  \begin{alignat}{2}
    \label{eq:68-3}
    \bff\of\bpt \, P &= -P \,,& \qquad \bff\of\alpha\, P &= -
    \frac{\partial}{\partial \alpha^{\vee}} P \,, \\
    \be\of\bpt \, P &= \eta^2 P \,,& \qquad \be\of\alpha\, P &= 
    \eta^2 \frac{\partial}{\partial \alpha^{\vee}} P - 2 \eta \alpha
    P\,. \label{eq:68-2}
  \end{alignat}
\end{Lemma}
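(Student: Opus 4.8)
The plan is to read off the four formulas directly from the projective‑bundle presentation \eqref{eq:54}, \eqref{eq:61} of $\Hd_c(\QMN_d)$ in the stable range $d=\deg\cL\ll 0$, by computing the correspondence $\iota$ of \eqref{eq:309} there. The first thing I would fix, once and for all, are the stabilization identifications: since the Chern polynomial $\bC_{\cO\oplus\cM}(\eta)$ of \eqref{eq:61} depends on $d$ only through the prefactor $\eta^{-2d}$, the rings $\Hd_c(\QMN_d)\cong\Ld H^1(C)[\eta]/\bC_{\cO\oplus\cM}(\eta)$ for consecutive $d$ are canonically identified as modules over $\Ld H^1(C)[\eta]$ after the appropriate Tate twist; this is precisely the twist built into the centered group $\widehat H(\QMN)$ of \eqref{eq:222}. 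All subsequent computations take place after this identification, so that $\be$ and $\bff$ become the $\eta$‑linear (respectively $\eta$‑linear‑plus‑derivation) operators of the statement.

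\textbf{The $\bpt$–descendants.} For $\be\of\bpt$ and $\bff\of\bpt$ I would fix a point $x_0\in C$ and use that these are the K\"unneth components of $\iota$ at $x_0$, i.e.\ the Gysin pushforward and the pullback along the codimension‑two regular embedding $\iota_{x_0}\colon\QMN_d\hookrightarrow\QMN_{d-1}$ realizing $\QMN_d$ as the locus of quasisections vanishing at $x_0$. By \eqref{eq:56} the restriction of $\lambda=c_1(\cL)$ to $\{x_0\}\times\QMN$ vanishes (the universal bundle being normalized to have no $\Hd(\Pic)$–component), hence $\iota_{x_0}^*\eta=\eta$ and $\iota_{x_0}^*$ is the identity of the stabilized ring; integrating over $C$ against $\int_C\bpt=1$ gives $\bff\of\bpt\cdot P=-P$. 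For $\be\of\bpt$ one uses $\iota_{x_0,*}\iota_{x_0}^*=\cup\,c_2(\cN_{x_0})$ and, using $c_2(\cV)=0$ (from \eqref{eq:226}) together with $\lambda|_{x_0}=0$, computes $c_2(\cN_{x_0})=\eta^2$, whence $\be\of\bpt\cdot P=\eta^2P$.

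\textbf{The $\alpha$–descendants.} Here one must keep track of how $\iota_x$ varies with $x$, which is where the derivation $\partial/\partial\alpha^\vee$ enters. I would compute the full family $\iota\colon C\times\QMN_d\hookrightarrow C\times\QMN_{d-1}$: it is the zero locus of the tautological section of $\cN=\cV\otimes\cL^{-1}\otimes\cO_{\QM}(1)$, with $c_2(\cN)=\eta^2+\eta(\mu-2\lambda)+(\lambda^2-\lambda\mu)$ by \eqref{eq:38} and $c_2(\cV)=0$, while $\iota^*$ differs from the identity only through the shear $\cL\mapsto\cL(-x)$ of the universal line bundle. Expanding $\lambda$ via \eqref{eq:56}, applying $(\textstyle\int_C\otimes 1)\circ\big((\alpha\boxtimes 1)\cup(-)\big)$ and reading off the $H^1(C)$‑component then produces the derivation term from the $\sum\gamma_i\boxtimes\gamma_i^\vee$ part of $\lambda$ (the shear moves $\lambda$ in exactly that direction) and the residual $-2\eta\alpha$ from the $\eta(-2\lambda)$ summand of $c_2(\cN)$. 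A cleaner shortcut for $\be\of\alpha$: once $\bff\of\alpha$, $\be\of\bpt$, $\bff\of\bpt$ and Proposition \ref{p_h} are known, the Casimir relation \eqref{eq:306-2} determines $\be\of\alpha$ algebraically, so only $\bff\of\alpha$ needs the geometric argument. As an independent route one could instead feed the localization formulas \eqref{eq:57} into Proposition \ref{p_Verma_0}, on whose Verma (Fock) module the operators $\be^{(0)},\bff^{(0)}$ act as explicit creation/annihilation operators in the stable range.

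\textbf{Main obstacle.} I expect the real work to be the sign and normalization bookkeeping in the last step: confirming that the $\cO(-x)$–shear in the correspondence, combined with \eqref{eq:56} and the (anti)commutativity of $\Ld H^1(C)$, produces precisely $\partial/\partial\alpha^\vee$ with the stated sign and the right normalization of $\alpha^\vee$, and that the stabilization identification of \eqref{eq:54} is compatible with the Tate twists of \eqref{eq:222}. A useful final check is to verify that the four formulas satisfy \eqref{eq:306}, \eqref{eq:306-0} and Proposition \ref{p_h}, which simultaneously pins down any ambiguous signs.
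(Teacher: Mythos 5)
Your proposal takes essentially the same route as the paper. The core content — that $\iota^*$ acts on the K\"unneth factors as a shear, i.e.\ $\iota^*(1\otimes\alpha)=\alpha\otimes 1+1\otimes\alpha$ and $\iota^*(1\otimes\eta)=\bpt\otimes 1 + 1\otimes\eta$, with $\bpt^2=0$ killing the higher corrections — is exactly what the paper packages as ``$\iota^*$ is an algebra homomorphism acting by \eqref{eq:314}, \eqref{eq:314-2}'', from which \eqref{eq:68-3} is read off immediately, and the paper likewise gets \eqref{eq:68-2} from the Casimir relation \eqref{eq:36}, just as you suggest as the shortcut for $\be\of\alpha$; your alternative of obtaining $\be\of\bpt\,P=\eta^2 P$ directly from $c_2(\cN_{x_0})=\eta^2$ rather than from Casimir is a harmless variation, and your explicit remark that the super-sign bookkeeping is where the remaining work lies is a fair assessment.
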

\begin{proof}
  The pullback $\iota^*$ is an algebra homomorphism which acts by  
\begin{alignat}{2}
  \label{eq:314}
  \iota^*(1\otimes \alpha) &= \alpha\otimes 1 + 1 \otimes \alpha\,, &
  \quad &\alpha \in H^1(C) \,, \\
  \iota^*(1\otimes\eta) & = \bpt \otimes 1 + 1 \otimes \eta \,,  \label{eq:314-2}
\end{alignat}
where $\bpt\in H^2(C)$ is the generator. The relation \eqref{eq:314}
was already discussed in the proof of Proposition \ref{p_h} . To see \eqref{eq:314-2},
note that $\eta$ is represented by the divisor where a linear function of
$s(x)$ vanishes for some fixed $x\in C$.

The equations \eqref{eq:68-3} follows from the definition of $\bff$ and
the above description of $\iota^*$. Formulas \eqref{eq:68-2}
follow from the Casimir relations \eqref{eq:36}. 
\end{proof}

\subsubsection{}

The line $H^{\textup{top}}_c(\QMN_d)$ is spanned by $\eta^{\max}
\Lambda^{\textup{top}} H^1(C)$, where the maximal power of $\eta$ is
one less than the degree of $\bC(\eta)$. It is clear that the action
of $\be\of\bpt^{\pm1}$ takes all these elements to each other.

By
looking at the lowest degree term in the odd variables, it is clear
that the action $\be\of\alpha$ generates a $2^{2g}$-dimensional
space of primitive vectors from $H^{\textup{top}}_c(\QMN_d)$. 
These have to exhaust all primitive vectors and this completes
the proof of Proposition \ref{p_outside}.

In fact, it is not difficult 
to see directly that the product of all $\be\of\alpha$, applied to $H^{\textup{top}}_c$ reproduces
precisely the relation $\bC(\eta)$.

 \subsection{Completion of the proof}\label{s_p_compl} 

 \subsubsection{}

 By Proposition \ref{p_Verma_0}, the exact sequence  \eqref{eq:242}
 translates into the exact sequence 
\begin{equation}
  \label{eq:310}
  0 \to \textup{Verma}\to
  \widehat{H}(\QMN) \to \textup{dual Verma} \to
  0 \,, 
\end{equation}
where we call a $\cA_\chi(C)$-module a Verma module if it is free as a module over
the commuting operators $\be$, while a dual Verma
module is dual to a free module as a module over the commuting
operators $\bff$. Note this definition also involves operators coming
from $H^0(C)$.

Our plan is to compare this sequence with the exact sequence
of the pair
\begin{equation}
  \label{eq:71}
  0 \to \sigma_{\bfL_\infty,*} \to \sigma_{\bfL,*} \to
  \sigma_{\bfL_0,*} \to 0 \,, 
\end{equation}
as in \eqref{eq:329}. We will first compare the submodules and
quotient modules, and then compare the extensions.

\subsubsection{}
It is clear from Proposition \ref{p_loc_loc} that the terms of the
filtration \eqref{eq:71} for $\sigma_{\bfL,*} \cOh_{\Fix,\vir} \otimes \cO(M)$
are similarly
a Verma
module and a dual Verma module. Therefore, it suffices to compare
the weights of the generators. This is done in the following 

\begin{Lemma}\label{l_w_gen}
  For both $\widehat{H}(\QMN)$ and $\sigma_{\bfL,*} \cOh_{\Fix,\vir} \otimes
  \cO(M)$, we have
  \begin{alignat}{2}
    \notag 
   \textup{generator of the submodule} & = \chi(\cM\otimes \cK_C^{-1})
                                      &&\sTate{g-1} \,, \\ 
    \textup{generator of the quotient} & = \chi(\cM^{-1}\otimes \cK_C^{-1})
                                     && \sTate{1-g}   \,.  \label{eq:72}
  \end{alignat}
\end{Lemma}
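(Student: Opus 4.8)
The plan is to reduce the statement, on each of the two sides of \eqref{eq:324}, to the computation of a single $\Fr\times\bA\buu$-weight: that of the cyclic generating line of the Verma submodule and that of the cogenerating line of the dual-Verma quotient. Since the ambient algebra $\cA^{\ge1}_\chi(C)$ is a fixed Heisenberg--Clifford algebra, a module of the shape ``Verma $\hookrightarrow$ module $\twoheadrightarrow$ dual Verma'' is pinned down by these two weights, which is exactly why this Lemma is what completes the proof of Theorem~\ref{t2} in Section~\ref{s_p_compl}.

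For the left-hand side I would work inside the $\Gm=\Aut(\cM)$-localization sequence \eqref{eq:242}. The Verma submodule is $\Hd_c(\Attr(\infty),\,\cdot\,)$, and by Proposition~\ref{p_Verma_0} its generating line is the vacuum of the Heisenberg Verma on the fixed component $\QM(\infty)\cong\Sd C$, i.e.\ the class of $\bS^0 C=\pt$ in degree $\deg\cL=0$ (so $D=0$). Its weight is assembled from three inputs: the local system, pulled back from $\Pic(C)$ along $s\mapsto\NN_s$, contributes $\chi(\NN_s)=\chi(\cM)$ at $D=0$ by the first row of the table \eqref{eq:305}; the affine-bundle shift $\sTate{-2\deg D}$ of \eqref{eq:244} is trivial at $D=0$; and the centering twist $\sTate{-2\deg\cL+g-1}\otimes\chi(\cK_C)^{-1}$ of \eqref{eq:222} equals $\sTate{g-1}\otimes\chi(\cK_C)^{-1}$ at $\deg\cL=0$. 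Multiplying the contributions gives $\chi(\cM\otimes\cK_C^{-1})\sTate{g-1}$. The cogenerator of the dual-Verma quotient $\Hd_c(\Attr(0),\,\cdot\,)$ is treated the same way on $\QM(0)\cong\Sd C$ at its extreme point $D=0$, where now $\deg\cL=\deg\cM$ and $\NN_s=\cM^{-1}$: the local system gives $\chi(\cM)^{-1}$, the shift \eqref{eq:244} now contributes the fixed $\sTate{2\deg\cM+2-2g}$, and \eqref{eq:222} contributes $\sTate{-2\deg\cM+g-1}\otimes\chi(\cK_C)^{-1}$. Here the two $\deg\cM$-dependent shifts cancel --- a useful consistency check, since \eqref{eq:72} is $\deg\cM$-independent --- and one is left with $\chi(\cM^{-1}\otimes\cK_C^{-1})\sTate{1-g}$, as claimed.

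For the right-hand side $\sigma_{*,\bfL}(\cOh_{\Fix,\vir}\otimes\cO(\cM))$ I would invoke the local-cohomology analysis of Section~\ref{s_local coh}: by the analogue of \eqref{eq:329} and \eqref{eq:67*2}, the Verma submodule $\sigma_{\bfL_\infty,*}$ is freely $\eta$-cogenerated by the restriction of $\cOh_{\Fix,\vir}\otimes\cO(\cM)$ to $\bfL_\infty\cap\Fix$, and the dual-Verma quotient $\sigma_{\bfL_0,*}$ by its restriction to $\bfL_0\cap\Fix$; under the dictionary of Proposition~\ref{p_loc_loc} these two loci match $\Attr(\infty)$ and $\Attr(0)$ and the fixed points $\infty,0\in\bP^1$ respectively. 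The $\cO(\cM)$-factor restricts to $\chi(\cM)$ at $\infty$ and to $\chi(\cM)^{-1}$ at $0$ by the normalization \eqref{eq:25-2}. The remaining factor $\cOh_{\Fix,\vir}=\Spin\,\sTate{-\delta}$ restricts, at a fixed point, to the ``$\sqrt{\det}$'' line of \eqref{eq:48}--\eqref{eq:66}; one evaluates it from the $\Gamma'$-weights of $\Omega^1_\hor\bX^\vee$ at $0$ and $\infty$ (read off from \eqref{eq:299}) together with the identity $\Lambda^{\textup{top}}\Hd(C,\chi)=\chi(\cK_C)\sTate{2-2g}$ of \eqref{eq:68}, and, combined with the $\delta$-shift built into $\sigma_{*,\bfL}$, this produces $\chi(\cK_C^{-1})\sTate{g-1}$ at $\infty$ and $\chi(\cK_C^{-1})\sTate{1-g}$ at $0$. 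Multiplying by the $\cO(\cM)$-weights reproduces \eqref{eq:72}, matching the left-hand side.

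The main obstacle, as throughout Section~\ref{s_pt2}, is the bookkeeping of Tate twists and of the $q$-powers hidden inside $\chi$ (recall $\chi(\cL)=(q^{1/2}a)^{-\deg\cL}\chi_0(\cL)$): one must track with care every $\sTate{\,\cdot\,}=[\,\cdot\,]\Tate{\cdot/2}$ coming from a virtual dimension, from the centering \eqref{eq:222}, and from the self-dual normalization \eqref{eq:70} (equivalently \eqref{eq:68}), and verify that they conspire to the same answer on both sides. A secondary, purely bookkeeping-level point is fixing once and for all the pairing of $\bfL_\infty,\bfL_0$ with $\Attr(\infty),\Attr(0)$ and with the correct fixed point of $\bP^1$; this is dictated by the attracting directions of the $a\to0$ and $z\to0$ limits and by the correspondence chain \eqref{eq:85}, and once Proposition~\ref{p_loc_loc} is in hand it is routine.
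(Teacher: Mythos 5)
Your proof is correct and follows the same route as the paper's: compute the weight of the generating line of the Verma submodule and the cogenerating line of the dual-Verma quotient on each side of \eqref{eq:324} and check that they match. On the left you spell out explicitly the bookkeeping that the paper dismisses as an immediate consequence of \eqref{eq:222} and table \eqref{eq:305}; your observation that the $\deg\cM$-dependent shifts cancel is a useful sanity check that the paper leaves implicit. On the right your accounting is looser than the paper's table \eqref{eq:305-2} — the paper separates the $\Spin$ weight (via its \eqref{eq:60}, which makes the $\delta$-dependence and the $H^1$ versus $\Hd$ distinction explicit), the $\sTate{-\delta}$ from $\cOh_{\Fix,\vir}$, the $\cO(\cM)$ weight, and the local cohomology shift $\sTate{2\delta}$, whereas you lump the first, second and fourth into ``the $\delta$-shift built into $\sigma_{*,\bfL}$'' and invoke \eqref{eq:68} rather than \eqref{eq:60} — but the combined weight you quote, $\chi(\cK_C^{-1})\sTate{g-1}$ at $\infty$ and $\chi(\cK_C^{-1})\sTate{1-g}$ at $0$, is exactly what the paper's four-row product gives, so the argument stands.
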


\begin{proof}
  For $\widehat{H}(\QMN)$, this is an immediate consequence of
  the definition \eqref{eq:222} and table \eqref{eq:305}. In the
  computations with the spinor bundle, we will need the number
  $\delta= \delta_{\{\chi_0^2=1\}} \in \{0,1\}$, where $
  \delta_{\{\chi_0^2=1\}}$ denotes the indicator function. From
  \eqref{eq:68}
  we deduce 
  \begin{equation}
    \label{eq:60}
    \left(\Lambda^{\textup{top}} H^1(C, \chi^2)\right)^{1/2} = \chi(\cK_C)
    \chi^{2\delta} \sTate{1-g-\delta} \,. 
  \end{equation}
  Consider the bundle $\Spin$. The generator of the submodule
  and the quotient modules come from the elements $1\in \Cliff$ and
  the product of all $\bff\of{\alpha_i}$,
  respectively. With the square root twists, these have the weight
  $ \left(\Lambda^{\textup{top}} H^1(C, \chi^2)\right)^{-1/2}$
  and $\left(\Lambda^{\textup{top}} H^1(C, \chi^{-2})\right)^{1/2}$,
  respectively. These weights form the first row in the following
  table 
 \begin{equation}
   \label{eq:305-2}
\begin{tabular}{ c| cc} 
 \textup{contributions from}  & \textup{quotient} & \textup{sub} 
  \\ [0.5ex] 
  \hline  
  $\Spin $  & $\chi(\cK_C)^{-1}
    \chi^{-2\delta} \sTate{1-g-\delta}$& $\chi(\cK_C)^{-1}
                                         \chi^{-2\delta} \sTate{\delta+g-1}$\\
  $\cOh_{\Fix,\vir} $  & $\sTate{-\delta}$& $\sTate{-\delta}$\\
   $\cO(\cM)$  & $\chi(\cM)^{-1}$& $\chi(\cM)$\\
  \textup{local cohomology}  & $\chi^{2\delta}\sTate{2\delta}$& $\chi^{2\delta}$\\
\end{tabular}
 \end{equation}
 where the shifts in local cohomology appear as in the computation done
 in section \ref{s_local coh}. Collecting terms in the above table
 concludes
 the proof. 
\end{proof}

\subsubsection{}
To finish the proof, we need to identify the classes of the
extension. If $\chi^2_0 = 1$, then the submodule is free as
a module over $\be\of\bpt$. This means the extension class
is uniquely determined by its localization with respect to
$\be\of\bpt$. By Proposition \ref{p_outside}, the extension
classes coinside after such localization, completing the
proof in the $\chi^2_0 = 1$ case.

\subsubsection{}
Consider the case $\chi^2_0 \ne 1$, in which case the
statement of the theorem is equivalent to the splitting of the
module $\widehat{H}(\QMN, \chi)$. The submodule
is a free module over $\be\of{\alpha_i}$, with the operators
$\bff\of{\alpha_j}$ acting by zero. For the quotient module,
the situation is reversed. Thus $\bff\of{\alpha_j} \, \be\of{\alpha_i} =
0$ for any $\alpha_i$ and $\alpha_j$, and the module splits
if and only if the product in the other order also vanishes. This is
equivalent to the vanishing of the commutator $\bh\of\bpt$.

\subsubsection{}
Both the sub and the quotient module are nonzero for a single
cohomological degree for given degree of the quasimap.
From the considerations of degrees, we see that 
$\rk \bh\of\bpt\le 1$. Since $\bh\of\bpt$ is central, we conclude
\[
  \Image \bff\of{\alpha_i} \subset \Ker \bh \of \bpt \,,
  \quad \Image \bh \of \bpt \subset \Ker \be\of{\alpha_i} \,, 
\] 
for any $\alpha_i$. 
Therefore, if nonzero, $\bh\of\bpt$ has to map the generator of the
quotient to the cogenerator of the submodule. Comparing the
degrees, we see that this is only possible if
\[
g=2 \,, \quad \deg \cM = 0 \,.
\] 
This case will be analyzed explicitly below and, indeed,
$\bh\of\bpt\ne 0$ when $\cM=\cO_C$.

\subsection{The exceptional case}\label{s_exception}

\subsubsection{}
Consider the case $g=2$, $\cM=\cO_C$. In this case, the generator of
the quotient module and the cogenerator of the submodule both
correspond to the class of $H^2(\bS^2 C,\chi)$ represented by
the canonical linear system $\bP^1 \subset \bS^2 C$. This cycle
lies in the fiber
\[
  \xymatrix{
 \QMN_{\cK^{-1}} \ar@{^{(}->}[r] \ar[d] & \QMN_{-2} \ar[d] \\ 
 \cK^{-1}_C \ar@{|->}[r] & \Pic_{-2}(C) \,, 
 } 
\]
over the point $\cL = \cK^{-1}_C $. The moduli space  $\QMN_{\cK^{-1}}$ is formed by all possible extensions
\[
  0 \to \cO \to \cV \to \cO \to 0
\]
together with a section
\[
  s \in H^0(\cK \otimes \cV)
\]
taken modulo the action of $\Gm \times H^0(\cO) \subset \Aut(\cV)$.
The operator $\bh\of\bpt$ acts by a multiple of $\eta = c_1(\cO(1))$.
It descends from the same action on the space $\widetilde{\QMN}_{\cK^{-1}}$, where we divide
by the action of $\Gm$ only.

\subsubsection{}
Consider the exact sequence
\[
  0 \to H^0(\cK)  \to H^0(\cK \otimes \cV) \to  H^0(\cK)
  \xrightarrow{\quad \beta_\cV \quad} H^1(\cK)
\]
where the map $\beta$ is the pairing with the class of
the extension
\[
\beta_\cV \in \Ext^1(\cO,\cO) = H^0(\cK)^*  \,. 
\]
Possible bundles $\cV$ are precisely parameterized by
their classes $\beta_\cV$. Denoting $K=H^0(\cK)$, we obtain
the following description of $\widetilde{\QMN}_{\cK^{-1}}$
as a subset of $\bP(K^{\oplus
   2}) \times K^*$ 
\begin{equation}
  \xymatrix{
 \bP(K\oplus \beta^\perp) \ar@{^{(}->}[r] \ar[d]
 &\widetilde{\QMN}_{\cK^{-1}} \ar[d] \\ 
 \beta \ar@{|->}[r] & K^* \,. } \label{eq:62}
\end{equation}
The action of $\Gm = \Aut \cM$ corresponds to scaling the first copy
of $K$ in $K^{\oplus 2}$. 

\subsubsection{}
In terms of the diagram \eqref{eq:62}, we generator of the quotient
module and the cogenerator of the submodule are dual to
the fundamental cycle in $H_8$ and
the cycle $\left[\bP(K)\right]\in H_6$, respectively. Similarly,
$\eta$ is dual to $\left[\bP(\textup{line} \oplus \beta^\perp) \right] \in
H_6$. One can move the line off the origin and then send it to
infinity by the action of $\Gm = \Aut \cM$. Thus
\[
\left[\bP(K)\right] = \left[\bP(\textup{line} \oplus \beta^\perp)
\right]
\]
and so $\bh\of\bpt\ne 0$ in this instance.

\subsubsection{}

If $\cM\not\cong \cO$ then the classes corresponding to $H^2(\bS^2 C,\chi)$
lie over different fibers of the projection to $\Pic_{-2}(C)$, thus
cannot possible be related via multiplication by $\eta$. This
concludes
the proof of Theorem \ref{t2}.

\section{Appendix.\ Rank 2 bundles over $\bP^1$}
\label{s_app} 

\subsection{$\Bun_\bG$ and $\Gr_\bG$}

\subsubsection{}

This appendix is elementary and written for readers with no background in
automorphic forms. Our main objective
here is to explain by means of the simplest example how spectral
decompositions are encoded by certain additive decompositions of
distributions. We also explain what these distributions have to do
with constant terms of Eisenstein series. Finally, we point out
how this elementary example fits into the general story told in our
paper.

\subsubsection{} 

Let $C=\bP^1$ be the projective line over a finite field $\Bbbk$ with
$q$ elements.
More generally, one may replace $\bP^1$ by a smooth
projective curve over 
$\Bbbk$ which remains irreducible over the algebraic
closure $\overline{\Bbbk}$, and that would be setting of
Section \ref{s_curves}. We denote by $\F =
\Bbbk(C)$ the field of rational functions on $C$. For
$C=\bP^1$, this simply means $\F= \Bbbk(t)$, where
$t$ is an indeterminate.

A closed point $x\in \bP^1$ other than $\infty \in \bP^1$ corresponds
to an irreducible monic polynomials in $t$. The degree $\deg_x$ is
the degree of this polynomial. We denote by $\Bbbk_x$ the
corresponding finite extension of $\Bbbk$. Consider
the completion $\F_x$ of $\F$ at $x$ and its subring 
\begin{equation}
  \label{eq:172}
 \Bbbk_x(\!(t)\!) \cong  \F_x \supset  \bO_x  \cong \Bbbk_x[\![t]\!] 
\end{equation}
formed
by functions regular at $x$. Here $\Bbbk_x(\!(t)\!)$ and $\Bbbk_x[\![t]\!]$
denote formal Laurent and Taylor series, respectively. 
One defines integral adeles and all adeles of $\F$,
respectively, by 
\begin{equation}
\bO = \prod_{x\in C} \bO_x \subset  {\prod_{x\in C}}' \, \F_x = \A \label{eq:152}
\end{equation}
where the prime in \eqref{eq:152} means that any adele is integral for
all but finitely many $x$. 

\subsubsection{} 

A vector bundle $\cV$ over $C$ may be trivialized in the formal
neighborhood of each closed point $x$ and also at the generic point.
The corresponding transition function identifies isomorphism classes
of vector bundles with 
double cosets 
\begin{equation}
\Bun_\bG = \bG(\F) \backslash \bG(\A) /\bG(\bO) \,, \label{eq:2}
\end{equation}
where $\bG=GL(n)$, $n= \rk \cV$.

For any reductive $\bG$, the
double cosets \eqref{eq:2} describe the $\Bbbk$-points of the stack $\cBun_\bG$ of
$\bG$-bundles on $C$. For
\[
\bG=  SL(n), PGL(n)
\]
this is the same as vector bundles with trivial
determinant, and vector bundles up to tensoring with a line bundle,
respectively. The latter can be also identified with
$\bP^{n-1}$-bundles $\cP$ over the curve $C$.

\subsubsection{}

A very special property of $C=\bP^1$ is that every vector bundle is a
direct sum of line bundles. As a result, the
structure of $\Bun_{\bG}$, and the corresponding theory of automorphic
forms, becomes almost trivial. Concretely,
\begin{equation}
\Bun_{SL(2)} = \left\{ \cO(m) \oplus \cO(-m) \right\}_{m\ge 0}
\hookrightarrow \Bun_{PGL(2)} = \left\{ \cP_k 
\right\}_{k\ge 0} \,, \quad \cP_k = \bP\left(\cO(k) \oplus \cO\right)
\,, \label{eq:153}
\end{equation}
where the map takes a rank 2 bundle to associated
$\bP^1$-bundle and its image consist of $\cP_k$ with $k$ even. 

For $C=\bP^1$, there are no cusp forms, no Galois
representations, Eisenstein series are equal to their constant term,
etc. Luckily, the structures that we want to illustrate survive even
in this simplest case and can give an idea about how things work in
general.

\subsubsection{}

Let $x \in C$ be a point of degree 1.  
The bundles in \eqref{eq:153} are trivial in the complement of $x$,
which means that
\begin{align}
  \Bun_\bG &= \bG(\cO_{C \setminus x}) \backslash \bG(\F_x)
             /\bG(\bO_x) \notag \\
 &= \bG(\Bbbk[t^{-1}]) \backslash \bG(\fst)
   /\bG(\fTst) \label{eq:154}\,,\\ 
   & = \bG(\Bbbk[t^{-1}]) \backslash \Gr_\bG \notag 
\end{align}
where
%
\begin{align}
  \label{eq:155}
  \Gr_\bG = \bG(\fst)
             /\bG(\fTst) \,, 
\end{align}
are the $\Bbbk$-points of the affine Grassmannian.
Among other
things, $\Gr_\bG$ describes $\bG$-bundles over $\bP^1$ trivialized
in the complement of a $\Bbbk$-point $x$.

\subsubsection{}

For $\bG = GL(n)$, we have
\[
\Gr_{GL(n)} = \{ \textup{$\fTst$-lattices $L\subset
  \fst^{\oplus n}$}\}\,. 
\]
The lattice $L(\cV)$ corresponding to a  vector bundle $\cV$ is
described as follows 
\begin{equation}
  \label{eq:157}
  \xymatrix{
    H^0(\bD_x,\cV) \, \ar@{^{(}->}[r] \ar@{=}[d]&
    H^0(\bD^\times_x,\cV) \ar@{=}[d] \\
    L(\cV) \, \ar@{^{(}->}[r] & \fst^{\oplus n} 
    } \,, 
  \end{equation}
  where
  \[
  \bD_x = \Spec \Bbbk[\![t]\!]\,, \quad 
\bD^\times_x = \Spec  \Bbbk(\!(t)\!) \,, 
\] 
are the formal neighborhood of $x$ and 
the punctured formal neighborhood of $x$.
For $\bG = SL(n)$ and $\bG =  PGL(n)$ one considers lattices $L$
such that $\Lambda^n L = \fTst$ and lattices up to scale,
respectively.

\subsubsection{}

For $\bG=PGL(2)$, the points of \eqref{eq:155} correspond to vertices
of a regular $(q+1)$-valent tree, see \cite{Serre}. This is the rank 1 case
of the general Bruhat-Tits building. Edges $L\textup{---}L'$ of this tree correspond to
pairs of lattices such that
\[
L \supsetneq L' \supsetneq t L \,. 
\]
Note this is a symmetric relation. 
For $q=2$, the tree is plotted in 
\eqref{eq:156}. 
\begin{gather}
  \label{eq:156}
  \raisebox{-100pt}{\includegraphics[height=200pt]{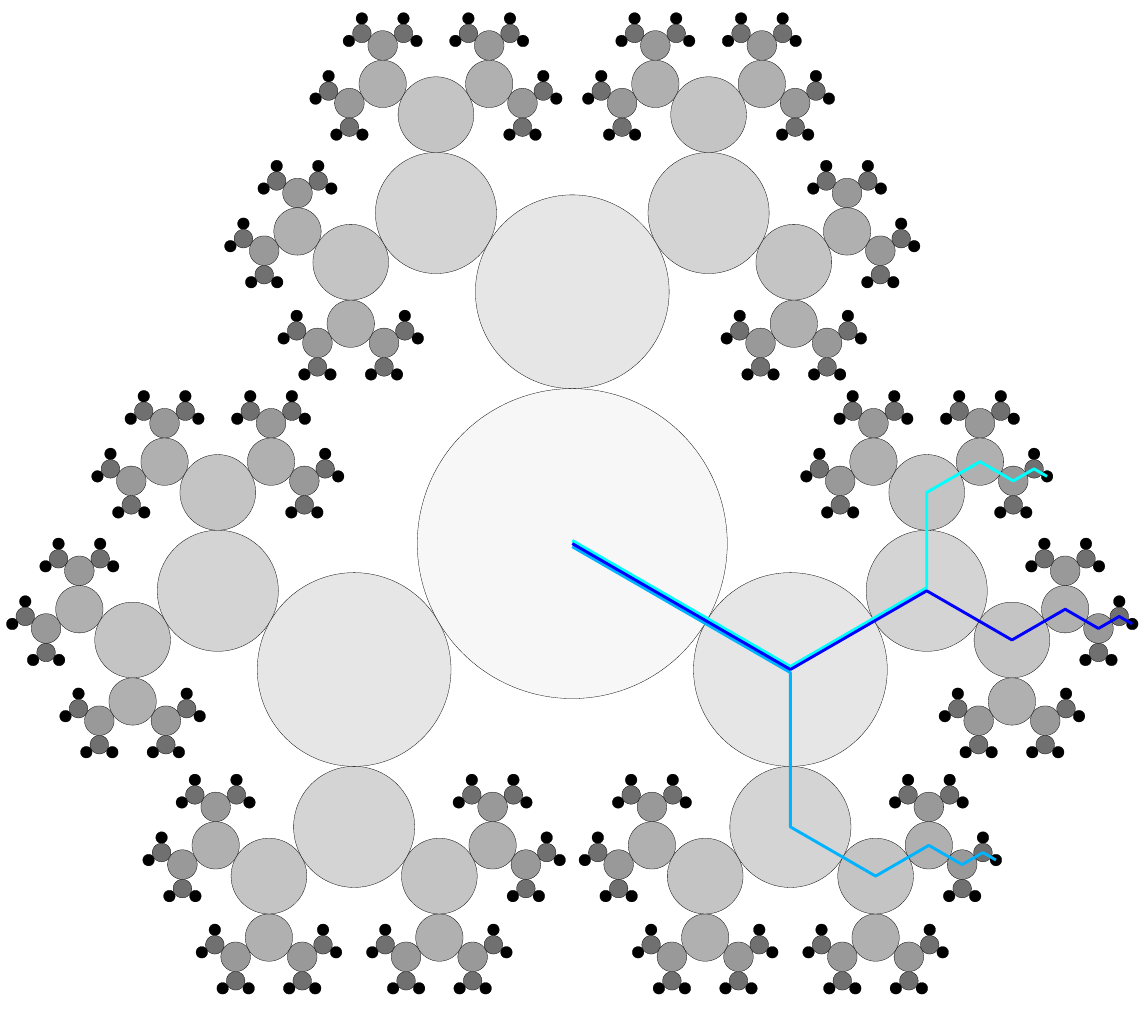}} \\
  \textup{\sl Orbits of $\bG(\fTst)$ on $\Gr_\bG$ for $q=2$} \notag 
\end{gather}
In \eqref{eq:156}, vertices of the tree are represented by circles and their color
reflects the distance from the root vertex ($=$ the trivial bundle).

The stabilizer of the root vertex in
$\bG(\fst)$ is $\bG(\fTst)$,
which acts transitively on the sphere of each radius around the
root. In other words, any path without backtracking
from the root to infinity gives a fundamental domain for $\bG(\fTst)$.
Three such paths are plotted in \eqref{eq:156}. The tree in
\eqref{eq:156} is the nonarchimedian analog of the unit disk,
equivalently, 
upper half-plane, and the action of $\bG(\fTst)$ should be compared
to the action of the maximal compact subgroup $SO(2,\R) \subset
SL(2,\R)$.

The subset
\[
\Gr_{SL(2)} \subset \Gr_{PGL(2)}
\]
is formed by vertices at even distance from the root.

\subsection{Hecke operators}

\subsubsection{}
The spherical Hecke algebra
\begin{align}
 \SH_x = \,\, & \textup{compactly supported functions} \notag \\
   &\textup{on $\bG(\fTst) \backslash \bG(\fst)/  \label{eq:159}
\bG(\fTst)$} 
\end{align}
acts by right convolution on function on $\Gr_\bG$. It is
clear from the description of orbits in \eqref{eq:156} that $\SH_x$
for $\bG = PGL(2)$ 
is an algebra of polynomials in one generator $\Delta$ which may be
chosen as the  graph Laplacian
\begin{equation}
\left[\Delta f \right] (L) = \sum_{L\textup{---}L'}
f(L') \label{eq:158} \,.
\end{equation}
The spherical Hecke algebra for $\bG=SL(2)$ is spanned by even powers of
$\Delta$.

\subsubsection{}
In terms of $\bP^1$-bundles $\cP \to C$, the action of $\Delta$ is
given by modifications as in \eqref{eq:160} in Section \ref{s_modif}. 
%
%
A section $s$ of $\cP$ in
\eqref{eq:160}
determines a unique section $s'$ of $\cP'$. Self-intersections of
$s$ and $s'$ are related by 
\begin{equation}
  \label{eq:161-2}
  (s',s') - (s,s) =
  \begin{cases}
    -1 \,, & y \in s \,, \\
    +1 \,, & y \notin s \,, 
  \end{cases}
\end{equation}
where $y$ is a point in the fiber of $\cP$ over $x\in C$
that one blows up in the
process of modification. Note there are $(q+1)$ choices for $y$ since
we assume $\deg x = 1$. 

The bundles $\cP_k$ for $k>0$ are distinguished by having a unique section
\begin{equation}
s_\infty: C= \bP\left(\cO(k) \right) \xrightarrow{\quad \quad}
\bP\left(\cO \oplus \cO(k) \right) =
\cP_k\label{eq:193}
\end{equation}
with a negative self-intersection $(s_\infty,s_\infty) = - k$.
According to  \eqref{eq:161-2},
the corresponding $(q+1)$ neighbors of $\cP_k$ look
like this:
\begin{gather}
\raisebox{-2.25cm}{\begin{tikzpicture}[scale=0.9]
\node (0) at (0,0) [circle,draw,minimum width = 1.3cm,fill=gray!3] {$0$}; 
\node (1) at (2,0) [circle,draw,minimum width = 1.3cm,fill=gray!7] {$1$};
\node (2) at (1.246979604, 1.563662965) [circle,draw,minimum width =
1.3cm,fill=gray!7] {$1$};
\node (3) at (1.246979604, -1.563662965) [circle,draw,minimum width =
1.3cm,fill=gray!7] {$1$};
\node (4) at (-0.4450418670, 1.949855825) [circle,draw,minimum width =
1.3cm,fill=gray!7] {$1$};
\node (5) at (-0.4450418670, -1.949855825) [circle,draw,minimum width =
1.3cm,fill=gray!7] {$1$};
\node (6) at (-1.801937736, 0.8677674786) [circle,minimum width =
1.3cm] {};
\node (7) at (-1.801937736, -0.8677674786) [circle,minimum width =
1.3cm] {};
\node (8) at (-2, 0) [circle,minimum width =
1.3cm] {$\dots$};
\draw (0)--(1); 
\draw (0)--(2); 
\draw (0)--(3);
\draw (0)--(4); 
\draw (0)--(5);
\draw (0)--(6); 
\draw (0)--(7); 
\draw (0)--(8); 
\end{tikzpicture}}\,, \quad 
\raisebox{-2.25cm}{\begin{tikzpicture}[scale=0.9]
\node (0) at (0,0) [circle,draw,minimum width = 1.3cm,fill=gray!40] {$k$}; 
\node (1) at (2,0) [circle,draw,minimum width = 1.3cm,fill=gray!56] {$k+1$};
\node (2) at (1.246979604, 1.563662965) [circle,draw,minimum width =
1.3cm,fill=gray!32] {$k-1$};
\node (3) at (1.246979604, -1.563662965) [circle,draw,minimum width =
1.3cm,fill=gray!32] {$k-1$};
\node (4) at (-0.4450418670, 1.949855825) [circle,draw,minimum width =
1.3cm,fill=gray!32] {$k-1$};
\node (5) at (-0.4450418670, -1.949855825) [circle,draw,minimum width =
1.3cm,fill=gray!32] {$k-1$};
\node (6) at (-1.801937736, 0.8677674786) [circle,minimum width =
1.3cm] {};
\node (7) at (-1.801937736, -0.8677674786) [circle,minimum width =
1.3cm] {};
\node (8) at (-2, 0) [circle,minimum width =
1.3cm] {$\dots$};
\draw (0)--(1); 
\draw (0)--(2); 
\draw (0)--(3);
\draw (0)--(4); 
\draw (0)--(5);
\draw (0)--(6); 
\draw (0)--(7); 
\draw (0)--(8); 
\end{tikzpicture}}\,, \quad k>0 \,, \label{kapusta} \\
 \textup{\sl The neighborhood of $\cP_k$ in $\Gr_\bG$.} \notag 
\end{gather}
where abbreviate $\cP_k$ to just $k$. Note that $\cP_{-k} \cong
\cP_{k}$; with this in mind one does not need to distinguish between
the two cases
in \eqref{kapusta}. 

\subsubsection{}

The bundles $\cP_k$ represents the the $\bG(\Bbbk[t^{-1}])$-orbits
on $\Gr_{\bG}$ for $\bG=PGL(2)$. The local rules \eqref{kapusta}
give a unique reconstruction of those up to an automorphism of the
tree, see Figure \eqref{tree2}. 
\begin{gather}
  \label{tree2}
  \raisebox{-100pt}{\includegraphics[height=200pt]{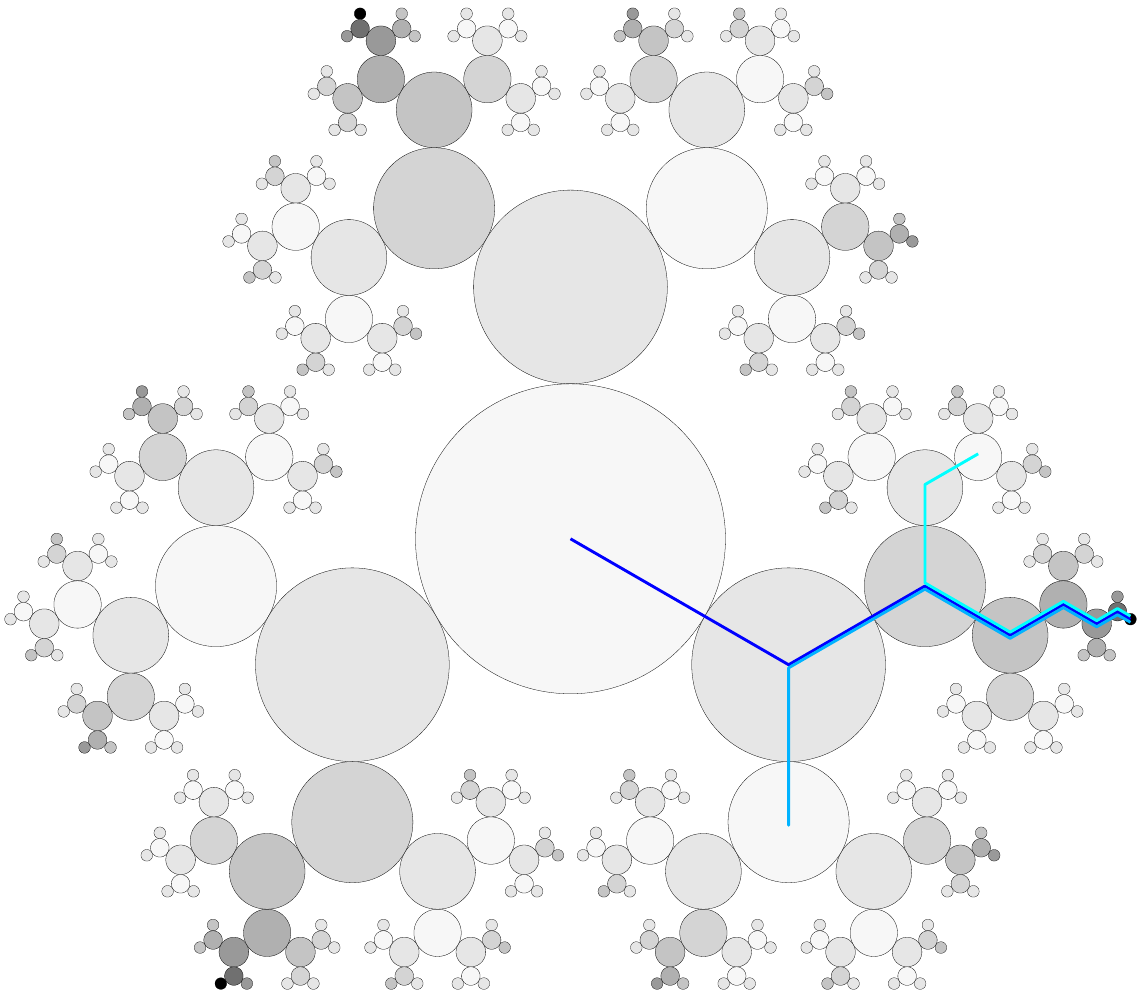}} \\
  \textup{\sl Orbits of $\bG(\Bbbk[t^{-1}])$ in $\Gr_\bG$.} \notag 
\end{gather}
The grayscale in \eqref{tree2} encodes the isomorphism type as
follows
\begin{equation}
  \label{eq:162}
  \raisebox{-0.6cm}{\begin{tikzpicture}
    \selectcolormodel{gray}
    \node (0) [circle,draw,minimum width = 1.3cm,fill=gray!3]  at
    (0,0) {$\cP_0$};
    \node (1)  [circle,draw,minimum width = 1.3cm,fill=gray!10]  at (2,0) {$\cP_1$}; 
 \node (2)  [circle,draw,minimum width = 1.3cm,fill=gray!17]  at (4,0) {$\cP_2$}; 
 \node (3)  [circle,draw,minimum width = 1.3cm,fill=gray!23]  at (6,0)
 {$\cP_3$};
  \node (4)  [circle,draw,minimum width = 1.3cm,fill=gray!31] at (8,0)
  {$\cP_4$};
  \node (5) [circle,draw,minimum width = 1.3cm,fill=gray!40]  at (10,0)
  {$\cP_5$};
  \node (6) [circle,minimum width = 1.3cm]  at (12,0)
  {$\dots$};
  \draw (0)--(1); \draw (1)--(2); \draw (2)--(3);  \draw (3)--(4);
   \draw (4)--(5); \draw (5)--(6); 
  \end{tikzpicture}} 
\end{equation}
Any chain of the form \eqref{eq:162}  is a 
fundamental domain for $PSL(2,\Bbbk[t^{-1}])$. Three such domain
are plotted in \eqref{tree2}.  They differ by shifts by element of the 
subgroup 
\begin{equation}
\Bbbk^{\infty} \cong \left\{
  \begin{pmatrix}
    1 & \Bbbk[t^{-1}] \\
    0 & 1 
  \end{pmatrix} \right\} \subset
PSL(2,\Bbbk[t^{-1}])\,, \label{eq:163}
\end{equation}
which fixes the corresponding cusp.

One may compare and
contrast this action with its Archimedean analog, namely,  the action
of $PSL(2,\Z)$ on the hyperbolic plane. 
The subgroup \eqref{eq:163} is the nonarchimedian analog of
\[
\left\{
  \begin{pmatrix}
    1 & \Z \\
    0 & 1 
  \end{pmatrix} \right\} \subset
PSL(2,\Z) \,.
\]
Together with $PSL(2,\Bbbk)$, the subgroup \eqref{eq:163} generates $PSL(2,\Bbbk[t^{-1}])$.

\subsubsection{} 

Summarizing the above discussion, one can say that for $\bG = PGL(2)$,
the action of the
spherical  Hecke algebra on functions on $\Bun_\bG$ amounts to the action
of one operator $\Delta$ on functions $f(k)$ of one variable $k \in \Z_{\ge
  0}$ given by the formula
\begin{equation}
  \label{eq:173}
  \left[\Delta f\right] (k) = q f(k-1) + f(k+1) \,, \quad f(-1) = f(1)
  \,. 
\end{equation}
This operator is self adjoint with respect to the natural Hermitian
inner 
product in $L^2(\Bun_\bG)$ given by
\begin{equation}
  \label{eq:174}
  \| f \|_{L^2(\Bun_\bG)} = \sum_{k \ge 0}
  \frac{|f(k)|^2}{|\Aut(\cP_k)|}  \,, 
\end{equation}
which is elementary to see directly using
\begin{equation}
  \label{eq:166}
  |\Aut(\cP_k)| =
  \begin{cases}
    q (q^2-1) \,, &  k =0 \,, \\
    q^{k+1} (q-1) \,, & k > 0 \,. 
  \end{cases}
\end{equation}

\subsection{Spectrum of $\Delta$}

\subsubsection{}

The operator \eqref{eq:173} is a second-order 
difference operators with constant coefficients. Ignoring the
boundary conditions, we find two eigenfunctions
\[
f_{\pm}(k;a) = \left(q^{1/2} a^{\pm 1} \right)^k, \quad k \in \Z, a
\in \Ct\,, 
\]
with eigenvalue
\begin{equation}
\Delta f_{\pm}(k;a) = \lambda(a) \, f_{\pm}(k;a)
\,, \quad \lambda(a) = q^{1/2} (a + a^{-1})  \,. \label{eq:182}
\end{equation}
The boundary condition in \eqref{eq:173} singles out one
particular linear combination of those
\begin{equation}
  \label{eq:175}
  \Eis(k,a) = \left(q^{1/2} a\right)^k + \frac{q a^2 -1}{a^2 - q} \, 
  \left(q^{1/2} a^{-1} \right)^k\,, 
\end{equation}
which is the Eisenstein series for $\Bun_{PSL(2)}(\bP^1)$, see below.

\subsubsection{}

By construction, $\Eis(k,a)$ and $\Eis(k,a^{-1})$ are proportional. In
fact, the elementary relation
\begin{equation}
  \label{eq:180}
   \Eis(k,a^{-1}) = \frac {a^2 - q}{q a^2 -1} \, \Eis(k,a) 
\end{equation}
is the simplest instance of the general functional equation for
Eisenstein series.

\subsubsection{} 

In the spectral theory of differential and difference operators,
one views \eqref{eq:175} as a superposition of a wave and its
reflection by the boundary. One reads off properties of the
system, such as its spectrum,  from the properties of the
coefficient in \eqref{eq:175} as a function of $a$, see e.g.\
\cite{Faddeev, Leon}. This analysis is
elementary in the case at hand. 

\subsubsection{}

Note that none of the functions \eqref{eq:175} lies in $L^2$. Indeed,
for large $k$ it grows at least as $q^{k/2}$, making the sum
\eqref{eq:174} diverge. Also note that
\[
\Res_{a= \pm \sqrt{q}} \Eis(k,a) \, \frac{da}{a}
= \frac{q-q^{-1}}{2 } \, (\pm
1)^{k} 
\]
are $L^2$-eigenfunctions with eigenvalues $\pm (q+1)$. As we will
see momentarily
\begin{equation}
  \label{eq:176}
  \Spec(\Delta) = \big[-2\sqrt{q},2 \sqrt{q}\big] \cup \{\pm (q+1)\} \,, 
\end{equation}
with multiplicity one. The spectral measure on the continuos spectrum in \eqref{eq:176}  is in the Lebesgue measure
equivalence class. 

\subsubsection{}

One popular starting point for spectral decompositions is the 
identity
\begin{equation}
f = \frac{1}{2\pi i} \oint_{|z|\gg 1} dz \, \frac{1}{\Delta-z} 
\, f\label{eq:165}
\end{equation}
valid for any function or vector $f$ and a bounded linear operator
$\Delta$. The operator $(\Delta-z)^{-1}$ in \eqref{eq:165} is called the
resolvent of the operator $\Delta$. It is analytic for $|z| \gg 1$ and
its residue at $z=\infty$ is the identity operator, whence
\eqref{eq:165}.

If $\Delta$ is self-adjoint then by the abstract spectral theorem
one has the following integral representation for the matrix
coefficients of the resolvent
\begin{equation}
  \label{eq:321-2}
  \left( (\Delta -z )^{-1} f_1, f_2\right) = \int_\R \frac{f_1(\lambda)
    \overline{f_2(\lambda)}}{\lambda - z} \,
  \mu_{\textup{spectral}}(d\lambda) \,.
\end{equation}
Here we identify elements $f_i$ of our Hilbert space with
functions\footnote{in principle, vector-valued, but scalar-valued in our case} on the spectrum of $\Delta$ and
$\mu_{\textup{spectral}}$ is the spectral
measure. It is clear that the atoms of $\mu_{\textup{spectral}}$
produce poles in \eqref{eq:321-2}, while the absolutely continuous
part of $\mu_{\textup{spectral}}$ produces a jump across the real
axis in \eqref{eq:321-2}. The magnitude of this jump is proportional to the
density of $\mu_{\textup{spectral}}$. A concrete spectral decomposition can thus
be obtained by moving the contour of integration in \eqref{eq:165} so
that to make it encircle the singularities of $(\Delta -z )^{-1}$. 

\subsubsection{}\label{s_compl} 

A good set of functions $f$ to plug into \eqref{eq:165} is not the
Eisenstein series themselves, but their linear combinations of the
form
\begin{align}
  \label{eq:177}
  \Eis_\omega(k) &= \oint_{|a|\gg 1} \Eis(k;a) \, \omega(a) \, \frac{da}{2\pi
                   i a}  \,, \\
  &= \oint_{|a|=1} \Eis(k;a) \, \omega(a) \, \frac{da}{2\pi
                   i a} + \frac{q-q^{-1}}{2} \sum_{\pm } \omega(\pm
    q^{1/2}) \, (\pm 1)^k \,, \label{eq:177bis}
\end{align}
where $\omega(a) \in \C[a^{\pm 1}]$. 
These are 
called pseudo-Eisenstein series in the theory of automorphic forms.
In other situations, they may be called wave packets, etc., that is,
linear combinations of eigenfunctions that decay at infinity sufficiently
rapidly. 

Since \eqref{eq:177} picks out the coefficients of the $a\to \infty$
expansion of $\Eis(k;a)$, it is easy to see that each $\Eis_\omega(k)$ is
compactly supported and that the map
\begin{equation}
  \label{eq:186}
  \C[a^{\pm 1}] \xrightarrow{\quad \omega \mapsto \Eis_\omega
    \quad} \, \, 
  \begin{matrix}
    \textup{compactly supported} \\
    \textup{functions on $\Bun_\bG$} 
  \end{matrix} 
\end{equation}
is surjective. The right hand side is dense in $L^2(\Bun_\bG)$ and
a very useful perspective on the spectral decomposition is to describe
$L^2(\Bun_\bG)$ as the completion of $\C[a^{\pm 1}]$ with respect to
the seminorm  $\| \Eis_\omega \|_{L^2(\Bun_\bG)}$ (which includes
quotienting out by the seminorm's kernel). This seminorm 
can be computed 
abstractly in great generality, see Section \ref{s_const_norm}. 

\subsubsection{}\label{s_spectrum1} 
By construction
\begin{equation}
  \label{eq:178}
  \frac{1}{\Delta-z} 
  \, \Eis_\omega =
 \oint_{|z| \gg |a| \gg 1}
  \frac{da}{2\pi i a}
  \,
  \frac{\omega(a)}{
    \lambda(a)- z} \, \Eis(a) \,. 
\end{equation}
Moving $z$ away from $z=\infty$ means shrinking the contour of
integration in \eqref{eq:178} to the unit circle $|a|=1$ as in
\eqref{eq:177bis}. 
Note the function $\lambda(a)$ is one-to-one for $|a|>1$ and
and two-to-one for $|a|=1$, creating the expected discontinuity of the
resolvent along
\[
\lambda(\{|a|=1\}) = \big[-2\sqrt{q},2 \sqrt{q}\big]  \, .
\]
This proves \eqref{eq:176}.

\subsubsection{}

For $\bG=SL(2)$, we need to consider only even $k$ and the operator
$\Delta^2$. This means we will have $b=a^2$ as the new parameter
for Eisenstein series
\begin{equation}
  \label{EisSL2}
  \Eis_{SL(2)}(\cO(m)\oplus \cO(-m),b) = \left(q b\right)^m + \frac{q b -1}{b - q} \, 
  \left(q b^{-1} \right)^m\,.  
\end{equation}
In particular, the eigenfunctions $(\pm 1)^k$ get identified, while
the continuous spectrum retains its shape.

\subsection{Same spectrum, as seen in the dual group} 

\subsubsection{}
The interpretation of this spectrum from the point of the general theory
is the following. One considers the Langlands dual group $\bG^\vee$
and identifies, following Satake,
\[
\lambda: \SH_x  (\bG) \otimes_\Z \Z[q^{\pm 1/2}]
\xrightarrow{\quad \sim \quad} K_{\bG^\vee} (\pt) \otimes_\Z
\Z[q^{\pm 1/2}]  \,, 
\]
where the isomorphism is an isomorphism of algebras. In particular,
\[
PGL(2)^\vee=SL(2)\,, 
\]
and vice versa, and
\begin{equation}
  \label{eq:181}
  \lambda ( \Delta) = q^{1/2} \otimes \textup{defining representation
    of $SL(2)$} \,. 
\end{equation}
To find an agreement with \eqref{eq:182} above, we observe that
\[
\lambda(a) = q^{1/2} \tr
\begin{pmatrix}
  a \\
  & a^{-1} 
\end{pmatrix}
\]
is the trace in the defining representation of $a$, viewed as a
conjugacy class in $SL(2)$. Viewed from this angle,
\begin{equation}
  \label{eq:183}
  \Spec(\SH_x  (\bG)) \subset \textup{conjugacy classes in
    $\bG^\vee(\C)$}\,. 
\end{equation}

\subsubsection{}

We hope the reader will enjoy checking that for both $PGL(2)$ and
$SL(2)$ the spectrum computed above can be described as
follows
\begin{equation}
  \label{eq:184}
  \Spec(\SH_x  (\bG)) = \bigsqcup_{\pi: SL(2,\C)\to \bG^\vee(\C)}
  \pi \left( \begin{pmatrix}
  q^{1/2} \\
  & q^{-1/2} 
\end{pmatrix} \right) \cdot \bG^\vee(\C)^\pi_{\textup{compact}}
\end{equation}
where the union is over all homomorphisms
$\pi$, the group $\bG^\vee(\C)^\pi$ is the centralizer of the image of
$\pi$, and $\bG^\vee(\C)^\pi_{\textup{compact}}$ is its maximal
compact subgroup.

Indeed, in the case at hand, there are only two possibilities for $\pi$. We have the
trivial representation, which gives
\begin{equation}
  \label{eq:185b}
  \Spec(\SH_x  (PGL(2)))_{\textup{continous}} = \left\{
    \begin{pmatrix}
  a \\
  & a^{-1}
\end{pmatrix} , |a|=1 \right\} \,, 
\end{equation}
and the defining
representation which gives 
\begin{equation}
  \label{eq:185}
  \Spec(\SH_x  (PGL(2)))_{\textup{discrete}} = \left\{
    \pm  \begin{pmatrix}
  q^{1/2} \\
  & q^{-1/2} 
\end{pmatrix}  \right\} \,. 
\end{equation}
For $\bG = SL(2)$, one takes the image of \eqref{eq:185b} and
\eqref{eq:185} in $PGL(2,\C)$.

\subsubsection{}

Formula \eqref{eq:184} admits extremely far-reaching generalizations of the
form
\begin{equation}
  \label{eq:184bb}
  \Spec(\SH_x  (\bG)) = \bigsqcup_{\pi:
    \Gamma \times SL(2,\C)\to \bG^\vee(\C)}
  \pi \left( \Fr_x \times \begin{pmatrix}
  \| \varpi_x \|^{1/2} \\
  & \| \varpi_x \|^{-1/2} 
\end{pmatrix} \right) \cdot \bG^\vee(\C)^\pi_{\textup{compact}} \,, 
\end{equation}
where the spectrum is the joint spectrum of the spherical Hecke
algebras for all nonarchimedian
places of a global field $\F$, $\Gamma$ is some
version of the unramified Galois group of $\F$, $\Fr_x$ is the
conjugacy class of the Frobenius element at $x$, and
$\| \varpi_x \|$ is the norm of a uniformizing element of $\F_x$. 


\subsection{Eisenstein series}

\subsubsection{}

The space $\Bun_\bG$ depends covariantly on the group $\bG$, meaning
that a group homomorphism $\bG \to \bG'$ induces a map $\Bun_\bG \to
\Bun_{\bG'}$. In particular, let $\sP \to \bG$ be an inclusion of a
parabolic subgroup and let $\sP \to \bM$ be the quotient by the unipotent
radical. Both Eisenstein and pseudo-Eisenstein series on $\Bun_\bG$
are produced by the
pull-push $\pi_{\bG,*} \pi_\bM^*$ in the diagram 
\begin{equation}
  \label{eq:187}
  \xymatrix{
    & \Bun_{\sP}(C) \ar[ld]_{\pi_\bM} \ar[rd]^{\pi_\bG}\\
    \Bun_\bM(C) && \Bun_\bG(C) \,,}
\end{equation}
which appears in many places in our narrative. 
Here we have discrete sets with the canonical measure $|\Aut|^{-1}$,
which provides a natural bijection between functions and measures. The
pushforward of a functions is really the pushforward of the
corresponding measure, in other words
\begin{equation}
  \label{eq:197}
  \big[\pi_* f \big](x) = \sum_{x' \in \pi^{-1}(x)}
  \frac{|\Aut(x\phantom{{}'})|}{|\Aut(x')|} \, f(x') \,. 
\end{equation}
By construction, $\pi_*$ and $\pi^*$ are transpose with respect to the
natural duality between functions and measures.

\subsubsection{}
In our running example, we have
\begin{equation}
  \label{eq:179}
  \bM \cong GL(1) = \{ z\} \xleftarrow{\quad}
  \sP = \left\{
  \begin{pmatrix}
    z & * \\
    0 & 1 
  \end{pmatrix}
\right\}
\xrightarrow{\quad}
\bG = PGL(2) \,. 
\end{equation}
A $\sP$-bundle over a curve $C$ is an extension
\begin{equation}
  \label{eq:198}
  0 \to \cM \to \cV \to \cO_C \to 0 \,, \quad \cM \in \Bun_{GL(1)} 
  = \Pic(C)(\Bbbk) \,. 
\end{equation}
Up to isomorphism, it is uniquely encoded by the $\bP^1$-bundle
$\cP = \bP(\cV)$ with a section
\[
s_0: C = \bP(\cM) \to \cP \,.
\]
We have $\cM=s_0^*(\cO_\cP(-1))$ and, in particular,
\begin{equation}
\deg s_0 = - \deg \cM = (s_0,s_0) \,.\label{eq:199}
\end{equation}

\subsubsection{}
The numerator in \eqref{eq:197} means that, in the
corresponding count, we \emph{do not} identify sections that
differ by the automorphism of $\cP$.
In other
words, every one of the the $\frac{|\Aut(\cP)|}{|\Aut(\cP,s)|}$ many
point in each $\Aut(\cP)$-orbit in 
\begin{equation}
  \label{eq:188}
  \left[\pi_{\bG,*} \pi_\bM^* \, \phi \right] (\cP) =
  \sum_{\textup{sections $s: C\to \cP$}} \phi(s^* \cO(-1)) \,, 
\end{equation}
contributes the same value $\phi(s^* \cO_{\bP^1}(1))$. Here $\phi$ is an
arbitrary function on
\begin{equation}
\Pic(C)(\Bbbk) = \Z \oplus \Pic_0(\C)(\Bbbk) \label{eq:200} \,,
\end{equation}
where the $\Z$-coordinate is given by \eqref{eq:199}. 
In our running example $C=\bP^1$, the second summand in \eqref{eq:200} is zero.

\subsubsection{}

As we will see momentarily, choosing $\phi(\deg s)$ to be compactly
supported gives pseudo-Eisenstein series on
$\Bun_{PGL(2)}$. Equivalently, we will check that taking
\begin{equation}
\phi(\cM) = \left(q^{1/2} a\right)^{\deg \cM} \label{eq:191}
\end{equation}
recovers \eqref{eq:175}, which means that 
\begin{equation}
  \label{eq:192}
  \Eis(\cP,a) =
   \sum_{\textup{sections $s: C\to \cP$}} \left(q^{1/2}
     a\right)^{-\deg(s)}  \,. 
\end{equation}
The shift by $q^{1/2}$ is the shift by
$q^{\rho}$ where $\rho=1/2$ is the half-sum of weights of $\bM$ in
the unipotent radical of $\sP$.

\subsubsection{}
It is obvious from \eqref{eq:161-2} that the right-hand side of
\eqref{eq:192} is
an eigenfunctions of $\Delta$ with eigenvalue $\lambda(a)$.
Further, the normalization is correct since $\cP_k$ contains
a unique section \eqref{eq:193} of degree $-k$. Thus, \eqref{eq:192}
is established.

Nonetheless, it will in instructive to do the actual counting.
Actually, we will do 
explicitly a middle-school version of the counting we
did just slightly more abstractly in Section \ref{s_curves}.

\subsection{Counting quasisections}

\subsubsection{}

We
have $\cP_k \setminus s_\infty \cong \cO_{\bP^1}(k)$ and hence
all other sections can be identified with rational sections of
$\cO(k)$. In other words, they have the form
\[
s(t) = \frac{F_{d+k}(t)}{G_{d}(t)}, \quad d=0,1,2,\dots\,,
\]
where $F_{d+k}$ and $G_{d}$ are polynomials of the indicated
degree. The degree of this section is $k+2d$.

\subsubsection{}

One way to count these, is to first count pairs $(F_{d+k},G_{d})$
with $G_d \ne 0$ up to an overall constant
multiple, ignoring the fact that they may have a common nonconstant
factor. The corresponding pairs $(F_{d+k}(t),G_{d}(t))$ describe 
quasisections of $\cO_{\bP^1}(k)$, see Section \ref{s_curves}. They 
form a complement to a linear subspace in a projective space of dimension
$2d+k+1$. Hence we get the count 
\begin{equation}
  \label{eq:194}
  \sum_{d\ge 0} \left(q^{1/2}
    a\right)^{-k-2d} \frac{q^{2d+k+2}-q^{d+k+1}}{q-1} =
   \frac{q}{(1-q a^{-2})(1-a^{-2})} \, \left(q^{1/2}
    a^{-1}\right)^{k} \,. 
\end{equation}
For the common multiple up to a constant factor, we similarly get the
count
\begin{equation}
  \label{eq:194b}
  \sum_{d\ge 0} \left(q^{1/2}
    a\right)^{-2d} \frac{q^{d+1}-1}{q-1} =
   \frac{1}{(1-a^{-2})(1-q^{-1}a^{-2})} \,. 
\end{equation}

\subsubsection{}

By definition, counting nonzero polynomials up to a constant factor is
the same as counting closed points of $\bP^1$. This explains why the $\zeta$-function
\begin{equation}
  \label{eq:195}
  \zeta_{\bP^1}(t) = \frac{1}{(1-t)(1-qt)} 
\end{equation}
appears in the answer. Introducing the symmetrized version of \eqref{eq:195},
also known as the completed $\zeta$-function
\begin{equation}
  \label{eq:195-2}
  \xi_{\bP^1}(t) = \frac{q^{1/2} t}{(1-t)(1-qt)} \,, 
\end{equation}
we can write the ratio of \eqref{eq:194} and \eqref{eq:194b} as
follows
\begin{equation}
  \label{eq:196}
  \Eis(\cP_k, a) = \left(q^{1/2}
    a\right)^{k} + \frac{\xi(a^{-2})}{\xi(a^2)} \left(q^{1/2}
    a^{-1}\right)^{k} \,. 
\end{equation}
Eisenstein series for $C=\bP^1$ are proportional to the their constant
terms, see \eqref{eq:201} below. The formula \eqref{eq:196} is thus
equivalent to the simplest instance of Langlands' constant term
formula.

\subsection{Constant term and $L^2$-norms}\label{s_const_norm} 

\subsubsection{}

By definition, the constant term
\[
\CT = \pi_{\bM,*} \pi_{\bG}^*
\]
is the transposed operator in the diagram \eqref{eq:187}.
For $C = \bP^1$ and $\deg \cM \ge -1$, any extension in \eqref{eq:198}
is trivial, thus
\begin{equation}
\CT (\phi) (\cM) = \frac{1}{q^{\deg \cM +1}} \, \phi( \cM
\oplus \cO) \,, \quad \deg \cM \ge -1 \,. \label{eq:201}
\end{equation}
The prefactor in \eqref{eq:201} comes from the ratio 
\[
\frac {|\Aut (\cM)|} {|\Aut ( \cM \subset \cM\oplus\cO)|}=
\frac1{|H^0(\cM)|} = \frac{1}{q^{\deg \cM +1}} \,. 
\]
in formula \eqref{eq:197}. 
Thus
\begin{equation}
\CT(\Eis(a))(\cO(k)) = \frac1q \left[\left(q^{-1/2}
    a\right)^{k} + \frac{\xi(a^{-2})}{\xi(a^2)} \left(q^{-1/2}
    a^{-1}\right)^{k} \right]\label{eq:189} \,. 
\end{equation}
Langlands gave a very general formulation and very general proof of
such constant term formulas in \cite{L}, further investigated by Harder
in \cite{Harder} and many other authors.

\subsubsection{}

For a general curve $C$ and Eisenstein series associated
to $\bG/\bB$, where $\bG$ is a split reductive group and $\bB\subset
\bG$ is the Borel subgroup, formula \eqref{eq:189} generalizes as follows. In this case
$\bM$ is the maximal torus of $\bG$, hence
\[
\Bun_\bM = \Cochars(\bM) \otimes_\Z \Pic(C)
\xrightarrow{\quad \deg \quad} \Cochars(\bM) \,. 
\]
The variable $a$ in the formula
\begin{equation}
\phi(\cM) = (q^{\rho} a)^{\deg \cM} \label{eq:198-2}
\end{equation}
should thus be interpreted 
as an element of the dual torus 
\[
a \in \bM^\vee  = \Chars(\bM) \otimes_\Z \Ct \,.
\]
The Weyl group $W$ acts on functions of $a$, such as \eqref{eq:198-2}
in the natural way. Consider the operator
\begin{align}
\boldsymbol{\Sigma} & = \sum_{w\in W} \prod_{
  \substack{\alpha > 0 \\ w^{-1} \alpha < 0}}
  \frac{\xi(a^{-\alpha})}
  {\xi(a^{\alpha})} \,   w \label{eq:199-2} \,, \\ 
                    & =  \boldsymbol{\Pi}^{-1} \left(\sum_{w\in W}  w
                      \right) \boldsymbol{\Pi}
                      \label{eq:199bis}
\end{align}
where $\boldsymbol{\Pi} = \prod_{\alpha > 0} \xi(a^{-\alpha})$. It
follows from \eqref{eq:199bis} that
\begin{equation}
  \label{eq:190}
  \boldsymbol{\Sigma}^2  = |W| \boldsymbol{\Sigma} \,, 
\end{equation}
which means that $\Sigma$ is proportional to a projector when
acting on rational functions of $a$. With these notations, 
\begin{equation}
  \label{eq:197-2}
  \CT (\Eis(\, \cdot \, )) \,  \propto \, \boldsymbol{\Sigma}
\end{equation}
when acting on functions of the form \eqref{eq:198-2}. For functions
$\phi$ that involve a nontrivial character of $\Pic_0(C)(\Bbbk)$, the function
$\xi$ in \eqref{eq:199-2} should be replaced by the corresponding
Dirichlet L-function.

\subsubsection{}

One immediate application of constant term formulas is a
formula of the norm $\| \Eis_\omega \|_{L^2(\Bun_\bG)}$ of a
pseudo-Eisenstein series. This is because we can 
we can express norms on $\Bun_\bG$ in terms of norms on $\Bun_\bM$
using the relation 
\begin{equation}
\left(\pi_{\bG,*} \pi_\bM^* \right)^* = \CT. \label{eq:11}
\end{equation}
The space $\Bun_\bM$ for $\bM=GL(1)$ is just $\Z$, each point of which has
the automorphism group $GL(1,\Bbbk)$ of order $q-1$. We can
go between functions
\[
\omhat: \Z \to \C
\] 
and functions $\omega(a)$ on $\C^\times = \bM^\vee$ by means of the Fourier
transform, which we normalize by
\begin{equation}
\omega(a)  =\sum_k \omhat(k) \,  (q^{1/2} a)^{-k} \,.\label{eq:9}
\end{equation}
The shift in \eqref{eq:9} is introduced so that there is no shift
in
\begin{equation}
  \Eis_\omega =\pi_{\bG,*} \pi_\bM^* \,\,  \omhat(\deg(\, \cdot \,))\,.  \label{eq:203}
\end{equation}
We have
\begin{equation}
  \| \omhat \|_{L^2(\Bun_\bM)} = \tfrac{1}{q-1} \sum_{k\in \Z}
  |\omhat(k)|^2 \,\label{eq:10} \,,
  \end{equation}
where the prefactor accounts for the automorphisms.

\subsubsection{}

Formulas \eqref{eq:9} and \eqref{eq:10} imply 
\begin{equation}
  \label{eq:205}
  \Big( (q^{-1/2} a)^{\deg}, \omhat\Big)_{L^2(\Bun_\bM)} =
  \tfrac{1}{q-1} \, \bar{\omega}(a^{-1}) \,, 
\end{equation}
where $\bar{\omega}(a) = \overline{\omega(\overline{a})}$. 
From this, \eqref{eq:11} and \eqref{eq:203}, we conclude 
\begin{equation}
 \notag 
  \Big(\Eis(a), \Eis_{\omega}\Big)_{L^2(\Bun_\bG)} = \Big(\!\CT \Eis(a),
                                              \omhat\Big)_{L^2(\Bun_\bM)}
  = \tfrac{1}{q(q-1)} \boldsymbol{\Sigma} \, \bar{\omega}(a^{-1})
        \,, 
         \label{eq:204}
\end{equation}
where 
\begin{equation}
  \label{eq:206}
  \boldsymbol{\Sigma} \, \omega (a) =  \omega(a) +  \frac{\xi(a^{-2})}{\xi(a^{2})}
  \omega(a^{-1})  \,. 
\end{equation}
It follows that 
\begin{equation}
  \label{eq:202}
 \| \Eis_\omega \|_{L^2(\Bun_\bG)} = 
\tfrac1{q(q-1)} \oint_{|a|\gg 1} \frac{da}{2\pi i a}       \, 
                                                   \omega(a) 
                                                   \boldsymbol{\Sigma}
                                     \, \bar{\omega}(a^{-1})\,.  
\end{equation}

\subsubsection{}
Moving the contour of integration in \eqref{eq:202} as in
\eqref{eq:177bis}, we obtain 
\begin{alignat}{2} 
  \notag 
 \| \Eis_\omega \|_{L^2(\Bun_\bG)} 
  &= 
\tfrac1{q(q-1)} \oint_{|a|=1} \frac{da}{2\pi i a}       \, 
                                                   \omega(a) 
                                                   \boldsymbol{\Sigma}
    \, \overline{\omega(a)} &&+
    r_1 |\omega(q^{1/2})|^2 + r_2 |\omega(-q^{1/2})|^2\,, \\
  &= 
\underbrace{\tfrac1{2q(q-1)}  \,\,\, \Big\| \overline{\boldsymbol{\Sigma}} \, \omega
    \Big\|^2_{L^2(|a|=1)} } _{\textup{continuous spectrum}}&&+ 
  \underbrace{ r_1 |\omega(q^{1/2})|^2 + r_2 |\omega(-q^{1/2})|^2}
  _{\textup{discrete spectrum}}\,,  \label{eq:202b}
\end{alignat}
where the coefficients $r_1,r_2 >0$ are obtained from the residues of
$\boldsymbol{\Sigma}$ at $a=\pm q^{1/2}$, and where in \eqref{eq:202b}
we used the fact that $\frac12\boldsymbol{\Sigma}$ is a self-adjoint
projector acting on $L^2(|a|=1)$.

\subsubsection{}

Let us now go back to the discussion in Section \ref{s_compl}. Formula
\eqref{eq:202b} describes a seminorm on $\C[a^{\pm 1}]$. Its three terms
involve values of $\omega$ at disjoint closed sets, hence the
completion $L^2(\Bun_\bG)$ 
splits into a direct sum of two 1-dimensional eigenspaces and
their orthogonal complement. The latter is a direct integral of
Eisenstein series $\Eis(a)$ for $a$ on the unit circle $|a|=1$ modulo
$a\mapsto a^{-1}$. The $a\mapsto a^{-1}$ quotient appears as
the quotient by the kernel of the spectral
projector $\frac12\overline{\boldsymbol{\Sigma}}$. It reflects
the functional equation for $\Eis(a)$. 

\subsection{A geometric interpretation}

\subsubsection{}
To conclude this Appendix, we want to explain our geometric
point of view on spectral decomposition in the 
running example. The relevant geometric object is yet another
projective line
\[
  \bP^1 = \bG^\vee/\bB^\vee\,,
\]
which is now interpreted as the flag variety for the Langlands
dual group. It has the natural action of
\[
  \bM^\vee = \{\diag(a^{-1},a) \} \subset \bG^\vee  \,. 
\]
In fact, the group $\bM^\vee$ will play two roles because it both
acts on $\bP^1$ and is also the Levi subgroup of the group
$\bB^\vee$ which appears in the quotient construction of $\bP^1$.
To account for these two roles, one can consider quotient stacks like
$\bM^\vee \backslash \bG^\vee/\bB^\vee$ or even $\bB^\vee \backslash
\bG^\vee/\bB^\vee$, in which $\bM^\vee$ appears  on the left and 
on the right on an equal footing. 
This results in the
identification
\begin{equation}
  \label{eq:12}
  K_{\bM^\vee}\!(\bP^1) = K_{\bB^\vee}\!(\bP^1) = K(\bB^\vee
  \backslash \bG^\vee/\bB^\vee) = \dots = \Z[\bM^\vee] \boxtimes_{\Z[\bM^\vee]^W}
  \Z[\bM^\vee]
  \,,
\end{equation}
where dots denote many other possible geometric realizations of
this algebra, the Weyl group $W\cong S(2)$ acts by $a\mapsto a^{\pm
  1}$, and the $\boxtimes$ symbol is used to distinguish this
tensor product from the tensor product in equivariant K-theory (which
is the algebra product in \eqref{eq:12}). The
identification may be fixed so that
\begin{align*}
  \label{eq:13}
  a \boxtimes 1 &= \textup{character $a$} \otimes \cO_{\bP^1} \,,\\
  1 \boxtimes a & = \cO_{\bP^1}(1) \,. 
\end{align*}
We will write elements of $K_{\bM^\vee}\!(\bP^1) \otimes_\Z\C$ in
the form $\omega_1(a) \boxtimes \omega_2(a)$. 

\subsubsection{}

There is a distinguished linear functional $\int_{\bP^1}$, given by the horizontal
arrow in the following diagram
\begin{equation}
  \label{eq:15}
  \xymatrix{
    K_{\bM^\vee}\!(\bP^1) \ar[rrr]^{\quad\int_{\bP^1}=(\bM^\vee \backslash \bP^1 \to \pt)_*\quad}\ar[rd]_{\chi}&&& \Z\,, \\
    & \Z[\bM^\vee] \ar[rru]_{\textup{\,\, invariants}}
    } 
\end{equation}
where $\chi$ is the Euler characteristic, equivalently pushforward to
$\pt/\bM^\vee$. It may be computed by the localization formula in equivariant
K-theory, or the Weyl character formula for $SL(2)$, or the formula
for the sum of a geometric progression.  The result is
\begin{equation}
  \label{eq:16}
  \int_{\bP^1} \omega_1(a) \boxtimes \omega_2(a) =
\int_{|a|=1} \frac{da}{2\pi i a} \, \omega_1(a) \left( \sum_{w \in W} w \right) 
    \frac{ \omega_2(a)}{1-a^{-2}}  \,. 
\end{equation}
%
Since the symmetrization eliminates the poles in
\eqref{eq:16}, the integration contour $|a|=1$
may be moved arbitrarily.

\subsubsection{}

Consider the cotangent bundle
\[
  \bX^\vee = T^*\bP^1 = T^*(\bG^\vee/\bB^\vee)\,. 
\]
Using the identification
\[
K_{\bM^\vee}\!(\bP^1) = K_{\bM^\vee}\!(\bX^\vee) \,, 
\]
one may similarly consider pushing forward along $\bX^\vee \to \pt$.
Since this pushforward is not proper, a certain care needs to be
exercised. Concretely, let $\bfL \subset \bX^\vee$ be the set of
points that have a limit under the action of $a$ as $a\to 0$. This is
the union of the zero section $\bP^1 \subset T^*\bP^1$ and the cotangent
fiber over one of the fixed
points. Let $\chi_\bfL$ denote Euler characteristic with support in
$\bfL$. It takes values in formal Laurent series in $a^{-1}$. 
Consider the diagram
\begin{equation}
  \label{eq:15-2}
  \xymatrix{
    K_{\Ct_q \times \bM^\vee}\!(\bX^\vee)
    \ar[rrr]^{\quad\int_{\bX^{\!\vee}}\quad}\ar[rd]_{\chi_\bfL}&&& \Z[q^{\pm 1}]\,, \\
    & \Z[q^{\pm1}][a,a^{-1}]\!] \ar[rru]_{\textup{\,\, invariants}}
    } 
\end{equation}
in which we enlarged the equivariance by making $\Ct$ with coordinate
$q$ act by scaling the cotangent directions with weight $q^{-1}$.
Again, by localization or geometric series, we compute
\begin{equation}
  \label{eq:16-2}
  \int_{\bX^{\!\vee}} \omega_1(a) \boxtimes \omega_2(a) =
  \int_{|a|\gg 1} \frac{da}{2\pi i a} \, \omega_1(a)
  \left( \sum_{w \in W} w \right) 
    \frac{\omega_2(a)}{(1-a^{-2})(1-q a^{2})}  \,. 
\end{equation}
Note the similarity between the weight here and the $\zeta$-function
\eqref{eq:195} of $\bP^1$.

\subsubsection{}

Now consider the following  Hamiltonian reduction of $\bX^\vee$ 
\[
 \cT = T^*(\bB^\vee \backslash \bG^\vee/\bB^\vee)  = \bB^\vee
 \backslash \mu_{\bB^\vee}^{\!-1}(0) \,, 
\]
where
\[
  \mu_{\bB^\vee}: \bX^\vee \to \Lie(\bB^\vee)^*\,,
\]
is the moment map for the action of $\bB^\vee$. The 
map $\mu_{\bB^\vee}$
is equivariant for the action of $\Ct_q \times \bM^\vee$ provided
$\Ct_q$ scales the target with weight $q^{-1}$. Again, we have
the identification 
\[
K_{\bM^\vee}\!(\bP^1) = K(\cT) \,, 
\]
and we would like to define a linear functional $\int_\cT$ as before.
Since the map $\cT \to \pt$ is not proper, this does require an
explanation, but the explanation is easier than the one for
$\int_{\bX^\vee}$. 

We can choose $\bB^\vee\supset \bM^\vee$ so that $\bM^\vee$ acts
with weight $a^2$ on its unipotent radical $\bU^\vee\subset \bB^\vee$. Importantly, the points
in the unipotent radical have a limit as $a\to 0$. This means that
one of the two equations in $\mu_{\bB^\vee}=0$ defines 
the Lagrangian $\bfL$. Therefore, the Euler characteristic 
 is well-defined as an element of 
$\Z[q^{\pm1}][a,a^{-1}]\!]$.

The equation $\mu_{\bB^\vee}=0$ cuts the Euler class of a trivial
vector 
bundle with a nontrivial group action. Similarly, the
Chevalley-Eilenberg complex for $\bU^\vee$-invariants cuts out
the Euler class of a trivial line bundle with a nontrivial weight.
This gives 
\begin{equation}
  \label{eq:17}
  \int_{\cT} \omega_1(a) \boxtimes \omega_2(a)  =
  \int_{\bX^{\!\vee}} \cE \omega_1(a) \boxtimes \omega_2(a)  \,, 
\end{equation}
where $\cE$ is the Euler class
\[
  \cE = \underbrace{(1-q)(1-q a^2)}_{\mu_{\bB^\vee}=0}
  \underbrace{(1-a^{-2})}_{\textup{$\bU^\vee$-invariants}} \,. 
\]
This means 
\begin{equation}
  \label{eq:16-3}
  \int_{\cT} \omega_1(a) \boxtimes \omega_2(a) =
 {\scriptstyle{(1-q)}} 
  \int_{|a|\gg 1} \frac{da}{2\pi i a} \,
  \left(\omega_1(a) \omega_2(a) + \frac{\xi(a^{-2})}{\xi(a^{2})} \,
    \omega_1(a) \omega_2(a^{-1})\right)\,, 
\end{equation}
Comparing this with \eqref{eq:202}, we see that 
\begin{equation}
  \label{eq:18}
  \left( \Eis_{\omega_1}, \Eis_{\omega_2}\right)_{L^2(\Bun_\bG)}
  \propto \int_{\cT} \omega_1(a) \boxtimes \omega^*_2(a)\,, 
\end{equation}
where
\begin{equation}
  \label{eq:19}
  \omega^*(a) = \bar{\omega}(a^{-1}) \,. 
\end{equation}
The proportionality constant in \eqref{eq:18} is  interesting
and has to do with volumes of $\Bun_\bG$ and other things. It is, however, immaterial
for the spectral decomposition. 

\subsubsection{}

Now, how does one see the spectral decomposition and specifically
\eqref{eq:184} from this ? For any reductive group $\bG^\vee$, the moment
map
\begin{equation}
  \label{eq:20}
  \mu_{\bG^\vee}: T^*(\bG^\vee/\bB^\vee)  \to \Lie(\bG^\vee)^* \cong
   \Lie(\bG^\vee)
\end{equation}
is the resolution of the cone of nilpotent elements in
$\Lie(\bG^\vee)$. The fibers of this map are called Springer fibers.
The map \eqref{eq:20} induces a map
\begin{equation}
  \label{eq:8}
  \bmu:  \cT\to
\Lie(\bG^\vee)/\bG^\vee\,, 
\end{equation}
the fibers of which are Cartesian squares of the Springer
fibers.

Consider the diagram
\begin{equation}
  \label{eq:21}
  \xymatrix{
    \cT\big/\Ct_q \ar[rr] \ar[dr]_{\bmu}&& \pt\big/\Ct_q \\
    &\Lie(\bG^\vee)\big/(\Ct_q \times \bG^\vee) \ar[ru]} \,. 
\end{equation}
The pushforward $\bmu_*$ in \eqref{eq:21} gives a K-theory class
on the stack of nilpotent conjugacy
classes $e$ for the dual group $\bG^\vee$. These 
correspond to representations $\pi$ in \eqref{eq:184}
via
\begin{equation}
  \label{eq:14}
  e = \pi\left(\!\left(
    \begin{matrix}
    0  & 1\\ 0 & 0
    \end{matrix} \right)\!\right)  \,.
\end{equation}
We have
\begin{equation}
  \label{eq:22}
  \textup{Orbit}(e)\big/ (\Ct_q \times \bG^\vee) \cong
  \pt \big/  (\Ct_q \times \bC(e)) \,, 
\end{equation}
where $\bC(e)$ is the centralizer of $e$ and $q\in \Ct_q$ acts by
$\pi(\diag(q^{1/2},q^{-1/2}))$ in the fiber over $e$. We see
this action of $q$ explicitly in \eqref{eq:184}. Restricted to
\eqref{eq:22}, the northeast
going map in \eqref{eq:21} is the integration over the maximal
compact subgroup of the complex group $\bC(e)$. This
group coincides with $\bG^\vee(\C)^\pi_{\textup{compact}}$ in
\eqref{eq:184}. This identifies the spectrum as a set of conjugacy
classes in $\bG^\vee$.

Further analysis of \eqref{eq:21}
describes spectral projectors as pushforwards along the Spinger
fibers and computes the spectral measure from the
$\Ct_q \times \bC(e)_{\textup{compact}}$-action on the
unipotent radical of $\bC(e)$. See \cite{KO1} for many
technical details required for the complete match. One key
property that needs to be checked is that the pieces corresponding
to each $e$ are positive in the same way as
\eqref{eq:202b} is a sum of orthogonal seminorms. Here the
Hermitian structure is introduced by \eqref{eq:19}. 
It would be
interesting to have an abstract argument showing such positivity.

\begin{bibdiv}
	\begin{biblist}

\bibitem{AO} M.~Aganagic and A.~Okounkov, 
  \emph{Duality interfaces in 3 dimensional theories},
  String-Math 2019, available from
  \url{https://www.math.columbia.edu/~okounkov/papers.html}.

\bib{Atiyah}{book}{
   author={Atiyah, Michael Francis},
   title={Elliptic operators and compact groups},
   series={Lecture Notes in Mathematics, Vol. 401},
   publisher={Springer-Verlag, Berlin-New York},
   date={1974},
   pages={ii+93},
}

\bib{Bez}{article}{
   author={Bezrukavnikov, Roman},
   title={Cohomology of tilting modules over quantum groups and
   $t$-structures on derived categories of coherent sheaves},
   journal={Invent. Math.},
   volume={166},
   date={2006},
   number={2},
   pages={327--357},
}

\bib{BezPos}{article}{
   author={Bezrukavnikov, Roman},
   author={Positselski, Leonid},
   title={On semi-infinite cohomology of finite-dimensional graded algebras},
   journal={Compos. Math.},
   volume={146},
   date={2010},
   number={2},
   pages={480--496},
}


  


\bib{BFN1}{article}{
   author={Braverman, Alexander},
   author={Finkelberg, Michael},
   author={Nakajima, Hiraku},
   title={Towards a mathematical definition of Coulomb branches of
   3-dimensional $\mathcal{N}=4$ gauge theories, II},
   journal={Adv. Theor. Math. Phys.},
   volume={22},
   date={2018},
   number={5},
   pages={1071--1147},
}

\bib{BFN2}{article}{
   author={Braverman, Alexander},
   author={Finkelberg, Michael},
   author={Nakajima, Hiraku},
   title={Coulomb branches of $3d$ $\mathcal{N}=4$ quiver gauge theories and
   slices in the affine Grassmannian},
   note={With two appendices by Braverman, Finkelberg, Joel Kamnitzer,
   Ryosuke Kodera, Nakajima, Ben Webster and Alex Weekes},
   journal={Adv. Theor. Math. Phys.},
   volume={23},
   date={2019},
   number={1},
   pages={75--166},
 }

 \bib{BFN3}{article}{
   author={Braverman, Alexander},
   author={Finkelberg, Michael},
   author={Nakajima, Hiraku},
   title={Ring objects in the equivariant derived Satake category arising
   from Coulomb branches},
   note={Appendix by Gus Lonergan},
   journal={Adv. Theor. Math. Phys.},
   volume={23},
   date={2019},
   number={2},
   pages={253--344},
}

\bib{BravGaits}{article}{
   author={Braverman, A.},
   author={Gaitsgory, D.},
   title={Geometric Eisenstein series},
   journal={Invent. Math.},
   volume={150},
   date={2002},
   number={2},
   pages={287--384},
}


\bib{Vermas}{article}{
   author={Bullimore, Mathew},
   author={Dimofte, Tudor},
   author={Gaiotto, Davide},
   author={Hilburn, Justin},
   author={Kim, Hee-Cheol},
   title={Vortices and Vermas},
   journal={Adv. Theor. Math. Phys.},
   volume={22},
   date={2018},
   number={4},
   pages={803--917},
}

\bib{Ionuts}{article}{
   author={Ciocan-Fontanine, Ionut},
   author={Favero, David},
   author={Gu\'{e}r\'{e}, J\'{e}r\'{e}my},
   author={Kim, Bumsig},
   author={Shoemaker, Mark},
   title={Fundamental factorization of a GLSM Part I: Construction},
   journal={Mem. Amer. Math. Soc.},
   volume={289},
   date={2023},
   number={1435},
   pages={iv+96},
}

\bib{DeformRed}{article}{
   author={Davison, Ben},
   author={P\u{a}durariu, Tudor},
   title={Deformed dimensional reduction},
   journal={Geom. Topol.},
   volume={26},
   date={2022},
   number={2},
   pages={721--776},
}



\bibitem{DHLloc} D.~Halpern-Leistner,
  \emph{A categorification of the Atiyah-Bott localization formula},
  available from \texttt{math.cornell.edu/~danielhl}. 

\bibitem{DHO} Marcelo De Martino, Volker Heiermann, Eric Opdam,
\emph{On the unramified spherical automorphic spectrum}, 
\texttt{arXiv:1512.08566}

\bibitem{DHO2} Marcelo De Martino, Volker Heiermann, Eric Opdam,
\emph{Residue distributions, iterated residues, and the spherical automorphic spectrum}, 
\texttt{ arXiv:2207.06773}

\bib{Dimca}{book}{
   author={Dimca, Alexandru},
   title={Sheaves in topology},
   series={Universitext},
   publisher={Springer-Verlag, Berlin},
   date={2004},
   pages={xvi+236},
}

\bib{Faddeev}{article}{
   author={Faddeev, L. D.},
   title={The inverse problem in the quantum theory of scattering},
   language={Russian},
   journal={Uspehi Mat. Nauk},
   volume={14},
   date={1959},
   number={4(88)},
}

\bib{Fan}{article}{
   author={Fantechi, Barbara},
   author={G\"{o}ttsche, Lothar},
   title={Riemann-Roch theorems and elliptic genus for virtually smooth
   schemes},
   journal={Geom. Topol.},
   volume={14},
   date={2010},
   number={1},
   pages={83--115},
}

\bib{Semiinf2}{article}{
   author={Feigin, Boris},
   author={Finkelberg, Michael},
   author={Kuznetsov, Alexander},
   author={Mirkovi\'{c}, Ivan},
   title={Semi-infinite flags. II. Local and global intersection cohomology
   of quasimaps' spaces},
   conference={
      title={Differential topology, infinite-dimensional Lie algebras, and
      applications},
   },
   book={
      series={Amer. Math. Soc. Transl. Ser. 2},
      volume={194},
      publisher={Amer. Math. Soc., Providence, RI},
   },
   date={1999},
   pages={113--148},
}

\bib{FinkKuz}{article}{
   author={Finkelberg, Michael},
   author={Kuznetsov, Alexander},
   title={Global intersection cohomology of quasimaps' spaces},
   journal={Internat. Math. Res. Notices},
   date={1997},
   number={7},
   pages={301--328},
 }

 \bib{Semiinf1}{article}{
   author={Finkelberg, Michael},
   author={Mirkovi\'{c}, Ivan},
   title={Semi-infinite flags. I. Case of global curve $\mathbf{P}^1$},
   conference={
      title={Differential topology, infinite-dimensional Lie algebras, and
      applications},
   },
   book={
      series={Amer. Math. Soc. Transl. Ser. 2},
      volume={194},
      publisher={Amer. Math. Soc., Providence, RI},
   },
   date={1999},
   pages={81--112},
}

\bib{Gaits}{article}{
   author={Gaitsgory, D.},
   title={Eisenstein series and quantum groups},
   language={English, with English and French summaries},
   journal={Ann. Fac. Sci. Toulouse Math. (6)},
   volume={25},
   date={2016},
   number={2-3},
   pages={235--315},
}

\bib{Ger1}{article}{
   author={Gerasimov, Anton},
   author={Lebedev, Dimitri},
   author={Oblezin, Sergey},
   title={From Archimedean $L$-factors to topological field theories},
   journal={Lett. Math. Phys.},
   volume={96},
   date={2011},
   number={1-3},
   pages={285--297},
}

\bib{Ger2}{article}{
   author={Gerasimov, Anton},
   author={Lebedev, Dimitri},
   author={Oblezin, Sergey},
   title={Archimedean $L$-factors and topological field theories II},
   journal={Commun. Number Theory Phys.},
   volume={5},
   date={2011},
   number={1},
   pages={101--133},
}

\bib{Harder}{article}{
   author={Harder, G.},
   title={Chevalley groups over function fields and automorphic forms},
   journal={Ann. of Math. (2)},
   volume={100},
   date={1974},
   pages={249--306},
}


\bib{Mirror}{book}{
   author={Hori, Kentaro},
   author={Katz, Sheldon},
   author={Klemm, Albrecht},
   author={Pandharipande, Rahul},
   author={Thomas, Richard},
   author={Vafa, Cumrun},
   author={Vakil, Ravi},
   author={Zaslow, Eric},
   title={Mirror symmetry},
   series={Clay Mathematics Monographs},
   volume={1},
   note={With a preface by Vafa},
   publisher={American Mathematical Society, Providence, RI; Clay
   Mathematics Institute, Cambridge, MA},
   date={2003},
   pages={xx+929},
}

\bib{KS}{book}{
   author={Kashiwara, Masaki},
   author={Schapira, Pierre},
   title={Sheaves on manifolds},
   series={Grundlehren der mathematischen Wissenschaften [Fundamental
   Principles of Mathematical Sciences]},
   volume={292},
   note={With a chapter in French by Christian Houzel},
   publisher={Springer-Verlag, Berlin},
   date={1990},
   pages={x+512},
}


\bibitem{KO1} D.~Kazhdan and A.~Okounkov, 
  \emph{On the unramified Eisenstein spectrum},
  \texttt{arXiv:2203.03486}. 

\bibitem{KO2} \bysame, 
  \emph{Unramified Langlands spectral decomposition for function field},
in preparation. 
  


\bib{Labesse}{article}{
   author={Labesse, Jean-Pierre},
   title={The Langlands spectral decomposition},
   conference={
      title={The genesis of the Langlands Program},
   },
   book={
      series={London Math. Soc. Lecture Note Ser.},
      volume={467},
      publisher={Cambridge Univ. Press, Cambridge},
   },
   date={2021},
   pages={176--214},
}

\bib{Lvol}{article}{
   author={Langlands, R. P.},
   title={The volume of the fundamental domain for some arithmetical
   subgroups of Chevalley groups},
   conference={
      title={Algebraic Groups and Discontinuous Subgroups},
      address={Proc. Sympos. Pure Math., Boulder, Colo.},
      date={1965},
   },
   book={
      publisher={Amer. Math. Soc., Providence, R.I.},
   },
   date={1966},
   pages={143--148},
}

\bib{L}{book}{
   author={Langlands, Robert P.},
   title={On the functional equations satisfied by Eisenstein series},
   series={Lecture Notes in Mathematics, Vol. 544},
   publisher={Springer-Verlag, Berlin-New York},
   date={1976},
}

\bib{Lmarch}{article}{
   author={Langlands, R. P.},
   title={Automorphic representations, Shimura varieties, and motives. Ein
   M\"{a}rchen},
   conference={
      title={Automorphic forms, representations and $L$-functions},
      address={Proc. Sympos. Pure Math., Oregon State Univ., Corvallis,
      Ore.},
      date={1977},
   },
   book={
      series={Proc. Sympos. Pure Math.},
      volume={XXXIII},
      publisher={Amer. Math. Soc., Providence, RI},
   },
   date={1979},
   pages={205--246},
}

\bib{Laff}{article}{
   author={Lafforgue, Vincent},
   title={Chtoucas pour les groupes r\'{e}ductifs et param\'{e}trisation de
   Langlands globale},
   language={French},
   journal={J. Amer. Math. Soc.},
   volume={31},
   date={2018},
   number={3},
   pages={719--891},
}

\bib{Laumon}{article}{
   author={Laumon, G.},
   title={Faisceaux automorphes li\'{e}s aux s\'{e}ries d'Eisenstein},
   language={French},
   conference={
      title={Automorphic forms, Shimura varieties, and $L$-functions, Vol.
      I},
      address={Ann Arbor, MI},
      date={1988},
   },
   book={
      series={Perspect. Math.},
      volume={10},
      publisher={Academic Press, Boston, MA},
   },
   date={1990},
   pages={227--281},
}

\bib{Macd}{article}{
   author={Macdonald, I. G.},
   title={Symmetric products of an algebraic curve},
   journal={Topology},
   volume={1},
   date={1962},
   pages={319--343},
}



\bib{Moe1}{article}{
   author={M\oe glin, C.},
   author={Waldspurger, J.-L.},
   title={Le spectre r\'{e}siduel de ${\rm GL}(n)$},
   journal={Ann. Sci. \'{E}cole Norm. Sup. (4)},
   volume={22},
   date={1989},
   number={4},
   pages={605--674},
}

\bib{Moe2}{article}{
   author={M\oe glin, C.},
   title={Orbites unipotentes et spectre discret non ramifi\'{e}: le cas des
   groupes classiques d\'{e}ploy\'{e}s},
   journal={Compositio Math.},
   volume={77},
   date={1991},
   number={1},
   pages={1--54},
}

\bib{MW}{book}{
   author={M\oe glin, C.},
   author={Waldspurger, J.-L.},
   title={Spectral decomposition and Eisenstein series},
   series={Cambridge Tracts in Mathematics},
   volume={113},
   note={Une paraphrase de l'\'{E}criture [A paraphrase of Scripture]},
   publisher={Cambridge University Press, Cambridge},
   date={1995},
}


\bib{NakHilb}{article}{
   author={Nakajima, Hiraku},
   title={Heisenberg algebra and Hilbert schemes of points on projective
   surfaces},
   journal={Ann. of Math. (2)},
   volume={145},
   date={1997},
   number={2},
   pages={379--388},
   issn={0003-486X},
   review={\MR{1441880}},
   doi={10.2307/2951818},
}

\bib{NakCoul}{article}{
   author={Nakajima, Hiraku},
   title={Towards a mathematical definition of Coulomb branches of
   3-dimensional $\mathcal{N}=4$ gauge theories, I},
   journal={Adv. Theor. Math. Phys.},
   volume={20},
   date={2016},
   number={3},
   pages={595--669},
}



\bibitem{O2} A.~Okounkov, 
  \emph{Nonabelian stable envelopes, vertex functions with descendents, and integral solutions of q-difference equations},
  \texttt{arXiv:2010.13217}. 

  \bib{OP}{article}{
   author={Okounkov, A.},
   author={Pandharipande, R.},
   title={Gromov-Witten theory, Hurwitz numbers, and matrix models},
   conference={
      title={Algebraic geometry---Seattle 2005. Part 1},
   },
   book={
      series={Proc. Sympos. Pure Math.},
      volume={80, Part 1},
      publisher={Amer. Math. Soc., Providence, RI},
   },
   date={2009},
   pages={325--414},
}





\bib{Serre}{book}{
   author={Serre, Jean-Pierre},
   title={Arbres, amalgames, ${\rm SL}_{2}$},
   language={French},
   series={Ast\'{e}risque, No. 46},
   note={Avec un sommaire anglais;
   R\'{e}dig\'{e} avec la collaboration de Hyman Bass},
   publisher={Soci\'{e}t\'{e} Math\'{e}matique de France, Paris},
   date={1977},
   pages={189 pp. },
}




\bib{Leon}{book}{
   author={Takhtajan, Leon A.},
   title={Quantum mechanics for mathematicians},
   series={Graduate Studies in Mathematics},
   volume={95},
   publisher={American Mathematical Society, Providence, RI},
   date={2008},
   pages={xvi+387},
   isbn={978-0-8218-4630-8},
   review={\MR{2433906}},
   doi={10.1090/gsm/095},
}


\bib{VinOnish}{article}{
   author={Vinberg, \`E. B.},
   author={Gorbatsevich, V. V.},
   author={Onishchik, A. L.},
   title={Structure of Lie groups and Lie algebras},
   book={
      series={Itogi Nauki i Tekhniki},
      publisher={Akad. Nauk SSSR, Vsesoyuz. Inst. Nauchn. i Tekhn. Inform.,
   Moscow},
   },
   date={1990},
   pages={5--259},
}

\bib{VinPop}{article}{
   author={Vinberg, E. B.},
   author={Popov, V. L.},
   title={Invariant theory},
   book={
      series={Itogi Nauki i Tekhniki},
      publisher={Akad. Nauk SSSR, Vsesoyuz. Inst. Nauchn. i Tekhn. Inform.,
   Moscow},
   },
   date={1989},
   pages={137--314, 315},
}

	\end{biblist}
\end{bibdiv}

\end{document}